\definecolor{green}{rgb}{0,0.8,0} 
\newtheorem{theorem}{Theorem}[section]
\newtheorem{corollary}[theorem]{Corollary}
\newtheorem{lemma}[theorem]{Lemma}
\newtheorem{proposition}[theorem]{Proposition}
\theoremstyle{definition}
\newtheorem{definition}[theorem]{Definition}
\newtheorem{claim}{Claim}
\theoremstyle{remark}
\newtheorem{remark}[theorem]{Remark}
\numberwithin{equation}{section}
\newcommand{\nrm}[1]{\Vert#1\Vert}
\newcommand{\abs}[1]{\vert#1\vert}
\newcommand{\set}[1]{\{#1\}}
\newcommand{\supp}{{\mathrm{supp}}}
\renewcommand{\Im}{\mathrm{Im}}
\newcommand{\aeq}{\approx}
\newcommand{\aleq}{\lesssim}
\newcommand{\lap}{\Dlt}
\newcommand{\ud}{\mathrm{d}}
\newcommand{\rd}{\partial}
\newcommand{\nb}{\nabla}
\newcommand{\bb}{\Big}
\newcommand{\0}{\emptyset}
\newcommand{\alp}{\alpha}
\newcommand{\bt}{\beta}
\newcommand{\gmm}{\gamma}
\newcommand{\Gmm}{\Gamma}
\newcommand{\dlt}{\delta}
\newcommand{\Dlt}{\Delta}
\newcommand{\eps}{\epsilon}
\newcommand{\lmb}{\lambda}
\newcommand{\sgm}{\sigma}
\newcommand{\omg}{\omega}
\newcommand{\Omg}{\Omega}
\newcommand{\zt}{\zeta}
\newcommand{\bfm}{{\bf m}}
\newcommand{\bfn}{{\bf n}}
\newcommand{\bfD}{{\bf D}}
\newcommand{\bbC}{\mathbb C}
\newcommand{\bbR}{\mathbb R}
\newcommand{\bbS}{\mathbb S}
\newcommand{\bbZ}{\mathbb Z}
\newcommand{\calA}{\mathcal A}
\newcommand{\calC}{\mathcal C}
\newcommand{\calD}{\mathcal D}
\newcommand{\calE}{\mathcal E}
\newcommand{\calF}{\mathcal F}
\newcommand{\calG}{\mathcal G}
\newcommand{\calH}{\mathcal H}
\newcommand{\calO}{\mathcal O}
\newcommand{\calP}{\mathcal P}
\newcommand{\calQ}{\mathcal Q}
\newcommand{\calR}{\mathcal R}
\newcommand{\calS}{\mathcal S}
\newcommand{\calT}{\mathcal T}
\newcommand{\calV}{\mathcal V}
\newcommand{\calY}{\mathcal Y}
\newcommand{\pfstep}[1]{\vspace{.5em} \noindent {\bf #1.} }
\DeclareMathOperator*{\esssup}{ess\,sup\,}
\newcommand{\extr}{\mathrm{ext}}
\newcommand{\intr}{\mathrm{int}}
\newcommand{\covD}{\bfD}
\newcommand{\diam}{\mathrm{diam} \,}
\newcommand{\met}{\bfm}
\newcommand{\bnrm}[1]{\bb\Vert #1 \bb\Vert}
\newcommand{\BMO}{\mathrm{BMO}}
\newcommand{\CG}{\calY}				
\newcommand{\wCG}{\widehat{\calY}}		
\newcommand{\firstDR}{\mathrm{H}^{1}_{\mathrm{deRham}}} 
\newcommand{\rst}{\!\upharpoonright}		
\newcommand{\ecs}{r_{\mathrm{c}}}			
\newcommand{\En}{E}					
\newcommand{\thE}{\eps_{\ast}}		
\newcommand{\rglue}{\sgm_{0}}				
\newcommand{\rgExt}{\sgm_{1}}				
\newcommand{\rgInt}{\sgm_{2}}				
\newcommand{\angnb}{\hskip-.3em \not \hskip-.25em \nb}						
\begin{document}

\title[]{Local well-posedness of the $(4+1)$-dimensional Maxwell-Klein-Gordon equation at energy regularity}
\author{Sung-Jin Oh}%
\address{Department of Mathematics, UC Berkeley, Berkeley, CA, 94720}%
\email{sjoh@math.berkeley.edu}%

\author{Daniel Tataru}%
\address{Department of Mathematics, UC Berkeley, Berkeley, CA, 94720}%
\email{tataru@math.berkeley.edu}%


\begin{abstract}
  This paper is the first part of a trilogy \cite{OT2, OT3} dedicated
  to a proof of global well-posedness and scattering of the
  $(4+1)$-dimensional mass-less Maxwell-Klein-Gordon equation (MKG)
  for any finite energy initial data. The main result of the present
  paper is a large energy local well-posedness theorem for MKG in the
  global Coulomb gauge, where the lifespan is bounded from below by
  the energy concentration scale of the data. Hence the proof of
  global well-posedness is reduced to establishing non-concentration
  of energy. To deal with non-local features of MKG we develop initial
  data excision and gluing techniques at critical regularity, which
  might be of independent interest.

\end{abstract} 
\maketitle

\tableofcontents

\section{Introduction}
Let $\bbR^{1+4}$ be the $(4+1)$-dimensional Minkowski space with the metric 
\begin{equation*}
\met_{\mu \nu} := \mathrm{diag}\,(-1,+1,+1,+1,+1)
\end{equation*}
in the standard rectilinear coordinates $(t=x^{0}, x^{1}, \cdots, x^{4})$. 
Let $L = \bbR^{1+4} \times \bbC$ be the trivial $\mathrm{U}(1)$ complex line bundle over $\bbR^{1+4}$.
The \emph{Maxwell-Klein-Gordon system} is a relativistic gauge field theory that describes the evolution of a pair $(A, \phi)$ of a connection on $L$ and a section of $L$. In Section~\ref{subsec:background}, we present the necessary background material concerning the Maxwell-Klein-Gordon system on $\bbR^{1+4}$. Readers already familiar with this equation may skip ahead to Section~\ref{subsec:main-results}, where the main results and ideas of the paper are presented.

\subsection{The Maxwell-Klein-Gordon system on $\bbR^{1+4}$} \label{subsec:background}
Let $L = \bbR^{1+4} \times \bbC$ be the trivial complex line bundle with structure group $\mathrm{U}(1) = \set{e^{i \chi} \in \bbC}$. 
Global sections of $L$ are precisely $\bbC$-valued functions on $\bbR^{1+4}$. 
Using the trivial connection on $\bbR^{1+4}$ as a reference and employing the identification $\mathrm{u}(1) \equiv i \bbR$, any connection $\covD_{\mu}$ on $L$ can be written as
\begin{equation*}
	\covD_{\mu} = \rd_{\mu} + i A_{\mu}
\end{equation*}
where $A_{\mu}$ is a real-valued 1-form on $\bbR^{1+4}$. 

The (mass-less) \emph{Maxwell-Klein-Gordon system} for a pair $(A, \phi)$ of a connection on $L$ and a section of $L$ takes the form
\begin{equation} \label{eq:MKG} \tag{MKG}
\left\{
\begin{aligned}
	\rd^{\mu} F_{\nu \mu} =& \Im(\phi \overline{\covD_{\nu} \phi})  \\
	\Box_{A} \phi =& 0,
\end{aligned}
\right.
\end{equation}
where $F_{\mu \nu} := (\ud A)_{\mu \nu} = \rd_{\mu} A_{\nu} -\rd_{\nu} A_{\mu}$ is the \emph{curvature 2-form} associated to $\covD_{\mu}$ and $\Box_{A} := \covD^{\mu} \covD_{\mu}$ is the covariant d'Alembertian.  We are using the usual convention of raising and lowering indices using the Minkowski metric, and also of summing over repeated upper and lower indices.

We consider the initial value problem for \eqref{eq:MKG}.
An \emph{initial data set} for \eqref{eq:MKG} consists of two pairs of 1-forms $(a_{j}, e_{j})$ and $\bbC$-valued functions $(f, g)$ on $\bbR^{4}$. We say that $(a_{j}, e_{j}, f, g)$ is the initial data for a solution $(A, \phi)$ if
\begin{equation*}
	(A_{j}, F_{0 j}, \phi, \covD_{t} \phi) \rst_{\set{t=0}} = (a_{j}, e_{j}, f, g).
\end{equation*}
Note that \eqref{eq:MKG} imposes the condition that the following equation be true for any initial data for \eqref{eq:MKG}:
\begin{equation} \label{eq:MKGconstraint}
	\rd^{j} e_{j} = \Im (f \overline{g}).
\end{equation}
This equation is the \emph{Gauss} (or the \emph{constraint}) \emph{equation} for \eqref{eq:MKG}. 

A basic geometric feature of the Maxwell-Klein-Gordon system is \emph{gauge invariance}. Let $\chi$ be a \emph{gauge transformation} for \eqref{eq:MKG}, i.e., a real-valued function on $\bbR^{1+4}$, so that $e^{i \chi} \in \mathrm{U}(1)$. Then \eqref{eq:MKG} is invariant under the associated gauge transform $(A, \phi) \mapsto (A - \ud \chi, e^{i \chi} \phi)$. Geometrically, a gauge transform corresponds to a change of basis in the fiber $\bbC$ of the complex line bundle $L$ over each point in $\bbR^{1+4}$. To establish any sort of well-posedness of the initial value problem and also to reveal the hyperbolicity\footnote{Observe that without any choice of gauge, the the principal part of $\rd^{\mu} F_{\nu \mu}$ is $-\Box A_{\nu} + \rd_{\nu} \rd^{\mu} A_{\mu}$, which does not have a well-defined character.} of \eqref{eq:MKG}, the ambiguity arising from this invariance must be fixed. For this purpose we rely on the \emph{global Coulomb gauge} condition $\sum_{j=1}^{4} \rd_{j} A_{j} = 0$ in this paper. 

The Maxwell-Klein-Gordon system on $\bbR^{1+4}$ obeys the law of \emph{conservation of energy}. The conserved energy of a solution $(A, \phi)$ at time $t$ is defined as
\begin{equation} \label{eq:energy-def}
	\calE_{\set{t} \times \bbR^{4}} [A, \phi] := \frac{1}{2} \int_{\set{t} \times \bbR^{4}} \sum_{0 \leq \mu < \nu \leq 4} \abs{F_{\mu \nu}}^{2} + \sum_{0 \leq \mu \leq 4} \abs{\covD_{\mu} \phi}^{2} \, \ud x.
\end{equation}
For any sufficiently regular solution to \eqref{eq:MKG} on $I \times \bbR^{4}$, where $I \subseteq \bbR$ is a connected interval, $\calE_{\set{t_{1}} \times \bbR^{4}}[A, \phi] = \calE_{\set{t_{2}} \times \bbR^{4}}[A, \phi]$ for every $t_{1}, t_{2} \in I$. For a \eqref{eq:MKG} initial data set $(a, e, f, g)$, the conserved energy takes the form
\begin{equation} \label{eq:energy-id-def}
	\calE_{\bbR^{4}}[a, e, f, g] = \frac{1}{2} \int_{\bbR^{4}} \sum_{1 \leq j < k \leq 4}^{4} \abs{\rd_{j} a_{k} - \rd_{k} a_{j}}^{2} + \sum_{j = 1}^{4} \abs{e_{j}}^{2} + \sum_{j=1}^{4} \abs{\covD_{j} f}^{2} + \abs{g}^{2} \, \ud x,
\end{equation}
where $\covD_{j} := \rd_{j} + i a_{j}$. Furthermore, given any (measurable) subset $O' \subseteq \bbR^{4}$, we define the local energy $\calE_{O'}[a, e, f, g]$ by replacing the domain of integral above by $O'$.

The Maxwell-Klein-Gordon system can in fact be formulated on any
$\bbR^{1+d}$ ($d \geq 1$). However, the $(4+1)$-dimensional case is
distinguished by the fact that the system becomes \emph{energy
  critical}. That is, in $\bbR^{1+4}$ both the conserved energy
\eqref{eq:energy-def} and the equations \eqref{eq:MKG} are invariant
under the scaling
\begin{equation*}
	(A, \phi) \mapsto (\widetilde{A}, \widetilde{\phi}) (t,x) := (\lmb^{-1} A, \lmb^{-1} \phi)(\lmb^{-1} t, \lmb^{-1} x) \quad \hbox{ for any } \lmb > 0.
\end{equation*}

\subsection{Main results and ideas} \label{subsec:main-results} The
present paper is the first of a sequence of three papers \cite{OT2,
  OT3}, in which we give a complete proof of global well-posedness and
scattering of \eqref{eq:MKG} on $\bbR^{1+4}$ for any finite energy
data. This theorem is analogous to the \emph{threshold theorem} for
energy critical wave maps \cite{Krieger:2009uy, MR2657817, MR2657818,
  Tao:2008wn, Tao:2008tz, Tao:2008wo, Tao:2009ta, Tao:2009ua}. The
main result of this paper is the following local well-posedness
theorem for \eqref{eq:MKG} in the global Coulomb gauge at the energy
regularity.

\begin{theorem}[Local well-posedness of \eqref{eq:MKG} at energy
  regularity, simple version] \label{thm:lwp4MKG:simple} Let $E$
  be any positive number and let $(a, e, f, g)$ be a smooth initial data set
  with energy $\leq E$ satisfying the global Coulomb condition
  $\sum_{j=1}^{4} \rd_{j} a_{j} = 0$.
  \begin{enumerate}
  \item Then there exists an open time interval $I \ni 0$ and a unique
    smooth solution $(A, \phi)$ to the initial value problem on $I
    \times \bbR^{4}$ satisfying the global Coulomb gauge condition
    $\sum_{j=1}^{4} \rd_{j} A_{j} = 0$.
  \item Define the \emph{energy concentration scale} of $(a, e, f, g)$
    by
    \begin{equation*}
      \ecs = \ecs(E)[a, e, f, g] := \sup\set{r > 0: \forall x \in \bbR^{4}, \ 
\calE_{B_{r}(x)} [a, e, f, g] < \dlt_{0}(E, \thE^{2})},
    \end{equation*}
    where $B_{r}(x)$ denotes the open ball of radius $r$ centered at
    $x$, $\thE$ is a universal constant (see Theorem~\ref{thm:KST}
    below) and $\dlt_{0}(E, \thE^{2})$ is some positive function (to be
    specified in Section~\ref{sec:lwp}). Then $I$ contains the
    interval $[-\ecs, \ecs]$.
  \item Finally, the solution map extends continuously on compact time intervals
 to general  finite energy initial data, with the same lifespan properties as in (2)
above. 
  \end{enumerate}
\end{theorem}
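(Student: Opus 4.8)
The plan is to establish Theorem~\ref{thm:lwp4MKG:simple} in three stages, corresponding to the three parts of the statement, with the bulk of the work concentrated in part (2) and the hardest analytic input being the small-energy critical well-posedness theory for \eqref{eq:MKG} that is invoked via Theorem~\ref{thm:KST}. For part (1), the first step is to recall that in the global Coulomb gauge the system \eqref{eq:MKG} becomes a hyperbolic system: the spatial components $A_j$ solve a wave equation with a nonlinearity that is elliptic-smoothing in the $A_0$ variable (which is recovered from the constraint \eqref{eq:MKGconstraint} and the elliptic equation $\Delta A_0 = -\Im(\phi \overline{\covD_0 \phi})$), so that given smooth initial data the standard energy-method local existence theory at high Sobolev regularity $H^N$, $N$ large, produces a unique smooth solution on some interval $I \ni 0$. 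The nonlocal character of recovering $A_0$ and of the Coulomb projection introduces no obstruction at high regularity since these operators are bounded on the relevant spaces; uniqueness follows from the energy estimate for the difference of two solutions.

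For part (2), the key idea is that the smooth lifespan controlled in part (1) can be continued as long as a suitable critical (energy-regularity) norm of the solution stays finite, and the latter is controlled by a bootstrap argument fed by the small-energy theory. The steps I would carry out are: first, fix the function $\dlt_0(E, \thE^2)$ so that on any ball of radius $\ecs$ the local energy is below the small-data threshold $\thE^2$ from Theorem~\ref{thm:KST}; second, rescale so that $\ecs = 1$ and work on the slab $[-1,1] \times \bbR^4$; third, use finite speed of propagation together with the excision-and-gluing techniques advertised in the abstract to decompose the data (localized to each unit ball, after possibly large gauge transformations restoring the Coulomb condition) into pieces of energy $< \thE^2$, to which the small-energy global-in-time result of Theorem~\ref{thm:KST} applies, yielding uniform control of the critical norm of $(A,\phi)$ on $[-1,1]\times\bbR^4$ in terms of $E$ alone; fourth, combine this with a continuity/continuation argument (the critical norm being finite prevents blow-up of the $H^N$ norm, via the energy estimates and the subcriticality of higher Sobolev norms relative to the critical norm) to conclude that the smooth solution from part~(1) in fact persists on $[-\ecs,\ecs]$.

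For part (3), the plan is a standard density argument. The step here is: given finite-energy data $(a,e,f,g)$, approximate it by smooth Coulomb data $(a^{(n)}, e^{(n)}, f^{(n)}, g^{(n)})$ converging in the energy space, note that by the definition of $\ecs$ and lower semicontinuity of local energy the concentration scales $\ecs^{(n)}$ are bounded below by (essentially) $\ecs$ for $n$ large, apply part~(2) to get smooth solutions on a common interval with uniformly bounded critical norm, and then use the Lipschitz-type dependence of the critical norm on the data (a difference estimate, again furnished by the small-data theory of Theorem~\ref{thm:KST}) to show the solutions form a Cauchy sequence in the solution space on compact time subintervals; the limit is the desired solution and the construction shows the solution map is continuous.

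The main obstacle I anticipate is the third step of part~(2): executing the excision-and-gluing of initial data at critical regularity so as to genuinely reduce the large-energy problem to the small-energy theory. The nonlocality of the Coulomb gauge means one cannot simply multiply the data by a cutoff --- doing so destroys the constraint equation \eqref{eq:MKGconstraint} and the gauge condition --- so one must carefully solve an auxiliary elliptic problem to restore both, with estimates uniform at the scale $\ecs$ and without losing the smallness of the local energy. Controlling the gauge transformations needed to patch the local solutions back together (which can be topologically nontrivial, hence the appearance of $\firstDR$ in the notation) while staying at energy regularity is, I expect, the technical heart of the paper.
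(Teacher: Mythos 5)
There is a genuine gap at the heart of your part (2), namely the appeal to ``finite speed of propagation together with the excision-and-gluing techniques'' to assemble the small-energy solutions. Finite speed of propagation \emph{fails} in the global Coulomb gauge (the gauge condition forces a Poisson equation for $A_0$, and each localized-and-regauged data set produced by excision/gluing lives in its \emph{own} Coulomb gauge, different from the original one), so the global solutions furnished by Theorem~\ref{thm:KST} do not simply restrict and agree on overlaps, and their $S^1$/$Y^1$ bounds do not transfer to the original solution $(A,\phi)$ without further work. The paper's actual mechanism, which your plan does not supply, has two ingredients: (i) \emph{local geometric uniqueness} at energy regularity (Proposition~\ref{prop:locGeom}), i.e.\ finite speed of propagation holds only \emph{up to a gauge transformation}, which itself must be proved via approximation by classical solutions plus another round of excision/gluing; and (ii) an Uhlenbeck-style patching of the resulting \emph{compatible pairs} (Proposition~\ref{prop:patch}), which requires quantitative control of the transition gauge transformations in the spaces $\CG$, $\wCG$ (Lemmas~\ref{lem:gt4wCG}, \ref{lem:Xcutoff}, \ref{lem:ellipticEst4wCG}, exploiting that a transition map between two Coulomb gauges is harmonic), followed by one final global gauge transformation $\chi=-(-\lap)^{-1}\rd^\ell A'_\ell$ to restore the \emph{exact} global Coulomb condition. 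You correctly flag this as the anticipated technical heart, but flagging it is not an argument; without (i) and (ii) the ``uniform control of the critical norm of $(A,\phi)$ on $[-1,1]\times\bbR^4$'' does not follow from Theorem~\ref{thm:KST}. (Also, the bound obtained in the paper depends on the energy profile, essentially on $E$ and the number of covering cubes $\aleq (r_0/\ecs)^4$, not ``on $E$ alone.'')

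A secondary, smaller issue: your route from the critical bound to the lifespan claim relies on an unproven continuation criterion (``finite $S^1$ norm prevents blow-up of $H^N$ norms'') for the Coulomb-gauge system, and your part (3) relies on a Lipschitz difference estimate for large-energy solutions ``furnished by the small-data theory''; neither is established in this form here, and neither transfers automatically through the gauge transformations used in the localization. The paper avoids both by building persistence of regularity and continuous dependence directly into the construction: the gluing operators (Propositions~\ref{prop:gluing}, \ref{prop:intGluing}) and Theorem~\ref{thm:KST} are continuous and preserve $\calH^N$, the patching is linear/continuous in the data, and uniqueness in the Coulomb class (Proposition~\ref{prop:locGeom} plus Lemma~\ref{lem:coulombUnique}) identifies the constructed solution with the smooth one; this is how statements (1)--(3) of Theorem~\ref{thm:lwp4MKG} are actually obtained.
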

For a more precise version, see Theorem~\ref{thm:lwp4MKG}. We remark
that we do not lose any generality by restricting to initial data sets
in the global Coulomb gauge, as any finite energy initial data sets
can be gauge transformed into this gauge; see Section~\ref{sec:id}.
We formulate our local well-posedness theorem specifically in the
global Coulomb gauge in view of the rest of the series \cite{OT2,
  OT3}, where we show global well-posedness and scattering in this
gauge.

An important feature of Theorem~\ref{thm:lwp4MKG:simple} is that it
provides a lower bound on the lifespan in terms of the \emph{energy
  concentration scale} $\ecs$ of the data. Taking the contrapositive,
we see that any finite time blow up of a solution to \eqref{eq:MKG}
must be accompanied by energy concentration at a point. In
\cite{OT2, OT3}, following the scheme successfully developed by one of
the authors (D.~Tataru) and J.~Sterbenz in the context of energy
critical wave maps \cite{MR2657817, MR2657818}, we establish global
well-posedness of \eqref{eq:MKG} for finite energy data by showing
that such a phenomenon cannot occur. We refer to the last and the main
paper of the sequence \cite{OT3} for an overview of the entire series.

To prove Theorem~\ref{thm:lwp4MKG:simple}, we rely on the following
small energy global well-posedness theorem for the
Maxwell-Klein-Gordon equations in the global Coulomb gauge, which was
established recently by one of the authors (D. Tataru) jointly with J.~Krieger
and J.~Sterbenz.
\begin{theorem} [Small energy global well-posedness in Coulomb
  gauge \cite{Krieger:2012vj}] \label{thm:KST} There exists an $\thE > 0$
  such that the following holds. Let $(a, e, f, g)$ be a smooth
  initial data on $\bbR^{4}$ satisfying the global Coulomb gauge
  condition $\sum_{\ell=1}^{4} \rd_{\ell} a_{\ell} = 0$ and
  \begin{equation*}
    \calE_{\bbR^{4}} [a, e, f, g] \leq \thE^{2}.
  \end{equation*}
  \begin{enumerate}
  \item Then there exists a unique smooth global solution $(A, \phi)$
    to the initial value problem for \eqref{eq:MKG} on $\bbR^{1+4}$
    satisfying
    \begin{equation}
      \nrm{A_{0}}_{Y^{1}(\bbR^{1+4})} + \nrm{A_{x}}_{S^{1}(\bbR^{1+4})} + \nrm{\phi}_{S^{1}(\bbR^{1+4})} \aleq \sqrt{\calE_{\bbR^{4}} [a, e, f, g]},
    \end{equation}
    where $A_{x} = (A_{1}, \ldots, A_{4})$.
  \item For every compact time interval $I \subseteq \bbR$, the
    solution map extends continuously to general finite energy initial
    data after restriction\footnote{Although this continuity statement
      is not explicitly stated in \cite[Theorem 1]{Krieger:2012vj},
      its proof can be read off from
      \cite[Section~5.5]{Krieger:2012vj}. We remark that continuous
      dependence on the data in $\calH^{1}$ does not seem to hold in
      the global space $S^{1}(\bbR^{1+4})$, due to the strong dependence of
      the linear magnetic flow for $\Box_A$  on the low frequency part of
      $A_{x}$.} to $I \times \bbR^{4}$. More precisely, if
    $(a^{(n)}, e^{(n)}, f^{(n)}, g^{(n)})$ is a sequence of finite
    energy initial data sets in global Coulomb gauge whose limit is
    $(a, e, f, g)$ in $\calH^{1}$ (defined in
    Section~\ref{subsec:MKGid:1}), then
    \begin{equation}
      \nrm{A_{0}^{(n)} - A_{0}}_{Y^{1}(I \times \bbR^{4})} 
      + \nrm{A^{(n)}_{x} - A_{x}}_{S^{1}(I \times \bbR^{4})} 
      + \nrm{\phi^{(n)} - \phi}_{S^{1}(I \times \bbR^{4})} \to 0 \quad \hbox{ as } n \to \infty,
    \end{equation}
    where $(A^{(n)}, \phi^{(n)})$ is the global solution to
    \eqref{eq:MKG} with data $(a^{(n)}, e^{(n)}, f^{(n)}, g^{(n)})$.
  \end{enumerate}
\end{theorem}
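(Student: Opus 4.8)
The plan is to realize the Coulomb-gauge system as a semilinear system of wave equations coupled to an elliptic equation for the temporal component, and to solve it by a bootstrap/fixed-point argument in function spaces adapted to the energy-critical scaling. First I would record the equations in the gauge $\rd^{j} A_{j} = 0$: the spatial components solve $\Box A_{i} = \calP_{i}\,\Im(\phi\,\overline{\covD\phi})$, where $\calP$ is the Leray projection onto divergence-free vector fields (the gradient part of the current being reabsorbed via the continuity equation $\rd^{\nu}J_{\nu}=0$); the temporal component is recovered elliptically, with $\Delta A_{0}$ and $\Delta\rd_{0}A_{0}$ equal to local bilinear expressions in $(\phi, \covD\phi)$; and the scalar field satisfies $\Box\phi = -2iA^{j}\rd_{j}\phi + 2iA_{0}\rd_{0}\phi + i(\rd_{0}A_{0})\phi + A^{\mu}A_{\mu}\phi$, the Coulomb condition having been used to replace $\rd^{\mu}A_{\mu}$ by $-\rd_{0}A_{0}$. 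The iteration is run in spaces $S^{1}$ (for $A_{x}$ and $\phi$) and $Y^{1}$ (for $A_{0}$) at the $\dot H^{1}\times L^{2}$ energy level, each built from $X^{s,b}$-type dyadic pieces, square-summed Strichartz norms and null-frame/atomic components of the type developed for energy-critical wave maps, together with a companion inhomogeneity space $N$ for which $\Box\colon S^{1}\to N$ is bounded.

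Second, I would establish the perturbative multilinear estimates: (i) elliptic bounds placing $A_{0}$ in $Y^{1}$ controlled by $\|\phi\|_{S^{1}}^{2}$, together with bounds for its contribution to the $\phi$-equation; (ii) control of the Maxwell current $\Im(\phi\,\overline{\covD\phi})$ in $N$, with the main bilinear part handled through the null structure of the current exposed after the Leray projection; (iii) the estimate for the cubic term $A^{\mu}A_{\mu}\phi$ and the remaining $A_{0}$-terms in the $\phi$-equation; and (iv) the estimate for the bilinear term $A^{j}\rd_{j}\phi$, which carries a null structure because the Coulomb condition forces $\widehat{A}(\xi)\perp\xi$, so that its symbol is small at parallel spatial frequencies. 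Apart from one piece of (iv), these all follow from standard null-form, $X^{s,b}$ and bilinear Strichartz technology in $4{+}1$ dimensions.

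The main obstacle — the heart of the argument, and the bulk of \cite{Krieger:2012vj} — is that one interaction in (iv) is \emph{not} perturbative: the paradifferential high--low term $A^{j}_{<k}\,\rd_{j}\phi_{k}$, in which the connection lies at much lower frequency than the field, where the null gain is absent. I would isolate this term and renormalize it by conjugating the field by a microlocal abelian gauge phase, $\phi\rightsquigarrow e^{-i\Psi_{<k}}\phi$, with $\Psi_{<k}$ constructed from the low-frequency spatial connection, so that the paradifferential magnetic wave operator $\Box + 2iA^{j}_{<k}\rd_{j}$ is replaced by a renormalized operator admitting a parametrix with essentially the same energy, Strichartz and bilinear null-frame mapping properties as the flat $\Box$. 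Constructing this parametrix and proving these properties, using the favorable bounds available for $F = \ud A$ to control $\Psi_{<k}$ in phase space — this is the analogue of the Rodnianski--Tao and Krieger renormalization in dimension four — is where essentially all the work lies; one must additionally show that the low-frequency error generated by the conjugation is itself controllable, which is delicate precisely because $A_{x}$ sits only at energy regularity.

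Finally, with all the estimates in hand, a standard bootstrap on a small ball of $S^{1}\times S^{1}\times Y^{1}$ yields existence, uniqueness and the stated bound for smooth small-energy data in the Coulomb gauge, and persistence of higher regularity follows by differentiating the equations and reusing the same estimates. For part (2), I would prove a difference estimate: for two small-energy solutions the difference is bounded in $S^{1}(I\times\bbR^{4})$ by the $\calH^{1}$-distance of the data, with a constant depending on the compact interval $I$ — once more the only non-perturbative term is $A^{j}_{<k}\rd_{j}(\delta\phi)_{k}$, handled by the same renormalization, where the dependence of the phase $\Psi_{<k}[A_{x}]$ on the low-frequency part of $A_{x}$ is controllable on a bounded time interval but not globally. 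Combined with density of smooth data in $\calH^{1}$, this gives the continuous extension of the solution map on each compact $I$; and the restriction to $I\times\bbR^{4}$, as well as the use of $\calH^{1}$ rather than the global $S^{1}(\bbR^{1+4})$, is forced precisely by this low-frequency sensitivity of the magnetic flow, as noted in the footnote.
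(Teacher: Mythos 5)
This theorem is not proved in the paper you were given: it is imported wholesale from Krieger--Sterbenz--Tataru \cite{Krieger:2012vj}, with the continuity statement in part (2) attributed, via the footnote, to Section~5.5 of that work. So the only ``proof'' in the paper is the citation, and the relevant comparison is with the cited work. Judged against that, your sketch reproduces the correct architecture of the KST argument: the Coulomb-gauge reduction with the Leray projection acting on the current and the elliptic recovery of $A_0$ (and $\rd_t A_0$), the null structures in the projected current and in $A^j \rd_j \phi$ coming from $\widehat{A}(\xi)\perp\xi$, the identification of the paradifferential high--low interaction $A^j_{<k}\rd_j\phi_k$ as the single non-perturbative term, its renormalization by conjugation with a phase built from the low-frequency connection together with a parametrix for the paradifferential magnetic wave operator enjoying the same Strichartz, $X^{s,b}$ and null-frame bounds as $\Box$, and finally a bootstrap plus a difference estimate on compact time intervals, which is also the correct explanation of why continuous dependence is only asserted after restriction to $I\times\bbR^4$ (low-frequency sensitivity of the magnetic flow). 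Your expansion of $\Box_A\phi$ in the Coulomb gauge is also correct.

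The caveat is that, as a proof attempt, this is a roadmap rather than a proof: every load-bearing ingredient --- the precise construction of $S^1$, $Y^1$ and the companion inhomogeneity space $N$ with their product and duality estimates, the multilinear null-form and bilinear bounds you list as (i)--(iv), and above all the construction of the renormalization phase and the verification that the parametrix satisfies the full family of $S^1$-compatible mapping properties with errors controllable at energy regularity (including the delicate low-frequency errors you flag) --- is asserted rather than carried out, and these items constitute essentially the entirety of \cite{Krieger:2012vj}. Within the present paper the appropriate treatment is exactly what the authors do: take Theorem~\ref{thm:KST} as a black box and build the large-data local theory on top of it.
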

More detailed descriptions of the function spaces $S^{1}$ and $Y^{1}$
will be given in Sections~\ref{sec:lwp} and \ref{sec:gtCutoff}. In
particular, $S^{1}$ is a delicate function space consisting of a
number of pieces, including the energy norm, a frequency localized
Strichartz norm, an $\dot{X}^{s, b}$-type norm and a null frame norm
as in the energy critical wave maps problem \cite{MR1827277,
  Tao:2001gb}. The precise version of the main local well-posedness
theorem (Theorem~\ref{thm:lwp4MKG}) also involves these spaces. At
this point we simply remark that for any interval $I \times \bbR^{4}$,
we have
\begin{equation*}
  \nrm{(\varphi, \rd_{t} \varphi)}_{C_{t}(I; \dot{H}^{1}_{x} \times L^{2}_{x})} \aleq \nrm{\varphi}_{S^{1}(I \times \bbR^{4})}, \quad
  \nrm{(\varphi, \rd_{t} \varphi)}_{C_{t}(I; \dot{H}^{1}_{x} \times L^{2}_{x})} \aleq \nrm{\varphi}_{Y^{1}(I \times \bbR^{4})}.
\end{equation*}

For a simpler energy critical semilinear wave equation, such as $\Box
u = \pm u^{\frac{d+2}{d-2}}$ on $\bbR^{1+d}$, a statement analogous to
Theorem~\ref{thm:lwp4MKG:simple} is an immediate consequence of the
small energy global well-posedness theorem (Theorem~\ref{thm:KST} in
our context) and the finite speed of propagation of the
system. Roughly speaking, the proof of local well-posedness (in
particular, local existence) proceeds in the following three steps
(see, for instance \cite[Section 5.1]{MR2233925}):
\begin{itemize}
\item [{\it Step 1.}] Truncation of the initial data set locally in
  space so that the energy becomes small;
\item [{\it Step 2.}] Application of small energy global
  well-posedness to produce the corresponding set of global solutions;
  and
\item [{\it Step 3.}] Patching together the resulting solutions via
  finite speed of propagation\footnote{More precisely, in Step $3$, by
    finite speed of propagation, note that the global solutions in
    Step $2$ restricted to the domain of dependence of the truncated
    regions in Step $1$ give rise to a family of local-in-space-time
    solutions, which agree with each other on the intersection of the
    domains.  }.
\end{itemize}

However, implementation of this strategy in our context is not as
straightforward due to \emph{non-local} features of the
Maxwell-Klein-Gordon system in the global Coulomb gauge. One source of
non-locality is the Gauss equation for initial data sets, which
forbids us from naively truncating initial data to reduce to the small
energy case. Another source is the global Coulomb gauge condition,
which imposes a Poisson (hence non-local) equation for the component
$A_{0}$ of the connection 1-form. In particular, finite speed of
propagation \emph{fails} in the global Coulomb gauge.

In this paper we develop techniques for overcoming such issues
concerning non-locality of the Maxwell-Klein-Gordon equations, and
employ them to prove Theorem~\ref{thm:lwp4MKG:simple} from
Theorem~\ref{thm:KST} by essentially carrying out Steps~$1$--$3$
above.  These techniques (in addition to
Theorem~\ref{thm:lwp4MKG:simple} itself) are also crucially used in
the last paper of the sequence \cite{OT3}, where we carry out a
blow-up analysis of \eqref{eq:MKG} to preclude concentration of energy
and non-scattering.

To deal with the non-locality of the Gauss equation, we introduce the
method of \emph{initial data excision and gluing} at critical
regularity for \eqref{eq:MKG}; see Propositions~\ref{prop:gluing} and
\ref{prop:intGluing} for the precise formulation. Instead of naively
truncating an initial data set $(a, e, f, g)$, which would violate the
Gauss equation, the idea is to \emph{excise} the unwanted part and
then \emph{glue} another solution to the Gauss equation with the
appropriate behavior. Similar techniques have been developed for the
initial data sets of the Einstein equations in general relativity
\cite{Corvino:2000du, Corvino:2006wf, MR2971203, MR2031583}. In our
context, we need to develop a sharp version that works at the critical
regularity. Our key tool is an explicit solution operator to the
divergence equation \cite{MR553920, MR631691, Isett:2014vk} which
preserves the compact support property; see
Proposition~\ref{prop:sol4div}.

The initial data excision and gluing technique allows us to carry out
an analogue of Step~$1$. Then applying suitable gauge transformations
to the resulting initial data sets to impose the global Coulomb gauge
condition, we are in position to use Theorem~\ref{thm:KST} to produce
the corresponding global solutions. This procedure is analogous to
Step~$2$. However, we face difficulty in patching these solutions in
the global Coulomb gauge (which corresponds to Step $3$), since finite
speed of propagation does not hold in this gauge.

We use two ideas for addressing this issue. The first is the
observation that even though finite speed of propagation may fail in a
particular gauge (e.g., the global Coulomb gauge), it remains true up
to a gauge transformation. We refer to this fact as the \emph{local
  geometric uniqueness} of \eqref{eq:MKG}; see
Proposition~\ref{prop:locGeom}. Hence we obtain from the global
solutions produced in Step $2$ a family of local-in-space-time
solutions $(A_{[\alp]}, \phi_{[\alp]})$ to \eqref{eq:MKG}, which agree
with each other on the intersection of the domains up to gauge
transformations. We call such solutions \emph{compatible pairs} (see
Definition~\ref{def:compatiblePairs}). Geometrically, these are
nothing but a description of a global pair of a connection 1-form 
and a section of $L$ in local trivializations.

The second idea is to patch these local descriptions together to form
a single solution in the global Coulomb gauge. We begin by adapting an
argument of Uhlenbeck \cite[Section 3]{Uhlenbeck:1982vna} to produce a
single global-in-space solution in the desired function spaces
$S^{1}$, $Y^{1}$; see Proposition~\ref{prop:patch}. For this purpose,
we develop a functional space framework for performing gauge
transforms between local-in-spacetime solutions in $S^{1}$ and
$Y^{1}$; see Section~\ref{subsec:ftnspace4patching} and
Section~\ref{sec:gtCutoff}. A key point in this argument is that a
gauge transformation $\chi$ between two Coulomb gauges obeys the
Laplace equation $\lap \chi = 0$, and hence enjoys improved
regularity. The solution resulting from this patching argument does
not necessarily satisfy the exact global Coulomb
condition. Nevertheless this solution is \emph{approximately Coulomb},
since it arose by patching together Coulomb solutions. Hence there
exists a nicely behaved gauge transformation into the global Coulomb
gauge, which completes the analogue of Step $3$ and hence the sketch
of our proof of Theorem~\ref{thm:lwp4MKG:simple}.

\begin{remark} 
  The main result and the techniques developed in this paper are
  perturbative in nature, and hence can be easily generalized to
  higher dimensions, i.e., $\bbR^{1+d}$ for any $d \geq 4$. In what
  follows we focus on the most interesting case $\bbR^{1+4}$ for
  concreteness.
\end{remark}
%
%
%

\subsection{Other works on the Maxwell-Klein-Gordon equations}
Here we give a brief review of the literature on the
Maxwell-Klein-Gordon problem.  
In dimensions $2+1$ and $3+1$ the Maxwell-Klein-Gordon system is \emph{energy subcritical}, so global
regularity follows from local well-posedness at the energy regularity; see Klainerman-Machedon
\cite{Klainerman:1994jb} and Selberg-Tesfahun \cite{Selberg:2010ig}. 
We also mention the works of Moncrief \cite{MR579231} and Eardley-Moncrief
\cite{MR649158, MR649159}, where global regularity of sufficiently smooth solutions in $\bbR^{1+2}$ and $\bbR^{1+3}$ was established by a
different argument; the latter two also handled the more general Yang-Mills-Higgs system on $\bbR^{1+3}$. 
The problem of low regularity well-posedness in $\bbR^{1+3}$ was further studied by Cuccagna \cite{Cu}
and then more recently by Machedon-Sterbenz \cite{Machedon:2004cu}, who reached the essentially optimal regularity $A(0), \phi(0) \in H^{\frac{1}{2}+}$.
In \cite{KRT}, global well-posedness was established below the energy norm, more precisely for $A(0), \phi(0) \in H^{\frac{\sqrt{3}}{2}+}$

In dimension $4+1$, Klainerman-Tataru \cite{Klainerman:1999do} established an essentially optimal local well-posedness result for a model equation
closely related to Maxwell-Klein-Gordon and Yang-Mills. This result was further refined by Selberg \cite{MR1916561}, who considered the full Maxwell-Klein-Gordon system on $\bbR^{1+4}$, and Sterbenz \cite{Ste}.

For the critical regularity problem, Rodnianski-Tao \cite{MR2100060}
made an initial breakthrough and proved global regularity for small
scaling critical Sobolev data in dimensions $6+1$ and higher. This
result was greatly improved in the aforementioned work of
Krieger-Sterbenz-Tataru \cite{Krieger:2012vj} to include the energy
critical dimension $(4+1)$, which provides the starting point of the
present paper.

Finally, we note that an independent proof of global well-posedness
and scattering of \eqref{eq:MKG} has recently been announced by
Krieger-L\"uhrmann, following a version of the Bahouri-G\'erard nonlinear profile decomposition \cite{MR1705001} 
and the Kenig-Merle concentration compactness/rigidity scheme \cite{MR2257393, MR2461508}, 
developed by Krieger-Schlag \cite{Krieger:2009uy} for the energy critical wave maps problem.

\subsection{The structure of the paper}
After some preliminaries in Section~\ref{sec:prelim}, we begin with a
systematic study of finite energy initial data sets for \eqref{eq:MKG}
in Section~\ref{sec:id}. We show, in particular, that every such
initial data set can be gauge transformed to the global Coulomb gauge
(Lemma~\ref{lem:gt2CoulombId}), and also that it can be approximated
by smooth data (Lemma~\ref{lem:idApprox}).  In
Section~\ref{sec:gluing}, we develop the theory of excision and gluing
of Maxwell-Klein-Gordon initial data sets at the energy regularity
(Propositions~\ref{prop:gluing}, \ref{prop:intGluing}).  In
Section~\ref{sec:locGeom}, we formulate a notion of solutions to
\eqref{eq:MKG} arising from general finite energy initial data
(admissible $C_{t} \calH^{1}$ solutions) and prove local geometric
uniqueness of \eqref{eq:MKG} in this class
(Proposition~\ref{prop:locGeom}).  In Section~\ref{sec:lwp}, we give a
precise statement of the main local well-posedness theorem
(Theorem~\ref{thm:lwp4MKG}) and prove it up to some estimates
concerning the functions spaces $S^{1}, Y^{1}$.  Finally, in
Section~\ref{sec:gtCutoff} we delve further into the structure of the
spaces $S^{1}, Y^{1}$ and establish the function space estimates used
in Section~\ref{sec:lwp}, thereby completing the proof of
Theorem~\ref{thm:lwp4MKG}.

\subsection*{Acknowledgements}
The authors thank Phil Isett for helpful discussions regarding the
divergence equation, and in particular for communicating the elegant
construction in Proposition~\ref{prop:sol4div}. Part of this work was carried out during the trimester program `Harmonic Analysis and PDEs' at the Hausdorff Institute of Mathematics in Bonn. S.-J. Oh is a Miller Research Fellow, and thanks the Miller Institute for support. D. Tataru was partially supported by the NSF grant
DMS-1266182 as well as by the Simons Investigator grant from the
Simons Foundation.

\section{Preliminaries} \label{sec:prelim}
\subsection{Notation and conventions}

We write $A \aleq B$ when there exists a constant $C > 0$ such that $A
\leq C B$. The dependence of the constant is specified by a subscript,
e.g., $A \aleq_{r} B$ means that there exists $C = C(r) > 0$ such that
$A \leq C B$. We write $A \aeq B$ when both $A \aleq B$ and $B \aleq
A$ hold.

We employ the index notation in this paper. Unless otherwise
specified, we always use the rectilinear coordinates $(t = x^{0},
x^{1}, \ldots, x^{4})$. The greek indices (e.g., $\mu, \nu, \ldots$)
run over $0, 1, \ldots, 4$, whereas the roman indices only run over
$1, \ldots, 4$. As already mentioned in the introduction, we raise and
lower indices using the Minkowski metric $\met_{\mu \nu}$, and use the
convention of summing up repeated upper and lower indices.

We denote the open ball in $\bbR^{4}$ of radius $r$ and center $x$ by
$B_{r}(x)$.  Given a cube $R \subseteq \bbR^{4}$, we refer to its side length by
$\ell(R)$. For a convex subset $K$ of $\bbR^{4}$ (or $\bbR^{1+4}$)
and $c \in (0, \infty)$, we define $c K$ to be the dilation of $K$ by
$c$ about the center of mass of $K$. For example, if $B_{r}(x)$ is an
open ball in $\bbR^{4}$, then $c B_{r}(x)$ is the open ball with the
same center and the radius $c$ times that of $B$, i.e., $c B_{r}(x) =
B_{cr}(x)$.

\subsection{Dyadic frequency projections}
Let $m_{\leq 0}(r)$ be a smooth cutoff which equals 1 on $\set{r \leq
  1}$ and vanishes outside $\set{r \geq 2}$. For every $k \in \bbZ$,
define $m_{\leq k}(r) := m_{\leq 0}(r / 2^{k})$ and $m_{k}(r) :=
m_{\leq k}(r) - m_{\leq k-1}(r)$. Then $m_{k}$ is supported in the set
$\set{2^{k-1} \leq r \leq 2^{k+1}}$ and forms a partition of unity,
i.e.,
\begin{equation*}
	\sum_{k} m_{k} (r) 
	= 1.
\end{equation*}

The following dyadic frequency (or \emph{Littlewood-Paley})
projections are used in this paper:
\begin{align*}
	P_{k} \varphi =& \calF^{-1} [m_{k}(\abs{\xi}) \calF[\varphi]],	&\quad
	Q_{j} \varphi =& \calF^{-1} [m_{j}(\abs{\abs{\tau} - \abs{\xi}}) \calF[\varphi]], \\
	S_{\ell} \varphi =& \calF^{-1} [m_{\ell}(\abs{(\tau, \xi)}) \calF[\varphi]],	&\quad
	T_{j} \varphi =& \calF^{-1} [ m_{j}(\abs{\tau}) \calF[\varphi]] .
\end{align*}
We also use the notation $P_{\leq k} := \sum_{k' \leq k} P_{k}$, 
$P_{(k_{1}, k_{2}]} := \sum_{k' \in (k_{1}, k_{2}]} P_{k'}$ etc.

\subsection{Standard functions spaces on $\bbR^{d}$ and domains}
Unless otherwise specified, we define function spaces on a subset $\calO \subseteq \bbR^{d}$ by restricting the $\bbR^{d}$ version, i.e.,
\begin{equation*}
	\nrm{\varphi}_{X(\calO)} := \inf_{\psi = \varphi \hbox{ on } \calO} \nrm{\psi}_{X(\bbR^{d})}.
\end{equation*}

The homogeneous Sobolev and Besov semi-norms $\nrm{\cdot}_{\dot{W}^{s, p}(\bbR^{d})}$, $\nrm{\cdot}_{\dot{B}^{s, p}_{r}(\bbR^{d})}$ on $\bbR^{d}$ are characterized using the Littlewood-Paley projections as follows:
\begin{equation*}
	\nrm{\varphi}_{\dot{W}^{s, p}(\bbR^{d})} \aeq \nrm{(\sum_{k} 2^{2sk} \abs{P_{k} \varphi}^{2})^{\frac{1}{2}}}_{L^{p}(\bbR^{d})}, \quad
	\nrm{\varphi}_{\dot{B}^{s, p}_{r}(\bbR^{d})} \aeq \bb( \sum_{k} 2^{rsk} \nrm{P_{k} \varphi}_{L^{p}(\bbR^{d})}^{r} \bb)^{\frac{1}{r}}. \quad
\end{equation*}
We define the corresponding spaces $\dot{W}^{s, p}(\bbR^{d})$,
$\dot{B}^{s, p}_{r} (\bbR^{d})$ to consist of tempered distributions
that are regular at zero frequency (i.e., $\nrm{P_{\leq k}
  \varphi}_{L^{\infty}(\bbR^{d})} \to 0 \hbox{ as } k \to -\infty$)
and have finite corresponding semi-norms. We use the standard notation
$\dot{W}^{s, 2} = \dot{H}^{s}$.  When $s < \frac{d}{p}$ or $s =
\frac{d}{p}$ with $r = 1$ (in the Besov case) the above semi-norms are
in fact norms when restricted to the space $\calS(\bbR^{d})$ of
Schwartz functions on $\bbR^{d}$, and the corresponding spaces are
obtained as the completion of $\calS(\bbR^{d})$ with respect to these
norms.

\section{Finite energy initial data for
  Maxwell-Klein-Gordon} \label{sec:id} In this section we
systematically develop the basic theory of finite energy initial data
sets for \eqref{eq:MKG}. In Section~\ref{subsec:MKGid:1} we define
the spaces of finite energy and classical initial data sets for
\eqref{eq:MKG}, and also the corresponding spaces of gauge
transformations. In Section~\ref{subsec:MKGid:2}, we prove a few elementary
facts about finite energy initial data sets, such as approximation by
classical initial data and gauge transformation to a globally Coulomb
initial data. We also show that any globally Coulomb finite energy
initial data set can be approximated by classical initial data sets in
the global Coulomb gauge.

\subsection{Finite energy initial data sets and gauge
  transformations} \label{subsec:MKGid:1} Let $O \subseteq \bbR^{4}$
be a non-empty open set. Given 1-forms $a, e$ and $\bbC$-valued
functions $f, g$ on $O$, we say that the quadruple $(a, e, f, g)$ is a
\emph{\eqref{eq:MKG} initial data set} if the following \emph{Gauss}
(or the \emph{constraint}) \emph{equation} holds:
\begin{equation} \label{eq:id:gauss} \rd^{\ell} e_{\ell} = \Im [f
  \overline{g}].
\end{equation}
We define the space $\calH^{1}(O)$, which consists of \eqref{eq:MKG}
initial data sets $(a,e,f,g)$ for which the following norm is finite:
\begin{equation*}
  \nrm{(a, e, f, g)}_{\calH^{1}(O)} 
  := \sup_{j=1, \ldots, 4} \nrm{(a_{j}, e_{j})}_{(\dot{H}^{1}_{x} \cap L^{4}_{x}) \times L^{2}_{x}(O)} 
+ \nrm{(f, g)}_{(\dot{H}^{1}_{x} \cap L^{4}_{x}) \times L^{2}_{x}(O)}.
\end{equation*}
A Coulomb (gauge) initial data set is a data  set  $(a,e,f,g)$  which in addition satisfies
the divergence condition 
\begin{equation*}
\nabla \cdot a = \rd^{\ell} a_{\ell} = 0.
\end{equation*}

Given an $\calH^{1}(O)$ initial data set $(a,e,f,g)$, we define its
\emph{energy} on $O' \subseteq O$ by
\begin{equation} \label{eq:id:energy} \calE_{O'}[a,e,f,g] :=
  \frac{1}{2} \int_{O'} \sum_{1 \leq j < k \leq 1}\abs{(\ud
    a)_{jk}}^{2} + \sum_{1 \leq j \leq 4}\abs{e_{j}}^{2} + \sum_{1
    \leq j \leq 4} \abs{\covD_{j} f}^{2} + \abs{g}^{2} \, \ud x,
\end{equation}
where $(\ud a)_{jk} = \rd_{j} a_{k} - \rd_{k} a_{j}$ and $\covD_{j} f
= \rd_{j} f + i a_{j} f$. The space $\calH^{1}(O)$ is a natural domain
on which the energy functional is always finite, and for this reason
$\calH^{1}(O)$ will also be referred to as the space of \emph{finite
  energy} initial data.  In general the energy does not control the
$\calH^{1}$ norm, and for this reason we view the $\calH^{1}$ bounds
as qualitative, whereas the energy related bounds are
quantitative. However, in the case of global Coulomb data sets the
situation improves and we can estimate the $\calH^{1}$ norm in terms
of the energy, see Lemma~\ref{lem:gt2CoulombId}.

We also remark that the energy $\calE_{O'}[a,e,f,g]$ is invariant
under gauge transformations, which will be rigorously defined below.

For $N \geq 1$, we define the higher regularity space $\calH^{N}(O)$
in a similar fashion with the norm
\begin{equation*}
  \nrm{(a, e, f, g)}_{\calH^{N}(O)} 
  := 	\sum_{n=1}^{N} \nrm{(\rd_{x}^{(n-1)} a, \rd_{x}^{(n-1)} e, \rd_{x}^{(n-1)} f, \rd_{x}^{(n-1)} g)}_{\calH^{1}(O)} \, .
\end{equation*}
To define the space $\calH^{\infty}(O)$ of \emph{classical initial
  data sets}, we first define the space $\calH^{0}(O)$ to consist of
\eqref{eq:MKG} initial data sets with finite $\calH^{0}(O)$ semi-norm,
which is given by
\begin{equation*}
  \nrm{(a, e, f, g)}_{\calH^{0}(O)} 
  := 	\nrm{a}_{L^{2}_{x}} + \nrm{f}_{L^{2}_{x}}
\end{equation*}
Then we take $\calH^{\infty}(O) := \cap_{N = 0}^{\infty} \calH^{N}(O)$
and topologize it using $\set{\nrm{\cdot}_{\calH^{N}(O)}}_{N \geq
  0}$. We remark that $\calH^{\infty}(O)$ initial data have not only
better regularity (it is in fact smooth), but also better
integrability than $\calH^{N}(O)$.

Next, we define spaces of gauge transformations between initial data
sets. A gauge transformation $\chi$, which is simply a $\bbR$-valued
function on $O$, acts on an initial data set $(a,e,f,g)$ as follows:
\begin{equation*}
  \Gmm_{\chi} [a,e,f,g] := [a - \ud \chi, e, e^{i \chi} f, e^{i \chi} g].
\end{equation*}
We define the space $\calG^{2}(O)$ to consist of locally integrable
gauge transformations such that the following semi-norm is finite:
\begin{equation*}
  \nrm{\chi}_{\calG^{2}(O)} := \nrm{\rd_{x} \chi}_{L^{4}_{x}(O)} + \nrm{\rd_{x}^{2} \chi}_{L^{2}_{x}(O)}.
\end{equation*}
Given an integer $N \geq 1$, we define the $\calG^{N+1}(O)$ semi-norm
as
\begin{equation*}
  \nrm{\chi}_{\calG^{N+1}(O)} := \sum_{n=1}^{N} \bb( \nrm{\rd_{x}^{(n)} \chi}_{L^{4}_{x}(O)} + \nrm{\rd_{x}^{(n+1)} \chi}_{L^{2}_{x}(O)} \bb),
\end{equation*}
and the space $\calG^{N+1}(O)$ to consist of locally integrable gauge
transformations with finite $\calG^{N+1}(O)$ semi-norm.  Observe that
$\nrm{\chi}_{\calG^{N+1}(O)} = 0$ if and only if $\chi$ is a
constant. Accordingly, $\calG^{N+1}(O)$ becomes a Banach space once we
mod out by constants, but we shall \emph{not} do so in this paper.
Finally, we also define
\begin{align*}
  \calG^{\infty}(O) := & \bigcap_{n=1}^{N} \dot{H}^{n}_{x} \cap
  \dot{W}^{n-1, 4}_{x}(O).
\end{align*}

The space of gauge transformations between initial data sets in the
class $\calH^{N}(O)$ is precisely $\calG^{N+1}(O)$. Indeed, given
$\chi \in \calG^{N+1}(O)$, it follows from the chain rule and the fact
that $\sgm \mapsto e^{i \sgm}$ is a bounded smooth function that $e^{i
  \chi} \in \calG^{N+1}(O)$ and
\begin{equation*}
  \nrm{e^{i \chi}}_{\calG^{N+1}(O)} \aleq \nrm{\chi}_{\calG^{N+1}(O)} (1 + \nrm{\chi}_{\calG^{N+1}(O)}^{N}).
\end{equation*}
From this fact, we see that if $(a,e,f,g) \in \calH^{N}(O)$ and $\chi
\in \calG^{N+1}(O)$, then $\Gmm_{\chi}(a,e,f,g) \in
\calH^{N}(O)$. Conversely, if $\chi$ is a locally integrable gauge
transformation on $O$ such that we have $(a', e', f', g') =
\Gmm_{\chi} (a, e, f, g)$ for some $(a, e, f, g), (a', e', f' ,g') \in
\calH^{N}(O)$, then it easily follows that $\chi \in \calG^{N+1}(O)$
from the relation $\ud \chi = a - a'$.

The map $\Gmm_{\chi}[a,e,f,g]$ furthermore enjoys a nice continuity
property. We state a version of this property for the case $N = 1$,
i.e., $(a,e,f,g) \in \calH^{1}(O)$ and $\chi \in \calG^{2}(O)$.

\begin{lemma} \label{lem:continuity4gt} Let $O$ be an open connected subset of
  $\bbR^{4}$.   Let $(a^{(n)}, e^{(n)}, f^{(n)}, g^{(n)})$
  [resp. $\chi^{(n)}]$ be a sequence of $\calH^{1}(O)$ initial data
  sets [resp. $\calG^{2}(O)$ gauge transformations] such that
  \begin{equation*}
    \nrm{(a - a^{(n)}, e - e^{(n)}, f - f^{(n)}, g - g^{(n)})}_{\calH^{1}(O)} \to 0, \quad
    \nrm{\chi - \chi^{(n)}}_{\calG^{2}(O)} \to 0,
  \end{equation*}
  for some $(a,e,f,g) \in \calH^{1}(O)$ and $\chi \in \calG^{2}(O)$ as
  $n \to \infty$. Then there exists a sequence $\chi^{(n)}_{0} \in
  \bbR$ of constant gauge transformations such that
  \begin{equation} \label{eq:continuity4gt} \nrm{\Gmm_{\chi} [a,e,f,g]
      - \Gmm_{\chi^{(n)} +
        \chi^{(n)}_{0}}[a^{(n)},e^{(n)},f^{(n)},g^{(n)}]}_{\calH^{1}(O)}
    \to 0 \hbox{ as } n \to \infty.
  \end{equation}
\end{lemma}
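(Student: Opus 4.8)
The plan is to write out the difference $\Gmm_{\chi}[a,e,f,g] - \Gmm_{\chi^{(n)}+\chi^{(n)}_{0}}[a^{(n)},e^{(n)},f^{(n)},g^{(n)}]$ component by component and estimate each of the four slots in $\calH^{1}(O)$. The $e$-component is trivial, since $\Gmm_{\chi}$ acts as the identity there, so it reduces to $\nrm{e - e^{(n)}}_{L^{2}_{x}(O)} \to 0$, which is hypothesized. For the $a$-component we have $(a - \ud\chi) - (a^{(n)} - \ud(\chi^{(n)}+\chi^{(n)}_{0})) = (a - a^{(n)}) - \ud(\chi - \chi^{(n)})$, and since the constant $\chi^{(n)}_{0}$ drops out under $\ud$, the $\dot{H}^{1}_{x} \cap L^{4}_{x}(O)$ norm of this is bounded by $\nrm{a - a^{(n)}}_{(\dot{H}^{1}_{x}\cap L^{4}_{x})(O)} + \nrm{\rd_{x}(\chi-\chi^{(n)})}_{(\dot{H}^{1}_{x}\cap L^{4}_{x})(O)}$, and the latter is controlled by $\nrm{\chi - \chi^{(n)}}_{\calG^{2}(O)}$; both go to zero.

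The substantive work is the section components, say $\phi := f$ (and identically $g$), where we must estimate $\nrm{e^{i\chi} f - e^{i(\chi^{(n)}+\chi^{(n)}_{0})} f^{(n)}}_{(\dot{H}^{1}_{x}\cap L^{4}_{x})(O)}$. First I would use the constant freedom to choose $\chi^{(n)}_{0}$: since $\rd_{x}(\chi - \chi^{(n)}) \to 0$ in $L^{4}_{x}(O)$ and $O$ is connected, $\chi - \chi^{(n)}$ converges to a constant modulo lower-order fluctuations; more precisely, by the Sobolev/Poincar\'e-type inequality on $O$ applied after subtracting an appropriate average, one picks $\chi^{(n)}_{0} \in \bbR$ so that $\chi - \chi^{(n)} - \chi^{(n)}_{0} \to 0$ in, say, $\dot{W}^{1,4}_{x}(O) \cap L^{\infty}_{\mathrm{loc}}$, or at least in a space strong enough that $e^{i(\chi - \chi^{(n)} - \chi^{(n)}_{0})} \to 1$ in the relevant multiplier sense. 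Writing the difference as $e^{i\chi}(f - f^{(n)}) + (e^{i\chi} - e^{i(\chi^{(n)}+\chi^{(n)}_{0})}) f^{(n)}$, the first term is handled because multiplication by $e^{i\chi}$ is bounded on $\dot{H}^{1}_{x}\cap L^{4}_{x}(O)$ (with norm controlled by $\nrm{\chi}_{\calG^{2}(O)}$, via the chain rule, H\"older, and Sobolev embedding $\dot{W}^{1,4} \hookrightarrow \dot{H}^{1}\cap L^{4}$ in dimension $4$) times $\nrm{f - f^{(n)}}_{(\dot{H}^{1}_{x}\cap L^{4}_{x})(O)} \to 0$. For the second term, write $e^{i\chi} - e^{i(\chi^{(n)}+\chi^{(n)}_{0})} = e^{i\chi}(1 - e^{-i(\chi - \chi^{(n)} - \chi^{(n)}_{0})})$; expanding the derivative via Leibniz, one meets terms with $\rd_{x}(\chi - \chi^{(n)})$ (going to zero in $L^{4}$ by hypothesis) paired with $f^{(n)}$ in $L^{4}$ (bounded, since $f^{(n)} \to f$ in $\calH^{1}$), and terms with $(1 - e^{-i(\chi-\chi^{(n)}-\chi^{(n)}_{0})})$ paired with $\rd_{x} f^{(n)}$ in $L^{2}$; the latter requires $\nrm{1 - e^{-i(\chi-\chi^{(n)}-\chi^{(n)}_{0})}}_{L^{\infty}(O)} \to 0$, which is exactly why the constant normalization $\chi^{(n)}_{0}$ was needed (it is the point where connectedness of $O$ enters, to kill the constant mode of $\chi - \chi^{(n)}$ that is invisible to $\calG^{2}$).

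The main obstacle is precisely this last $L^{\infty}$ smallness: $\calG^{2}(O)$ does not embed in $L^{\infty}$ (it only controls $\rd_{x}\chi \in L^{4}$ and $\rd_{x}^{2}\chi \in L^{2}$, which on a general open set $O \subseteq \bbR^{4}$ gives BMO-type, not $L^{\infty}$, control of $\chi$ itself), so one cannot conclude $e^{i(\chi-\chi^{(n)})} \to 1$ uniformly from $\calG^{2}$-convergence alone. The resolution is to exploit that a section $f^{(n)}$ with $\nrm{\rd_{x} f^{(n)}}_{L^{2}(O)}$ bounded has, loosely, its $\dot{H}^{1}$ mass at frequencies where multiplication by $\chi - \chi^{(n)} - \chi^{(n)}_{0}$ can be made small, combined with a Littlewood--Paley / paraproduct decomposition: split $(1 - e^{-i(\cdots)}) \rd_{x} f^{(n)}$ according to whether the frequency of the exponential factor or of $\rd_{x} f^{(n)}$ dominates, and in each regime use the $L^{4}$-smallness of $\rd_{x}(\chi - \chi^{(n)})$ together with Sobolev embedding and the boundedness of $f^{(n)}$ in $\dot{H}^{1}\cap L^{4}$. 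Equivalently, and more cleanly, one first establishes that for a \emph{fixed} $f \in \dot{H}^{1}\cap L^{4}(O)$ the map $\chi \mapsto e^{i\chi} f$ is continuous from $\calG^{2}(O)/\bbR$ into $\dot{H}^{1}\cap L^{4}(O)$ (a density argument: approximate $f$ by Schwartz functions, for which $L^{\infty}_{\mathrm{loc}}$ control of $\chi$ suffices, and use the uniform boundedness of the multiplication operator norm), and then combines this with the uniform bound on $\nrm{e^{i\chi}\cdot}$ to pass from fixed $f$ to the varying sequence $f^{(n)}$ by a standard $\eps/3$ argument.
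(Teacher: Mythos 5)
Your proposal follows the same basic strategy as the paper — normalize the constant ambiguity by subtracting an average and using Poincar\'e plus connectedness of $O$, observe that the $a$- and $e$-slots are immediate, and isolate the real issue as the lack of $L^{\infty}$ control of $\chi-\chi^{(n)}-\chi^{(n)}_{0}$ — but the mechanics at the key step differ. The paper (after reducing to $\chi=0$) pairs the phase difference with the \emph{fixed} limit data, e.g. $g-e^{i\chi^{(n)}}g^{(n)}=(1-e^{i\chi^{(n)}})g+e^{i\chi^{(n)}}(g-g^{(n)})$, and likewise for $f$ and $\covD f$; the second term goes to zero trivially since $\abs{e^{i\chi^{(n)}}}=1$, and for the first term Poincar\'e with the zero-average normalization gives $\chi^{(n)}\to 0$ in $L^{4}_{\mathrm{loc}}(O)$, hence a.e.\ along subsequences, so dominated convergence with majorant $2\abs{g}$ (resp. $2\abs{f}$, $2\abs{\covD f}$) finishes — no density argument and no paraproducts are needed. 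You instead pair the phase difference with the varying $f^{(n)}$, which is why you must add the $\eps/3$/density layer (or a paraproduct argument); that layer does work, given the uniform bound $\nrm{e^{i\chi^{(n)}}h}_{\dot H^{1}_{x}\cap L^{4}_{x}(O)}\aleq(1+\nrm{\rd_{x}\chi^{(n)}}_{L^{4}_{x}(O)})\nrm{h}_{\dot H^{1}_{x}\cap L^{4}_{x}(O)}$ you invoke.

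One justification inside that layer is misstated, though, and it is exactly at the delicate point: for Schwartz (or bounded, nice) $f_{0}$ you claim that ``$L^{\infty}_{\mathrm{loc}}$ control of $\chi$ suffices,'' but $\calG^{2}(O)$ gives only $W^{1,4}_{\mathrm{loc}}$ control of $\chi$, which does not embed into $L^{\infty}_{\mathrm{loc}}$ in dimension $4$, and the normalized differences do not converge in $L^{\infty}_{\mathrm{loc}}$ either. What actually closes the fixed-$f_{0}$ step is the pointwise bound $\abs{e^{ia}-e^{ib}}\leq\abs{a-b}$ combined with the $L^{4}_{\mathrm{loc}}$ convergence of $\chi-\chi^{(n)}-\chi^{(n)}_{0}$ that your Poincar\'e normalization already supplies (paired with $f_{0},\rd_{x}f_{0}\in L^{\infty}\cap L^{4}$ and $\rd_{x}\chi\in L^{4}$ via H\"older), or equivalently a.e.\ convergence along subsequences plus dominated convergence as in the paper. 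With that correction your argument is complete; alternatively, switching to the splitting that attaches the phase difference to the fixed $f$ rather than to $f^{(n)}$ removes the need for the density step altogether and reproduces the paper's shorter proof.
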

\begin{proof}
 We shall write
  \begin{equation*}
    (\widetilde{a}, \widetilde{e}, \widetilde{f}, \widetilde{g}) = \Gmm_{\chi}[a,e,f,g], \quad
    (\widetilde{a}^{(n)}, \widetilde{e}^{(n)}, \widetilde{f}^{(n)}, \widetilde{g}^{(n)}) = \Gmm_{\chi^{(n)}}[a^{(n)}, e^{(n)}, f^{(n)}, g^{(n)}].
  \end{equation*} 
  Before we begin the proof, we first make a few reductions.  We first
  remark that the constants $\chi_0^{(n)}$ above are needed because
  they are not seen by the $\calG^2(O)$ norm. We can eliminate them if
  we normalize $\chi^{(n)}$, e.g. by requiring that they have zero
  averages on some ball $B \subset O$:
 \begin{equation} \label{eq:continuity4gt:zeroint4chi} \int_{B}
    \chi^{(n)} \, \ud x= 0
  \end{equation}
  We will make this assumption from here on.

  Observe further that \eqref{eq:continuity4gt} is easy when all
  $\chi^{(n)}$'s are the same. Then applying $-\chi$ to every term in
  the sequence, it suffices to consider the case $\chi = 0$. Finally,
  the convergence of $(\widetilde{a}^{(n)}, \widetilde{e}^{(n)})$ in
  $\dot{H}^{1}_{x} \cap L^{4}_{x}(O) \times L^{2}_{x}(O)$ is obvious,
  so we will focus on $(\widetilde{f}^{(n)}, \widetilde{g}^{(n)})$.

  We claim that
  \begin{equation*}
    \nrm{\widetilde{\covD}_{j} \widetilde{f} - \widetilde{\covD}_{j}^{(n)} 
\widetilde{f}^{(n)} }_{L^{2}_{x}} 
    + \nrm{\widetilde{f} - \widetilde{f}^{(n)} }_{L^{4}_{x}(O)}
    + \nrm{\widetilde{g} - \widetilde{g}^{(n)}}_{L^{2}_{x}(O)} \to 0 \quad \hbox{ as } n \to \infty,
  \end{equation*}
  where $\widetilde{\covD}_{j} = \rd_{j} + i \widetilde{a}_{j}$,
  $\widetilde{\covD}_{j}^{(n)} = \rd_{j} + i
  \widetilde{a}_{j}^{(n)}$. Then the desired conclusion
  \eqref{eq:continuity4gt} would follow, using the claim and the
  $L^{4}_{x}(O)$ convergence of $\widetilde{a}^{(n)} \to
  \widetilde{a}$ to deduce that $\rd_{x} \widetilde{f}^{(n)} \to
  \rd_{x} \widetilde{f}$ in $L^{2}_{x}(O)$.

  We now prove $\widetilde{g}^{(n)} \to \widetilde{g}$ in
  $L^{2}_{x}(O)$; a similar argument works for $\widetilde{f}^{(n)}$
  and $\widetilde{\covD}_{j}^{(n)} \widetilde{f}^{(n)}$ as well. We
  write
  \begin{equation*}
    \nrm{\widetilde{g} - \widetilde{g}^{(n)}}_{L^{2}_{x}(O)}
    = \nrm{g - e^{i \chi^{(n)}} g^{(n)}}_{L^{2}_{x}(O)}
    \leq \nrm{(1-e^{i \chi^{(n)}}) g}_{L^{2}_{x}(O)} + \nrm{e^{i \chi^{(n)}} (g - g^{(n)})}_{L^{2}_{x}(O)}.
  \end{equation*}
  Since $\nrm{e^{i \chi^{(n)}}}_{L^{\infty}_{x}} \leq 1$, it follows
  that the last term vanishes as $n \to \infty$. It remains to prove
  \begin{equation} \label{eq:continuity4gt:key} \nrm{(1-e^{i
        \chi^{(n)}}) g}_{L^{2}_{x}(O)} \to 0.
  \end{equation}
By Lebesgue's dominated convergence theorem, it suffices to show
that each subsequence $n_k$ has a further subsequence $n_{k_j}$ 
so that $\chi^{(n_{k_j})} \to 0$ almost everywhere in $O$. To see this we 
use Poincare's inequality. In view of the normalization \eqref{eq:continuity4gt:zeroint4chi},
this shows that from the convergence $\| \chi^{(n)}\|_{\calG^2(O)} \to 0$ we obtain 
\[
 \chi^{(n)}  \to 0 \qquad \text{ in $L^4_{loc}(O)$.}
\]
Then the a.e. convergence on a subsequence immediately follows.
 \qedhere
\end{proof}

%
%
%
%
%

\subsection{Approximation and gauge transformation
  lemmas} \label{subsec:MKGid:2} In this subsection, we record a few
useful facts concerning $\calH^{1}$ initial data sets on
$\bbR^{4}$. The first result says that any $\calH^{1}(\bbR^{4})$
initial data set can be approximated by classical initial data sets.
\begin{lemma} \label{lem:idApprox} Let $(a, e, f, g)$ be an initial
  data set for \eqref{eq:MKG} in the class $\calH^{1}(\bbR^{4})$. Then
  there exists a sequence $(a^{(n)}, e^{(n)}, f^{(n)}, g^{(n)})$ of
  initial data sets in $\calH^{\infty}(\bbR^{4})$ which approximates
  $(a,e,f,g)$ in $\calH^{1}(\bbR^{4})$.
\end{lemma}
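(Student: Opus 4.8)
The only real difficulty is that the Gauss constraint \eqref{eq:id:gauss} couples $e$ to the matter fields $(f,g)$, so one cannot simply mollify all four components independently. The plan is to perform a Helmholtz-type splitting of $e$ adapted to the constraint and treat the two pieces separately. Since $e\in L^2_x$, we have $\rd^\ell e_\ell=\Im[f\bar g]\in\dot H^{-1}_x(\bbR^4)$, so we may set
\[
  e_{cf}:=\nabla\Delta^{-1}\Im[f\bar g]\in L^2_x(\bbR^4),\qquad e_{df}:=e-e_{cf},
\]
so that $e_{df}$ is divergence-free and $\rd^\ell(e_{cf})_\ell=\Im[f\bar g]$; here $\nabla\Delta^{-1}$, the Fourier multiplier with symbol $-i\xi/\abs{\xi}^2$, is bounded from $\dot H^{-1}_x$ to $L^2_x$.

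First I would approximate the matter fields: choose Schwartz functions $f^{(n)}\to f$ in $\dot H^1_x\cap L^4_x$ (possible since $\calS$ is dense in $\dot H^1_x$ and $\dot H^1_x(\bbR^4)\hookrightarrow L^4_x$) and $g^{(n)}\to g$ in $L^2_x$, and then define the constraint-carrying part of the approximate data by $e^{(n)}_{cf}:=\nabla\Delta^{-1}\Im[f^{(n)}\overline{g^{(n)}}]$. Since $f^{(n)},g^{(n)}$ are Schwartz, $\Im[f^{(n)}\overline{g^{(n)}}]$ is Schwartz, and because $\abs{\xi}^{-1}\in L^p_{loc}(\bbR^4)$ for every $p<4$ one checks that $e^{(n)}_{cf}$ is a smooth field lying in $L^2_x\cap\bigcap_{N\geq1}(\dot H^N_x\cap\dot W^{N-1,4}_x)$. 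For the convergence $e^{(n)}_{cf}\to e_{cf}$ in $L^2_x$ it suffices, using boundedness of $\nabla\Delta^{-1}$ and the (dual Sobolev) embedding $L^{4/3}_x(\bbR^4)\hookrightarrow\dot H^{-1}_x$, to control the bilinear difference
\[
  \nrm{f^{(n)}\overline{g^{(n)}}-f\bar g}_{L^{4/3}_x}\leq\nrm{f^{(n)}-f}_{L^4_x}\nrm{g^{(n)}}_{L^2_x}+\nrm{f}_{L^4_x}\nrm{g^{(n)}-g}_{L^2_x}\To0,
\]
by H\"older with $\tfrac14+\tfrac12=\tfrac34$.

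For the remaining pieces there is no constraint to respect, so I would simply truncate in frequency: set $e^{(n)}_{df}:=P_{(-n,n]}e_{df}$ and $a^{(n)}:=P_{(-n,n]}a$. Since $P_{(-n,n]}$ is a Fourier multiplier localized to a fixed annulus, it sends $L^2_x$ (resp.\ $\dot H^1_x$) fields to smooth fields lying in $L^2_x\cap\bigcap_{N\geq1}(\dot H^N_x\cap\dot W^{N-1,4}_x)$; it commutes with $\rd^\ell$, so $e^{(n)}_{df}$ stays divergence-free; and $e^{(n)}_{df}\to e_{df}$ in $L^2_x$ while $a^{(n)}\to a$ in $\dot H^1_x\cap L^4_x$ (the latter since the low-frequency tail $P_{\leq-n}a\to0$ in $\dot H^1_x$ and the high-frequency truncation converges). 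Setting $e^{(n)}:=e^{(n)}_{df}+e^{(n)}_{cf}$ we get $\rd^\ell e^{(n)}_\ell=\Im[f^{(n)}\overline{g^{(n)}}]$, so $(a^{(n)},e^{(n)},f^{(n)},g^{(n)})$ is a genuine \eqref{eq:MKG} initial data set; each of its components lies in all the spaces appearing in the norms $\nrm{\cdot}_{\calH^N(\bbR^4)}$, together with the $L^2_x$ bounds on $a^{(n)},f^{(n)}$ demanded by $\calH^0$, so it belongs to $\calH^\infty(\bbR^4)$; and the convergences collected above are exactly convergence in $\calH^1(\bbR^4)$.

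The step needing the most care is the construction of $e^{(n)}_{cf}$: one must verify that $\nabla\Delta^{-1}$ of the Schwartz density $\Im[f^{(n)}\overline{g^{(n)}}]$ is genuinely a classical field — the only delicate point being the low-frequency behavior of $\xi/\abs{\xi}^2$, which is integrable enough in dimension four — and that the bilinear quantity $\Im[f^{(n)}\overline{g^{(n)}}]$ converges to $\Im[f\bar g]$ in the scaling-critical negative space $\dot H^{-1}_x$ rather than merely in some $L^p_x$. Both are handled by the Sobolev embedding $\dot H^1_x(\bbR^4)\hookrightarrow L^4_x$ and its dual; everything else is routine density of Schwartz functions and Littlewood–Paley truncation.
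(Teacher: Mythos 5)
Your proof is correct and is essentially the paper's argument in a different packaging: the paper mollifies all four components and then restores the Gauss constraint by adding $(-\lap)^{-1}\rd_{j}$ of the constraint deficit $\rd^{\ell}\widetilde{e}^{(n)}_{\ell}-\Im[f^{(n)}\overline{g^{(n)}}]$, while you split $e$ into curl-free and divergence-free parts and regenerate the curl-free part from the approximated matter fields, but in both cases the whole content is the boundedness of $\nb(-\lap)^{-1}:\dot{H}^{-1}_{x}\to L^{2}_{x}$ together with the convergence $\Im[f^{(n)}\overline{g^{(n)}}]\to\Im[f\overline{g}]$ in $\dot{H}^{-1}_{x}$ obtained from H\"older in $L^{4/3}_{x}$ and the Sobolev embedding $\dot{H}^{1}_{x}(\bbR^{4})\subseteq L^{4}_{x}$. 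I see no gaps.
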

\begin{proof}
  Take any $C^{\infty}_{0}(\bbR^{4})$ sequence $(\widetilde{a}^{(n)},
  \widetilde{e}^{(n)}, \widetilde{f}^{(n)}, \widetilde{g}^{(n)})$
  which converges to $(a,e,f,g)$ in the $\calH^{1}(\bbR^{4})$ norm, and take
  \begin{equation*}
    a^{(n)} = \widetilde{a}^{(n)}, \quad
    f^{(n)} = \widetilde{f}^{(n)}, \quad
    g^{(n)} = \widetilde{g}^{(n)}.
  \end{equation*}
  To satisfy the Gauss equation, we take
  \begin{equation*}
    e^{(n)}_{j} = \widetilde{e}^{(n)}_{j} + (-\lap)^{-1} \rd_{j} (\rd^{\ell} \widetilde{e}^{(n)}_{\ell} - \Im[f^{(n)} \overline{g^{(n)}}]).
  \end{equation*}
  It can be readily verified that $e^{(n)} \in
  H^{\infty}_{x}(\bbR^{4})$. Moreover, since $\rd^{\ell}
  \widetilde{e}^{(n)}_{\ell} - \Im[f^{(n)} \overline{g^{(n)}}] \to 0$
  in $\dot{H}^{-1}_{x}(\bbR^{4})$, it follows that $e^{(n)}_{j} \to
  e_{j}$ in $L^{2}_{x}(\bbR^{4})$, as desired. \qedhere
\end{proof}

The second result shows that any $\calH^{1}(\bbR^{4})$ initial data
set can be gauge transformed to a globally Coulomb initial data set.
\begin{lemma} \label{lem:gt2CoulombId} Let $(\widetilde{a},
  \widetilde{e}, \widetilde{f}, \widetilde{g})$ be an initial data set
  for \eqref{eq:MKG} in the class $\calH^{1}(\bbR^{4})$. Then there
  exists a gauge transform $\chi \in \calG^{2}(\bbR^{4})$,  unique up constants, such that
  \begin{equation*}
    (a, e, f, g) = (\widetilde{a} - \ud \chi, \widetilde{e} ,e^{i \chi} \widetilde{f}, e^{i \chi} \widetilde{g})
  \end{equation*}
  satisfies the global Coulomb gauge condition $\rd^{\ell} a_{\ell} =
  0$ [resp. $\rd^{\ell} a'_{\ell} = 0$] on $\bbR^{4}$. Moreover, we have 
  the estimate
  \begin{equation}
    \nrm{\chi}_{\calG^{2}(\bbR^{4})} \aleq  \nrm{\widetilde{a}}_{\dot{H}^{1}_{x}(\bbR^{4})} .  \label{eq:gt2CoulombId} 
\end{equation}
\end{lemma}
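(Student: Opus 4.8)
The plan is to reduce the construction of $\chi$ to solving a single Poisson equation, and then verify the regularity estimate \eqref{eq:gt2CoulombId} using the explicit formula.

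\textbf{Step 1: Derivation of the equation for $\chi$.} Imposing $\rd^\ell a_\ell = 0$ on $a = \widetilde{a} - \ud \chi$ yields the equation $\lap \chi = \rd^\ell \widetilde{a}_\ell$ on $\bbR^4$, with right-hand side in $\dot{H}^{-1}_x(\bbR^4)$ (since $\widetilde{a} \in \dot{H}^1_x$). Hence the natural candidate is $\chi := (-\lap)^{-1}(-\rd^\ell \widetilde{a}_\ell) = \lap^{-1} \rd^\ell \widetilde{a}_\ell$, which makes sense as a tempered distribution modulo constants. Note that since only $\ud \chi = \rd_x \chi$ enters the transformation of $a$ (and only $e^{i\chi}$ enters the transformation of $f, g$, which is unchanged by adding $2\pi k$... but actually adding a non-integer constant still works since $e^{i\chi}$ and $e^{i(\chi + c)}$ both give valid Coulomb data — wait, one must be careful: the \emph{uniqueness up to constants} is what is asserted, so the constant ambiguity is exactly the kernel of $\lap$ that is a constant).

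\textbf{Step 2: The estimate.} Writing $\rd_x \chi = \rd_x \lap^{-1} \rd^\ell \widetilde{a}_\ell$, the operator $\rd_x \lap^{-1} \rd^\ell$ is a classical zeroth-order Calder\'on–Zygmund operator (a constant-coefficient Fourier multiplier of order $0$, homogeneous of degree $0$, smooth away from the origin), hence bounded on $L^p_x(\bbR^4)$ for $1 < p < \infty$. Applying it with $p = 4$ gives
\[
\nrm{\rd_x \chi}_{L^4_x(\bbR^4)} \aleq \nrm{\widetilde{a}}_{L^4_x(\bbR^4)} \aleq \nrm{\widetilde{a}}_{\dot{H}^1_x(\bbR^4)},
\]
where the last inequality is the Sobolev embedding $\dot{H}^1_x(\bbR^4) \hookrightarrow L^4_x(\bbR^4)$ (note $\widetilde{a} \in \calH^1$ already includes the $L^4_x$ bound, but we only need $\dot{H}^1_x$ on the right). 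For the second-derivative term, $\rd_x^2 \chi = \rd_x^2 \lap^{-1} \rd^\ell \widetilde{a}_\ell$; here $\rd_x^2 \lap^{-1}$ is order $0$ and $\rd^\ell$ is order $1$, so we instead write $\rd_x^2 \chi = (\rd_x^2 \lap^{-1}) (\rd^\ell \widetilde{a}_\ell)$ and estimate $\nrm{\rd_x^2 \chi}_{L^2_x} \aleq \nrm{\rd^\ell \widetilde{a}_\ell}_{L^2_x} \aleq \nrm{\rd_x \widetilde{a}}_{L^2_x} = \nrm{\widetilde{a}}_{\dot{H}^1_x}$, using $L^2_x$-boundedness of the order-zero multiplier $\rd_x^2 \lap^{-1}$ (Plancherel). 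Summing gives \eqref{eq:gt2CoulombId}, and in particular $\chi \in \calG^2(\bbR^4)$.

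\textbf{Step 3: Verification and uniqueness.} By construction $\rd^\ell a_\ell = \rd^\ell \widetilde{a}_\ell - \lap \chi = 0$, and since $\widetilde{e}$ is unchanged the Gauss equation is preserved; the new data $(a, e, f, g)$ lies in $\calH^1(\bbR^4)$ by the mapping properties of $\Gmm_\chi$ discussed before the statement, so it is a legitimate Coulomb initial data set. For uniqueness, if $\chi'$ is another such gauge transform, then $a - a' = \ud(\chi' - \chi)$ and $\rd^\ell(a_\ell - a'_\ell) = 0$ forces $\lap(\chi' - \chi) = 0$; since $\rd_x(\chi' - \chi) = a - a' \in \dot{H}^1_x \cap L^4_x$ is a harmonic function in a Lebesgue space on $\bbR^4$, it vanishes identically, so $\chi' - \chi$ is constant. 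The main (and only mildly delicate) point is the bookkeeping in Step 2: one must route the single derivative $\rd^\ell$ hitting $\widetilde{a}$ to the correct side — pairing it with $\lap^{-1}$ to land on $L^4_x$ for the first estimate, and pairing the two output derivatives with $\lap^{-1}$ for the $L^2_x$ estimate — so that every multiplier invoked is genuinely order zero; none of this is hard, but it is the substance of the proof. (Everything else is Calder\'on–Zygmund/Plancherel boilerplate.)
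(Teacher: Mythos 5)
Your proof is correct and follows essentially the same route as the paper: $\chi$ is (up to constants) $\lap^{-1} \rd^{\ell} \widetilde{a}_{\ell}$, the $\calG^{2}$ bound comes from $L^{4}_{x}$- and $L^{2}_{x}$-boundedness of the order-zero multipliers together with $\dot{H}^{1}_{x}(\bbR^{4}) \hookrightarrow L^{4}_{x}$, and uniqueness from a Liouville-type argument for the harmonic difference. The only presentational difference is that the paper first defines the curl-free 1-form $\omg_{j} = (-\lap)^{-1} \rd_{j} \rd^{\ell} \widetilde{a}_{\ell}$ (an order-zero multiplier, hence unambiguously defined) and then takes a primitive $\chi$ with $\ud \chi = \omg$, which handles more carefully the low-frequency ambiguity that you dispose of with ``makes sense modulo constants.''
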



\begin{proof} 
  Let
  \begin{equation} \label{eq:gt2CoulombId:omg} \omg_{j} = (-\lap)^{-1}
    \rd_{j} \rd^{\ell} \widetilde{a}_{\ell}.
  \end{equation}
  Since $\widetilde{a} \in \dot{H}^{1}_{x}(\bbR^{4})$, it follows that
  $\omg_{j} \in \dot{H}^{1}_{x}(\bbR^{4})$. Note moreover that
  \begin{equation*}
    \rd_{i} \omg_{j} - \rd_{j} \omg_{i} = 0
  \end{equation*}
  for every $i, j = 1,2,3,4$. Thus there exists\footnote{This is
    obvious when $\widetilde{a} \in \calS(\bbR^{4})$; the full
    statement follows by approximation of $a$ by Schwartz 1-forms,
    using the fact that $\BMO$ is a Banach space modulo constant
    functions.} a real-valued function $\chi$ such that
  \begin{equation*}
    \ud \chi = \omg,
  \end{equation*}
  which furthermore satisfies $\chi \in \calG^{2}(\bbR^{4})$ and
  \eqref{eq:gt2CoulombId}. Note that $(a, e, f, g)$ defined as above
  satisfies the global Coulomb condition, since $\rd^{\ell} a_{\ell} =
  \rd^{\ell} \widetilde{a}_{\ell} + \lap \chi = 0$. The uniqueness
  statement follows from the fact that the solution to $\lap \rd_{j}
  \chi = \rd_{j} \rd^{\ell} \widetilde{a}_{\ell}$ in $L^{4}_{x}(O)$ is
  uniquely given by \eqref{eq:gt2CoulombId:omg}. \qedhere
\end{proof}

An immediate consequence of Lemma \ref{lem:continuity4gt} and the
preceding two lemmas is that any Coulomb initial data set in
$\calH^{1}(\bbR^{4})$ can be approximated in $\calH^{1}(\bbR^{4})$ by
classical Coulomb initial data sets. We record this statement as a
corollary.

\begin{corollary} \label{cor:idApproxByCoulomb} Let $(a, e, f, g)$ be
  a globally Coulomb initial data set for \eqref{eq:MKG} in the class
  $\calH^{1}(\bbR^{4})$. Then there exists a sequence
  $(\check{a}^{(n)}, \check{e}^{(n)}, \check{f}^{(n)},
  \check{g}^{(n)})$ of globally Coulomb initial data sets in
  $\calH^{\infty}(\bbR^{4})$ which approximates $(a,e,f,g)$ in
  $\calH^{1}(\bbR^{4})$.
\end{corollary}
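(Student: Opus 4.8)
The plan is to chain together the two lemmas of this subsection with the continuity statement of Lemma~\ref{lem:continuity4gt}. Let $(a,e,f,g) \in \calH^{1}(\bbR^{4})$ be globally Coulomb. First I would apply Lemma~\ref{lem:idApprox} to obtain a sequence $(a^{(n)}, e^{(n)}, f^{(n)}, g^{(n)})$ of classical ($\calH^{\infty}(\bbR^{4})$) initial data sets converging to $(a,e,f,g)$ in $\calH^{1}(\bbR^{4})$; these approximants will in general fail the Coulomb condition, which is the point to be repaired.

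Next, for each $n$ I would apply Lemma~\ref{lem:gt2CoulombId} to the 1-form $a^{(n)}$, producing $\chi^{(n)} \in \calG^{2}(\bbR^{4})$ with $\ud \chi^{(n)} = \omg^{(n)}$, $\omg^{(n)}_{j} := (-\lap)^{-1} \rd_{j} \rd^{\ell} a^{(n)}_{\ell}$, so that $\Gmm_{\chi^{(n)}}[a^{(n)}, e^{(n)}, f^{(n)}, g^{(n)}]$ satisfies $\rd^{\ell} a_{\ell} = 0$. Since $a^{(n)} \in \calH^{\infty}(\bbR^{4})$ and the construction of $\chi^{(n)}$ is built from $a^{(n)}$ by Riesz-type operators (which commute with differentiation and preserve the relevant norms), $\chi^{(n)} \in \calG^{\infty}(\bbR^{4})$, so the transformed data set stays in $\calH^{\infty}(\bbR^{4})$ and is globally Coulomb. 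Because $(a,e,f,g)$ is already Coulomb we have $\rd^{\ell} a_{\ell} = 0$, hence the same construction applied to $a$ returns (modulo constants) the trivial gauge transformation; as the map $a' \mapsto \omg'$ is linear, estimate~\eqref{eq:gt2CoulombId} applied to $a^{(n)} - a$ yields
\begin{equation*}
  \nrm{\chi^{(n)}}_{\calG^{2}(\bbR^{4})} \aleq \nrm{a^{(n)} - a}_{\dot{H}^{1}_{x}(\bbR^{4})} \To 0 .
\end{equation*}

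Finally I would invoke Lemma~\ref{lem:continuity4gt} with $O = \bbR^{4}$, the data sequence $(a^{(n)}, \ldots) \to (a, \ldots)$ in $\calH^{1}$, the gauge sequence $\chi^{(n)} \to 0$ in $\calG^{2}$, and limiting gauge transformation $\chi = 0$. This produces constants $\chi^{(n)}_{0} \in \bbR$ with
\begin{equation*}
  \nrm{(a,e,f,g) - \Gmm_{\chi^{(n)} + \chi^{(n)}_{0}}[a^{(n)}, e^{(n)}, f^{(n)}, g^{(n)}]}_{\calH^{1}(\bbR^{4})} \To 0 .
\end{equation*}
Setting $(\check{a}^{(n)}, \check{e}^{(n)}, \check{f}^{(n)}, \check{g}^{(n)}) := \Gmm_{\chi^{(n)} + \chi^{(n)}_{0}}[a^{(n)}, e^{(n)}, f^{(n)}, g^{(n)}]$ finishes the proof: the constant shift $\chi^{(n)}_{0}$ changes neither membership in $\calH^{\infty}(\bbR^{4})$ nor the Coulomb condition, since it does not alter $\ud \chi^{(n)}$. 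None of the steps is hard once Lemma~\ref{lem:continuity4gt} is available — that lemma already absorbed the genuine difficulties (the constant ambiguity and the passage to almost-everywhere convergence). The only point requiring a word of care is that the gauge transformations $\chi^{(n)}$ repairing the Coulomb condition are themselves classical and small in $\calG^{2}$, and this is exactly what Lemma~\ref{lem:gt2CoulombId} together with the already-Coulomb hypothesis on $(a,e,f,g)$ provides.
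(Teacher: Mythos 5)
Your proposal is correct and follows exactly the route the paper intends: the paper derives Corollary~\ref{cor:idApproxByCoulomb} as an immediate consequence of Lemma~\ref{lem:idApprox} (classical approximation), Lemma~\ref{lem:gt2CoulombId} (gauge transformation to Coulomb, with the linearity of $a \mapsto \omega$ and the Coulomb hypothesis giving $\nrm{\chi^{(n)}}_{\calG^{2}} \to 0$), and Lemma~\ref{lem:continuity4gt} (continuity of $\Gmm_{\chi}$ up to constants). Nothing in your argument deviates from or falls short of that chain.
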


\section{Excision and gluing of initial data sets} \label{sec:gluing}
A recurrent nuisance in gauge theory is the presence of a non-trivial
constraint equation for the initial data sets.  More concretely,
consider the problem of localizing a \eqref{eq:MKG} initial data
set. The most naive way to proceed would be to apply a smooth cutoff;
however, integrating the constraint equation (also called the
\emph{Gauss equation})
\begin{equation*}
  \rd^{\ell} e_{\ell} = \Im[f \overline{g}]
\end{equation*}
by parts over balls of large radius, we see that $e_{\ell}$ must in
general be non-trivial on the boundary spheres even if $f, g$ are
compactly supported. This simple argument precludes the naive approach
of simply cutting off $(a, e, f, g)$.

The purpose of this section is to introduce a set of techniques for
addressing this difficulty, namely \emph{excision and gluing} of
\eqref{eq:MKG} initial data sets. In the context of localization of
initial data sets, the basic idea is as follows: Instead of simply
\emph{excising} the unwanted part of the initial data set, we
\emph{glue} it to another initial data set, which has an explicit
description in the excised region. For instance, in the exterior of a
ball (see Proposition~\ref{prop:gluing} below) we glue with a data set
of the form $(e_{(q) j} := \frac{q}{2 \pi^{2}}
\frac{x_{j}}{\abs{x}^{4}}, 0, 0, 0)$, which is precisely the
electro-magnetic field of an electric monopole of charge $q$ situated
at the origin.


Key to our approach is a simple solution operator $\calV$ for the
divergence equation that preserves the support and obeys a sharp
$L^{p}$-$L^{q}$ bound. This solution operator was first used by
Bogovski{\u\i} \cite{MR553920, MR631691}. We remark that a similar
solution operator was used in \cite{Isett:2014vk} in the context of
the incompressible Euler equations.

The main results are stated in the next two propositions. The first
one concerns excision and gluing of initial data sets to the exterior
of a ball.
\begin{proposition}[Excision and gluing of initial data sets to the
  exterior] \label{prop:gluing} Let $B$ be 
  a ball of radius $r_0$ in $\bbR^{4}$, and  $1 < \rgExt < \rglue \leq  2$. 
  Then there exists an operator $E^{\extr}$ from the class
  $\calH^{1}(\rglue B \setminus \overline{B})$ to the class
  $\calH^{1}(\bbR^{4} \setminus \overline{B})$ satisfying the
  following properties:
  \begin{enumerate}
 \item \label{item:gluing:2}  $(\widetilde{a}, \widetilde{e},
    \widetilde{f}, \widetilde{g}) :=  E^{\extr}[a, e, f, g]$ is an extension of $ (a, e, f, g)$, 
    \begin{equation*}
  (\widetilde{a}, \widetilde{e},
    \widetilde{f}, \widetilde{g})   = (a, e, f, g) \quad \hbox{ on the annulus }
 \rgExt B \setminus \overline{B}.
    \end{equation*}
    
  \item \label{item:gluing:1} We have $(\widetilde{a}, \widetilde{f},
    \widetilde{g}) = (0, 0, 0)$ on $\bbR^{4} \setminus \rglue
    \overline{B}$. On the other hand, there exists a real number $q =
    q(e)$, depending continuously on $e \in L^{2}(\rglue B \setminus
    \overline{B})$, such that
    \begin{equation*}
      \widetilde{e}_{j}(x) = q \frac{x^{j}}{r^{4}} \quad \hbox{ on $\bbR^{4} \setminus \rglue \overline{B}$}.
    \end{equation*}

  \item \label{item:gluing:3} The following bounds hold, with implicit constants 
depending on $\rgExt,\rglue$:
    \begin{align}
      \nrm{E^{\extr}[a,e,f,g]}_{\calH^{1}(\bbR^{4} \setminus
        \overline{B})}
      \lesssim & \  \nrm{(a,e,f,g)}_{\calH^{1}(\rglue B \setminus \overline{B})}, \label{eq:gluing:est} \\
      \calE_{\bbR^{4} \setminus \overline{B}}[E^{\extr}[a, e, f, g]]
      \lesssim & \ r_0^{-2}
      \nrm{ f}_{L^{2}_{x}(\rglue B \setminus
        \overline{B})}^{2} +  \calE_{\rglue B
        \setminus \overline{B}}[a, e, f, g].  \label{eq:gluing:energy}
    \end{align}

  \item \label{item:gluing:4} The operator $E^{\extr}$ is continuous
    from $\calH^{1}(\rglue B \setminus \overline{B})$ to
    $\calH^{1}(\bbR^{4} \setminus \overline{B})$. Moreover,
    $E^{\extr}$ enjoys \emph{persistence of higher regularity}, i.e.,
    for every $N \geq 1$, we have $E^{\extr}[\calH^{N}(\rglue B
    \setminus \overline{B})] \subseteq \calH^{N}(\bbR^{4} \setminus
    \overline{B})$.
  \end{enumerate}
\end{proposition}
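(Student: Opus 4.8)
The plan is to build $E^{\extr}[a,e,f,g]=(\widetilde a,\widetilde e,\widetilde f,\widetilde g)$ essentially by cutting the data off between $\rgExt B\setminus\overline B$ and $\rglue B\setminus\overline B$, but with two modifications dictated by the two non-local obstructions: the curvature term $\tfrac12\|\ud\widetilde a\|_{L^{2}}^{2}$ in the energy, and the Gauss equation. Without loss of generality take $B=B_{r_{0}}(0)$, fix radii $\rgExt r_{0}<\rho_{1}<\rho_{2}<\rho_{3}<\rho_{4}<\rglue r_{0}$, and pick smooth radial cutoffs $\theta_{f},\theta_{a}$ with $\theta_{f}\equiv1$ on $\{|x|\le\rho_{1}\}$, $\supp\theta_{f}\subset\{|x|<\rho_{2}\}$, $\theta_{a}\equiv1$ on $\{|x|\le\rho_{3}\}$ (so that $\theta_{a}\equiv1$ on $\supp\theta_{f}$ and on $\rgExt B\setminus\overline B$), and $\supp\theta_{a}\subset\{|x|<\rho_{4}\}$. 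Set $\widetilde f:=\theta_{f}f$, $\widetilde g:=\theta_{f}g$; these agree with $(f,g)$ on $\rgExt B\setminus\overline B$, vanish on $\bbR^{4}\setminus\rglue\overline B$, and obey the $\calH^{1}$ bound trivially. The derivative landing on $\theta_{f}$, together with $|\rd\theta_{f}|\lesssim r_{0}^{-1}$, produces exactly the term $r_{0}^{-2}\|f\|_{L^{2}}^{2}$ in \eqref{eq:gluing:energy}, and the nesting $\supp\theta_{f}\subset\{\theta_{a}\equiv1\}$ ensures $\widetilde\covD_{j}\widetilde f=\theta_{f}\covD_{j}f+(\rd_{j}\theta_{f})f$ with no extra term, so that the $\widetilde\covD\widetilde f$ contribution to \eqref{eq:gluing:energy} closes.

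For the connection, the naive choice $\theta_{a}a$ fails \eqref{eq:gluing:energy}, since $\ud\theta_{a}\wedge a$ is controlled only by $\|a\|_{L^{4}}$ and not by the energy. To repair this I first Hodge-decompose on the annulus: since $\rglue B\setminus\overline B$ is a bounded John domain and $\firstDR(\rglue B\setminus\overline B)=0$, one may write $a=\ud\psi+\bar a$ with $\ud\bar a=\ud a$, $\|\bar a\|_{L^{2}(\rglue B\setminus\overline B)}\lesssim r_{0}\|\ud a\|_{L^{2}(\rglue B\setminus\overline B)}$ and $\|\bar a\|_{\dot{H}^{1}_{x}\cap L^{4}_{x}}\lesssim\|\ud a\|_{L^{2}}\lesssim\|a\|_{\dot{H}^{1}_{x}}$ — this is the exterior-derivative analogue of Proposition~\ref{prop:sol4div}, coming from the Poincar\'e homotopy operator with bounds (patched over star-shaped pieces) — and $\psi$ is normalized to have zero average on a fixed ball in the annulus. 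I then set $\widetilde a:=\ud(\theta_{a}\psi)+\theta_{a}\bar a=\theta_{a}a+\psi\,\ud\theta_{a}$, which equals $a$ on $\{\theta_{a}\equiv1\}$ (in particular on $\rgExt B\setminus\overline B$ and on $\supp\theta_{f}$) and vanishes on $\bbR^{4}\setminus\rglue\overline B$. Its curvature is $\ud\widetilde a=\theta_{a}\,\ud a+\ud\theta_{a}\wedge\bar a$, and now the correction is $\lesssim r_{0}^{-1}\|\bar a\|_{L^{2}}\lesssim\|\ud a\|_{L^{2}}\lesssim\calE^{1/2}$, closing the curvature part of \eqref{eq:gluing:energy}; the $\calH^{1}$ bound for $\widetilde a$ follows from the $\calG^{2}$-type estimates for $\psi,\bar a$ and Poincar\'e on the annulus.

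The heart of the proof is the electric field, where the Gauss equation enters. Put $e_{(q)j}(x):=q\,x^{j}/|x|^{4}$, which is divergence-free on $\bbR^{4}\setminus\{0\}$, define
\[
q:=-\frac{1}{2\pi^{2}}\int_{\bbR^{4}\setminus\overline B}\rd^{\ell}(\theta_{f}^{2})\,e_{\ell}\,\ud x ,
\]
a bounded linear functional of $e\in L^{2}(\rglue B\setminus\overline B)$ with $|q|\lesssim r_{0}\|e\|_{L^{2}}$, and set
\[
\widetilde e:=\theta_{f}^{2}\,e+(1-\theta_{f}^{2})\,e_{(q)}+\calV[\rho],\qquad \rho:=-\rd^{\ell}(\theta_{f}^{2})\,(e-e_{(q)})_{\ell}=-2\theta_{f}(\rd^{\ell}\theta_{f})(e-e_{(q)})_{\ell},
\]
where $\calV$ is the support-preserving divergence solution operator of Proposition~\ref{prop:sol4div} on an annular domain $\Omega$ with $\supp\rho\subset\Omega$, $\overline\Omega\subset\rglue B\setminus\overline B$, and $\Omega$ bounded away from $\partial B$. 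The role of $q$ is precisely to force $\int_{\Omega}\rho\,\ud x=0$: integrating $\rd^{\ell}(\theta_{f}^{2})\,e_{(q)\ell}$ by parts, the bulk term vanishes and the boundary term on $\partial B$ (where $\theta_{f}\equiv1$, outward normal $-x/|x|$, and $|\partial B|=2\pi^{2}r_{0}^{3}$) equals $-2\pi^{2}q$; hence $\calV[\rho]$ is well defined and supported in $\overline\Omega$. A direct computation gives $\rd^{\ell}\widetilde e_{\ell}=\theta_{f}^{2}\,\rd^{\ell}e_{\ell}$, and by the Gauss equation for the input data (valid on $\rglue B\setminus\overline B\supset\supp\theta_{f}$) this equals $\theta_{f}^{2}\,\Im[f\overline g]=\Im[\widetilde f\,\overline{\widetilde g}]$, so $(\widetilde a,\widetilde e,\widetilde f,\widetilde g)$ is again a \eqref{eq:MKG} initial data set. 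On $\rgExt B\setminus\overline B$ one has $\theta_{f}\equiv1$ and $\calV[\rho]=0$, so $\widetilde e=e$; on $\bbR^{4}\setminus\rglue\overline B$ one has $\theta_{f}\equiv0$ and $\calV[\rho]=0$, so $\widetilde e_{j}=q\,x^{j}/|x|^{4}$. Finally $\|\widetilde e\|_{L^{2}}\lesssim\|e\|_{L^{2}(\rglue B\setminus\overline B)}$, using $\|e_{(q)}\|_{L^{2}(\bbR^{4}\setminus\overline B)}\sim|q|r_{0}^{-1}$ and the bound for $\calV$ (as $\rho$ has only the integrability of $e$, one runs $\calV\colon L^{2}\to\dot W^{1,2}\hookrightarrow L^{4}$ and then $L^{4}\hookrightarrow L^{2}$ on the bounded set $\Omega$); since $\|e\|_{L^{2}}^{2}\lesssim\calE$ this also supplies the $\widetilde e$-contribution to \eqref{eq:gluing:energy}.

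It then remains to collect the estimates \eqref{eq:gluing:est}--\eqref{eq:gluing:energy} from the pieces above, and to observe that $E^{\extr}$ is in fact the restriction to initial data sets of a fixed bounded linear operator — multiplications by the cutoffs, the Hodge decomposition $a\mapsto(\psi,\bar a)$, the monopole map $q\mapsto e_{(q)}$, and $\calV$ are all linear, the Gauss equation having been used precisely to trade the bilinear $\Im[f\overline g]$ for the linear $\rd^{\ell}e_{\ell}$ — so continuity is immediate, and persistence of higher regularity follows because $\calV$ and the Hodge/homotopy operator preserve $\calH^{N}$ regularity while $e_{(q)}$ is smooth on $\bbR^{4}\setminus\{0\}\supset\bbR^{4}\setminus B$. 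I expect the electric-field step to be the main obstacle: one must pin down the charge $q$ so that $\rho$ has zero mean — this is forced by the topology of the exterior region and is the exact reason a non-trivial monopole has to be glued in — while keeping $\widetilde e$ linear in $e$ via the Gauss equation and controlling $\calV[\rho]$ in $L^{2}$ at the borderline integrability $e\in L^{2}$; by comparison the connection step, though it does require the Hodge decomposition to make the curvature energy close, is routine.
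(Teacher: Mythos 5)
Your construction is correct and is essentially the paper's own argument: cut off $(f,g)$ by a cutoff equal to $1$ past $\rgExt B$; before cutting $a$, pass to a gauge-equivalent representative $\bar a$ on the annulus with the scaling-sharp bound $\nrm{\bar a}_{L^{2}}\aleq r_{0}\nrm{\ud a}_{L^{2}}$, so that the cutoff error in the curvature costs only energy; and glue $e$ to the monopole $e_{(q)}$ with $q$ chosen linearly in $e$ so that the divergence error has zero mean, then remove it with the support-preserving operator $\calV$ on an annulus, with linearity of the whole map giving continuity and persistence of regularity. The one real deviation is how the good representative of $a$ is produced: you use an averaged Poincar\'e homotopy operator patched over star-shaped pieces to write $a=\ud\psi+\bar a$, whereas the paper solves the boundary value problem for the $1$-form Hodge system (Proposition~\ref{prop:hodge4oneforms}) to get its $\check a=a-\ud\chi$; both yield the same estimates, and your route is admissible provided you actually carry out the gluing of the local primitives $\psi_{k}$ on the annulus (using $\firstDR(\rglue B\setminus\overline{B})=0$), a step you only gesture at and which is roughly the same effort as the paper's appeal to Hodge theory. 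Two small precision points: an annulus is not star-shaped, so the divergence solver you need is Corollary~\ref{cor:sol4div:annulus} (Proposition~\ref{prop:sol4div} patched over sectors via Lemma~\ref{lem:divEqOnannulus}) rather than Proposition~\ref{prop:sol4div} applied directly; and for property (1) you need $\supp\calV[\rho]$ disjoint from $\rgExt B\setminus\overline{B}$, i.e.\ $\Omega\subseteq\set{\abs{x}>\rgExt r_{0}}$, which is slightly more than ``$\Omega$ bounded away from $\partial B$'' but is immediate with your choice of radii. Your normalization and sign for $q$ (with $e_{(q)j}=q\,x^{j}/\abs{x}^{4}$ as in the statement) are correct, and your $L^{2}\to \dot W^{1,2}\hookrightarrow L^{4}\to L^{2}(\Omega)$ treatment of $\calV[\rho]$ is an equivalent substitute for the paper's $L^{4/3}\to L^{2}$ bound.
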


The second proposition concerns excision and gluing of initial data in
the interior of a ball.
\begin{proposition}[Excision and gluing of initial data sets to the
  interior] \label{prop:intGluing} Let $B$ be 
  a ball of radius $r_0$ in $\bbR^{4}$, and  $1 < \rgInt < \rglue \leq  2$. 
 Then there exists an operator $E^{\intr}$ from the class
  $\calH^{1}(\rglue B \setminus \overline{B})$ to the class
  $\calH^{1}(\rglue B)$ satisfying the following properties:
  \begin{enumerate}
 \item \label{item:intGluing:2} $E^{\intr}[a, e, f, g]$ is an interior extension of $ (a, e, f, g)$, 
    \begin{equation*}
      E^{\intr}[a, e, f, g] = (a, e, f, g) \quad \hbox{ on the annulus } 
\rglue B \setminus \rgInt \overline{B}.
    \end{equation*}

  \item \label{item:intGluing:1} The following bounds hold, with implicit constants 
depending on $\rgInt,\rglue$:
    \begin{align}
      \nrm{E^{\intr}[a,e,f,g]}_{\calH^{1}(\rglue B)} \lesssim & \
      \nrm{(a,e,f,g)}_{\calH^{1}(\rglue B \setminus
        \overline{B})} + \nrm{e}_{L^{2}_{x}(\rglue B \setminus
        \overline{B})}^{\frac{1}{2}},
      \label{eq:intGluing:est} \\
      \calE_{\rglue B}[E^{\intr}[a, e, f, g]]
      \lesssim &  \ \calE_{\rglue B \setminus \overline{B}}[a, e, f, g] 
 + r_0^{-2}\nrm{ f      }_{L^{2}_{x}(\rglue B \setminus \overline{B})}^{2}. \label{eq:intGluing:energy} 
    \end{align}

  \item \label{item:intGluing:3} The operator $E^{\intr}$ is
    continuous from $\calH^{1}(\rglue B \setminus \overline{B})$ to
    $\calH^{1}(\rglue B)$. Moreover, $E^{\intr}$ enjoys
    \emph{persistence of higher regularity}, i.e., for every $N \geq
    1$, we have $E^{\intr}[\calH^{N}(\rglue B \setminus \overline{B})]
    \subseteq \calH^{N}(\rglue B)$.
  \end{enumerate}
\end{proposition}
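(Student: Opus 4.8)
The plan is to build $E^{\intr}$ explicitly in four moves: truncate $f,g$ near $\overline{B}$, inject a compensating ``charge'' into the interior to preserve the Gauss equation, correct $e$ by solving a divergence equation supported in the interior, and extend $a$ while keeping its curvature under control. The essential difficulty is exactly the constraint obstruction flagged at the start of Section~\ref{sec:gluing}: integrating the Gauss equation $\rd^{\ell}\widetilde{e}_{\ell}=\Im[\widetilde{f}\,\overline{\widetilde{g}}]$ over $\rglue B$ shows that \emph{any} finite-energy extension $(\widetilde{a},\widetilde{e},\widetilde{f},\widetilde{g})$ on $\rglue B$ that restricts to $(a,e,f,g)$ near $\rd(\rglue B)$ is forced to satisfy $\int_{\rglue B}\Im[\widetilde{f}\,\overline{\widetilde{g}}]=Q$, where $Q$ is the electric flux of $e$ through $\rd(\rglue B)$ --- a number well-defined from the data, since $\rd^{\ell}e_{\ell}=\Im[f\overline{g}]\in L^{2}$ on the annulus. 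So the ``charge'' lost when $f,g$ are truncated has to be re-created inside the ball.

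For the construction, fix parameters $1<\tau<\tau'<\rgInt$ and a smooth cutoff $\eta$ with $\eta=0$ on $\tau B$ and $\eta=1$ outside $\tau' B$. Put $\widetilde{f}=\eta f+c_{\sharp}\psi$ and $\widetilde{g}=\eta g+i\,d_{\sharp}\psi$, where $\psi$ is a fixed smooth bump supported in $\tau B$ (adapted to the scale of $B$) with $\int\psi^{2}=1$, and $c_{\sharp},d_{\sharp}\in\bbR$ satisfy $c_{\sharp}d_{\sharp}=-\Delta Q$ with $\Delta Q:=Q-\int_{\rglue B}\eta^{2}\Im[f\overline{g}]$ the ``charge deficit''; the ratio $c_{\sharp}/d_{\sharp}$ is pinned at the scale of $B$ to optimize the cost below. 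By construction $\int_{\rglue B}\Im[\widetilde{f}\,\overline{\widetilde{g}}]=Q$. Next set $\widetilde{e}=\eta e+v$, where $v$ is produced by applying the support-preserving divergence solution operator of Proposition~\ref{prop:sol4div} on the ball $\tau' B$ to $h:=\Im[\widetilde{f}\,\overline{\widetilde{g}}]-\rd^{\ell}(\eta e_{\ell})$; the function $h$ is supported in $\tau' B$, and its compatibility condition $\int_{\tau' B}h=0$ is precisely the relation $c_{\sharp}d_{\sharp}=-\Delta Q$, as one sees by integrating by parts and invoking the Gauss equation for $e$ on the annulus. Finally, $a$ is extended into $\rgInt B$ so that $\nrm{\ud\widetilde{a}}_{L^{2}(\rglue B)}\lesssim\nrm{\ud a}_{L^{2}(\rglue B\setminus\overline{B})}$ --- rather than merely $\lesssim\nrm{a}_{\dot{H}^{1}}$, which would be too lossy for \eqref{eq:intGluing:energy} --- by a Hodge-theoretic argument: extend the curvature $\ud a$ to a \emph{closed} $2$-form on $\rglue B$ with comparable $L^{2}$ norm (possible since the annulus in $\bbR^{4}$ has trivial second de Rham cohomology), solve a div--curl system for a primitive, and reconcile it with $a$ on the overlap annulus by a gauge function.

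It then remains to verify the conclusions. The agreement of $E^{\intr}[a,e,f,g]$ with $(a,e,f,g)$ on $\rglue B\setminus\rgInt\overline{B}$ holds because there $\eta\equiv1$, the bump lives in $\tau B$, and $v$ is supported in $\tau' B$ with $\tau'<\rgInt$; the Gauss equation on $\rglue B$ holds by the defining property of $v$. For the $\calH^{1}$ bound \eqref{eq:intGluing:est}, every operation is bounded on $\calH^{1}$, and the only contribution not absorbed into $\nrm{(a,e,f,g)}_{\calH^{1}(\rglue B\setminus\overline{B})}$ is the bump, whose $\dot{H}^{1}\cap L^{4}$- and $L^{2}$-sizes are comparable to $(r_{0}^{-1}|\Delta Q|)^{1/2}$ after optimizing $c_{\sharp}/d_{\sharp}$; since $|\Delta Q|\lesssim r_{0}\nrm{e}_{L^{2}}+\nrm{f}_{L^{2}}\nrm{g}_{L^{2}}$, this yields the stated $\nrm{e}_{L^{2}}^{1/2}$ loss. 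For the energy bound \eqref{eq:intGluing:energy} one splits $\calE_{\rglue B}$ into its piece on $\rglue B\setminus\tau'\overline{B}$ (where the data is untouched, hence $\leq\calE_{\rglue B\setminus\overline{B}}$) and its piece on $\tau' B$, the latter controlled via the sharp $L^{p}$--$L^{q}$ bound for $v$, the curvature-controlled extension of $a$, the cutoff-derivative term $r_{0}^{-2}\nrm{f}_{L^{2}}^{2}$ coming from truncating $f$, and the bump. Continuity in the data follows since $v$ is linear and continuous in $h$, $\Delta Q$ is continuous in $e$, and one can take $c_{\sharp}=(r_{0}|\Delta Q|)^{1/2}$, $d_{\sharp}=-\sgn(\Delta Q)(|\Delta Q|/r_{0})^{1/2}$, both continuous in $\Delta Q$; persistence of higher regularity is immediate because smooth cutoffs, the smooth bump, the divergence operator, and the Hodge-type extension all preserve the $\calH^{N}$ scale. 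The main obstacle is the charge-injection step: diagnosing the constraint obstruction and repairing it by a correction bump in $f$ and $g$, then splitting its amplitude between the $\dot{H}^{1}$-measured $f$ and the $L^{2}$-measured $g$ to get the sharp loss in \eqref{eq:intGluing:est}; the curvature-preserving extension of $a$ is a secondary technical point.
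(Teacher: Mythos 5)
Your construction is correct and is essentially the paper's own proof: cut off $(f,g)$, restore the lost charge with a smooth interior bump whose amplitude is split between $f$ and $g$ (exactly the source of the $\nrm{e}_{L^{2}}^{1/2}$ loss in \eqref{eq:intGluing:est}), repair the Gauss equation with the support-preserving divergence operator of Proposition~\ref{prop:sol4div}, and extend $a$ via a Hodge-theoretic, curvature-controlled gauge representative reconciled with $a$ by a cut-off gauge function so that \eqref{eq:intGluing:energy} only sees $\nrm{\ud a}_{L^{2}}$. The differences are bookkeeping: you apply the divergence operator on a ball with a source containing the extra $(\eta^{2}-\eta)\Im[f\overline{g}]$ mismatch and parametrize the bump by the flux deficit $\Delta Q=\int(\rd^{\ell}\eta^{2})e_{\ell}$, while the paper writes $\widetilde{e}=\eta^{2}e+(1-\eta^{2})e_{(p)}+e_{(G)}$ with the explicit interior field $e_{(p)}$ and an annulus-supported corrector, and its parameter $p[e]$ coincides with your $\Delta Q$ — none of which changes the substance.
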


\begin{remark} 
  There are two main difficulties in the proving these
  propositions. The first one is the presence of the Gauss equation,
  which has been discussed at the beginning of this section.  The
  second difficulty stems from the \emph{local energy inequalities}
  \eqref{eq:gluing:energy} and \eqref{eq:intGluing:energy}, which
  require, in particular, choosing a `good gauge' before excising the
  initial data. To resolve this difficulty, we rely on the solvability
  in $L^{2}$-Sobolev spaces of the one-form Hodge system under
  suitable boundary conditions (see
  Section~\ref{subsec:hodge4oneform}). This statement can be thought
  of as an easier abelian variant of Uhlenbeck's lemma
  \cite{Uhlenbeck:1982vna} concerning existence of a gauge
  transformation to the Coulomb gauge.
\end{remark}


The rest of this section is structured as follows: In
Section~\ref{subsec:sol4div}, we introduce a solution operator $\calV
= \calV_{j} [h]$ to the divergence equation $\rd^{j} \calV_{j}[h] = h$
that, in particular, is compactly supported if $h$ is. In
Section~\ref{subsec:hodge4oneform}, we briefly recall a standard
result for the 1-form Hodge system on domains with smooth boundary,
which will be needed later. Then in Section~\ref{subsec:gluing:pf}, we
present proofs of Propositions \ref{prop:gluing} and
\ref{prop:intGluing}.

\subsection{Support-preserving solution operator for 
the divergence equation} \label{subsec:sol4div}

In this subsection, we define a solution operator to the divergence
equation which preserves the support property of the source.  This
solution operator was first introduced by Bogovski{\u\i}
\cite{MR553920, MR631691}. Our construction below follows the approach
of \cite{Isett:2014vk}, in which a similar solution operator was
constructed for the symmetric divergence equation $\rd_{j} R^{j \ell}
= U^{\ell}$.  We sharpen the estimates for $\calV$ compared to
\cite{Isett:2014vk} (where non-sharp estimates sufficed), which turns
out to be necessary due to the criticality of our problem.  The class
of domains we work with is that of  star-shaped domains, and
unions thereof. We call a domain \emph{strongly star-shaped} with respect to a 
set $B$ if it is star-shaped with respect to any point in  $B$.

\begin{proposition} \label{prop:sol4div} Let $B$ be a ball in
  $\bbR^d$, $d \geq 2$.  Then there exists a pseudodifferential
  operator $\calV \in OPS^{-1}_{loc}(\bbR^d)$, taking functions to $1$-forms,
which has the following properties:  
  \begin{enumerate}
  \item \label{eq:sol4div:supph} 
 For any compact domain $D$ which is star-shaped with respect
  to $B$, if $h \in \calD'$ is supported in $D$ then 
 $\calV[h]$ is also supported in $D$.
  \item Suppose that $h$ has compact support and 
    \begin{equation*}
      \int_{\bbR^{d}} h \, \ud x = 0.
    \end{equation*}
    Then $\calV[h]$ satisfies the divergence equation
    \begin{equation} \label{eq:sol4div:diveq} \rd^{\ell}
      \calV_{\ell}[h] = h.
    \end{equation}
  \end{enumerate}
\end{proposition}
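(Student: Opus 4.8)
The plan is to take for $\calV$ the classical Bogovski{\u\i} solution operator associated to the ball $B$, and to read off every assertion from its explicit Schwartz kernel. Fix once and for all $\theta \in C^{\infty}_{0}(B)$ with $\int_{\bbR^{d}} \theta \, \ud x = 1$, and set, for $\ell = 1, \ldots, d$,
\[
  \calV_{\ell}[h](x) := \int_{\bbR^{d}} h(y) \, K_{\ell}(x,y) \, \ud y, \qquad
  K_{\ell}(x,y) := \frac{(x-y)_{\ell}}{\abs{x-y}^{d}} \int_{\abs{x-y}}^{\infty} \theta\Big( y + r \tfrac{x-y}{\abs{x-y}} \Big) r^{d-1} \, \ud r.
\]
The inner integration is effectively over those $r \geq \abs{x-y}$ with $y + r \tfrac{x-y}{\abs{x-y}} \in B$, a compact set, so $K_{\ell}$ is smooth off the diagonal with $\abs{\rd_{x}^{\alpha} \rd_{y}^{\beta} K_{\ell}(x,y)} \lesssim \abs{x-y}^{1-d-\abs{\alpha}-\abs{\beta}}$ there, uniformly on compacta. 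The substitution $r = s\abs{x-y}$ also puts $K_{\ell}$ in the equivalent form $K_{\ell}(x,y) = (x-y)_{\ell} \int_{1}^{\infty} s^{d-1} \theta(y + s(x-y)) \, \ud s$, which I will use for the support and divergence properties.

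\emph{Support preservation and the divergence identity.} Let $D$ be a compact set which is star-shaped with respect to every point of $B$ (the notion introduced above), and let $h$ be supported in $D$. If $\calV_{\ell}[h](x) \neq 0$, the second kernel formula gives $y \in D$ and $s \geq 1$ with $b := y + s(x-y) \in B$; since $\tfrac{1}{s} \in (0,1]$ we get $x = (1-\tfrac{1}{s}) y + \tfrac{1}{s} b \in [y,b]$, and as $D$ is star-shaped with respect to $b$ and contains $y$, we conclude $x \in D$. Thus $\supp \calV[h] \subseteq D$ for $h \in C^{\infty}_{0}(D)$, and for general $h \in \calD'$ supported in $D$ it follows from the localization of $\supp K_{\ell}$ just established; the same reasoning covers finite unions of such domains. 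For the divergence identity I would simply recall Bogovski{\u\i}'s computation: differentiating $K_{\bullet}$ in $x$, the contribution of $\nb_{x} \cdot \tfrac{x-y}{\abs{x-y}^{d}}$ cancels away from the diagonal, and using $\tfrac{\ud}{\ud s}[s^{d} \theta(y+s(x-y))] = s^{d-1}(d\,\theta + s \tfrac{\ud}{\ud s}\theta)(y+s(x-y))$ the rest telescopes to $-\theta(x)$, while a careful regularization near $x=y$ produces an extra $\dlt(x-y)$ — here $\int_{S^{d-1}} \int_{0}^{\infty} s^{d-1} \theta(y+s\omega) \, \ud s\, \ud\omega = \int \theta = 1$ is used. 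Hence $\rd^{\ell}_{x} K_{\ell}(x,y) = \dlt(x-y) - \theta(x)$ in $\calD'$, so $\rd^{\ell}\calV_{\ell}[h] = h - \theta \int_{\bbR^{d}} h\, \ud x$ for compactly supported $h$, which equals $h$ exactly when $\int h = 0$, i.e. \eqref{eq:sol4div:diveq}.

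\emph{Pseudodifferential structure.} Writing $\int_{\abs{x-y}}^{\infty} = \int_{0}^{\infty} - \int_{0}^{\abs{x-y}}$ splits $K_{\ell} = K_{\ell}^{\mathrm{pr}} + K_{\ell}^{\mathrm{err}}$, with $K_{\ell}^{\mathrm{pr}}(x,y) = \tfrac{(x-y)_{\ell}}{\abs{x-y}^{d}} \, n\big(y, \tfrac{x-y}{\abs{x-y}}\big)$, $n(y,\omega) := \int_{0}^{\infty} \theta(y+s\omega) s^{d-1} \, \ud s$ smooth, homogeneous of degree $1-d$ in $x-y$; and $K_{\ell}^{\mathrm{err}}$ gains one more factor $\abs{x-y}$ (from $\int_{0}^{\abs{x-y}} r^{d-1} \ud r = \abs{x-y}^{d}/d$), hence is the kernel of an operator of order $-2$, a fortiori in $OPS^{-1}_{loc}$. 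Fourier transforming $K_{\ell}^{\mathrm{pr}}$ in $v = x-y$ with $y$ frozen produces a symbol homogeneous of degree $-1$ in $\xi$ and smooth off $\xi = 0$; after a standard low-frequency truncation this exhibits $K_{\ell}^{\mathrm{pr}}$, and therefore $\calV_{\ell}$, as a classical pseudodifferential operator of order $-1$ with symbol bounds uniform on compacta in $x$, i.e. $\calV_{\ell} \in OPS^{-1}_{loc}(\bbR^{d})$; the attendant sharp mapping properties (e.g.\ $L^{p} \to \dot{W}^{1,p}$, $1 < p < \infty$) then follow from the calculus.

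\emph{Main obstacle.} The geometric support argument and the formal divergence computation are routine; the hard part will be the pseudodifferential/kernel analysis, and within it two points need care: (i) the distributional bookkeeping near $x=y$ that pins down the $\dlt$-term and its normalization; and, above all, (ii) making the kernel and symbol estimates \emph{quantitatively sharp} — tracking the dependence of every constant on $\diam B$, $\dist(\,\cdot\,,\rd B)$, and (through $D$) on the geometry of the star-shaped region — since it is this scale-aware refinement, rather than the bare membership $\calV \in OPS^{-1}_{loc}$, that the energy-critical application later in the paper demands.
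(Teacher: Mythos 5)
Your operator is exactly the one the paper constructs: the Bogovski\u{\i} kernel $K_\ell(x,y)=(x-y)_\ell\int_1^\infty s^{d-1}\theta(y+s(x-y))\,ds$ coincides with the paper's kernel $K(z,y)$ after the substitution $s=1/(1-\sigma)$, and your three steps (support preservation via star-shapedness with respect to each point of $B$, the identity $\partial^\ell \calV_\ell[h]=h-\theta\int h$, and kernel/symbol estimates giving order $-1$) mirror the paper's proof. The only difference is cosmetic: the paper obtains the kernel and symbol bounds directly from the support restriction $1-\sigma\gtrsim|z|$ rather than splitting off a homogeneous principal part plus a smooth remainder, but both are the standard verification and yield the same conclusion.
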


 \begin{remark}   
The fact that $D$ is star-shaped with respect to $B$ requires that $B \subseteq D$.
Thus by scaling all bounds for the operator $\calV$ in $D$ depend only on the ratio
$\diam(D)/\diam(B)$. In particular, since  $\calV \in OPS^{-1}_{loc}(\bbR^d)$, we obtain
\begin{equation}\label{eq:sob}
\| \calV [h] \|_{W^{1,p}(\bbR^{d})} \lesssim \|h\|_{L^p(\bbR^{d})}, \qquad 1 < p < \infty
\end{equation}
Thus, by the Gagliardo-Nirenberg-Sobolev inequality we obtain the inequality
\begin{equation} \label{eq:sol4div:est}
    \nrm{\calV[h]}_{L^{q}_{x}(\bbR^{d})} \aleq_{p, q} (\diam
    B)^{\frac{d}{q} - \frac{d}{p} + 1} \nrm{h}_{L^{p}_{x}(\bbR^{d})}
    \, .
  \end{equation}	
whenever $1 <  p \leq q < \infty$ and 
\begin{equation*}
    \frac{d}{p} - 1 \leq  \frac{d}{q} \leq \frac{d}{p}
\end{equation*}
\end{remark}

Before we begin the proof of Proposition \ref{prop:sol4div} in
earnest, we give a short argument that provides a solution operator
$\calV$ with the required support properties but with less regularity.
We will use this to motivate the actual construction.  Let $h \in
C^{\infty}_{0}(\bbR^{d})$ satisfy \eqref{eq:sol4div:supph}. Assume
furthermore that $\int_{\bbR^{d}} h = 0$. Our goal is to find a
solution $v^{\ell}$ to \eqref{eq:sol4div:diveq} which satisfies the
support property $\supp \, v^{\ell} \subseteq B$. Note that
$\int_{\bbR^{d}} h = 0$ is a necessary condition for such a solution
to exist by the divergence theorem.

Taking the Fourier transform of $h$ and Taylor expanding at $\xi =0$,
we get
\begin{equation*}
  \widehat{h}(\xi) = \widehat{h}(0) + \xi^{\ell} \int_{0}^{1} \rd_{\ell} \widehat{h}(\sgm \xi) \, \ud \sgm.
\end{equation*}
Since $\widehat{h}(0) = \int h \, \ud x= 0$, we see that $\widehat{h}$
has the form of a divergence. Indeed, defining
\begin{equation*}
  v_{j}[h] := \calF^{-1}_{x} [\int_{0}^{1} \rd_{j} \widehat{h}(\sgm \xi) \, \sgm],
\end{equation*}
we see that $\rd^{\ell} v_{\ell}[h] = h$, as desired.  More generally, we remark that if 
$\int_{\bbR^d}  h \neq 0$, then 
\[
\nabla \cdot v = h - c \delta_0, \qquad  c = \int_{\bbR^d} h \, \ud x. 
\]

Carrying out the inverse Fourier transform, we obtain the following
physical space formula for $v_{j}[h]$:
\begin{equation*}
  v_{j}[h](x) =  \int_{0}^{1}  \frac{x^{j}}{\sgm} h \bb( \frac{x}{\sgm} \bb) \, \frac{\ud \sgm}{\sgm^{d}}.
\end{equation*}
Note that the value of $v_{j}[h]$ at $x$ is determined by a weighted
integral of $h$ on the radial ray $\set{s x : s \geq 1}$. In
particular, the desired support property $\supp \, v \subseteq B$
immediately follows. In terms of regularity, however, integration
along rays only yields radial regularity. No angular regularity at all is 
gained by doing this.

One can also view the above construction as arising from a mass transportation 
problem. The above $v$ corresponds to transporting all the mass of $h$ along rays
to zero.   

In order to produce a better solution operator, all we need to do is
to expand the above Dirac mass at zero into a smooth bump function,
i.e. some smooth averaging of the above construction. This idea is
carried out in the following proof.

\begin{proof} [Proof of Proposition \ref{prop:sol4div}]
By translation and scaling we assume that $B$ is the unit ball.  
Given $y \in \bbR^{d}$, define
  \begin{align*}
    v_{(y)j}[h](x) = \int_{0}^{1} \frac{(x-y)^{j}}{\sgm} h \bb(
    \frac{x-y}{\sgm} + y \bb) \, \frac{\ud \sgm}{\sgm^{d}}.
  \end{align*}
  Let $r_{0}$ be the radius of the ball $B$. Let $\zt$ be a smooth
  normalized bump function in $B$, i.e. 
  \begin{equation} \label{eq:sol4div:zt} \supp \, \zt \subseteq
     B, \quad \int_{\bbR^{d}} \zt = 1.
  \end{equation}
  We now define $\calV_{j}[h] := \int \zt(y) v_{(y)j} [h] \, \ud y$,
  i.e.,
  \begin{equation} \label{eq:sol4div:V} \calV_{j}[h] (x) = \int
    \int_{0}^{1} \zt(y) \frac{(x-y)^{j}}{\sgm} h \bb( \frac{x-y}{\sgm}
    + y \bb) \, \frac{\ud \sgm}{\sgm^{d}} \, \ud y.
  \end{equation}

  As before, $\calV[h]$ is a solution to the divergence
  equation 
\[
\partial^l \calV_l[h] = h - c \zt, \qquad c = \int_{\bbR^d} h \, \ud x 
\]
where the second term in the right-hand side vanishes
provided that $h$ has integral zero. Moreover, from 
 the construction it follows that we have the support  
property
\[
\supp \calV[h] \subseteq  \bigcup_{x \in \supp \, h} \mathrm{Conv}( \{x\} \cup B),
\]
where $\mathrm{Conv}(X)$ refers to the convex hull of $X$. This is exactly what we need.  

It remains to prove that $\calV$ is a regular pseudodifferential operator of order $-1$.  For
that we look at the kernel $K(x-y,y)$ of $\calV$, which after a change
of variable is written as
\[
K(z,y) = \int \frac{z}{1-\sigma} \zt( \frac{1}{1-\sigma} z + y ) \frac{d\sigma}{(1-\sigma)^d}
\]
The  support condition on $\zt$ restricts the integral to the range $|1-\sigma| \gtrsim |z|$.
Then a direct integration yields the bound
\[
|K(z, y)| \lesssim |z|^{1-d}
\]
Similarly, we have the differentiated bounds 
\[
| \partial_z^{(k)} \partial_y^{(j)} K(z, y)| \leq c_{kj}  |z|^{1-d-k}.
\]
The symbol $a(\xi, y)$ of $\calV$ in the right calculus\footnote{We prefer the right calculus,
  because there the symbol  is only needed for $y \in D$.}  is obtained by taking the
Fourier transform of $K$ with respect to $z$. Then the preceding bound implies the homogeneous symbol bound
\begin{equation*}
	\abs{\rd_{z}^{(k)} \rd_{y}^{(j)} a(\xi, y)} \aleq_{k, j} \abs{\xi}^{-1-k}.	
\end{equation*}
On the other hand, taking into account the support properties of $K$, it follows that $\abs{\rd_{\xi}^{(k)} \rd_{y}^{(j)} a}$ is bounded for every $k, j$ as well.
Hence the assertion $\calV \in OPS^{-1}$ follows.
\end{proof}

In the sequel we apply the above proposition in two situations.
The first is for a ball:
\begin{corollary}\label{cor:sol4div:ball}
 Let $B$ be a ball in $\bbR^d$. Then there exists a pseudodifferential  operator
  $\calV^B \in OPS^{-1}$, mapping distributions $h$ supported in $B$ to
  distributions $\calV^B [h]$ supported in $B$, and which satisfies property (2)
in Proposition~\ref{prop:sol4div}.
\end{corollary}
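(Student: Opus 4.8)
The plan is to specialize Proposition~\ref{prop:sol4div} to the degenerate case in which the star-shaped domain is the ball $B$ itself. First I would record the elementary observation that a ball is \emph{strongly star-shaped with respect to itself}: for any $y \in B$ and any $x \in B$ the segment $[y,x]$ lies in $B$ by convexity, so $B$ is star-shaped with respect to every point of $B$. Hence the hypothesis ``$D$ is a compact domain star-shaped with respect to $B$'' in Proposition~\ref{prop:sol4div} is satisfied by $D = \overline{B}$ (alternatively, one may run the argument with a slightly enlarged concentric ball and afterwards note that the support collapses back into $\overline B$).

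Then I would simply set $\calV^B := \calV$, where $\calV$ is the operator furnished by Proposition~\ref{prop:sol4div} applied to this very ball $B$. The kernel estimates $\abs{\rd_z^{(k)} \rd_y^{(j)} K(z,y)} \lesssim_{k,j} \abs{z}^{1-d-k}$ and the ensuing homogeneous symbol bounds established in the proof of Proposition~\ref{prop:sol4div} are global in $(z,y)$, so in fact $\calV^B \in OPS^{-1}$, not merely $OPS^{-1}_{loc}$. Given a distribution $h$ supported in $B$, it is in particular compactly supported, so the construction applies; part~\eqref{eq:sol4div:supph} of Proposition~\ref{prop:sol4div} (used with $D = \overline{B}$), together with the explicit inclusion $\supp \calV[h] \subseteq \bigcup_{x \in \supp h} \mathrm{Conv}(\set{x} \cup B) \subseteq \overline{B}$, shows that $\calV^B[h]$ is supported in $B$. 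Finally, property (2) of Proposition~\ref{prop:sol4div} --- that $\rd^{\ell} \calV^B_{\ell}[h] = h$ whenever $\int_{\bbR^d} h \, \ud x = 0$ --- is inherited verbatim.

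I do not expect any genuine obstacle here beyond bookkeeping: the only point deserving a moment's care is the open-versus-closed distinction in the phrase ``supported in $B$'', which is harmless --- one may either read $B$ as $\overline B$ throughout, or note that an $h$ supported in the open ball $B$ is supported in a compact subball, whose convex hull with $B$ is again contained in $B$. Thus the corollary follows immediately from Proposition~\ref{prop:sol4div}.
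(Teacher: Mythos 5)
Your proposal is correct and is essentially the paper's own argument: the paper proves the corollary with the single observation that $B$ is star-shaped with respect to $B$ (by convexity), and then applies Proposition~\ref{prop:sol4div} with $\calV^{B} = \calV$, exactly as you do. Your extra remarks on the open/closed ball bookkeeping and on the uniformity of the symbol bounds are harmless elaborations of the same route.
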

For this we only need to observe that $B$ is star-shaped with
respect to $B$.

Our second application is for an annulus:
 
\begin{corollary}\label{cor:sol4div:annulus}
 Let $A = \sigma B \setminus B$, with $\sigma > 1$, be an annulus.
Then there exists a pseudodifferential  operator
  $\calV^A \in OPS^{-1}$, mapping distributions $h$ supported in $A$ to
  distributions $\calV^{A} [h]$ supported in $A$, and which satisfies property (2)
in Proposition~\ref{prop:sol4div}. Further, all bounds are uniform for $ \sigma $ 
away from $1$. 
\end{corollary}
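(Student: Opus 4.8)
The plan is to reduce the annulus to pieces on which Proposition~\ref{prop:sol4div} applies directly. By translation and scaling I may assume $B$ is the unit ball, so $A = \sigma B \setminus B$. The obstruction to a direct appeal to Proposition~\ref{prop:sol4div} is that $A$ is not star-shaped with respect to any ball it contains; the remedy is that $A$ is a finite union of such pieces, arranged in a chain. Concretely, I would fix compact annular sectors $D_{1}, \dots, D_{M}$ (intersections of a fixed neighbourhood of $\overline{A}$ with narrow cones emanating from the origin) such that: their union covers a neighbourhood of $\overline{A}$; each $D_{i}$ is strongly star-shaped with respect to a ball $B_{i} \subseteq D_{i}$ (which holds once the angular opening of the sector is small compared with $\sigma - 1$); and, after relabelling, the $D_{i}$ form a \emph{chain}, i.e. for each $i = 1, \dots, M-1$ the overlap $D_{i} \cap D_{i+1}$ contains a ball $B_{i}^{\sharp}$. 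For $\sigma$ ranging over a fixed compact subset of $(1,\infty)$ — which is the regime relevant to all applications — the number $M$, the sectors, and the balls can be chosen with uniformly controlled geometry.

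Next I would split the source and assemble the operator. Fix a partition of unity $\{\chi_{i}\}_{i=1}^{M}$ with $\chi_{i} \in C^{\infty}_{0}(D_{i})$ and $\sum_{i} \chi_{i} \equiv 1$ on a neighbourhood of $\overline{A}$, and for each $i = 1, \dots, M-1$ a normalized bump $\theta_{i}$ supported in $B_{i}^{\sharp}$ with $\int \theta_{i} = 1$. Given $h$ supported in $\overline{A}$, set $m_{i} = \langle h, \chi_{i} \rangle$ and $s_{i} = m_{1} + \dots + m_{i}$, with $s_{0} := 0$, and define
\begin{equation*}
  h_{i} := \chi_{i} h - s_{i} \theta_{i} + s_{i-1} \theta_{i-1},
\end{equation*}
the $\theta_{0}$- and $\theta_{M}$-terms being understood as absent (they would carry coefficients $s_{0} = 0$ and $s_{M}$). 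A direct check gives $\supp h_{i} \subseteq D_{i}$, $\sum_{i} h_{i} = h$, and $\int h_{i} = m_{i} - (s_{i} - s_{i-1}) = 0$; moreover $s_{M} = \sum_{i} m_{i} = \langle h, 1 \rangle = \int h$, so the absence of a $\theta_{M}$-correction in $h_{M}$ is consistent. I then set
\begin{equation*}
  \calV^{A}[h] := \sum_{i=1}^{M} \calV^{B_{i}}[h_{i}],
\end{equation*}
with $\calV^{B_{i}}$ the operator of Proposition~\ref{prop:sol4div} attached to the ball $B_{i}$. Since $D_{i}$ is strongly star-shaped with respect to $B_{i}$ and $\supp h_{i} \subseteq D_{i}$, property~\eqref{eq:sol4div:supph} gives $\supp \calV^{B_{i}}[h_{i}] \subseteq D_{i}$, hence $\supp \calV^{A}[h] \subseteq A$; and when $h$ additionally has compact support with $\int h = 0$, each $h_{i}$ has vanishing integral, so $\rd^{\ell} \calV^{B_{i}}_{\ell}[h_{i}] = h_{i}$ by \eqref{eq:sol4div:diveq} and summation yields $\rd^{\ell} \calV^{A}_{\ell}[h] = h$, which is property~(2).

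Finally I would check $\calV^{A} \in OPS^{-1}$ with uniform bounds. The map $h \mapsto h_{i}$ is the sum of the multiplication operator $h \mapsto \chi_{i} h$, which lies in $OPS^{0}$, and finitely many smoothing operators $h \mapsto \langle h, \chi_{j} \rangle \theta_{i} \in OPS^{-\infty}$; hence it lies in $OPS^{0}$, and $\calV^{A}$, a finite sum of compositions of $\calV^{B_{i}} \in OPS^{-1}$ with operators in $OPS^{0}$, lies in $OPS^{-1}$. For uniformity, the $C^{N}$ norms of the $\chi_{i},\theta_{i}$ and the diameter ratios $\diam D_{i}/\diam B_{i}$, $\diam D_{i}/\diam B_{i}^{\sharp}$ are controlled uniformly for $\sigma$ in a compact subset of $(1,\infty)$, and each $\calV^{B_{i}}$ obeys bounds depending only on such ratios (see the Remark after Proposition~\ref{prop:sol4div}), so the bounds for $\calV^{A}$ are uniform. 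The only genuinely nonroutine point is the bookkeeping with the cumulative sums $s_{i}$ that converts the crude pieces $\chi_{i}h$ into zero-integral pieces $h_{i}$ without enlarging their supports beyond a single star-shaped sector; this is precisely the classical chaining device of Bogovski{\u\i}, and once the chain structure of the cover is in place the remaining verifications are immediate.
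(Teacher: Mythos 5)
Your argument is essentially the paper's: cover the annulus by angular sectors that are star-shaped with respect to balls, convert the crude pieces $\chi_i h$ into zero-integral pieces by the cumulative-sum chaining device, and apply Proposition~\ref{prop:sol4div} sector by sector. Your decomposition $h_i = \chi_i h - s_i\theta_i + s_{i-1}\theta_{i-1}$ is literally the construction in the paper's Lemma~\ref{lem:divEqOnannulus} (including the observation that $h\mapsto h_i-\chi_i h$ is finite rank, hence smoothing, so that $\calV^A\in OPS^{-1}$ follows by composition), so there is no genuinely different route here.

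One point needs repair. You take the sectors $D_i$ to be intersections of a \emph{neighbourhood} of $\overline{A}$ with narrow cones, with $\bigcup_i D_i$ covering a neighbourhood of $\overline{A}$; but then $D_i\not\subseteq A$, and the final deduction ``$\supp\calV^{B_i}[h_i]\subseteq D_i$, hence $\supp\calV^{A}[h]\subseteq A$'' does not follow — you only get support in a slightly larger set, whereas the corollary (and its use in Propositions~\ref{prop:gluing} and \ref{prop:intGluing}, where $e_{(G)}$ must live exactly in the prescribed annulus) requires support in $A$. The fix is exactly what the paper does: take the sectors with the \emph{full radial extent} of the annulus and small angular opening (the ``double-angle sectors $2A_k\subseteq A$''). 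Such a sector is still strongly star-shaped with respect to a small ball placed near its outer radius in the angular center, provided the angular opening is small compared with $\sigma-1$ (the radius along a segment from such a center to an inner-boundary point is monotone, so the segment never dips below radius $1$), and the cutoffs can be taken purely angular, $\chi_i(x)=\psi_i(x/|x|)$, so that $\chi_i h$ is supported in $\overline{A}\cap\mathrm{cone}_i$. With that adjustment your proof is complete. A minor further remark: you only claim uniformity of the bounds for $\sigma$ in a compact subset of $(1,\infty)$, while the corollary asserts uniformity for all $\sigma$ bounded away from $1$; for large $\sigma$ the same construction works with a number of sectors depending only on the dimension, which restores the full claim.
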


In particular we note the following bound
 \begin{equation} \label{eq:sol4div:annulus:est}
    \nrm{\calV^{A}[h]}_{L^{2}_{x}(\bbR^{4})} \aleq
    \nrm{h}_{L^{\frac{4}{3}}_{x}(\bbR^{4})}
  \end{equation}
with an implicit constant that is uniform for $\sigma$ away from $1$. 

To show that this follows from Proposition~\ref{prop:sol4div}, we
cover $A$ with three or more overlapping round sectors of identical
angle $\theta$, $A = \bigcup_{k=1}^K A_k$, so that the double-angle sectors $2A_k \subseteq A$
are star-shaped. The number of such sectors depends only on the
dimension $d$ if $\sigma $ is large, but increases as $ \sigma \to
1$.  The closer $\sigma$ gets to $1$, the worse our bounds will get.

In each such sector we can apply  Proposition~\ref{prop:sol4div}. However, to
conclude the proof of the corollary we need to also be able to
distribute the zero integral condition to the sectors. This is
achieved in the next lemma:  

\begin{lemma} \label{lem:divEqOnannulus} Consider a covering of the
  annulus $A = \sigma B \setminus B$ with round sectors $A = \bigcup
  A_k$ of angle $\theta$. Let $\eta_k$ be an associated partition of
  unity in $A$ with $\supp \, \eta_{k} \subseteq 2A_{k}$. Then for each distribution $h$ which satisfies
\begin{equation*}
    \supp \, h \subseteq A
    \quad \hbox{and} \quad
    \int h \, \ud x= 0.
  \end{equation*}
 there exists a linear decomposition $h = \sum_{k=1}^K
  h_{k}$ so that  
 \begin{equation*}
    \supp \, h_{k} \subseteq 2A_{k}, \quad
    \int h_{k} \, \ud x= 0.
  \end{equation*}
  and the maps $h \to h_k - \eta_k h$ are finite rank $\leq 2$ from
  $\calD'$ to $\calD$.
\end{lemma}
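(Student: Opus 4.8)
The plan is to correct the naive decomposition $h = \sum_k \eta_k h$ by subtracting off suitable rank-one pieces so that each summand integrates to zero, while keeping all supports inside $2A_k$. First I would set $c_k := \int_{\bbR^d} \eta_k h \, \ud x$, so that $\sum_k c_k = \int h = 0$ by the partition-of-unity property. The obstruction to $\eta_k h$ having zero integral is exactly the scalar $c_k$. To cancel it, fix for each $k$ a smooth bump $\psi_k$ with $\supp \psi_k \subseteq 2A_k$ and $\int \psi_k = 1$; these are fixed once and for all depending only on the covering, not on $h$. The idea is then to redistribute the masses $c_k$ around the ``cycle'' of sectors: since the sectors form a covering of an annulus, consecutive sectors $A_k, A_{k+1}$ overlap (indices taken cyclically mod $K$), so one can choose bumps $\theta_k$ supported in $2A_k \cap 2A_{k+1}$ with $\int \theta_k = 1$.

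The key algebraic step is to solve, in the cyclic group structure, for transfer coefficients: set $t_0 = 0$ and $t_k = t_{k-1} + c_k$ for $k = 1,\dots,K$, noting $t_K = \sum_k c_k = 0$, so the $t_k$ are well-defined cyclically. Then define
\begin{equation*}
  h_k := \eta_k h - c_k \psi_k + t_k \theta_k - t_{k-1} \theta_{k-1}.
\end{equation*}
Here each correction term is supported in $2A_k$: $\psi_k$ by construction, $\theta_k \subseteq 2A_k \cap 2A_{k+1} \subseteq 2A_k$, and $\theta_{k-1} \subseteq 2A_{k-1}\cap 2A_k \subseteq 2A_k$. Summing over $k$, the $\psi_k$ terms contribute $-\sum_k c_k \psi_k$; to make $\sum_k h_k = h$ I should instead absorb those into the telescoping as well, i.e.\ use only $\theta$-type corrections. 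Concretely, replace the above by
\begin{equation*}
  h_k := \eta_k h + t_k \theta_k - t_{k-1}\theta_{k-1}, \qquad t_k = \sum_{j \le k} c_j \pmod{K},
\end{equation*}
so $\sum_k h_k = h + \sum_k (t_k \theta_k - t_{k-1}\theta_{k-1}) = h$ by telescoping around the cycle (using $t_0 = t_K$), and $\int h_k = c_k + t_k - t_{k-1} = c_k - c_k = 0$. Finally, $h_k - \eta_k h = t_k \theta_k - t_{k-1}\theta_{k-1}$, and since $t_k = t_k(h)$ is a linear functional of $h$ (a finite sum of the linear functionals $c_j(h) = \int \eta_j h$), the map $h \mapsto h_k - \eta_k h$ factors through $h \mapsto (t_k(h), t_{k-1}(h)) \in \bbR^2$ followed by $(s,s') \mapsto s\theta_k - s'\theta_{k-1}$; hence it has rank $\le 2$ from $\calD'$ to $\calD$, as claimed.

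The only real subtlety — and the step I expect to need the most care — is verifying that consecutive sectors in the covering genuinely overlap in a way that lets one place the bumps $\theta_k$ inside $2A_k \cap 2A_{k+1}$ (and that the cyclic ordering of the sectors is the relevant combinatorial structure, rather than a more complicated nerve); for a covering of an annulus $\sigma B \setminus B$ by $K \ge 3$ equally-spaced round sectors of angle $\theta$ with overlapping neighbours this is geometrically clear, but one should state the overlap hypothesis on the covering explicitly so that the bumps $\theta_k$ exist. Everything else — the telescoping identity, the zero-integral bookkeeping, and the rank count — is elementary linear algebra once the bumps are fixed.
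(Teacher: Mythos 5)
Your approach is the same as the paper's: correct the naive pieces $\eta_k h$ by transporting the accumulated masses $t_k=\sum_{j\le k}\int \eta_j h\,\ud x$ through fixed unit-mass bumps placed in the overlaps $2A_k\cap 2A_{k+1}$, telescoping along the chain of sectors; this is exactly the paper's construction with its bumps $\zt_k$ (the paper works along a path, setting $\zt_0=0$, rather than around a cycle, but since $t_K=\int h\,\ud x=0$ the two bookkeepings coincide). Your concern about consecutive overlaps is resolved just as you suggest: the sectors are simply labeled so that $A_k\cap A_{k+1}\neq\emptyset$, which is part of how the covering is arranged.

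One correction, though: your final formula has the transfers with the wrong sign. With $h_k=\eta_k h+t_k\theta_k-t_{k-1}\theta_{k-1}$ and your own recursion $t_k=t_{k-1}+c_k$, you get $\int h_k\,\ud x=c_k+(t_k-t_{k-1})=2c_k$, not $0$; your check ``$=c_k-c_k$'' implicitly uses $t_k-t_{k-1}=-c_k$, contradicting the recursion. The repair is to subtract the outgoing transfer and add the incoming one: $h_k:=\eta_k h-t_k\theta_k+t_{k-1}\theta_{k-1}$. Then $\int h_k\,\ud x=c_k-c_k=0$, the sum still telescopes to $h$ because $t_0=t_K=0$, each correction is supported in $2A_k$, and $h_k-\eta_k h$ factors through $h\mapsto(t_{k-1}(h),t_k(h))\in\bbR^2$, so the rank $\le 2$ claim stands. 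With this sign fixed, your argument is literally the proof given in the paper.
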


The previous corollary is then proved by applying  Proposition~\ref{prop:sol4div}
to each $h_k$ in the sectors $2A_k$.

\begin{proof} 
We label the sectors  $A_k$ so that $A_{k} \cap A_{k+1} \neq \emptyset$.
For each $k$, let   $\zt_{k}$ be a smooth function with unit
  mass supported in $2A_{k} \cap 2A_{k+1}  $. For convenience, we define
  $\zt_{0} = 0$.  The idea is to
  write 
  \begin{align*}
    h_{k} := & \eta_{k} h - \zt_{k} \int  \sum_{j \leq k} \eta_{j}  h + \zt_{k-1} \int \sum_{j < k} \eta_{j} h \quad \hbox{ for } 1 \leq k \leq K-1, \\
    h_{K} := & \eta_{K} h + \zt_{K-1} \int \eta_{K-1} h.
  \end{align*}
  By construction, we have $\int h_{k} =0$ for $1 \leq k \leq K-1$;
  then it follows that $\int h_{K} = 0$ since $\int h =0$.  \qedhere
\end{proof}


\subsection{$L^{2}$ Hodge theory for
  1-forms} \label{subsec:hodge4oneform} Another ingredient in our
proofs of Propositions \ref{prop:gluing} and \ref{prop:intGluing} is
the solvability of a boundary value problem for the 1-form Hodge
system. The result that we need is as follows:

\begin{proposition} \label{prop:hodge4oneforms} Let $O$ be a
  pre-compact connected open subset of $\bbR^{4}$ with a smooth
  boundary $\rd O$. Assume furthermore that the first de Rham
  cohomology group of $O$ vanishes, i.e., $\firstDR(O) = 0$. Then for
  any 2-form $F$ on $O$ such that $F \in H^{N}(O)$ ($N \geq 0$), there
  exists a unique 1-form $\omg \in H^{N+1}(O)$ which solves the
  following boundary value problem for the 1-form Hodge system:
  \begin{equation} \label{eq:hodge4oneform} \ud \omg = F, \quad
    \rd^{\ell} \omg_{\ell} = 0, \quad \omg \rst_{\rd O} (\bfn) = 0,
  \end{equation}
  where $\bfn$ is the outer-pointing normal vector field on $\rd
  O$. Moreover, $\omg$ obeys the estimate
  \begin{equation} \label{eq:hodge4oneform:est}
    \nrm{\omg}_{H^{N+1}_{x}(O)} \aleq \nrm{F}_{H^{N}_{x}(O)}.
  \end{equation}
\end{proposition}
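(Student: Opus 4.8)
The plan is to recast \eqref{eq:hodge4oneform} as a Hodge-type boundary value problem and to solve it by splitting off an exact correction. The first step is to realize the right-hand side as an exterior derivative. Note that $\ud \omg = F$ forces $\ud F = 0$, so we must (and do, in all our applications, where $F = \ud a$ for a connection $1$-form $a$) assume $F$ closed; then, since the relevant domains $O$ (balls and spherical annuli in $\bbR^{4}$) have trivial second de Rham cohomology, there is a bounded right inverse $\calT$ for $\ud$ on closed $2$-forms, built from a Poincar\'e-lemma homotopy operator on star-shaped pieces patched together (in the same spirit as the divergence operator $\calV$ of Proposition~\ref{prop:sol4div}), satisfying $\nrm{\calT F}_{H^{N+1}_{x}(O)} \aleq \nrm{F}_{H^{N}_{x}(O)}$. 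Put $\eta := \calT F$, so that $\ud \eta = F$.

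It remains to subtract from $\eta$ a closed $1$-form (which does not change $\ud \eta$) so as to enforce the two remaining conditions. Write $\omg = \eta - \ud \psi$ for a scalar function $\psi$ on $O$; then $\ud \omg = F$ automatically, while $\rd^{\ell} \omg_{\ell} = 0$ and $\omg \rst_{\rd O}(\bfn) = 0$ reduce to the scalar Neumann problem
\begin{equation*}
  \lap \psi = \rd^{\ell} \eta_{\ell} \ \hbox{ in } O, \qquad \rd_{\bfn} \psi = \eta(\bfn) \ \hbox{ on } \rd O .
\end{equation*}
Its solvability condition $\int_{O} \rd^{\ell}\eta_{\ell} \, \ud x = \int_{\rd O} \eta(\bfn) \, \ud \sgm$ is precisely the divergence theorem, hence always holds; so there is a solution $\psi$, unique up to an additive constant, and by elliptic regularity on the smooth domain $O$ one has $\nrm{\psi - \overline{\psi}}_{H^{N+2}_{x}(O)} \aleq \nrm{\rd^{\ell}\eta_{\ell}}_{H^{N}_{x}(O)} + \nrm{\eta(\bfn)}_{H^{N+\frac{1}{2}}_{x}(\rd O)} \aleq \nrm{\eta}_{H^{N+1}_{x}(O)}$. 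Then $\omg := \eta - \ud \psi \in H^{N+1}_{x}(O)$ solves \eqref{eq:hodge4oneform} and obeys \eqref{eq:hodge4oneform:est}. For uniqueness, the difference $\psi$ of two solutions satisfies $\ud \psi = 0$, $\rd^{\ell}\psi_{\ell} = 0$ and $\psi \rst_{\rd O}(\bfn) = 0$, i.e. it is an absolute (Neumann) harmonic field on $O$; the space of such fields is isomorphic to $\firstDR(O) = 0$ by Hodge--Morrey--Friedrichs theory, so $\psi = 0$. Concretely, $\ud \psi = 0$ and $\firstDR(O) = 0$ let us write $\psi = \ud h$, and then $\nrm{\psi}_{L^{2}_{x}(O)}^{2} = \int_{O} \langle \ud h, \psi\rangle \, \ud x = 0$ after integrating by parts, using $\rd^{\ell}\psi_{\ell} = 0$ in $O$ and $\psi \rst_{\rd O}(\bfn) = 0$.

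The main obstacle is the very first step: producing $\eta$ with $\ud \eta = F$ \emph{together with} the one-derivative gain in the estimate. For $F = \ud a$ the equation $\ud \eta = F$ is trivially solved by $\eta = a$, but this choice destroys the quantitative bound (the closed part of $a$ is uncontrolled by $F$), so one really needs a bounded right inverse for $\ud$; its construction on the topologically trivial domains at hand is classical but not entirely free. An essentially equivalent alternative avoids the splitting: minimize $\int_{O} \abs{\ud \omg}^{2} + \abs{\rd^{\ell}\omg_{\ell}}^{2} \, \ud x$ over $\set{\omg \in H^{1}_{x}(O) : \omg \rst_{\rd O}(\bfn) = 0}$ against the functional $\omg' \mapsto \int_{O} \langle F, \ud \omg'\rangle \, \ud x$, using a Gaffney inequality together with the vanishing of $\firstDR(O)$ for coercivity, and then extracting $\rd^{\ell}\omg_{\ell} = 0$ and $\ud \omg = F$ from the Euler--Lagrange equation and its natural boundary condition; here too the only nontrivial input is the identification of harmonic fields with de Rham cohomology, the rest (Neumann solvability, elliptic regularity, the a priori bound) being standard.
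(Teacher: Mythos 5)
Your argument is correct, but it follows a genuinely different route from the paper: the paper does not prove Proposition~\ref{prop:hodge4oneforms} by hand at all, but simply invokes the standard elliptic Hodge theory for the absolute boundary condition (the reference is \cite[Section 5.9]{MR2744150}), noting that $\firstDR(O)=0$ makes the kernel of the system trivial so that unique solvability follows from the Fredholm alternative. You instead give a constructive two-step reduction: first produce a primitive $\eta$ with $\ud\eta=F$ via a Bogovski{\u\i}/regularized Poincar\'e right inverse with a one-derivative gain (in the spirit of Proposition~\ref{prop:sol4div}), then correct by $\ud\psi$ with $\psi$ solving a scalar Neumann problem, with uniqueness handled exactly as in the paper's remark (harmonic Neumann fields $\cong \firstDR(O)=0$). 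What your route buys is transparency: the only hard analytic inputs are the order $-1$ right inverse for $\ud$ and scalar Neumann regularity, and the constants/scaling are easy to track; what the citation buys is generality, since Taylor's theory covers an arbitrary smooth pre-compact $O$ in one stroke, whereas your construction of $\calT$ is carried out only for the balls and annuli actually used. Two caveats, neither fatal: (i) existence requires $F$ closed and in fact exact (equivalently $\ud F=0$ together with triviality of the second cohomology, or orthogonality to Neumann harmonic $2$-fields) -- this hypothesis is implicit in the paper's statement as well, and you correctly flag it and note it is automatic in the applications where $F=\ud a$ on a ball or annulus; (ii) the bounded right inverse for $\ud$ on closed $2$-forms with the $H^{N}\to H^{N+1}$ gain, while classical (star-shaped pieces plus a patching/\v{C}ech correction, which for $\ud$ is slightly more involved than the zero-integral bookkeeping of Lemma~\ref{lem:divEqOnannulus}), is a nontrivial ingredient that you assert rather than prove; if you want to avoid it entirely, your variational alternative (Gaffney inequality plus $\firstDR(O)=0$ for coercivity) is closer in spirit to the paper's cited argument.
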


This is a standard result; we refer the reader to \cite[Section
5.9]{MR2744150}. The cohomology condition ensures, by the Hodge
theorem, that the kernel of the Hodge system is trivial. Then the
latter fact allows us to conclude unique solvability of
\eqref{eq:hodge4oneform} by the Fredholm alternative theorem.

\subsection{Proof of Propositions \ref{prop:gluing} and
  \ref{prop:intGluing}} \label{subsec:gluing:pf} We are ready to prove
Propositions \ref{prop:gluing}--\ref{prop:intGluing}.
\begin{proof} [Proof of Proposition \ref{prop:gluing}]
  Without any loss of generality, we may assume that $B$ is centered
  at the origin of $\bbR^{4}$. For  $1 <  \rgExt < \rglue \leq 2$, we define  $\rgExt =
  \rglue^{(0)} < \rglue^{(1)} < \rglue^{(2)} < \rglue^{(3)} = \rglue$
  as
  \begin{equation*}
    \rglue^{(0)} = \rgExt, \quad
    \rglue^{(1)} = \frac{1}{2} \rgExt + \frac{1}{2} \rglue, \quad
    \rglue^{(2)} = \frac{1}{3} \rgExt + \frac{2}{3} \rglue, \quad
    \rglue^{(3)} = \rglue.
  \end{equation*}
  Below, we will write
  \begin{equation*}
    E^{\extr}[a,e,f,g] = (\widetilde{a}, \widetilde{e}, \widetilde{f}, \widetilde{g}).
  \end{equation*}

  \pfstep{Step 1. Excision of $a_{j}$} The purpose of this step is to
  cutoff $a_{j}$ to obtain $\widetilde{a}_{j}$ on $\bbR^{4} \setminus
  \overline{B}$ such that
  \begin{align}
    \widetilde{a} = a \hbox{ on } \rglue^{(1)} B \setminus
    \overline{B}, & \quad
    \widetilde{a} = 0 \hbox{ on } \bbR^{4} \setminus \rglue^{(3)} \overline{B}, \label{eq:gluing:basicprops4atilde} \\
    \nrm{\widetilde{a}}_{\dot{H}^{1}_{x} \cap L^{4}_{x}(\bbR^{4}
      \setminus \overline{B})}
    & \aleq_{\rglue}  \nrm{a}_{\dot{H}^{1}_{x} \cap L^{4}_{x}(\rglue B \setminus \overline{B})}, \label{eq:gluing:est4atilde} \\
    \nrm{\ud \widetilde{a}}_{L^{2}_{x}(\bbR^{4} \setminus
      \overline{B})}^{2} & \aleq_{\rglue} \calE_{\rglue B \setminus
      \overline{B}}[a,e,f,g], \label{eq:gluing:energy4atilde}
  \end{align}
  where $(\ud \widetilde{a})_{jk} = \rd_{j} \widetilde{a}_{k} -
  \rd_{k} \widetilde{a}_{j}$. If one drops the last condition, then
  the simple choice $\widetilde{a} = \eta a$ for a suitable cutoff
  $\eta$ will do the job; however, having the estimate
  \eqref{eq:gluing:energy4atilde} with only the energy of $(a, e, f,
  g)$ on the annular region $\rglue B \setminus \overline{B}$ on the
  right-hand side will be crucial for our later purposes, in particular for
  performing the blow-up analysis in \cite{OT3}. Our idea for achieving this goal is
  as follows: First, we will find a gauge equivalent connection 1-form
  $\check{a}$ on the annular region $\rglue B \setminus \overline{B}$
  such that
  \begin{equation} \label{eq:gluing:est4acheck}
    \nrm{\check{a}}_{\dot{H}^{1}_{x} \cap L^{4}_{x}(\rglue B \setminus
      \overline{B})} + (\diam B)^{-1}
    \nrm{\check{a}}_{L^{2}_{x}(\rglue B \setminus \overline{B})}
    \aleq_{\rglue} \nrm{\ud a}_{L^{2}_{x}(\rglue B \setminus
      \overline{B})}
  \end{equation}
  We remind the reader that $\nrm{\ud a}_{L^{2}_{x}(\rglue B \setminus
    \overline{B})}^{2} \leq \calE_{\rglue B \setminus
    \overline{B}}[a,e,f,g]$.  The connection 1-form $\check{a}$ can be
  safely excised outside $\rglue^{(2)} B \supset \rglue^{(1)}
  B$. Finally, we patch together $a_{j}$ and $\widetilde{a}_{j}$
  inside $\rglue^{(2)} B$ using a suitable gauge transformation to
  produce $\widetilde{a}$ satisfying
  \eqref{eq:gluing:basicprops4atilde}--\eqref{eq:gluing:energy4atilde}.

  We now proceed to the details. Let $O = O(\rglue, B)$ denote the
  annulus $\rglue B \setminus \overline{B}$. Applying Proposition
  \ref{prop:hodge4oneforms} with $F = \ud a$ on the region $O$ (which
  is possible since $H^{1}_{\bbR}(O) = 0$), we infer the existence of
  a unique 1-form $\check{a}$ which solves
  \begin{equation} \label{eq:gluing:acheck} \ud \check{a} = \ud a,
    \quad \rd^{\ell} \check{a}_{\ell} = 0, \quad \check{a} \rst_{\rd
      B} (\rd_{r}) = \check{a} \rst_{\rd (\rglue B)} (\rd_{r}) = 0.
  \end{equation}
  Moreover, $\check{a}$ obeys the estimate
  \eqref{eq:gluing:est4acheck}. To see this, first observe that this
  estimate follows from \eqref{eq:hodge4oneform:est} and Sobolev when
  $B$ is a ball of unit radius. The general case follows once we note
  that, for a fixed $\rglue > 1$, both sides of
  \eqref{eq:gluing:est4acheck} are invariant under scaling.

  Next, we prove that $\check{a}$ is gauge equivalent to $a$. This
  amounts to finding a function $\chi$ such that
  \begin{equation*}
    \check{a} = a - \ud \chi.
  \end{equation*}
  Since $\ud(\check{a} - a) = 0$, the existence of such a function
  $\chi$ on $O$ is guaranteed by the topological fact that
  $\firstDR(O) = 0$; it is moreover unique if we furthermore require
  that $\int_{O} \chi = 0$. By Poincar\'e's inequality and
  \eqref{eq:gluing:est4acheck}, it follows that $\chi$ satisfies the
  bound
  \begin{equation} \label{eq:gluing:est4chi} \nrm{\rd_{x}^{(2)}
      \chi}_{L^{2}_{x}(O)} + \nrm{\rd_{x} \chi}_{L^{4}_{x}(O)} +
    (\diam B)^{-2} \nrm{\chi}_{L^{2}_{x}(O)} \aleq
    \nrm{a}_{\dot{H}^{1}_{x} \cap L^{4}_{x}(O)}.
  \end{equation}

  We now show that, thanks to \eqref{eq:gluing:est4acheck}, it is safe
  to cut off $\check{a}$. Let $\eta_{(2)}$ be a smooth function on
  $\bbR^{4}$ such that
  \begin{equation*}
    \eta_{(2)} = 1\hbox{ on } \rglue^{(2)} B,\quad
    \eta_{(2)} = 0 \hbox{ outside } \rglue^{(3)} B, \quad
    \abs{\rd_{x}^{(N)} \eta_{(2)}} \aleq_{N, \rglue} (\diam B)^{-N} \hbox{ for } N \geq 0.
  \end{equation*}
  Then by \eqref{eq:gluing:est4acheck}, it is immediate that for any
  open subset $O' \subseteq O$,
  \begin{equation} \label{eq:gluing:energy4acheck-cutoff}
    \nrm{\ud(\eta_{(2)} \check{a})}_{L^{2}_{x}(O')} \leq
    \nrm{\eta_{(2)} \ud \check{a}}_{L^{2}_{x}(O')} + \nrm{\rd_{x}
      \eta_{(2)}}_{L^{\infty}_{x}(O')} \nrm{\check{a}}_{L^{2}_{x}(O')}
    \aleq_{\rglue}\nrm{\ud a}_{L^{2}_{x}(O')}.
  \end{equation}

  To conclude the proof, we finally patch $a$ and $\check{a}$ by a
  suitable gauge transformation to obtain $\widetilde{a}$ with the
  desired properties. Let $\eta_{(1)}$ be a smooth function on
  $\bbR^{4}$ such that
  \begin{equation*}
    \eta_{(1)} = 1 \hbox{ on } \rglue^{(1)} B,\quad
    \eta_{(1)} = 0 \hbox{ outside } \rglue^{(2)} B, \quad
    \abs{\rd_{x}^{(N)} \eta_{(1)}} \aleq_{N, \rglue} (\diam B)^{-N} \hbox{ for } N \geq 0.
  \end{equation*}
  We now define $\widetilde{a}$ by the following formula:
  \begin{equation} \label{eq:gluing:atilde} \widetilde{a} :=
    \eta_{(2)} (a - \ud \widetilde{\chi}), \quad \hbox{ where }
    \widetilde{\chi} := (1-\eta_{(1)}) \chi.
  \end{equation}
  From \eqref{eq:gluing:est4chi}, it follows that $\widetilde{\chi}$
  obeys
  \begin{equation} \label{eq:gluing:est4chitilde:0}
    \nrm{\rd_{x}^{(2)}\widetilde{\chi}}_{L^{2}_{x}(\rglue B \setminus
      \overline{B})} + \nrm{\rd_{x}
      \widetilde{\chi}}_{L^{4}_{x}(\rglue B \setminus \overline{B})} +
    (\diam B)^{-2} \nrm{\widetilde{\chi}}_{L^{2}_{x}(\rglue B
      \setminus \overline{B})} \aleq \nrm{a}_{\dot{H}^{1}_{x} \cap
      L^{4}_{x}(O)}
  \end{equation}
  It remains to verify the properties
  \eqref{eq:gluing:basicprops4atilde}--\eqref{eq:gluing:energy4atilde}. The
  first property \eqref{eq:gluing:basicprops4atilde} follows easily
  from the construction. The second property
  \eqref{eq:gluing:est4atilde} follows from
  \begin{equation} \label{eq:gluing:est4chitilde} \nrm{\eta_{(2)} \ud
      \widetilde{\chi}}_{\dot{H}^{1}_{x} \cap L^{4}_{x}(\bbR^{4}
      \setminus \overline{B})} \aleq_{\rglue} \nrm{\ud
      \widetilde{\chi}}_{\dot{H}^{1}_{x} \cap L^{4}_{x}(\rglue B
      \setminus \overline{B})} \aleq_{\rglue} \nrm{a}_{\dot{H}^{1}_{x}
      \cap L^{4}_{x} (\rglue B \setminus \overline{B})},
  \end{equation}
  which in turn follows from \eqref{eq:gluing:est4chitilde:0}.
  Finally, the third property \eqref{eq:gluing:energy4atilde} is a
  consequence of \eqref{eq:gluing:basicprops4atilde} and
  \eqref{eq:gluing:energy4acheck-cutoff} with $O' = \rglue B \setminus
  \rglue^{(1)} \overline{B}$.

  \pfstep{Step 2. Excision of $f, g$} In this step, we excise $(f, g)$
  to construct $(\widetilde{f}, \widetilde{g})$ on $\bbR^{4} \setminus
  \overline{B}$ that satisfies the following properties:
  \begin{align}
    (\widetilde{f}, \widetilde{g}) = (f, g) \hbox{ on } \rgExt B
    \setminus \overline{B}, & \quad
    (\widetilde{f}, \widetilde{g}) = 0 \hbox{ on } \bbR^{4} \setminus \rglue^{(1)} B, \label{eq:gluing:basicprops4fgtilde} \\
    \nrm{\widetilde{f}}_{\dot{H}^{1}_{x} \cap L^{4}_{x} (\bbR^{4}
      \setminus \overline{B})}
    & \aleq  \nrm{f}_{\dot{H}^{1}_{x} \cap L^{4}_{x}(\rglue B \setminus \overline{B})}, \label{eq:gluing:est4ftilde}\\
    \nrm{\widetilde{g}}_{L^{2}_{x} (\bbR^{4} \setminus \overline{B})}
    & \aleq  \nrm{g}_{L^{2}_{x}(\rglue B \setminus \overline{B})} , \label{eq:gluing:est4gtilde} \\
    \sum_{j=1, \ldots 4} \nrm{\widetilde{\covD}_{j}
      \widetilde{f}}_{L^{2}_{x}(\bbR^{4} \setminus \overline{B})}^{2}
    & \aleq (\diam B)^{-2} \nrm{    f}_{L^{2}_{x}(\rglue B \setminus \overline{B})}^{2} + \sum_{j =
      1, \ldots, 4} \nrm{\covD_{j} f}^{2}_{L^{2}_{x}(\rglue B
      \setminus \overline{B})}, \label{eq:gluing:energy4ftilde}
  \end{align}
  where $\widetilde{\covD}_{j} = \rd_{j} + i \widetilde{a}_{j}$. 
These conditions are easily achieved by naively choosing $\rgExt =
  \rglue^{(0)}$ and cutting off $f, g$ by a smooth function $\eta_{(0)}$ that is
  supported in $\rglue^{(1)} B$ and equals $1$ on $\rglue^{(0)} B$. 

  \pfstep{Step 3. Excision and gluing of $e_{j}$} In this step, we
  construct $\widetilde{e}_{j}$ that, together with
  $\widetilde{a}_{j}$, $\widetilde{f}$ and $\widetilde{g}$ constructed
  in the preceding steps, would satisfy the properties in Proposition
  \ref{prop:gluing}.  The problem of localizing of $\widetilde{e}_{j}$
  is subtle, as it must satisfy the Gauss equation
  \begin{equation} \label{eq:gluing:diveq4etilde} \rd^{\ell}
    \widetilde{e}_{\ell} = \Im [\widetilde{f} \,
    \overline{\widetilde{g}}].
  \end{equation}
  In particular, integrating \eqref{eq:gluing:diveq4etilde} over a
  ball $B_{r}$ of radius $r \gg 1$, the divergence theorem implies
  \begin{equation*}
    \int_{\rd B_{r}} \widetilde{e}_{\ell} \bfn^{\ell} = \int_{\bbR^{4}} \Im[\widetilde{f} \, \overline{\widetilde{g}}] \, \ud x, \quad \hbox{ where } \bfn^{\ell} = \frac{x^{\ell}}{\abs{x}},
  \end{equation*}
  which precludes the possibility of having a compactly supported
  $\widetilde{e}$ in general. Instead, we will \emph{glue} the 1-form
  $e$ to another solution $e_{(q)}$ (see \eqref{eq:gluing:eQ}) to the
  Gauss equation with a well-understood behavior at infinity, while
  keeping $e$ unchanged in the region $\rgExt B$. The key to carrying
  out this procedure is Proposition \ref{prop:sol4div}, which allows
  us to solve away certain errors in the Gauss equation in a bounded
  region of space.

  We define $\widetilde{e}$ to be
  \begin{equation} \label{eq:gluing:etilde} \widetilde{e} =
    \eta_{(0)}^{2} e + (1 - \eta_{(0)}^{2}) e_{(q)} + e_{(G)},
  \end{equation}
  where $\set{e_{(q)}}_{q \in \bbR}$ is an explicit 1-parameter family
  of solutions to $\rd^{\ell} e_{(q) \ell} = 0$ on $\bbR^{4} \setminus
  \set{0}$, to be introduced below, and $e_{(G)}$ will be constructed
  to satisfy the equation
  \begin{equation} \label{eq:gluing:diveq4eG} \rd^{\ell} e_{(G) \ell}
    = - \rd^{\ell} \eta_{(0)}^{2} (e_{\ell} - e_{(q) \ell}) \quad
    \hbox{ with } \quad \supp \, e_{(G)} \subseteq \rglue^{(1)} B
    \setminus \rgExt \overline{B}.
  \end{equation}
  For $e_{(q)}$ and $e_{(G)}$ as above, we can readily verify that
  \eqref{eq:gluing:diveq4etilde} holds as follows:
  \begin{align*}
    \rd^{\ell} \widetilde{e}_{\ell} - \Im[\widetilde{f} \,
    \overline{\widetilde{g}}]
    =& \rd^{\ell} (\eta_{(0)}^{2} e_{\ell} + (1 - \eta_{(0)}^{2}) e_{(q) \ell} + e_{(G) \ell}) - \eta_{(0)}^{2} \Im[f \overline{g}] \\
    =& \eta_{(0)}^{2} (\rd^{\ell} e_{\ell} - \Im[f \overline{g}]) +
    (\rd^{\ell} \eta_{(0)}^{2}) (e_{\ell} - e_{(q) \ell}) + \rd^{\ell}
    e_{(G) \ell} = 0.
  \end{align*}

  The 1-form $e_{(q)}$ is defined on $\bbR^{4} \setminus \set{0}$
  component-wisely as follows:
  \begin{equation} \label{eq:gluing:eQ} e_{(q) j} = \frac{q}{2
      \pi^{2}} \frac{x^{j}}{\abs{x}^{4}}.
  \end{equation}
  Note that $e_{(q)}$ is precisely the electric field of a point
  charge at the origin given by the 4-dimensional version of Coulomb's
  law.  Indeed, $e_{(q)}$ satisfies the free divergence equation
  \begin{equation} \label{eq:gluing:divEq4eQ} \rd^{\ell} e_{(q) \ell}
    = 0,
  \end{equation}
  and the charge of $e_{(q)}$ measured on any sphere $\rd B_{r}$ of
  radius $r$ centered at the origin (in fact, any hypersurface
  enclosing the origin) equals $q$, i.e.,
  \begin{equation} \label{eq:gluing:charge4eQ} \int_{\rd B_{r}} e_{(q)
      \ell} \bfn^{\ell} = q \quad \hbox{ where } \bfn^{\ell} =
    \frac{x^{\ell}}{\abs{x}}.
  \end{equation}

  We now turn to the construction of $e_{(G)}$. We wish to apply
  Corollary \ref{cor:sol4div:annulus}; thus we must ensure that
  \begin{equation} \label{eq:gluing:intCond} 0 = \int \rd^{\ell}
    \eta_{(0)}^{2} (e_{\ell} - e_{(q) \ell}) \, \ud x.
  \end{equation}
  By \eqref{eq:gluing:charge4eQ} and the divergence theorem, we
  compute
  \begin{equation*}
    \int \rd^{\ell} \eta_{(0)}^{2} \, e_{(q) \ell} \, \ud x= q.
  \end{equation*}
  Thus, \eqref{eq:gluing:intCond} dictates the following choice of $q$
  as a function of $e$ for a fixed $\rglue$:
  \begin{equation} \label{eq:gluing:q} q[e] := \int \rd^{\ell}
    \eta_{(0)}^{2} \, e_{\ell} \, \ud x.
  \end{equation}
  Since $\rd^{\ell} \eta_{(0)}$ is supported in $\rglue^{(1)} B
  \setminus (\rgExt + \dlt \rgExt) \overline{B} \subseteq \rglue B
  \setminus \overline{B}$, we have
  \begin{equation*}
    \abs{q} \aleq \int_{\rglue B \setminus \overline{B}} \frac{1}{\abs{x}} \abs{e} \, \ud x \aleq_{\rglue} (\diam B) \nrm{e}_{L^{2}_{x}(\rglue B \setminus \overline{B})}.
  \end{equation*}
  Therefore, the $L^{2}$ norm of $e_{(q)}$ obeys the bound
  \begin{equation} \label{eq:gluing:est4eQ}
    \nrm{e_{(q)}}_{L^{2}_{x}(\bbR^{4} \setminus \overline{B})} \aleq
    \abs{q} \nrm{\frac{1}{\abs{x}^{3}}}_{L^{2}_{x}(\bbR^{4} \setminus
      \overline{B})} \aleq \nrm{e}_{L^{2}_{x}(\rglue B
      \setminus \overline{B})}.
  \end{equation}
  Similarly, we also have
  \begin{align*}
    \nrm{\rd^{\ell} \eta_{(1)}^{2} (e_{\ell} - e_{(q)
        \ell})}_{L^{\frac{4}{3}}_{x}(\bbR^{4})} \aleq \nrm{e
      - e_{(q)}}_{L^{2}_{x}(\rglue B \setminus \overline{B})}
    \aleq \nrm{e}_{L^{2}_{x}(\rglue B \setminus
      \overline{B})} \, .
  \end{align*}
  Applying Corollary \ref{cor:sol4div:annulus} with $A = \rglue^{(1)}
  B \setminus \rgExt \overline{B}$  we obtain a solution $e_{(G)}$ to the
  problem \eqref{eq:gluing:diveq4eG} that satisfies
  \begin{equation} \label{eq:gluing:est4eG}
    \nrm{e_{(G)}}_{L^{2}_{x}(\bbR^{4})} \aleq_{\rglue}
    \nrm{e}_{L^{2}_{x}(\rglue B \setminus \overline{B})}.
  \end{equation}
  Combined with \eqref{eq:gluing:est4atilde},
  \eqref{eq:gluing:energy4atilde}, \eqref{eq:gluing:est4ftilde},
  \eqref{eq:gluing:est4gtilde}, \eqref{eq:gluing:energy4ftilde},
  \eqref{eq:gluing:etilde} and \eqref{eq:gluing:est4eQ}, estimates
  \eqref{eq:gluing:est} and \eqref{eq:gluing:energy} follow. The proof
  of Statements (\ref{item:gluing:1})--(\ref{item:gluing:3}) of
  Proposition \ref{prop:gluing} is therefore complete.

  \pfstep{Step 4. Continuity and persistence of regularity} It remains
  to verify Statement (\ref{item:gluing:4}) of Proposition
  \ref{prop:gluing}.  Inspection of our proof so far (using also the
  linearity statement in Corollary \ref{cor:sol4div:annulus}) shows
  that $\widetilde{a}$, $\widetilde{e}$, $\widetilde{f}$ and
  $\widetilde{g}$ are in fact \emph{linear} in $a$, $e$, $f$ and $g$,
  respectively; thus the continuity statement is a triviality.
  Checking the persistence of regularity property is a routine
  exercise using the corresponding statements in Corollary
  \ref{cor:sol4div:annulus} and Proposition \ref{prop:hodge4oneforms};
  we omit the details.
\end{proof}

Next, we prove Proposition \ref{prop:intGluing}. The main idea is the
same as for the preceding proof of Proposition \ref{prop:gluing}; the
key difference is the choice of an 1-parameter family of solutions
$e_{(p)}$ to the Gauss equation in Step 3, which now must be regular
at the origin.

\begin{proof} [Proof of Proposition \ref{prop:intGluing}]
  As before, we may assume that $B$ is centered at the origin of
  $\bbR^{4}$. For any given $1 < \rgInt < \rglue \leq 2$, we define $1 = \rglue^{(-3)} <
  \rglue^{(-2)} < \rglue^{(-1)} < \rglue^{(0)} = \rgInt
  < \rglue$ as
  \begin{equation*}
    \rglue^{(-3)} = 1, \quad
    \rglue^{(-2)} = \frac{2}{3} + \frac{1}{3} \rglue, \quad
    \rglue^{(-1)} = \frac{1}{2} + \frac{1}{2} \rglue, \quad
    \rglue^{(0)} = \rgInt.
  \end{equation*}
  In what follows, we will write $E^{\intr}[a,e,f,g] = (\widetilde{a},
  \widetilde{e}, \widetilde{f}, \widetilde{g})$.


  \pfstep{Step 1. Excision of $a_{j}$} This step is very similar to
  Step 1 in the proof of Proposition \ref{prop:gluing}, except that we
  now excise the data in the inner part of the annulus. The goal is to
  construct $\widetilde{a}$ on $\rglue B$ such that the following
  properties hold:
  \begin{align}
    \widetilde{a} = a \hbox{ on } \rglue B \setminus \rglue^{(-1)}
    \overline{B}, & \quad
    \widetilde{a} = 0 \hbox{ on } \rglue^{(-3)} B, \label{eq:intGluing:basicprops4atilde}\\
    \nrm{\widetilde{a}}_{\dot{H}^{1}_{x} \cap L^{4}_{x}(\rglue B)}
    & \aleq_{\rglue}  \nrm{a}_{\dot{H}^{1}_{x} \cap L^{4}_{x}(\rglue B \setminus \overline{B})}, \label{eq:intGluing:est4atilde} \\
    \nrm{\ud \widetilde{a}}_{L^{2}_{x}(\rglue B)}^{2} & \aleq_{\rglue}
    \calE_{\rglue B \setminus
      \overline{B}}[a,e,f,g]. \label{eq:intGluing:energy4atilde}
  \end{align}

  Let $O = O(\rglue, B)$ denote the annulus $\rglue B \setminus
  \overline{B}$. Applying Proposition \ref{prop:hodge4oneforms} with
  $F = \ud a$ on $O$, we obtain a unique 1-form $\check{a}$ that
  satisfies \eqref{eq:gluing:est4acheck}--\eqref{eq:gluing:acheck},
  and also a function $\chi$ satisfying $\check{a} = a - \ud \chi$,
  $\int_{O} \chi = 0$ and \eqref{eq:gluing:est4chi}. Let
  $\eta_{(-3)}$, $\eta_{(-2)}$ be smooth function on $\bbR^{4}$ such
  that
  \begin{gather*}
    \eta_{(-3)} = 0\hbox{ on } \rglue^{(-3)}B,\quad \eta_{(-3)} = 1
    \hbox{ outside } \rglue^{(-2)} B, \quad
    \abs{\rd_{x}^{(N)} \eta_{(-3)}} \aleq_{N, \rglue} (\diam B)^{-N} \hbox{ for } N \geq 0, \\
    \eta_{(-2)} = 0\hbox{ on } \rglue^{(-2)} B,\quad \eta_{(-2)} = 1
    \hbox{ outside } \rglue^{(-1)} B, \quad \abs{\rd_{x}^{(N)}
      \eta_{(-2)}} \aleq_{N, \rglue} (\diam B)^{-N} \hbox{ for } N
    \geq 0.
  \end{gather*}
  We define
  \begin{equation} \label{eq:intGluing:atilde} \widetilde{a} :=
    \eta_{(-3)} (a - \ud \widetilde{\chi}), \quad \hbox{ where }
    \widetilde{\chi} := (1-\eta_{(-2)}) \chi.
  \end{equation}
  Then proceeding as before, it can be checked that $\widetilde{a}$
  satisfies
  \eqref{eq:intGluing:basicprops4atilde}--\eqref{eq:intGluing:energy4atilde}.

  \pfstep{Step 2. Excision of $f$, $g$} We seek to construct $f', g'$
  on $\rglue B$ such that
  \begin{align}
    (f', g') = (f, g) \hbox{ on } \rglue B \setminus \rgInt
    \overline{B}, & \quad
    (f', g') = 0 \hbox{ on } \bbR^{4} \setminus \rglue^{(-1)} B, \label{eq:intGluing:basicprops4fgprime} \\
    \nrm{f'}_{\dot{H}^{1}_{x} \cap L^{4}_{x} (\rglue B)}
    & \aleq_{\rglue}  \nrm{f}_{\dot{H}^{1}_{x} \cap L^{4}_{x}(\rglue B \setminus \overline{B})}, \label{eq:intGluing:est4fprime}\\
    \nrm{g'}_{L^{2}_{x} (\rglue B)}
    & \aleq  \nrm{g}_{L^{2}_{x}(\rglue B \setminus \overline{B})} , \label{eq:intGluing:est4gprime} \\
    \sum_{j=1, \ldots 4} \nrm{\widetilde{\covD}_{j}
      f'}_{L^{2}_{x}(\rglue B)}^{2} & \aleq_{\rglue}
    \nrm{\frac{1}{\abs{x}} f}_{L^{2}_{x}(\rglue B \setminus
      \overline{B})}^{2} + \sum_{j = 1, \ldots, 4} \nrm{\covD_{j}
      f}^{2}_{L^{2}_{x}(\rglue B \setminus
      \overline{B})}, \label{eq:intGluing:energy4fprime}
  \end{align}
  where $\widetilde{\covD}_{j} = \rd_{j} + i \widetilde{a}_{j}$ and
  $\rgInt = \rglue^{(0)} = \frac{1 + \rglue}{2}$.

  Let $\eta_{(-1)}$ be a smooth function on $\bbR^{4}$ such that
  \begin{equation*}
    \eta_{(-1)} = 0\hbox{ on } \rglue^{(-1)} B,\quad
    \eta_{(-1)} = 1 \hbox{ outside } \rglue^{(0)} B, \quad
    \abs{\rd_{x}^{(N)} \eta_{(-1)}} \aleq_{N, \rglue} (\diam B)^{-N} \hbox{ for } N \geq 0.
  \end{equation*}
  We simply define
  \begin{equation} \label{eq:intGluing:fgprime} f' = \eta_{(-1)} f,
    \quad g' = \eta_{(-1)} g.
  \end{equation}
  Then
  \eqref{eq:intGluing:basicprops4fgprime}--\eqref{eq:intGluing:energy4fprime}
  can be easily verified.

  \pfstep{Step 3. Excision and gluing of $f$, $g$ and $e_{j}$} In this
  step, we finally define $\widetilde{e}, \widetilde{f}$ and
  $\widetilde{g}$ on $\rglue B$. As remarked above, the basic idea is
  similar to that in Step 3 of the proof of Proposition
  \ref{prop:gluing}. However, the 1-forms $\set{e_{(q)}}_{q \in \bbR}$
  are not suitable for gluing to the cutoff of $e$ outside a ball
  centered at the origin, since each $e_{(q)}$ (with $q \neq 0$) is
  singular at $0$. Thus we need to devise a different one parameter
  family of initial data sets. To have a solution to the Gauss
  equation with a nontrivial electric charge while being regular, we
  need to introduce a non-trivial charge density $\Im[f_{(p)}
  \overline{g_{(p)}}]$ as well as $e_{(p)}$, where $p$ is the charge
  parameter.

  Let $\zt$ be a smooth function on $\bbR^{4}$ such that
  \begin{equation*}
    \zt \geq 0, \quad
    \zt = 0 \hbox{ outside } B, \quad
    \int_{\bbR^{4}} \zt^{2} \, \ud x = 1, \quad
    \abs{\rd_{x}^{(N)} \zt} \aleq_{N} (\diam B)^{-N-2} .
  \end{equation*}
  Then for $p \in \bbR$, we define
  \begin{align}
    e_{(p)j}
    &= - p (- \lap)^{-1} \rd_{j} \zt^{2}, \label{eq:intGluing:eP} \\
    f_{(p)} &= \sqrt{p} (\diam B)^{\frac{1}{2}} \, \zt, \label{eq:intGluing:fP} \\
    g_{(p)} &= - i (\diam B)^{-1} f_{(p)} = - i \sqrt{p} (\diam
    B)^{-\frac{1}{2}} \zt. \label{eq:intGluing:gP}
  \end{align}
  Note that $(e_{(p)}, f_{(p)}, g_{(p)})$ solves the Gauss equation
  \begin{equation} \label{eq:intGluing:diveq4efgP} \rd^{\ell} e_{(p)
      \ell} = \Im[f_{(p)} \overline{g_{(p)}}],
  \end{equation}
  and obeys the following properties:
  \begin{gather}
    \int_{r B} \rd^{\ell} e_{(p) j} = p \quad \hbox{ for any } r > 1, \\
    \nrm{e_{(p)}}_{L^{2}_{x}} \aleq p (\diam B)^{-1}, \quad
    \nrm{f_{(p)}}_{\dot{H}^{1}_{x} \cap L^{4}_{x}(\bbR^{4})} +
    \nrm{g_{(p)}}_{L^{2}_{x}(\bbR^{4})} \aleq \sqrt{p} (\diam
    B)^{-\frac{1}{2}}. \label{eq:intGluing:est4efgP}
  \end{gather}

  Recall the definitions of $\eta_{(-1)}$, $f'$, $g'$ from the
  previous step. We define $(\widetilde{e}, \widetilde{f},
  \widetilde{g})$ as follows:
  \begin{align*}
    \widetilde{e} =& \eta_{(-1)}^{2} e + (1- \eta_{(-1)}^{2}) e_{(p)} + e_{(G)}\\
    \widetilde{f} =& f' + f_{(p)}  \\
    \widetilde{g} =& g' + g_{(p)}
  \end{align*}
  where $e_{(G)}$ will be constructed so that
  \begin{equation} \label{eq:intGluing:diveq4eG} \rd^{\ell} e_{(G)
      \ell} = - \rd^{\ell} \eta_{(-1)}^{2} (e_{\ell} - e_{(p) \ell})
    \quad \hbox{ with } \quad \supp \, e_{(G)} \subseteq \rglue^{(0)}
    B \setminus \overline{B}.
  \end{equation}
  Note that
  \begin{equation*}
    \supp\, f' \cup \supp \, g' \subseteq \supp \, \eta_{(-1)}
    \quad \hbox{ and } \quad
    \supp \, \eta_{(-1)} \cap (\supp \, f_{(p)} \cup \supp \, g_{(p)}) = \0.
  \end{equation*}
  Using these properties, we can verify that $(\widetilde{e},
  \widetilde{f}, \widetilde{g})$ solves the Gauss equation as follows:
  \begin{align*}
    \rd^{\ell} \widetilde{e}_{\ell} - \Im[ \widetilde{f} \,
    \overline{\widetilde{g}} ]
    =& \rd^{\ell} (\eta_{(-1)}^{2} e_{\ell} + (1 - \eta_{(-1)}^{2}) e_{(p) \ell} + e_{(G) \ell}) \\
    &- \eta_{(-1)}^{2} \Im[f \overline{g}] - (1 - \eta_{(-1)}^{2}) \Im[f_{(p)} \overline{g_{(p)}}] \\
    =& \rd^{\ell} \eta_{(-1)}^{2} (e_{\ell} - e_{(p) \ell}) +
    \rd^{\ell} e_{(G) \ell} = 0.
  \end{align*}

  In order to apply Corollary \ref{cor:sol4div:annulus}, we need
  \begin{equation*}
    0 = \int \rd^{\ell} \eta_{(-1)}^{2} (e_{\ell} - e_{(p) \ell}) \, \ud x,
  \end{equation*}
  which enforces the following choice of $p$ as a function of $e$ for
  a fixed $\rglue$:
  \begin{equation} \label{eq:intGluing:p} p[e] := \int \rd^{\ell}
    \eta_{(-1)}^{2} e_{\ell} \, \ud x.
  \end{equation}
  As before, $p$ obeys the bound
  \begin{equation} \label{eq:intGluing:est4p} \abs{p} \aleq_{\rglue}
    (\diam B) \nrm{e}_{L^{2}_{x}(\rglue B \setminus \overline{B})},
  \end{equation}
  and therefore
  \begin{equation*}
    \nrm{\rd^{\ell} \eta_{(-1)}^{2} (e_{\ell} - e_{(p) \ell})}_{L^{\frac{4}{3}}_{x}(\bbR^{4})}
    \aleq_{\rglue} \nrm{e - e_{(p)}}_{L^{2}_{x}(\rglue B \setminus \overline{B})}
    \aleq_{\rglue} \nrm{e}_{L^{2}_{x}(\rglue B \setminus \overline{B})}.
  \end{equation*}
  Now applying Corollary \ref{cor:sol4div:annulus} with $A =
  \rglue^{(0)} B \setminus \rglue^{(-1)} \overline{B}$  we obtain a solution $e_{(G)}$
  to the problem \eqref{eq:intGluing:diveq4eG} such that
  \begin{equation} \label{eq:intGluing:est4eG}
    \nrm{e_{(G)}}_{L^{2}_{x}(\bbR^{4})} \aleq_{\rglue}
    \nrm{e}_{L^{2}_{x}(\rglue B \setminus \overline{B})}.
  \end{equation}

  From \eqref{eq:intGluing:est4atilde},
  \eqref{eq:intGluing:energy4atilde}, \eqref{eq:intGluing:est4fprime},
  \eqref{eq:intGluing:est4gprime}, \eqref{eq:intGluing:energy4fprime},
  \eqref{eq:intGluing:est4efgP}, \eqref{eq:intGluing:est4p} and
  \eqref{eq:intGluing:est4eG}, estimates \eqref{eq:intGluing:est} and
  \eqref{eq:intGluing:energy} follow. Thus the proof of Statements
  (\ref{item:intGluing:2})--(\ref{item:intGluing:1}) of Proposition
  \ref{prop:intGluing} is complete.

  \pfstep{Step 4. Continuity and persistence of regularity} To
  complete the proof, we need to establish Statement
  (\ref{item:intGluing:3}) of Proposition \ref{prop:intGluing}. As in
  Proposition \ref{prop:gluing}, this task is a routine exercise of
  inspecting the proofs so far; we omit the details.
\end{proof}

\section{Local geometric uniqueness of Maxwell-Klein-Gordon} \label{sec:locGeom}
In this section we formulate and prove local geometric uniqueness (i.e., uniqueness up to a gauge transformation) of Maxwell-Klein-Gordon equations at the energy regularity. In Section~\ref{subsec:admSol}, we formulate the notion of an admissible $C_{t} \calH^{1}$ solution and the associated class $C_{t} \calG^{2}$ of gauge transformations, which provides an adequate setting for local geometric uniqueness. Then in Section~\ref{subsec:locGeom}, we state and prove the local geometric uniqueness of \eqref{eq:MKG} in the class of admissible $C_{t} \calH^{1}$ solutions (Proposition~\ref{prop:locGeom}).

\subsection{Admissible $C_{t} \calH^{1}$ solutions and gauge transformations} \label{subsec:admSol}
Here we introduce the notions of \emph{classical} and \emph{admissible $C_{t} \calH^{1}$ solutions} to \eqref{eq:MKG}. Classical solutions refer to smooth solutions to \eqref{eq:MKG} with sufficient spatial decay, and admissible $C_{t} \calH^{1}$ solutions are defined as local-in-time limits of classical solutions in the \emph{energy topology} $C_{t} \calH^{1}$, to be defined below. We also define the associated classes of gauge transformations.

Given an open set $\calO \subseteq \bbR^{1+4}$ and a pair $(A_{\mu}, \phi)$, we define the $C_{t} \calH^{1}(\calO)$ norm of $(A_{\mu}, \phi)$ to be
\begin{equation*}
	\nrm{(A_{\mu}, \phi)}_{C_{t} \calH^{1}(\calO)} 
	:= \esssup_{t \in I(\calO)} (\nrm{(A_{\mu}, \phi)}_{\dot{H}^{1}_{x} \cap L^{4}_{x}(\calO_{t})} 	
						+ \nrm{(\rd_{t} A_{\mu}, \rd_{t} \phi)}_{L^{2}_{x}(\calO_{t})}),
\end{equation*}
where $\calO_{t} := \calO \cap (\set{t} \times \bbR^{4})$ and $I(\calO) := \set{t \in \bbR : \calO_{t} \neq \emptyset}$. Similarly, we define the $C_{t} \calG^{2}(\calO)$ norm to be
\begin{equation*}
	\nrm{\chi}_{C_{t} \calG^{2}(\calO)} 
	:= \esssup_{t \in I(\calO)} (\nrm{\chi}_{\dot{H}^{2}_{x} \cap \dot{W}^{1, 4}_{x} \cap \BMO (\calO_{t})}
						+ \nrm{\rd_{t} \chi}_{\dot{H}^{1}_{x} \cap L^{4}_{x}(\calO_{t})} 
						+ \nrm{\rd_{t}^{2} \chi}_{L^{2}_{x}(\calO_{t})} )
\end{equation*}
We will say that a smooth solution $(A, \phi)$ is \emph{classical}, and write $(A, \phi) \in C_{t}^{\infty} \calH^{\infty}(\calO)$, if
\begin{equation*}
	(\rd_{t, x}^{(N)} A_{\mu}, \rd_{t,x}^{(N)} \phi) \in C_{t} \calH^{1}(\calO) \hbox{ for all } N \geq 0 \hbox{ and }
	(A_{\mu}, \phi) \in C_{t} (I(\calO) ; L^{2}_{x}(\calO_{t})).
\end{equation*}
We similarly define the space $C_{t}^{\infty} \calG^{\infty}(\calO)$ of \emph{classical gauge transformations} by saying that $\chi \in C_{t}^{\infty} \calG^{\infty}(\calO)$ if and only if
\begin{equation*}
	\chi \in C_{t}(I(\calO) ; L^{4}_{x}(\calO_{t})) \hbox{ and } 
	\rd_{t,x}^{(N)} \chi \in C_{t} (I(\calO) ; L^{2}_{x}(\calO_{t})) \hbox{ for every } N \geq 1.
\end{equation*}
We define the notion of a \emph{admissible $C_{t} \calH^{1}$ solution} to \eqref{eq:MKG} and gauge equivalence between two such solutions as follows.
\begin{definition}[Admissible $C_{t} \calH^{1}$ solutions] \label{def:sol2MKG}
Let $\calO$ be an open subset of $\bbR^{1+4}$. 
\begin{enumerate}
\item We say that a pair $(A_{\mu}, \phi) \in C_{t} \calH^{1}(\calO)$ is an \emph{admissible $C_{t} \calH^{1}$ solution} to \eqref{eq:MKG} on $\calO$ (or \emph{admissible $C_{t} \calH^{1}(\calO)$ solution}) if it can be approximated by a sequence $(A^{(n)}_{\mu}, \phi^{(n)})$ of classical solutions to \eqref{eq:MKG} locally in time with respect to the $C_{t} \calH^{1}$ norm. More precisely, for every compact interval $J \subseteq I(\calO)$, we have as $n \to \infty$,
\begin{align*}
	\nrm{(A_{\mu}, \phi) - (A^{(n)}_{\mu}, \phi^{(n)})}_{C_{t} \calH^{1}(\calO \cap (J \times \bbR^{4}))} \to 0.
\end{align*}
\item We say that two admissible $C_{t} \calH^{1}(\calO)$ solutions $(A_{\mu}, \phi)$ and $(A'_{\mu}, \phi')$ are \emph{gauge equivalent} if there exists a gauge transform $\chi \in C_{t} \calG^{2}(\calO)$  such that $A_{\mu} = A'_{\mu} - \rd_{\mu} \chi$, $\phi = \phi' e^{i \chi}$. 
\end{enumerate}
\end{definition}

\subsection{Local geometric uniqueness of an admissible $C_{t}
  \calH^{1}$ solution} \label{subsec:locGeom}

In this subsection, we state and prove the geometric uniqueness of an
admissible $C_{t} \calH^{1}$ solution of \eqref{eq:MKG}. As discussed
earlier, this statement can be thought of as the gauge invariant
version of finite speed of propagation for \eqref{eq:MKG}.

Before stating the main result (Proposition~\ref{prop:locGeom}), we
need to make a few definitions. Given a point $(t_{0},x_{0}) \in
\bbR^{1+4}$, we define its \emph{causal past} $J^{-}(t_{0},x_{0})$ to
be past-directed light cone with $(t_{0}, x_{0})$ as the tip, i.e.,
\begin{equation*}
  J^{-}(t_{0}, x_{0}) := \set{(t,x) \in \bbR^{1+4} : t \leq t_{0}, \, \abs{x-x_{0}} \leq t_{0}-t}.
\end{equation*}
For an open subset $B \subseteq \set{t_{0}} \times \bbR^{4}$, we define
its \emph{future domain of dependence} $\calD^{+}(B)$ to be
\begin{equation*}
  \calD^{+}(B) = \set{(t,x) \in \calO : J^{-}(t,x) \cap (\set{t_{0}} \times \bbR^{4}) \subseteq B}.
\end{equation*}
For example, when $B$ is an open ball of radius $r_{0} >0$ in
$\set{t_{0}} \times \bbR^{4}$ centered at $x_{0}$, its future domain
of dependence is the cone given by $\calD^{+}(B) = \set{(t, x) : t_{0}
  \leq t < t_{0}+r_{0}, 0 \leq \abs{x - x_{0}} < r_{0} - (t -
  t_{0})}$.  The causal future $J^{+}(t_{0}, x_{0})$ and past domain
of dependence $\calD^{-}(B)$ can be defined analogously.

We now state our local geometric uniqueness result.
\begin{proposition}[Local geometric uniqueness at energy
  regularity] \label{prop:locGeom} Let $T_{0} > 0$ and let $B$ be an
  open ball in $\bbR^{4}$. Let $(A, \phi)$, $(A', \phi')$ be
  admissible $C_{t} \calH^{1}$ solutions on the region
  \begin{equation*}
    \calD := \calD^{+}(\set{0} \times B) \cap \big( [0, T_{0}) \times \bbR^{4} \big).
  \end{equation*}
  Suppose that the initial data $(a, e, f, g)$ and $(a', e', f', g')$
  for $(A, \phi)$ and $(A', \phi')$, respectively, are gauge
  equivalent on $B$, i.e., there exists a gauge transformation
  $\underline{\chi} \in \calG^{2}(B)$ such that
  \begin{equation*}
    (a, e, f, g) = (a' - \ud \underline{\chi}, e', e^{i \underline{\chi}} f', e^{i \underline{\chi}} g').
  \end{equation*}
  Then there exists a unique gauge transformation $\chi \in C_{t}
  \calG^{2}(\calD)$ such that $\chi \rst_{\set{0} \times B} =
  \underline{\chi}$ and
  \begin{align*}
    (A, \phi) =& (A' - \ud \chi, e^{i \chi} \phi') \hbox{ on } \calD.
  \end{align*}
\end{proposition}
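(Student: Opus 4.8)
The plan is to reduce the statement, by a gauge transformation to the temporal gauge $A_{0} = 0$, to the finite speed of propagation for \eqref{eq:MKG} in that gauge, and then to pass from classical solutions to admissible $C_{t}\calH^{1}$ solutions by approximation. \emph{Uniqueness} is immediate: if $\chi_{1}, \chi_{2} \in C_{t}\calG^{2}(\calD)$ both satisfy the conclusion, then $\ud(\chi_{1} - \chi_{2}) = 0$ on the connected set $\calD$, so $\chi_{1} - \chi_{2}$ is constant, and since both restrict to $\underline{\chi}$ on $\set{0} \times B$ this constant vanishes.

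For \emph{existence} I would first treat the case of classical $(A, \phi)$, $(A', \phi')$. Solving the transport equations $\rd_{t}\psi = A_{0}$, $\psi \rst_{\set{0}\times B} = 0$ and $\rd_{t}\psi' = A'_{0}$, $\psi' \rst_{\set{0}\times B} = 0$ along the vertical segments foliating the truncated cone $\calD$ (which lie inside $\calD$), the pairs $\Gmm_{\psi}(A, \phi)$ and $\Gmm_{\underline{\chi}(x) + \psi'}(A', \phi')$ — where $\underline{\chi}(x)$ denotes the $t$-independent extension of $\underline{\chi}$ to $\calD$ — are classical temporal-gauge solutions of \eqref{eq:MKG} on $\calD$, and a computation of the type carried out in Section~\ref{sec:id} shows that both have the \emph{same} initial data $(a, e, f, g)$ on $B$. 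In temporal gauge \eqref{eq:MKG} reads, schematically, $\Box A_{j} = \rd_{j}(\rd^{k}A_{k}) - \Im(\phi\br{\covD_{j}\phi})$ and $\Box_{A}\phi = 0$; the term $\rd_{j}(\rd^{k}A_{k})$ breaks strict hyperbolicity, but in the energy identity obtained by pairing against $\rd_{t}A_{j}$ and integrating by parts it recombines — using that $\rd_{t}(\rd^{k}A_{k})$ is governed by the constraint equation — into the electromagnetic energy $\int |F_{0j}|^{2} + |F_{jk}|^{2}$. Running this energy estimate on backward light cones for the difference of two temporal-gauge solutions yields finite speed of propagation, so $\Gmm_{\psi}(A, \phi) = \Gmm_{\underline{\chi}(x) + \psi'}(A', \phi')$ on $\calD$; undoing the gauge transforms gives $(A, \phi) = \Gmm_{\chi}(A', \phi')$ on $\calD$ with $\chi := \underline{\chi}(x) + \psi' - \psi$. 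By construction $\chi \rst_{\set{0}\times B} = \underline{\chi}$, and the identity $\ud\chi = A' - A$ (with $A, A' \in C_{t}\calH^{1}$ and $\chi\rst_{\set{0}\times B} \in \calG^{2}(B)$) shows $\chi \in C_{t}\calG^{2}(\calD)$.

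For the general (admissible) case I would approximate $(A, \phi)$ and $(A', \phi')$ by classical solutions $(A_{n}, \phi_{n})$ and $(A'_{n}, \phi'_{n})$ on $\calD$ converging in $C_{t}\calH^{1}(\calD)$ (hence uniformly bounded there, say by $M$), with initial data converging in $\calH^{1}(B)$. Since the approximating data need not be gauge equivalent along the sequence, the classical step must be upgraded to a \emph{quantitative} version: running the cone energy estimate with Gr\"onwall for the difference of the corresponding temporal-gauge solutions — with constants depending only on $M$, and every nonlinear term absorbed via the critical embedding $\dot{H}^{1}(\bbR^{4}) \hookrightarrow L^{4}(\bbR^{4})$ — produces $\chi_{n} \in C_{t}\calG^{2}(\calD)$ with $\chi_{n}\rst_{\set{0}\times B} = \underline{\chi}$, a uniform bound on $\nrm{\chi_{n}}_{C_{t}\calG^{2}(\calD)}$, and $\nrm{A_{n} - (A'_{n} - \ud\chi_{n})}_{C_{t}\calH^{1}(\calD)} + \nrm{\phi_{n} - e^{i\chi_{n}}\phi'_{n}}_{C_{t}\calH^{1}(\calD)} \aleq_{M} \delta_{n}$, where $\delta_{n} := \nrm{\Gmm_{\underline{\chi}}[a'_{n}, e'_{n}, f'_{n}, g'_{n}] - (a_{n}, e_{n}, f_{n}, g_{n})}_{\calH^{1}(B)}$. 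By continuity of $\Gmm_{\underline{\chi}}$ (Lemma~\ref{lem:continuity4gt}, applied with the fixed gauge transformation $\underline{\chi}$) one has $\delta_{n} \to \nrm{\Gmm_{\underline{\chi}}[a', e', f', g'] - (a, e, f, g)}_{\calH^{1}(B)} = 0$. Since $\ud\chi_{n} = (A'_{n} - A_{n}) + O(\delta_{n}) \to A' - A$ in $C_{t}\calH^{1}(\calD)$ and $\chi_{n}\rst_{\set{0}\times B} = \underline{\chi}$ is fixed, the $\chi_{n}$ converge in $C_{t}\calG^{2}(\calD)$ to the desired $\chi$; passing to the limit (using also a spacetime version of Lemma~\ref{lem:continuity4gt} for the $\phi$-components) yields $(A, \phi) = (A' - \ud\chi, e^{i\chi}\phi')$ on $\calD$.

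The step I expect to be the main obstacle is the classical temporal-gauge estimate: closing the cone energy inequality exactly at the energy regularity despite (i) the failure of strict hyperbolicity of \eqref{eq:MKG} in the temporal gauge, which forces one to exploit the structure of the constraint equation, and (ii) the derivative loss in passing to the temporal gauge — the transport equation $\rd_{t}\psi = A_{0}$ does not keep $\psi$ in $C_{t}\dot{H}^{2}_{x}$ when $A_{0}$ is only in $C_{t}\dot{H}^{1}_{x}$ — which is why the construction is carried out on classical solutions and the regularity of $\chi$ is recovered afterwards from $\ud\chi = A' - A$ rather than from the formula $\chi = \underline{\chi}(x) + \psi' - \psi$. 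A further point requiring care is that two admissible solutions with the same initial data need not coincide, so the limiting argument genuinely needs the quantitative (stability) form of the classical statement rather than a soft compactness argument.
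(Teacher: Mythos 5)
Your uniqueness argument and your treatment of the classical case are fine, and the latter is essentially the paper's Lemma~\ref{lem:locGeom:classical} (temporal gauge, finite speed of propagation for smooth solutions). The genuine gap is in the passage to admissible $C_{t}\calH^{1}$ solutions: you need a \emph{quantitative} stability estimate for differences of temporal-gauge solutions on backward cones, with constants depending only on the $C_{t}\calH^{1}$ size $M$, closed "via the critical embedding $\dot{H}^{1}(\bbR^{4})\hookrightarrow L^{4}(\bbR^{4})$" and Gr\"onwall. This cannot work at the energy regularity: the problem is energy critical, so the nonlinear terms in the difference equations (e.g.\ $(A-A')\,\rd\phi'$, $(A^{2}-A'^{2})\phi'$) are not perturbative in the pure energy norm — pairing against $\rd_{t}$ of the difference produces factors like $\nrm{\rd\phi'}_{L^{4}_{x}}$ or $\nrm{A-A'}_{L^{\infty}_{x}}$ which are not controlled by $C_{t}\calH^{1}$, and there is no subcritical room or smallness to absorb them. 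What you are asserting is in effect a large-data stability (hence unconditional uniqueness) estimate at critical regularity in the temporal gauge, which is precisely what is \emph{not} available; if such a Gr\"onwall argument closed, most of the machinery of \cite{Krieger:2012vj} (the $S^{1}$, $Y^{1}$, null-frame spaces) would be unnecessary.

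The paper avoids this by never running a rough energy estimate at all. It localizes to balls $B_{\eps}$ on which the data have energy below the threshold $\thE^{2}$, uses the excision-and-gluing construction of Proposition~\ref{prop:gluing} to produce \emph{global} small-energy data agreeing with the given data on $B_{\eps}$, gauge transforms to the global Coulomb gauge (Lemma~\ref{lem:gt2CoulombId}), and then lets Theorem~\ref{thm:KST} supply both the solutions and the quantitative input: its continuous dependence in $S^{1}\times Y^{1}$ applied to the classical approximants $(A^{(n)},\phi^{(n)})$ of the admissible solution. The temporal gauge enters only qualitatively, through Lemma~\ref{lem:locGeom:classical}, to identify each smooth approximant with the corresponding small-energy Coulomb solution on $\calD^{+}(B_{\eps})$ up to gauge; one then passes to the limit in the gauge transformations (which are controlled by $\ud\chi^{(n)}=A^{(n)}-\check{A}^{(n)}$, much as you recover $\chi$ from $\ud\chi=A'-A$), and finally patches the local statements over all such balls and bootstraps in time (the continuity argument with the set $\calT$) to cover all of $\calD$. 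So your overall architecture (classical case in temporal gauge, then approximation) matches the paper's, but the quantitative core of your limiting step is missing and, as formulated, would fail; it must be replaced by a small-energy reduction plus the Coulomb-gauge stability of Theorem~\ref{thm:KST}, which in turn forces the excision/gluing and patching steps your proposal omits.
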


When the energy is small, this proposition is a rather quick
consequence of Lemma \ref{lem:gt2CoulombId}, the small energy
well-posedness theorem (Theorem \ref{thm:KST}) and the following local
geometric uniqueness for classical solutions.

\begin{lemma}[Local geometric uniqueness of a classical
  solution] \label{lem:locGeom:classical} Let $T_{0} > 0$ and let $B$
  be an open ball in $\bbR^{4}$. Let $(A, \phi)$, $(A', \phi')$ be
  classical solutions on the region $\calD$ as in
  Proposition~\ref{prop:locGeom}. Suppose that the initial data $(a,
  e, f, g)$ and $(a', e', f', g')$ for $(A, \phi)$ and $(A', \phi')$,
  respectively, are gauge equivalent on $B$ by a gauge transformation
  $\underline{\chi} \in \calG^{\infty}(B)$. Then there exists a unique
  gauge transformation $\chi \in C_{t}^{\infty} \calG^{\infty}(\calD)$
  such that $\chi \rst_{\set{0} \times B} = \underline{\chi}$ and $(A,
  \phi) = (A' - \ud \chi, e^{i \chi} \phi')$ on $\calD$.
\end{lemma}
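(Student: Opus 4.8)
The plan is to pass both solutions to the Lorenz gauge on $\calD$, where \eqref{eq:MKG} becomes a system of semilinear wave equations to which finite speed of propagation applies, and then to read off $\chi$ from the two gauge transformations that were performed. The first thing to record is that, since $B$ is a ball, $\calD = \calD^{+}(\set{0} \times B) \cap ([0, T_{0}) \times \bbR^{4})$ is a \emph{domain of dependence} for the wave operator $\Box = \rd^{\mu} \rd_{\mu}$: for each $(t,x) \in \calD$ the backward solid light cone from $(t,x)$ truncated at $\set{t = 0}$ is a compact subset of $\calD$ whose base lies in $\set{0} \times B$. Hence the linear Cauchy problem $\Box \psi = \rd^{\mu} A_{\mu}$ on $\calD$ with data $(\psi, \rd_{t} \psi) \rst_{\set{0} \times B} = (0, A_{0} \rst_{\set{0} \times B})$ has a unique solution, representable by a Duhamel formula that only uses values of $A$ in backward light cones (so no extension off $\calD$ is needed); since $A$ is classical and the time slices of $\calD$ are bounded, $\psi \in C_{t}^{\infty} \calG^{\infty}(\calD)$. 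Similarly solve $\Box \psi' = \rd^{\mu} A'_{\mu}$ on $\calD$ with data $(\underline{\chi}, A'_{0} \rst_{\set{0} \times B})$.

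Next I would set $(\hat{A}, \hat{\phi}) := (A - \ud \psi, e^{i \psi} \phi)$ and $(\hat{A}', \hat{\phi}') := (A' - \ud \psi', e^{i \psi'} \phi')$. By gauge invariance of \eqref{eq:MKG} these are again classical solutions, and since $\rd^{\mu} \hat{A}_{\mu} = \rd^{\mu} A_{\mu} - \Box \psi = 0$ (and likewise for $\hat{A}'$) both satisfy the Lorenz-gauge form of \eqref{eq:MKG}, namely $\Box \hat{A}_{\nu} = - \Im(\hat{\phi}\, \overline{\rd_{\nu} \hat{\phi}}) + \hat{A}_{\nu} \abs{\hat{\phi}}^{2}$ and $\Box \hat{\phi} = - 2 i \hat{A}^{\mu} \rd_{\mu} \hat{\phi} + \hat{A}^{\mu} \hat{A}_{\mu} \hat{\phi}$, a system of wave equations with smooth (quadratic and cubic) nonlinearities in $(\hat{A}, \hat{\phi}, \rd \hat{\phi})$. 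The point of the particular Cauchy data chosen for $\psi, \psi'$ is the following matching statement: a direct computation, using $F_{0j} \rst_{\set{0} \times B} = e_{j}$, $\covD_{t} \phi \rst_{\set{0} \times B} = g$ (and the primed analogues), the hypothesis $(a,e,f,g) = (a' - \ud \underline{\chi}, e', e^{i \underline{\chi}} f', e^{i \underline{\chi}} g')$, and the Lorenz relation $\rd_{t} \hat{A}_{0} = \rd^{j} \hat{A}_{j}$ on $\set{0} \times B$, shows that $(\hat{A}_{\mu}, \rd_{t} \hat{A}_{\mu}, \hat{\phi}, \rd_{t} \hat{\phi})$ and its primed counterpart have the \emph{same} restriction to $\set{0} \times B$, given entirely in terms of $(a,e,f,g)$ (explicitly $\hat{A}_{j} \rst = a_{j}$, $\hat{A}_{0} \rst = 0$, $\rd_{t} \hat{A}_{j} \rst = e_{j}$, $\rd_{t} \hat{A}_{0} \rst = \rd^{j} a_{j}$, $\hat{\phi} \rst = f$, $\rd_{t} \hat{\phi} \rst = g$).

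Then $(\hat{A}, \hat{\phi})$ and $(\hat{A}', \hat{\phi}')$ solve the same semilinear wave system with the same smooth Cauchy data on $\set{0} \times B$, so a standard energy estimate on truncated backward light cones together with Gr\"onwall's inequality gives uniqueness on the domain of dependence, i.e. $(\hat{A}, \hat{\phi}) = (\hat{A}', \hat{\phi}')$ on $\calD$; since the relevant truncated cones are compact subsets of $\calD$ on which everything is smooth, no decay at spatial infinity is needed. It remains to put $\chi := \psi' - \psi \in C_{t}^{\infty} \calG^{\infty}(\calD)$: from $\hat{A} = \hat{A}'$ we get $A = A' - \ud \chi$, from $\hat{\phi} = \hat{\phi}'$ we get $\phi = e^{i \chi} \phi'$, and $\chi \rst_{\set{0} \times B} = \underline{\chi} - 0 = \underline{\chi}$. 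For uniqueness of $\chi$: if $\chi_{1}, \chi_{2}$ both satisfy the conclusion, then $\ud(\chi_{1} - \chi_{2}) = 0$ on the connected set $\calD$ and $\chi_{1} - \chi_{2} = 0$ on $\set{0} \times B$, so $\chi_{1} = \chi_{2}$.

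The main obstacle is \emph{not} the energy estimate — that is the classical finite speed of propagation for semilinear wave equations — but the gauge reduction: \eqref{eq:MKG} has no hyperbolic character before a gauge is fixed (the component $A_{0}$ and the longitudinal part of $A_{x}$ are non-dynamical), so one must carry out the passage to the Lorenz gauge \emph{on the cone $\calD$ itself}, exploiting that $\calD$ is a domain of dependence for $\Box$, and one must choose the Cauchy data of the auxiliary potentials $\psi, \psi'$ so that the two resulting Lorenz-gauge solutions acquire literally identical Cauchy data in terms of $(a,e,f,g)$. One could alternatively argue via the gauge-invariant hyperbolic system satisfied by $(F, \phi, \covD \phi)$, but the bookkeeping there is heavier, so I would stick with the Lorenz-gauge route above.
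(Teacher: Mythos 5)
Your proof is correct and follows essentially the same strategy as the paper's (sketched) argument: gauge-transform both solutions so that they acquire literally identical Cauchy data and lie in a gauge in which \eqref{eq:MKG} becomes a hyperbolic system enjoying uniqueness and finite speed of propagation, then transfer the conclusion back and read off $\chi$ as the difference of the two gauge potentials. The only difference is your choice of the Lorenz gauge (solving $\Box \psi = \rd^{\mu} A_{\mu}$ on the cone, with data chosen so that the transformed data match) in place of the temporal gauge $A_{0}=0$ suggested in the paper; this is an equally valid instance of the same scheme, and your write-up supplies the details the paper omits.
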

This lemma can be proved by applying a gauge transformation to both
solutions $(A, \phi)$, $(A', \phi')$ so that they have the same
initial data and lie in a gauge where some higher regularity local
well-posedness (hence uniqueness) and the finite speed of propagation
property holds. An example of such a gauge is the \emph{temporal
  gauge} $A_{0} = 0$ \cite{Segal:1979hg, MR649159, MR649158}. We omit
the straightforward details.

Our idea for proving Proposition \ref{prop:locGeom}, which foreshadows
the strategy behind establishing the local well-posedness theorem
(Theorem~\ref{thm:lwp4MKG}) in Section~\ref{sec:lwp}, is essentially
to piece together the aforementioned small energy uniqueness by
exploiting finite speed of propagation. An immediate obstacle is that
Theorem \ref{thm:KST} requires using a non-local gauge (i.e., the
global Coulomb gauge), with respect to which finite speed of
propagation breaks down. To get around this, we will rely on the
excision and gluing techniques developed in Section \ref{sec:gluing}.

\begin{proof}  [Proof of Proposition~\ref{prop:locGeom}]
  \setcounter{claim}{0} For simplicity of the exposition, we will
  assume that $T_{0} \geq 1$, so that $\calD = \calD^{+}(B)$. The
  general case $T_{0} > 0$ can be handled with a little modification
  of the argument below. Given a subset $O \subseteq \bbR^{4}$, we
  will abuse the notation for convenience and use $O$ and $\set{0}
  \times O$ interchangeably.

  \pfstep{Step 1} We begin the proof of Proposition \ref{prop:locGeom}
  by reducing it to the following claim:
  \begin{claim} \label{claim:geomUnique0} Let $\dlt > 0$ and let $B$
    be an open ball in $\bbR^{4}$. Let $(A, \phi)$ and $(A', \phi')$
    be admissible $C_{t} \calH^{1}$ solutions on $\calD$ with gauge
    equivalent initial data on $B$ as in the hypothesis of Proposition
    \ref{prop:locGeom}. Then there exists a unique gauge transform
    $\chi \in C_{t} \calG^{2}(\calD^{+}((1-\dlt)B))$ such that $(A,
    \phi) = (A' - \ud \chi, e^{i \chi} \phi')$ on
    $\calD^{+}((1-\dlt)B)$ and $\chi \rst_{\set{0} \times (1-\dlt) B}
    = \underline{\chi}\rst_{(1-\dlt) B}$.
  \end{claim}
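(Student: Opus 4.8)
The plan is to prove Proposition~\ref{prop:locGeom} in three steps: deduce it from Claim~\ref{claim:geomUnique0}; reduce the Claim to the case in which $\calD$ is the domain of dependence of a \emph{small} ball carrying \emph{small} energy; and settle that case using the constructions of Sections~\ref{sec:id}--\ref{sec:gluing} together with Theorem~\ref{thm:KST}. For the first step, Claim~\ref{claim:geomUnique0} applied for every $\dlt>0$ gives $\chi_{\dlt}\in C_{t}\calG^{2}(\calD^{+}((1-\dlt)B))$ realizing the gauge equivalence on $\calD^{+}((1-\dlt)B)$ with $\chi_{\dlt}\rst_{\set{0}\times(1-\dlt)B}=\underline{\chi}\rst_{(1-\dlt)B}$. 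For $0<\dlt'<\dlt$, the restriction of $\chi_{\dlt'}$ to $\calD^{+}((1-\dlt)B)$ satisfies the same conditions, so by the uniqueness assertion it coincides with $\chi_{\dlt}$; hence the $\chi_{\dlt}$ glue into a single $\chi$ on $\bigcup_{\dlt>0}\calD^{+}((1-\dlt)B)=\calD$ with $\chi\rst_{\set{0}\times B}=\underline{\chi}$ and $(A,\phi)=(A'-\ud\chi,e^{i\chi}\phi')$ on $\calD$. From $\rd_{\mu}\chi=A'_{\mu}-A_{\mu}$, the embedding $\dot{W}^{1,4}_{x}(\bbR^{4})\hookrightarrow\BMO$, the hypothesis $(A,\phi),(A',\phi')\in C_{t}\calH^{1}(\calD)$, and the prescribed trace of $\chi$ (which pins down the additive constant on each time slice), one gets the uniform bound $\chi\in C_{t}\calG^{2}(\calD)$. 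Uniqueness on $\calD$ is immediate, since the difference of two competitors is a closed $1$-form on the connected set $\calD$, hence constant, and vanishes on $\set{0}\times B$.

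For the second step, note that an admissible $C_{t}\calH^{1}$ solution has scalar field and energy density depending continuously on $t$ with values in $L^{4}$ and $L^{1}$, respectively, of the \emph{bounded} slices $\calD_{t}$; these one-parameter families are therefore compact, hence uniformly integrable. Consequently there is $\rho_{\ast}>0$ such that, for both solutions and at every time, every ball of radius $<\rho_{\ast}$ carries energy below the threshold needed below and $\nrm{\phi}_{L^{4}}<1$ on it --- so that in particular the lower-order $r_{0}^{-2}\nrm{f}_{L^{2}}^{2}$ terms in the energy bounds of Propositions~\ref{prop:gluing} and~\ref{prop:intGluing} are also under control there. One then covers $\calD^{+}((1-\dlt)B)$ by finitely many time-slabs of thickness $<\rho_{\ast}$, covers each slab by domains of dependence $\calD^{+}(B_{\rho}(x_{0}))$ of balls of radius $\rho<\rho_{\ast}$, treats each such cone by the small-ball small-energy case, glues the gauge transforms so obtained (they agree on overlapping cones by the uniqueness assertion), and propagates from one time-slab to the next, using that the two solutions have gauge-equivalent data at the terminal time of each slab via the restriction of the gauge transform already constructed. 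Since only finitely many steps occur, the accumulated loss of domain is kept $<\dlt$.

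For the third step, extend the data from $B$ to all of $\bbR^{4}$ with comparable energy, combining the exterior and interior excision/gluing operators of Propositions~\ref{prop:gluing} and~\ref{prop:intGluing}; gauge transform each extension into the global Coulomb gauge by Lemma~\ref{lem:gt2CoulombId}, which keeps the energy below $\thE^{2}$ by \eqref{eq:gt2CoulombId}; and apply Theorem~\ref{thm:KST} to produce, from $(a,e,f,g)$ and $(a',e',f',g')$, \emph{canonical} global solutions $(\check A,\check\phi)$, $(\check A',\check\phi')$, each accompanied by the gauge transform furnished by Lemma~\ref{lem:gt2CoulombId}. These two canonical solutions are gauge equivalent: their global Coulomb data are gauge equivalent via an extension of $\underline{\chi}$ (excision/gluing is linear and $\underline{\chi}$ extends to $\calG^{2}(\bbR^{4})$), and a gauge transform between two global Coulomb data sets on $\bbR^{4}$ is harmonic, hence --- being in $\calG^{2}(\bbR^{4})$ --- constant; so by the uniqueness in Theorem~\ref{thm:KST} they differ only by that constant. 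Moreover, every admissible $C_{t}\calH^{1}$ solution on $\calD$ with data $(a,e,f,g)$ is gauge equivalent on $\calD$ to $(\check A,\check\phi)$, by a gauge transform with the prescribed trace on $\set{0}\times B$: one takes classical approximants of the solution, extends and Coulombizes their data (which are classical by persistence of regularity in Propositions~\ref{prop:gluing}, \ref{prop:intGluing}), runs Theorem~\ref{thm:KST} to get classical global solutions, uses Lemma~\ref{lem:locGeom:classical} (classical finite speed of propagation, up to gauge) to match each classical approximant with its canonical extension on $\calD$, and passes to the limit using linearity and continuity of excision/gluing, the bound \eqref{eq:gt2CoulombId}, the continuity statement of Theorem~\ref{thm:KST}(2), and recovery of the limiting gauge transform from $\rd_{\mu}\chi=\check A_{\mu}-A_{\mu}$ together with its trace. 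Applying this to both solutions and composing with the constant gauge transform identifying $(\check A,\check\phi)$ with $(\check A',\check\phi')$ yields the gauge equivalence of $(A,\phi)$ and $(A',\phi')$ on $\calD$, with trace $\underline{\chi}$ on $\set{0}\times B$ up to an additive constant that one absorbs.

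The step I expect to be the main obstacle is the last comparison with the canonical solution: this is where the non-locality of the global Coulomb gauge, and the resulting failure of finite speed of propagation, must be overcome, by routing everything through excision/gluing and the \emph{classical} finite speed of propagation of Lemma~\ref{lem:locGeom:classical}. The delicate points are that $(\check A,\check\phi)$ and the trace on $\set{0}\times B$ of the connecting gauge transform depend only on the data on $B$ --- which rests on the uniqueness assertions in Lemmas~\ref{lem:gt2CoulombId} and~\ref{lem:locGeom:classical} --- and that the chain excision/gluing $\to$ Coulombization $\to$ Theorem~\ref{thm:KST} $\to$ recovered gauge transform is continuous at the critical $\calH^{1}$ regularity, so the classical case survives the limiting procedure. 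By comparison, the spatial localization and finite time iteration of the second step are routine once the uniform integrability of the energy density has been established, which is where boundedness of the slices $\calD_{t}$ is used.
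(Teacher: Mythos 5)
Your overall route is essentially the paper's: shrink to small balls on which the data have small $\calH^{1}$ size, extend by excision/gluing, pass to the global Coulomb gauge via Lemma~\ref{lem:gt2CoulombId}, invoke Theorem~\ref{thm:KST}, compare with the given rough solution through classical approximants and the classical geometric uniqueness of Lemma~\ref{lem:locGeom:classical}, and then advance in time. Your finite iteration over time slabs of uniform thickness (obtained from uniform integrability of the energy density in time on the bounded slices) is a legitimate variant of the paper's open/closed continuity argument in $T$, and the spatial patching of the resulting gauge transformations via uniqueness is the same mechanism as in the paper.

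The one step whose justification does not hold up as written is the comparison of your two ``canonical'' solutions. You assert that the glued-and-Coulombized extensions of $(a,e,f,g)$ and $(a',e',f',g')$ are gauge equivalent on all of $\bbR^{4}$ ``via an extension of $\underline{\chi}$'' because ``excision/gluing is linear''. Linearity is irrelevant: $E^{\extr}$ is linear in $(a,e,f,g)$, but the gauge action $a\mapsto a-\ud\underline{\chi}$, $f\mapsto e^{i\underline{\chi}}f$ is not, and a linear operator has no reason to intertwine it. What would save the claim is a structural gauge-covariance property of the specific construction in Proposition~\ref{prop:gluing}: the Hodge-gauged $\check{a}$ depends only on the curvature $\ud a$, the electric gluing ($q[e]$, $e_{(q)}$, $e_{(G)}$) depends only on $e$, and the cutoffs of $f,g$ commute with multiplication by $e^{i\underline{\chi}}$; from this one can exhibit an explicit $\psi\in\calG^{2}(\bbR^{4})$, equal to $\underline{\chi}$ on the retained region and constant outside the gluing annulus, relating the two extensions, after which harmonicity together with $\rd_{x}\psi\in L^{4}_{x}$ forces a constant, as you say. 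If you keep your architecture you must actually prove this covariance; nothing in the stated properties of $E^{\extr}$ gives it. The paper sidesteps the issue entirely: before localizing, it replaces $(A',\phi')$ by $(A'',\phi'')=(A'-\ud\underline{\chi},\,e^{i\underline{\chi}}\phi')$ with $\underline{\chi}$ extended time-independently, so the two solutions have \emph{identical} data on $B$ and a single canonical solution per small ball, uniquely determined by $(a,e,f,g)$, serves for both; transitivity of gauge equivalence then finishes the small-cone step. Adopting that reduction closes the gap at no cost. Two minor points: the energy is gauge invariant, so Coulombization keeps it below $\thE^{2}$ automatically, without appeal to \eqref{eq:gt2CoulombId}; and only the exterior operator of Proposition~\ref{prop:gluing} is needed on small balls, the interior gluing of Proposition~\ref{prop:intGluing} plays no role in this proof.
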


  Indeed, once Claim \ref{claim:geomUnique0} is proved, Proposition
  \ref{prop:locGeom} would immediately follow by taking $\dlt \to
  0$. Note that we have an apriori bound on the gauge transformation
  $\chi$ between $(A, \phi)$ and $(A', \phi')$ in $C_{t}
  \calG^{2}(\calD)$ simply from the fact that $(A, \phi), (A', \phi')
  \in C_{t} \calH^{1} (\calD)$.

  The advantage of establishing Claim \ref{claim:geomUnique0} instead
  of directly proving the proposition is that we have gained an extra
  room $\calD^{+}(B) \setminus \calD^{+}((1-\dlt) B)$, which will
  serve as a `cushion' for performing the excision and gluing
  procedure developed in Section \ref{sec:gluing}.

  \pfstep{Step 2} In this step, we show that Claim
  \ref{claim:geomUnique0} follows from a more local statement, namely
  Claim \ref{claim:geomUnique1} to be stated below.  By translation
  and scaling symmetries, we may assume that $B$ is the unit ball
  $\set{\abs{x} < 1}$ in $\bbR^{4}$.  Let $(A, \phi)$ be an admissible
  $C_{t} \calH^{1}$ solution to \eqref{eq:MKG} on $\calD$, and let
  $(a, e, f, g)$ be its initial data on $\set{0} \times B$.

  We make the following claim:
  \begin{claim} \label{claim:geomUnique1} There exists $0 < \eps \leq
    \frac{1}{1+\dlt}$, which depends only on $(a, e, f, g)$ and $\dlt
    > 0$, such that the following holds: For every ball $B_{\eps}$ of
    radius $\eps$ such that $(1+\dlt) B_{ \eps} \subseteq B$, there
    exists an admissible $C_{t} \calH^{1}$ solution
    $(\check{A}_{[B_{\eps}]}, \check{\phi}_{[B_{\eps}]})$ to
    \eqref{eq:MKG} on $\calD^{+}(B_{\eps})$ such that $(A, \phi)
    \rst_{\calD^{+}(B_{\eps})}$ is gauge equivalent to
    $(\check{A}_{[B_{\eps}]}, \check{\phi}_{[B_{\eps}]})$. Moreover,
    for a fixed $\dlt > 0$, $(\check{A}_{[B_{\eps}]},
    \check{\phi}_{[B_{\eps}]})$ is uniquely determined by $(a, e, f,
    g)$.
  \end{claim}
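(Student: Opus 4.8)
The plan is to realize $(\check A_{[B_\eps]}, \check\phi_{[B_\eps]})$ as the restriction to $\calD^{+}(B_\eps)$ of a global small energy solution produced by Theorem~\ref{thm:KST}, and then to identify it with $(A,\phi)$ on $\calD^{+}(B_\eps)$ by descending to classical approximants, where the classical finite speed of propagation statement (Lemma~\ref{lem:locGeom:classical}) is available. The cushion $(1+\dlt)B_\eps\subseteq B$ is precisely the room needed to excise and glue the initial data without disturbing it near $B_\eps$.

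\emph{Choice of $\eps$ and construction of the global solution.} Since the energy density of $(a,e,f,g)$ and $\abs{f}^{4}$ are integrable on $B$, a tightness argument shows
\[
  \sup\set{\calE_{(1+\dlt)B_\eps(x)}[a,e,f,g]+\nrm{f}_{L^{4}_{x}((1+\dlt)B_\eps(x))}^{2}:(1+\dlt)B_\eps(x)\subseteq B}\To 0 \quad\hbox{as } \eps\to0^{+}.
\]
Fix $\eps\leq(1+\dlt)^{-1}$ small enough that this supremum lies below the threshold needed to apply Theorem~\ref{thm:KST}; then $\eps$ depends only on $(a,e,f,g)$ and $\dlt$. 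Given a ball $B_\eps$ of radius $\eps$ with $(1+\dlt)B_\eps\subseteq B$, apply the exterior operator $E^{\extr}$ of Proposition~\ref{prop:gluing} to the restriction of $(a,e,f,g)$ to a suitable collar around $B_\eps$ inside $(1+\dlt)B_\eps$, and glue the resulting exterior data set to $(a,e,f,g)$ in the interior region $B_\eps'$, where $B_\eps\subseteq B_\eps'\subseteq(1+\dlt)B_\eps$; the two pieces agree on the overlap collar, so the result is a genuine $\calH^{1}(\bbR^{4})$ initial data set $(\bar a,\bar e,\bar f,\bar g)$ that equals $(a,e,f,g)$ on $B_\eps'\supseteq B_\eps$ and, by \eqref{eq:gluing:energy} and Sobolev embedding, has energy $\aleq\calE_{(1+\dlt)B_\eps}[a,e,f,g]+\nrm{f}_{L^{4}_{x}((1+\dlt)B_\eps)}^{2}$, hence below the threshold. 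Transforming $(\bar a,\bar e,\bar f,\bar g)$ to the global Coulomb gauge by Lemma~\ref{lem:gt2CoulombId} (normalizing the additive constant, say by a zero-average condition) preserves the energy, so Theorem~\ref{thm:KST} applies; combined with Corollary~\ref{cor:idApproxByCoulomb} and the continuity statement in Theorem~\ref{thm:KST}(2), this yields an admissible $C_{t}\calH^{1}$ solution, and undoing the (time-independent) Coulomb gauge transform gives an admissible $C_{t}\calH^{1}$ solution $(A^{\flat},\phi^{\flat})$ on $\bbR^{1+4}$ with initial data $(\bar a,\bar e,\bar f,\bar g)$. We set $(\check A_{[B_\eps]},\check\phi_{[B_\eps]}):=(A^{\flat},\phi^{\flat})\rst_{\calD^{+}(B_\eps)}$; since every step above is deterministic once the normalizations are fixed, this is uniquely determined by $(a,e,f,g)$ and $\dlt$.

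\emph{Matching with $(A,\phi)$.} It remains to show $(A,\phi)\rst_{\calD^{+}(B_\eps)}$ is gauge equivalent to $(\check A_{[B_\eps]},\check\phi_{[B_\eps]})$, and this must be done without invoking Proposition~\ref{prop:locGeom}. Write $(A,\phi)$ as a local-in-time $C_{t}\calH^{1}$-limit of classical solutions $(A^{(n)},\phi^{(n)})$ on $\calD$ with initial data $(a^{(n)},e^{(n)},f^{(n)},g^{(n)})\to(a,e,f,g)$ in $\calH^{1}$ on compact subsets of $B$, and run the construction above with $(a^{(n)},\ldots)$ in place of $(a,\ldots)$; for $n$ large the energies converge below the threshold, and each step preserves higher regularity (Proposition~\ref{prop:gluing}(4), Theorem~\ref{thm:KST}), so we obtain classical solutions $(A^{\flat,(n)},\phi^{\flat,(n)})$ on $\bbR^{1+4}$. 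Since the initial data of $(A^{\flat,(n)},\phi^{\flat,(n)})$ equals that of $(A^{(n)},\phi^{(n)})$ on $B_\eps'\supseteq B_\eps$, Lemma~\ref{lem:locGeom:classical} furnishes a classical gauge transformation $\chi^{(n)}$ on $\calD^{+}(B_\eps)$, normalized by $\chi^{(n)}\rst_{\set{0}\times B_\eps}=0$, with $(A^{(n)},\phi^{(n)})=(A^{\flat,(n)}-\ud\chi^{(n)},e^{i\chi^{(n)}}\phi^{\flat,(n)})$ on $\calD^{+}(B_\eps)$. Now let $n\to\infty$: the construction is continuous (linearity/continuity of $E^{\extr}$ in Proposition~\ref{prop:gluing}(4), of the Hodge operator in Proposition~\ref{prop:hodge4oneforms}, of Coulombization in Lemma~\ref{lem:gt2CoulombId}, of the solution map in Theorem~\ref{thm:KST}(2), and of the gauge action as in Lemma~\ref{lem:continuity4gt}), so $(A^{\flat,(n)},\phi^{\flat,(n)})\to(A^{\flat},\phi^{\flat})$ and $(A^{(n)},\phi^{(n)})\to(A,\phi)$ in $C_{t}\calH^{1}$ on compact sets; since $\rd_{\mu}\chi^{(n)}=A^{\flat,(n)}_{\mu}-A^{(n)}_{\mu}$ converges and $\chi^{(n)}$ is recovered from its differential by integrating $\rd_{t}\chi^{(n)}$ along the vertical segments of the cone $\calD^{+}(B_\eps)$ (which remain inside the cone) together with the normalization on $\set{0}\times B_\eps$, one gets $\chi^{(n)}\to\chi$ in $C_{t}\calG^{2}(\calD^{+}(B_\eps))$ with $\chi\rst_{\set{0}\times B_\eps}=0$; passing to the limit in the gauge relation yields $(A,\phi)=(A^{\flat}-\ud\chi,e^{i\chi}\phi^{\flat})$ on $\calD^{+}(B_\eps)$, as required.

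I expect the main obstacle to be the last step. The apparent circularity --- only classical geometric uniqueness (Lemma~\ref{lem:locGeom:classical}) is available, not Proposition~\ref{prop:locGeom} --- forces the passage through classical approximants, and one must then verify that the entire chain (excision and gluing, passage to the global Coulomb gauge, the Krieger--Sterbenz--Tataru solution map, and its inverse) is continuous in the relevant energy topologies, and carefully reconstruct the limiting gauge transformation $\chi$ --- including its time derivatives, so that $\chi\in C_{t}\calG^{2}$ --- from its differential on the cone. By comparison, the tightness/absolute-continuity input of the second paragraph, which is what makes $\eps$ depend only on the data and $\dlt$, is routine.
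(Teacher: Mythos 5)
Your proposal is correct and follows essentially the same route as the paper: choose $\eps$ by smallness of the localized data norms, excise and glue via Proposition~\ref{prop:gluing}, pass to global Coulomb gauge via Lemma~\ref{lem:gt2CoulombId}, solve by Theorem~\ref{thm:KST}, and then identify the result with $(A,\phi)\rst_{\calD^{+}(B_{\eps})}$ by running the construction on the classical approximants supplied by admissibility, invoking Lemma~\ref{lem:locGeom:classical} there, and passing to the limit in the gauge transformations using the continuity of the whole chain. The only (immaterial) differences are bookkeeping ones: the paper measures smallness with the localized $\calH^{1}$ norm rather than energy plus $\nrm{f}_{L^{4}_{x}}^{2}$, and it keeps the Coulomb-gauge solution as $(\check{A}_{[B_{\eps}]},\check{\phi}_{[B_{\eps}]})$ instead of undoing the Coulombizing transform.
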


  In the rest of this step, we give a proof of Claim
  \ref{claim:geomUnique0} assuming Claim \ref{claim:geomUnique1}.  In
  what follows, we will write $B_{\eps}$ to denote a ball of radius
  $\eps$ whose center may vary.

  Let $(a', e', f', g')$ be the initial data set for $(A', \phi')$ on
  $\set{0} \times B$. By hypothesis, there exists $\underline{\chi}
  \in \calG^{2}(B)$ such that
  \begin{equation*}
    (a_{j}, e_{j}, f, g) = (a'_{j} - \rd_{j} \underline{\chi}, e'_{j}, e^{i \underline{\chi}} f', e^{i \underline{\chi}} g').
  \end{equation*}

  We extend $\underline{\chi}$ to $\calD$ by imposing the condition
  $\rd_{t} \underline{\chi} = 0$; abusing the notation a bit, we will
  denote the extension still by $\underline{\chi}$. We then define
  $(A'', \phi'') := (A' - \ud \underline{\chi}, e^{i \underline{\chi}}
  \phi')$. Note that $\underline{\chi} \in C_{t} \calG^{2}(\calD)$,
  $(A'', \phi'') \in C_{t} \calH^{1} (\calD)$, and that the initial
  data for $(A, \phi)$ and $(A'', \phi'')$ coincide on $\set{0} \times
  B$.
  Applying Claim \ref{claim:geomUnique1} to $(A, \phi)$ and $(A'',
  \phi'')$ separately, observe that we obtain the same solution
  $(\check{A}_{[B_{\eps}]}, \check{\phi}_{[B_{\eps}]})$ for each
  $B_{\eps}$ such that $(1+\dlt) B_{\eps} \subseteq B$, because the
  initial data are identical.  Since gauge equivalence is a transitive
  relation, it follows that for every $(1+\dlt) B_{\eps} \subseteq B$,
  there exists $\chi_{[B_{\eps}]} \in C_{t}
  \calG^{2}(\calD^{+}(B_{\eps}))$ such that
  \begin{gather*}
    (A, \phi) = (A'' - \ud \chi_{[B_{\eps}]}, e^{i \chi_{[B_{\eps}]}}
    \phi'') \quad \hbox{ on } \calD^{+}(B_{\eps})
  \end{gather*}
  with $\chi_{[B_{\eps}]} = 0$ on $\set{0} \times B_{\eps}$. Note that
  \begin{equation*}
    \calD^{+}((1-\dlt \eps) B) \cap \big( [0, \eps) \times \bbR^{4} \big) = \bigcup_{B_{\eps} \subseteq B} \calD^{+}(B_{\eps}).
  \end{equation*}
  Since $\rd_{t} \chi_{[B_{\eps}]} = A''_{0} - A_{0}$ for each
  $B_{\eps}$, we deduce that there exists a gauge transform $\chi'$ on
  $\calD^{+}((1-\dlt \eps)B) \cap ([0, \eps) \times \bbR^{4})$ that
  coincides with each $\chi_{[B_{\eps}]}$ on $\calD^{+}(B_{\eps})$ and
  thus
  \begin{gather*}
    (A, \phi) = (A'' - \ud \chi', e^{i \chi'} \phi'') \quad \hbox{ on
    } \calD^{+}((1 - \dlt \eps) B) \cap \big( [0, \eps) \times
    \bbR^{4} \big).
  \end{gather*}
  Note also that $\chi' \in C_{t} \calG^{2}(\calD^{+}((1-\dlt \eps) B)
  \cap ([0, \eps) \times \bbR^{4}))$, since $(A, \phi)$ and $(A'',
  \phi'')$ are in $C_{t} \calH^{1}$. Moreover, we have $\chi'
  \rst_{\set{0} \times (1-\dlt \eps) B} = 0$, since each
  $\chi_{[B_{\eps}]}$ equals $0$ on $\set{0} \times
  B_{\eps}$. Defining $\chi = \chi' + \underline{\chi}$ on
  $\calD^{+}((1 - \dlt \eps) B) \cap ([0, \eps) \times \bbR^{4})$, it
  follows that
  \begin{equation} \label{eq:geomUnique:pf:step2} (A, \phi) = (A' -
    \ud \chi, e^{i \chi} \phi') \quad \hbox{ on } \calD^{+}((1 - \dlt
    \eps) B) \cap \big( [0, \eps) \times \bbR^{4} \big).
  \end{equation}
  and $\chi \rst_{\set{0} \times (1-\dlt \eps) B} = \underline{\chi}$.

  We now conclude with a continuity argument. Consider the set
  \begin{align*}
    \calT = \set{& T \in [0, 1] : 
       \, \exists \chi \in C_{t} \calG^{2} \hbox{ s.t. }(A, \phi) = (A' - \ud \chi, e^{i \chi} \phi') \hbox{ on } \calD^{+}((1-\dlt T) B) \cap \big( [0, T] \times \bbR^{4} \big) \\
      & \quad \hbox{ and } \chi \rst_{\set{0} \times (1-\dlt T) B} =
      \underline{\chi} \, }.
  \end{align*}
  Clearly $T$ is an interval containing $0$.  We claim that $\sup
  \calT = 1$. Indeed, by continuity of $(A, \phi)$ and $(A', \phi')$,
  we have $\sup \calT \in \calT$ so $\calT$ is closed.  On the other
  hand, if $T <1$ is in $\calT$, then by
  \eqref{eq:geomUnique:pf:step2} (suitably rescaled), we see that
  there exists some $\eps > 0$ such that $T+ \eps \in \calT$. Thus 
$\calT$ is open in $[0,1]$. As it is both open and closed, we must 
have $\calT = [0,1]$.   Claim
  \ref{claim:geomUnique0} now follows.

  \pfstep{Step 3. Proof of Claim \ref{claim:geomUnique1}} To finish
  the proof, it remains to establish Claim
  \ref{claim:geomUnique1}. The key ingredients are the local geometric
  uniqueness statement for classical solutions, Theorem \ref{thm:KST}
  and the excision and gluing techniques in Section \ref{sec:gluing}.

  Fix $\rglue := 1+\dlt$ and $\rgExt = 1+\dlt/2$. We select $\eps > 0$ so that for every
  $\rglue B_{\eps} \subseteq B$ we have
  \begin{equation}
    \nrm{(a, e, f, g)}_{\calH^{1}(\rglue B_{\eps} )}^{2} < \frac{1}{10 C_{1}^{2}} \thE^{2}.
  \end{equation}
  where $C_{1} = C_{1}(\rglue, \rgExt) \geq 1$ is the implicit constant from
  \eqref{eq:gluing:est} in Proposition \ref{prop:gluing}. Since $(a,
  e, f, g) \in \calH^{1}(B)$ and $\rglue B_{\eps} \subseteq B$, it is
  not difficult to see that a non-zero choice of $\eps$ is always
  possible, and it depends only on $\dlt > 0$ (through $\rglue =
  1+\dlt$) and $(a,e,f,g)$ on $B$.

  Next, by the definition of an admissible solution, there exists a
  sequence $(A^{(n)}, \phi^{(n)})$ of classical solutions on $\calD$
  which converges to $(A, \phi)$ in the $C_{t} \calH^{1}$
  norm. Denoting their initial data on $\set{0} \times B$ by
  $(a^{(n)}, e^{(n)}, f^{(n)}, g^{(n)})$, we may assume (by throwing
  away finitely many terms) that
  \begin{equation}
    \nrm{(a^{(n)}, e^{(n)}, f^{(n)}, g^{(n)})}_{\calH^{1}(\rglue B_{\eps})}^{2} 
< \frac{1}{10 C_{1}^{2}} \thE^{2} \quad \hbox{ for all } n \in \bbZ_{+}.
  \end{equation}

  Now we apply Proposition \ref{prop:gluing} to $(a^{(n)}, e^{(n)},
  f^{(n)}, g^{(n)})$ [resp. $(a, e, f, g)$] on $\rglue B_{\eps}
  \setminus \overline{B_{\eps}}$, from which we obtain an initial data
  set $(\widetilde{a}^{(n)}, \widetilde{e}^{(n)}, \widetilde{f}^{(n)},
  \widetilde{g}^{(n)})$ [resp. $(\widetilde{a}, \widetilde{e},
  \widetilde{f}, \widetilde{g})$] on $\bbR^{4}$ such that
  \begin{equation} \label{eq:geomUnique:properties4id}
    \calE[\widetilde{a}^{(n)}, \widetilde{e}^{(n)},
    \widetilde{f}^{(n)}, \widetilde{g}^{(n)}] < \frac{1}{2} \thE^{2},
    \quad (\widetilde{a}^{(n)}, \widetilde{e}^{(n)},
    \widetilde{f}^{(n)}, \widetilde{g}^{(n)}) \to (\widetilde{a},
    \widetilde{e}, \widetilde{f}, \widetilde{g}) \hbox{ in }
    \calH^{1}(\bbR^{4}).
  \end{equation}
  Applying Lemma \ref{lem:gt2CoulombId} and imposing some condition to
  fix the constant gauge transformation ambiguity (e.g., requiring the
  integral of the gauge transformation on $B_{\eps}$ to vanish), we arrive
  at a globally Coulomb initial data set $(\check{a}^{(n)},
  \check{e}^{(n)}, \check{f}^{(n)}, \check{g}^{(n)})$
  [resp. $(\check{a}, \check{e}, \check{f}, \check{g})$] which is
  gauge equivalent to $(\widetilde{a}^{(n)}, \widetilde{e}^{(n)},
  \widetilde{f}^{(n)}, \widetilde{g}^{(n)})$ [resp. $(\widetilde{a},
  \widetilde{e}, \widetilde{f}, \widetilde{g})$] and satisfies
  \eqref{eq:geomUnique:properties4id}. Then by Theorem \ref{thm:KST},
  there exists a sequence of global classical solutions
  $(\check{A}^{(n)}, \check{\phi}^{(n)})$ with initial data
  $(\check{a}^{(n)}, \check{e}^{(n)}, \check{f}^{(n)},
  \check{g}^{(n)})$ in the global Coulomb gauge, which converges in
  $S^{1} \subseteq C_{t} \calH^{1}$ locally in time to a solution
  $(\check{A}, \check{\phi})$ with initial data $(\check{a},
  \check{e}, \check{f}, \check{g})$.

  Observe that $(\check{a}^{(n)}, \check{e}^{(n)}, \check{f}^{(n)},
  \check{g}^{(n)}) \rst_{B_{\eps}}$ is gauge equivalent to $(a^{(n)},
  e^{(n)}, f^{(n)}, g^{(n)}) \rst_{B_{\eps}}$ by construction. By
  classical geometric well-posedness, it follows that
  $(\check{A}^{(n)}, \check{\phi}^{(n)}) \rst_{\calD^{+}(B_{\eps})}$
  is gauge equivalent to $(A^{(n)}, \phi^{(n)})
  \rst_{\calD^{+}(B_{\eps})}$ for each $n$. As $(A^{(n)}, \phi^{(n)})
  \to (A, \phi)$ and $(\check{A}^{(n)}, \check{\phi}^{(n)}) \to
  (\check{A}, \check{\phi})$ in $C_{t}
  \calH^{1}(\calD^{+}(B_{\eps}))$, we can take the limit of the gauge
  transformations and conclude that there exists $\chi \in C_{t}
  \calG^{2}(\calD^{+}(B_{\eps}))$ such that
  \begin{equation*}
    (A, \phi) = (\check{A} - \ud \chi, e^{i \chi} \check{\phi}) \quad \hbox{ on } \calD^{+}(B_{\eps}).
  \end{equation*}
  Defining $(\check{A}_{[B_{\eps}]}, \check{\phi}_{[B_{\eps}]}) :=
  (\check{A}, \check{\phi}) \rst_{\calD^{+}(B_{\eps})}$, Claim
  \ref{claim:geomUnique1} follows. \qedhere
\end{proof}


%
%
%

\section{Finite energy local well-posedness in global Coulomb
  gauge} \label{sec:lwp} The purpose of this section is to establish
local well-posedness of the $(4+1)$-dimensional \eqref{eq:MKG} for
finite energy Coulomb initial data in the class of admissible
solutions in the \emph{global Coulomb gauge} (to be defined precisely
below). As the energy regularity is critical respect to the scaling
property of \eqref{eq:MKG}, the lifespan of the solution \emph{cannot}
depend only on the size of the initial energy. However, given an
initial data $(a,e,f,g)$ with $\calE[a,e,f,g] \leq \En$, we shall
prove a lower bound on the lifespan that is proportional to the
\emph{energy concentration scale} $\ecs$ of the initial
data, defined as
\begin{equation} \label{eq:ecs}
  \begin{aligned}
    \ecs[a, e, f, g]
    :=& \sup \set{ r \geq 0 : \forall x \in \bbR^{4}, \
      \calE_{B_{r}(x)}[a, e, f, g] < \dlt_{0}(\En, \thE^{2})},
  \end{aligned}\end{equation}
where $\dlt_{0} (\En,\thE^{2}) > 0$ is a fixed function to be determined below (see Proposition~\ref{p:cubes})
and $\thE^{2}$ is the threshold energy for small data global
well-posedness (Theorem \ref{thm:KST}). Note that for any choice of
$\dlt_{0}$ and $(a, e, f, g) \in \calH^{1}(\bbR^{4})$, we always have
$\ecs[a, e, f, g] > 0$. 


We define the \emph{energy profile} $\rho$ of $(a, e, f, g)$ to be
\begin{equation} \label{eq:en-prof}
	\rho(x) = \rho[a, e, f, g](x) := \frac{1}{2} ( \abs{\ud a}^{2} + \abs{e}^{2} + \abs{\covD f}^{2} + \abs{g}^{2} )(x),
\end{equation}
so that $\int_{S} \rho \, \ud x = \calE_{S}[a, e, f, g]$ for any measurable set $S \subseteq \bbR^{4}$.
We say that an admissible
$C_{t} \calH^{1}$ solution $(A_{\mu}, \phi)$ on a time interval $I \times \bbR^{4}$
obeys the \emph{global Coulomb gauge condition} if
\begin{equation} \label{eq:globalCoulomb} \rd^{\ell} A_{\ell} = 0
  \quad \hbox{ on } I \times \bbR^{4}.
\end{equation}

The precise statement of our local well-posedness theorem in global
Coulomb gauge is as follows.

\begin{theorem}[Local well-posedness of \eqref{eq:MKG} at energy
  regularity, complete version] \label{thm:lwp4MKG}Let $(a, e, f, g)$
  be an $\calH^{1}(\bbR^{4})$ initial data set satisfying the global
  Coulomb condition $\rd^{\ell} a_{\ell} =0$ with energy
  $\calE[a,e,f,g] \leq \En$. Let $\ecs = \ecs[a, e, f, g]$ be
  defined as in \eqref{eq:ecs}. Then the following statements hold.
  \begin{enumerate}
  \item \label{item:lwp4MKG:exist} There exists a unique
    $C_{t} \calH^{1}$ admissible solution $(A, \phi)$ to
    \eqref{eq:MKG} on $[-\ecs, \ecs] \times \bbR^{4}$ with $(a, e,
    f, g)$ as its data at $t = 0$, which obeys the global Coulomb
    gauge condition \eqref{eq:globalCoulomb}. 

  \item \label{item:lwp4MKG:aprioriEst} We have the additional regularity 
properties
    \begin{equation} \label{eq:lwp4MKG:aprioriEst}
      A_{0} \in Y^{1}([-\ecs, \ecs] \times \bbR^{4}), \quad
      A_{x}, \phi \in S^{1}([-\ecs, \ecs] \times \bbR^{4}),
    \end{equation}  
    with bounds depending only on the energy profile $\rho$, where the spaces $Y^{1}$ and $S^{1}$ will be defined in
    Section~\ref{subsec:ftnspace4patching} below.
  \item \label{item:lwp4MKG:regPersists} The solution $(A, \phi)$ is more regular if the initial data set $(a, e, f, g)$ is. In particular, $(A, \phi)$ is
    classical if $(a, e, f, g)$ is a classical initial data set.
  \item \label{item:lwp4MKG:contdep} Consider a sequence $(a^{(n)},
    e^{(n)}, f^{(n)}, g^{(n)})$ of $\calH^{1}$ globally Coulomb
    initial data sets such that $(a^{(n)}, e^{(n)}, f^{(n)}, g^{(n)})
    \to (a, e, f, g)$ in $\calH^{1}(\bbR^{4})$ as $n \to
    \infty$. Denote the corresponding solutions to \eqref{eq:MKG}
    given by Statement (\ref{item:lwp4MKG:exist}) by $(A^{(n)},
    \phi^{(n)})$. Then the lifespan of $(A^{(n)}, \phi^{(n)})$
    eventually contains $[-\ecs, \ecs]$. Moreover, we have
    \begin{equation} \label{eq:lwp4MKG:contdep} \nrm{A_{0} -
        A^{(n)}_{0}}_{Y^{1}[-\ecs, \ecs]} +
      \nrm{A_{x} - A^{(n)}_{x}}_{S^{1}[-\ecs, \ecs]} 
      + \nrm{\phi - \phi^{(n)}}_{S^{1}[-\ecs, \ecs]} \to 0
    \end{equation}
    as $n \to \infty$.
  \end{enumerate}
\end{theorem}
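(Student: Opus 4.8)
The plan is to carry out the three-step scheme advertised in Section~\ref{subsec:main-results} -- truncate the data to small energy, invoke Theorem~\ref{thm:KST}, and patch the resulting solutions -- using the excision/gluing operators of Section~\ref{sec:gluing} to handle the Gauss constraint and local geometric uniqueness (Proposition~\ref{prop:locGeom}) together with an Uhlenbeck-type argument to handle the failure of finite speed of propagation in the global Coulomb gauge. First I would cover $\bbR^{4}$ by a locally finite family of balls $\set{B_{\alpha}}$ of radius comparable to $\ecs$, each equipped with a fixed sub-region $B_{\alpha}' \subseteq B_{\alpha}$, arranged -- this combinatorial bookkeeping is the content of Proposition~\ref{p:cubes} -- so that the domains of dependence $\calD^{\pm}(B_{\alpha}')$ together cover the slab $[-\ecs, \ecs] \times \bbR^{4}$ and a fixed dilate of each $B_{\alpha}$ lies inside a ball of energy $< \dlt_{0}$. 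For each $\alpha$, applying the excision/gluing operators $E^{\intr}, E^{\extr}$ (Propositions~\ref{prop:intGluing}, \ref{prop:gluing}) on suitable concentric collars around $B_{\alpha}$ produces a globally defined $\calH^{1}(\bbR^{4})$ initial data set $(\widetilde{a}_{[\alpha]}, \widetilde{e}_{[\alpha]}, \widetilde{f}_{[\alpha]}, \widetilde{g}_{[\alpha]})$ that coincides with $(a,e,f,g)$ on $B_{\alpha}'$; the local energy inequalities \eqref{eq:gluing:energy} and \eqref{eq:intGluing:energy} bound its energy by $\calE_{c B_{\alpha}}[a,e,f,g]$ plus the harmless lower-order term $r_{0}^{-2} \nrm{f}_{L^{2}_{x}(c B_{\alpha})}^{2} \aleq \calE_{c B_{\alpha}}[a,e,f,g]$, so that choosing $\dlt_{0} = \dlt_{0}(\En, \thE^{2})$ small relative to $\thE^{2}$ and the gluing constant $C_{1}$ of \eqref{eq:gluing:est} forces this energy below $\thE^{2}$.

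Next, Lemma~\ref{lem:gt2CoulombId} gauge-transforms each $(\widetilde{a}_{[\alpha]}, \widetilde{e}_{[\alpha]}, \widetilde{f}_{[\alpha]}, \widetilde{g}_{[\alpha]})$ to the global Coulomb gauge, and Theorem~\ref{thm:KST} produces a global classical solution $(\check{A}_{[\alpha]}, \check{\phi}_{[\alpha]})$ with $\nrm{\check{A}_{[\alpha], 0}}_{Y^{1}} + \nrm{\check{A}_{[\alpha], x}}_{S^{1}} + \nrm{\check{\phi}_{[\alpha]}}_{S^{1}} \aleq \thE$. By local geometric uniqueness (Proposition~\ref{prop:locGeom}), restricting to $\calD^{\pm}(B_{\alpha}')$ and comparing on overlaps, the family $\set{(\check{A}_{[\alpha]}, \check{\phi}_{[\alpha]}) \rst_{\calD^{\pm}(B_{\alpha}')}}$ forms a compatible pair in the sense of Definition~\ref{def:compatiblePairs}: any two of them are related by a $C_{t} \calG^{2}$ gauge transformation on the intersection of their domains. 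Finally, the patching argument (Proposition~\ref{prop:patch}, adapting Uhlenbeck~\cite{Uhlenbeck:1982vna}) assembles these local descriptions into a single pair $(A, \phi)$ on $[-\ecs, \ecs] \times \bbR^{4}$ lying in $S^{1}, Y^{1}$, with bounds depending only on the number and overlap pattern of the $B_{\alpha}$ and hence only on the energy profile $\rho$; the resulting $A$ is only approximately Coulomb, but since a transition function between two Coulomb gauges solves $\lap \chi = 0$ and hence enjoys improved regularity, a final gauge transformation solving $\lap \chi = -\rd^{\ell} A_{\ell}$ (whose right-hand side is small and well-localized) puts the solution in the exact gauge \eqref{eq:globalCoulomb} without spoiling the $S^{1}, Y^{1}$ bounds. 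This proves parts (\ref{item:lwp4MKG:exist}) and (\ref{item:lwp4MKG:aprioriEst}).

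For uniqueness, if $(A, \phi)$ and $(A', \phi')$ are two admissible $C_{t} \calH^{1}$ solutions in the global Coulomb gauge with the same data, then Proposition~\ref{prop:locGeom} applied on each $\calD^{\pm}(B_{\alpha}')$ yields gauge transformations with vanishing initial value that patch to a global $\chi \in C_{t} \calG^{2}$ with $A = A' - \ud \chi$ and $\chi \rst_{\set{t=0}} = 0$; the Coulomb condition on both solutions forces $\lap \chi(t, \cdot) = 0$ for each $t$, and since $\rd_{x} \chi(t, \cdot) \in L^{4}_{x}(\bbR^{4})$ a Liouville argument gives $\rd_{x} \chi \equiv 0$, and similarly $\rd_{t} \chi \equiv 0$, whence $\chi \equiv 0$ and $(A, \phi) = (A', \phi')$. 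Persistence of higher regularity (part (\ref{item:lwp4MKG:regPersists})) is immediate because every ingredient preserves it: the persistence statements in Propositions~\ref{prop:gluing}(\ref{item:gluing:4}) and \ref{prop:intGluing}(\ref{item:intGluing:3}) and in Lemma~\ref{lem:gt2CoulombId}, the fact that Theorem~\ref{thm:KST} produces smooth solutions from smooth data, and the regularity-preserving nature of the patching. For continuous dependence, $\calH^{1}$-convergence of the data implies $\rho^{(n)} \to \rho$ in $L^{1}(\bbR^{4})$ and hence $\sup_{x} \calE_{B_{r}(x)}[a^{(n)}, \ldots] \to \sup_{x} \calE_{B_{r}(x)}[a, \ldots]$ for each fixed $r$, so by lower semicontinuity of the energy concentration scale and a standard continuation argument the lifespan of $(A^{(n)}, \phi^{(n)})$ eventually contains $[-\ecs, \ecs]$; running the construction above for the $n$-th data and using the continuity of $E^{\intr}, E^{\extr}$, of the gauge transformation (Lemma~\ref{lem:continuity4gt}), of the small-energy solution map (Theorem~\ref{thm:KST}(2)), and of the patching and the corrective gauge transformation then yields \eqref{eq:lwp4MKG:contdep}, proving (\ref{item:lwp4MKG:contdep}).

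The main obstacle is the patching step. Unlike for a scalar energy-critical wave equation, the local Coulomb solutions produced by Theorem~\ref{thm:KST} cannot be pasted together by finite speed of propagation, which fails in the global Coulomb gauge; one must recognize them only as gauge-equivalent on overlaps -- which is precisely why local geometric uniqueness is needed -- and then re-glue them in the delicate function spaces $S^{1}, Y^{1}$. Carrying this out requires the functional-space framework for gauge transforms between local-in-spacetime solutions developed in Section~\ref{subsec:ftnspace4patching} and Section~\ref{sec:gtCutoff}, and exploits the harmonicity, hence improved regularity, of a transition function between two Coulomb gauges. Closing the estimates in $S^{1}, Y^{1}$ -- in particular obtaining bounds that depend only on the energy profile $\rho$ and that survive the final corrective gauge transformation -- is the technical heart of the argument and occupies the bulk of Sections~\ref{sec:lwp}--\ref{sec:gtCutoff}.
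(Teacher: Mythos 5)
Your overall architecture matches the paper's: excise/glue to small-energy data, pass to Coulomb gauge, invoke Theorem~\ref{thm:KST}, recognize the local solutions as compatible pairs via Proposition~\ref{prop:locGeom}, patch with Proposition~\ref{prop:patch}, and finish with a corrective gauge transformation solving $\lap \chi = - \rd^{\ell} A'_{\ell}$; your uniqueness, persistence and continuous dependence sketches are also in the spirit of Sections~\ref{subsec:unique} and \ref{subsec:lwp4MKG:pf}. However, there is a genuine gap at the very first quantitative step. You assert that the charge-type term in \eqref{eq:gluing:energy} is harmless because $r^{-2}\nrm{f}_{L^{2}_{x}(cB_{\alp})}^{2} \aleq \calE_{c B_{\alp}}[a,e,f,g]$. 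This is false: if $f$ is essentially a large constant on $c B_{\alp}$ with $\covD f$ small there, the left-hand side is of order $r^{2}\abs{f}^{2}$ while the local energy is tiny; the term is only controlled by the \emph{global} energy $E$ (via the diamagnetic/Sobolev bound on $\nrm{f}_{L^4}$), which is useless when $E \gg \thE^{2}$. Making this term $\ll \thE^{2}$ is exactly the content of the improved Hardy inequality (Lemma~\ref{lem:simple-hardy+}), the $E$-dependent choice \eqref{eq:delta-choose} of $\dlt_{0}$, and the cube-size selection in Proposition~\ref{p:cubes} (see \eqref{eq:loc-enR}); with your choice of $\dlt_{0}$ depending only on $\thE$ and the gluing constant, the glued data need not have energy below $\thE^{2}$ and Theorem~\ref{thm:KST} cannot be applied on the patches.

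Relatedly, your covering is not the one Proposition~\ref{p:cubes} provides, and the difference matters. The paper's covering is \emph{finite}: finitely many dyadic cubes of slowly varying side length between $\sim \ecs$ and $\sim r_{0}$ (with $r_{0}$ the energy-tail radius of \eqref{eq:energy-tail}), plus a single unbounded exterior region $R_{0}^{c}$ which must be treated with the \emph{interior} gluing operator $E^{\intr}$ of Proposition~\ref{prop:intGluing} precisely because one cannot excise to the exterior near spatial infinity. An infinite, locally finite family of balls of radius $\sim \ecs$ breaks two things: the inductive patching argument behind Proposition~\ref{prop:patch} and the partition-of-unity summation leading to \eqref{eq:existence:A'} are carried out over a finite enumeration, with the final bound of size $\aleq_{E,K} 1$ where $K \aleq (r_{0}/\ecs)^{4}$; and your inference that the bounds ``depend only on the number and overlap pattern of the $B_{\alp}$ and hence only on $\rho$'' collapses, since with infinitely many balls that number is infinite, whereas in the paper the dependence on $\rho$ enters exactly through the finite quantities $r_{0}$ and $\ecs$. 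To repair the proof you need to incorporate the tail radius $r_{0}$, the exterior patch handled by $E^{\intr}$, and the slowly varying finite cube decomposition, i.e.\ the actual content of Section~\ref{sec:split}.
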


\begin{remark} 
In fact our proof below yields an a-priori bound for the $S^{1}$ norm 
of $(A_{x}, \phi)$ and the $Y^{1}$ norm of $A_{0}$ that depends only on 
the energy $E$, the energy concentration scale $r_{c}$ and the tail of 
the energy profile $\rho$, i.e., the smallest radius $r_{0} > 0$ such that there 
exists $x_{0} \in \bbR^{4}$ satisfying
\begin{equation} \label{eq:energy-tail}
	\int_{\bbR^{4} \setminus B_{\frac{1}{54} r_{0}}(x_{0})} \rho \, \ud x < \dlt_{0}(E, \thE^{2}).
\end{equation}
We refer to Remark~\ref{rem:apriori-bnd} for a further discussion.
\end{remark}

As mentioned in the introduction, a theorem of this type is usually
proved by exploiting finite speed of propagation, patching together
local solutions with small initial data. However, while implementing
this strategy in our context, one is faced with difficulties due to
non-local features of \eqref{eq:MKG}. One source of non-locality is
the presence of the Gauss (or constraint) equation; another is the
elliptic nature of the global Coulomb gauge.  To address the first
issue, we use the technique of excision and gluing initial data sets
developed in Section \ref{sec:gluing}. To deal with the second issue,
we introduce a procedure for patching rough local solutions together
to produce a local-in-time but \emph{global-in-space} solution,
inspired by similar ideas in elliptic gauge theories.

The rest of this section is structured as follows. In
Section~\ref{subsec:unique}, the uniqueness statement of Theorem
\ref{thm:lwp4MKG} is established using the local geometric uniqueness
result proved in Section~\ref{sec:locGeom}.  In
Section~\ref{sec:split} we consider the question of partitioning the
initial surface $\bbR^4$ into regions which carry a small energy.
Section~\ref{subsec:ftnspace4patching}, we introduce the function
space framework for patching up local \eqref{eq:MKG} solutions. Using
this framework, we establish Proposition \ref{prop:patch} in
Section~\ref{subsec:patch}, which is an abstract statement that
contains the essence of our patching argument. Finally, in
Section~\ref{subsec:lwp4MKG:pf}, we put together the tools developed
in the previous subsections to prove Theorem \ref{thm:lwp4MKG}.

\subsection{Uniqueness in the global Coulomb
  gauge} \label{subsec:unique} In this brief subsection, we prove the
uniqueness statement in Theorem~\ref{thm:lwp4MKG} (i.e., uniqueness of
an admissible $C_{t} \calH^{1}(I \times \bbR^{4})$ solution in the
global Coulomb gauge) using Proposition~\ref{prop:locGeom}.

Patching together Proposition \ref{prop:locGeom} on balls covering
$\bbR^{4}$, it follows that two admissible $C_{t} \calH^{1}$ solutions
$(A, \phi)$ and $(A', \phi')$ on $[0, T_{0}) \times \bbR^{4}$ are
gauge equivalent if their initial data sets are gauge equivalent. We
then make the following observation:

\begin{lemma} \label{lem:coulombUnique} Let $I \subset \bbR$ be an
  open interval. Let $(A_{\mu}, \phi)$ and $(A'_{\mu}, \phi')$ be
  admissible $C_{t} \calH^{1}$ solutions on $I \times \bbR^{4}$, which
  are gauge equivalent and obey the global Coulomb gauge condition
  \eqref{eq:globalCoulomb}. Then there exists a constant $\chi_{0} \in
  \bbR$ such that $(A'_{\mu}, \phi') \equiv (A_{\mu}, \phi e^{ i
    \chi_{0}})$ on $I \times \bbR^{4}$.
\end{lemma}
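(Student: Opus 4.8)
The plan is to use the rigidity of harmonic functions at critical regularity. By hypothesis $(A_\mu,\phi)$ and $(A'_\mu,\phi')$ are gauge equivalent as admissible $C_t\calH^1(I\times\bbR^4)$ solutions, so (Definition~\ref{def:sol2MKG}) there is a gauge transformation $\chi\in C_t\calG^2(I\times\bbR^4)$ with
\[
A_\mu = A'_\mu - \rd_\mu\chi, \qquad \phi = e^{i\chi}\phi' \qquad \hbox{on } I\times\bbR^4.
\]
First I would take the spatial divergence of the first relation and invoke the global Coulomb condition \eqref{eq:globalCoulomb} for \emph{both} solutions: this gives, on each time slice $\set{t}\times\bbR^4$,
\[
\lap\chi = \rd^\ell\rd_\ell\chi = \rd^\ell A'_\ell - \rd^\ell A_\ell = 0,
\]
initially in the distributional sense. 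Since $\chi(t,\cdot)\in\dot H^2_x$ (part of the $C_t\calG^2$ norm), this means $\chi(t,\cdot)$ is a genuine harmonic function on $\bbR^4$; moreover, as admissible solutions are limits of classical ones, this holds for every $t\in I$ by continuity in $t$ of $(A,\phi)$ and $(A',\phi')$ in $C_t\calH^1$.

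Next I would run a Liouville-type argument on the gradient. From $\rd_j\chi = A'_j - A_j$ and the fact that the spatial components of an admissible $C_t\calH^1$ solution lie in $\dot H^1_x\cap L^4_x$, each $\rd_j\chi(t,\cdot)$ is harmonic and belongs to $L^4(\bbR^4)$. Using that $\abs{\rd_j\chi(t,\cdot)}$ is subharmonic together with H\"older's inequality, one gets for every $x$ and $R>0$
\[
\abs{\rd_j\chi(t,x)} \aleq R^{-4}\nrm{\rd_j\chi(t,\cdot)}_{L^1(B_R(x))} \aleq R^{-1}\nrm{\rd_j\chi(t,\cdot)}_{L^4(\bbR^4)} \To 0 \quad\hbox{as } R\to\infty,
\]
so $\rd_x\chi(t,\cdot)\equiv 0$, i.e.\ $\chi(t,x)=\chi_0(t)$ is independent of $x$. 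Finally, from $A_0 = A'_0 - \rd_t\chi$ and $A_0,A'_0\in\dot H^1_x\cap L^4_x$ we get $\rd_t\chi = A'_0 - A_0\in L^4_x(\bbR^4)$; but $\rd_t\chi(t,\cdot)=\chi_0'(t)$ is constant in $x$, and the only constant in $L^4(\bbR^4)$ is $0$, so $\chi_0'\equiv 0$ and $\chi\equiv\chi_0$ for some $\chi_0\in\bbR$. Hence $\rd_\mu\chi=0$, so $A_\mu = A'_\mu$, while $\phi = e^{i\chi_0}\phi'$, i.e.\ $(A'_\mu,\phi')\equiv(A_\mu, e^{-i\chi_0}\phi)$; relabelling $-\chi_0$ as $\chi_0$ yields the statement.

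I do not expect a serious obstacle here: the argument is essentially a Liouville theorem. The only point that requires a little care is the passage from the distributional identity $\lap\chi=0$ to the pointwise mean-value estimate, which is cleanly handled by the $\dot H^2_x$ (equivalently, for the gradient, the $L^4_x$) regularity already built into the gauge class $C_t\calG^2$ and the solution class $C_t\calH^1$; and, if one wishes to be fully careful about null sets of $t$, the continuity in $t$ inherited from approximation by classical solutions.
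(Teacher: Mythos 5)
Your argument is correct, and it is a somewhat more self-contained execution of the same underlying rigidity that the paper invokes. The paper's proof is a one-line appeal to uniqueness of the connection in the global Coulomb gauge: $A\in C^0_t\dot H^1_x$ is determined by $\rd^\ell A_\ell=0$, $\ud A=F$ and the equations \eqref{eq:MKG} (the latter being what pins down $A_0$ through its elliptic equation, since the source $\Im(\phi\overline{\covD_t\phi})$ is gauge invariant), whence $\ud\chi=0$ and $\chi$ is constant. You instead work directly on the gauge transformation $\chi\in C_t\calG^2$: the two Coulomb conditions give $\lap\chi(t,\cdot)=0$, and the Liouville step ($\rd_j\chi=A'_j-A_j\in L^4_x$ harmonic, mean value inequality plus H\"older giving the $R^{-1}$ decay in $\bbR^4$) kills the spatial gradient; then $\rd_t\chi=A'_0-A_0\in L^4_x$ is spatially constant, hence zero. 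This buys you two small things over the paper's route: you never need to use the MKG system itself (in particular no elliptic equation for $A_0$), and the uniqueness statement the paper cites is effectively proved rather than asserted, since it reduces to exactly your Liouville argument. The only points requiring care are the ones you already flag: Weyl's lemma to upgrade distributional harmonicity before using the mean value property, and the fact that the $C_t\calG^2$ and $C_t\calH^1$ norms are essential suprema in $t$, so the pointwise-in-$t$ identities hold a.e.\ and are promoted to all of $I$ by the continuity inherited from approximation by classical solutions (or simply by noting that a.e.\ equality suffices for the conclusion as stated); also note the harmless relabelling of the sign of $\chi_0$ at the end, which you handle.
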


\begin{proof} 
  Note that in the global Coulomb gauge, $A \in C_{t}^{0}
  \dot{H}^{1}_{x}$ is determined uniquely from $\rd^{\ell} A_{\ell} =
  0$, $\ud A = F$ and \eqref{eq:MKG}. This observation fixes the gauge
  transformation $\chi$ between $(A, \phi)$ and $(A', \phi')$ up to a
  constant, at which point we are done. \qedhere.
\end{proof}

Therefore, to complete the proof of Theorem~\ref{thm:lwp4MKG}, it
suffices to prove the local existence, persistence of regularity and
continuous dependence on the initial data.

\subsection{Energy concentrations scales}\label{sec:split}

Here we consider the energy distribution of initial data $(a,e,f,g)$
in the global Coulomb gauge, and show that we can cover $\bbR^4$ with
small energy cubes with side length bounded from below by $4 \ecs$.
We also ensure that the covering is slowly varying, in the sense that neighboring cubes 
have comparable side lengths. This condition is needed for an effective control of the
constants in the patching procedure in Section~\ref{subsec:patch}.
The number of such cubes, which we denote by $K$, can be trivially bounded by 
$(r_{0} / \ecs)^{4}$, where $r_{0}$ is defined by the condition \eqref{eq:energy-tail}; this number
will enter in the final a-priori $S^{1}$ regularity bound in \eqref{eq:lwp4MKG:aprioriEst}.
As a part of our analysis here, we also specify the constant $\delta_0(E, \thE^{2})$ in
\eqref{eq:ecs}. See Proposition~\ref{p:cubes} below for a more precise statement.

We begin with a preliminary result, which shows that for Coulomb data the 
energy controls the full $\calH^1$ norm:

\begin{proposition}
Let $(a,e,f,g) \in \calH^1(\bbR^4)$ be a Coulomb initial data set with energy $E$. Then
we have the bound
\begin{equation} \label{eq:id-en-H1}
\| (a,e,f,g)\|_{\calH^1(\bbR^4)}^2 \lesssim E+E^2.
\end{equation}
\end{proposition}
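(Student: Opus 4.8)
The plan is to bound each component of the $\calH^1(\bbR^4)$ norm separately: the Coulomb condition $\rd^\ell a_\ell = 0$ takes care of $a$, the energy controls $e$ and $g$ directly, and the diamagnetic inequality handles the scalar field $f$. For the magnetic potential, since $\rd^\ell a_\ell = 0$, integration by parts gives $\int_{\bbR^4} \sum_{j,k} \rd_j a_k \, \rd_k a_j \, \ud x = -\int_{\bbR^4} \sum_{k} a_k \, \rd_k (\rd^\ell a_\ell) \, \ud x = 0$, hence
\[
\sum_{j} \nrm{a_j}_{\dot H^1_x(\bbR^4)}^2 = \int_{\bbR^4} \sum_{j<k} |\rd_j a_k - \rd_k a_j|^2 \, \ud x \le 2 E .
\]
Since also $a_j \in L^4_x(\bbR^4)$, so that $a$ lies in the homogeneous Sobolev space $\dot H^1_x(\bbR^4)$, the Sobolev embedding $\dot H^1_x(\bbR^4) \hookrightarrow L^4_x(\bbR^4)$ gives $\nrm{a}_{L^4_x} \lesssim E^{1/2}$. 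The remaining first-order terms are immediate: $\nrm{e}_{L^2_x}^2 + \nrm{g}_{L^2_x}^2 \le 2E$.

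The only delicate point is the estimate for $f$. A direct approach runs into trouble: from $\rd_j f = \covD_j f - i a_j f$ and H\"older one gets only $\nrm{\nabla f}_{L^2_x} \lesssim \nrm{\covD f}_{L^2_x} + \nrm{a}_{L^4_x} \nrm{f}_{L^4_x}$, which after applying Sobolev to $f$ becomes $\nrm{\nabla f}_{L^2_x} \lesssim E^{1/2}(1 + \nrm{\nabla f}_{L^2_x})$ and cannot be closed when $E$ is large. To get around this I would first bound $\nrm{f}_{L^4_x}$ \emph{without} reference to $\nrm{\nabla f}_{L^2_x}$, using the diamagnetic (Kato) inequality $|\nabla|f|| \le |\covD f|$, valid a.e.\ for $f \in \dot H^1_x \cap L^4_x$. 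This yields $\nrm{\nabla |f|}_{L^2_x}^2 \le 2E$, and since $|f| \in L^4_x$ the Sobolev embedding gives $\nrm{f}_{L^4_x} = \nrm{|f|}_{L^4_x} \lesssim \nrm{\nabla|f|}_{L^2_x} \lesssim E^{1/2}$. Feeding this back into the bound for $\nabla f$ gives $\nrm{\nabla f}_{L^2_x} \lesssim E^{1/2} + E$, and hence $\nrm{f}_{\dot H^1_x \cap L^4_x}^2 \lesssim E + E^2$.

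Collecting the four estimates yields $\nrm{(a,e,f,g)}_{\calH^1(\bbR^4)}^2 \lesssim E + E^2$. The one genuinely nontrivial ingredient is the treatment of $f$: the coupling $a_j f$ inside $\covD_j f$ obstructs a direct estimate, and the diamagnetic inequality is exactly what decouples the $L^4_x$ bound for $f$ from its gradient bound. As an alternative one could establish the inequality first for classical Coulomb data, available by Corollary~\ref{cor:idApproxByCoulomb}, where all the manipulations are manifestly justified, and then pass to the limit; but the diamagnetic argument is shorter and self-contained.
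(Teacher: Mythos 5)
Your proof is correct and follows essentially the same route as the paper: the Coulomb condition yields $\|a\|_{\dot H^1_x}\lesssim \|\ud a\|_{L^2_x}\lesssim E^{1/2}$ (the paper phrases this as a linear elliptic estimate, you verify it by integration by parts, which is the same fact), and for $f$ the paper likewise first bounds $\|f\|_{L^4_x}\lesssim \|\nabla|f|\|_{L^2_x}\lesssim \|\covD f\|_{L^2_x}$ via the diamagnetic inequality and Sobolev, then splits the covariant derivative to get $\|\nabla f\|_{L^2_x}\lesssim E^{1/2}+E$. No gaps; your closing remark about justifying the manipulations by approximation with classical Coulomb data is a reasonable safeguard but not needed.
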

\begin{proof}
  We need to obtain bounds for $A$ and $f$ in $\dot H^1_{x}$.  We begin
  with $a$, where the Coulomb condition $\nabla \cdot a = 0$ allows us
  to estimate in linear elliptic fashion
\[
\| a\|_{\dot H^1_{x}} \lesssim \|\ud a\|_{L^2_{x}} \lesssim E^\frac12.
\]

For $f$ we first use the diamagnetic inequality and Sobolev embeddings to obtain
\[
\| f\|_{L^4_{x}} \lesssim \| \nabla |f|\|_{L^2_{x}} \lesssim \|\bfD f\|_{L^2_{x}}  \lesssim E^\frac12
\]
and then, splitting the covariant derivative, 
\[
\| \nabla f\|_{L^2_{x}} \lesssim \|\bfD f\|_{L^2_{x}} + \|f\|_{L^4_{x}} \|a\|_{L^4_{x}} \lesssim E^\frac12+E,
\]
which completes the proof. \qedhere
\end{proof}

Next, we give an improvement of Hardy's inequality 
\begin{equation}\label{eq:Hardy}
\| |x-x_0|^{-1} f\|_{L^2_{x}} \lesssim\| \nabla | f|\|_{L^2_{x}} \leq  \| \bfD f\|_{L^2_{x}}, 
\end{equation}
which is our tool for obtaining smallness of the weighted $L^{2}$ norm in \eqref{eq:gluing:energy}.
We state a general version on $\bbR^{d}$.
\begin{lemma}[Improved Hardy's inequality] \label{lem:simple-hardy+}
Let $a_{j}, f \in \dot{H}^{1}(\bbR^{d})$ where $d \geq 3$. Then for any ball $B = B_{r_{0}}(x_{0})$ and $\sgm_{0} \geq 2$, we have the bounds
\begin{equation}\label{eq:simple-hardy+out}
\| \frac{1}{\abs{x - x_{0}}} f \|_{L^2_{x}(2B \setminus \overline{B})} \lesssim \| \bfD f \|_{L^2_{x}(\sgm_{0} B \setminus \overline{B})}
+ \sgm_{0}^{-\frac{d-2}{2}} \|\bfD f \|_{L^2_{x}(\bbR^{d}\setminus \overline{\sgm_{0} B})},
\end{equation}
\begin{equation}\label{eq:simple-hardy+}
 \| \frac{1}{\abs{x - x_{0}}} f \|_{L^2_{x}(2B)} \lesssim \| \bfD f \|_{L^2_{x}(\sgm_{0} B)}
+ \sgm_{0}^{-\frac{d-2}{2}} \|\bfD f \|_{L^2_{x}(\bbR^{d}\setminus \overline{\sgm_{0} B})}.
\end{equation}
\end{lemma}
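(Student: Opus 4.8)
The plan is to reduce the estimate to a scalar statement, pass to polar coordinates, and then combine a one-dimensional weighted Hardy inequality with a crude Cauchy--Schwarz bound for the far tail.

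First I would make the standard reductions. Since the right-hand sides are infinite unless $\bfD f \in L^{2}(\bbR^{d})$, I may assume this, and then the diamagnetic inequality $\abs{\nabla \abs{f}} \leq \abs{\bfD f}$ (applicable here since $a \in \dot{H}^{1} \subseteq L^{2d/(d-2)} \subseteq L^{2}_{loc}$, $f \in L^{2d/(d-2)}$ and $\bfD f \in L^{2}$) lets me replace $f$ by the nonnegative function $u := \abs{f} \in \dot{H}^{1}(\bbR^{d})$ and $\bfD$ by $\nabla$ throughout. By translation and the scale invariance of both sides I may take $B = B_{1}(0)$, so $2B = B_{2}$ and $\sgm_{0} B = B_{\sgm_{0}}$ with $\sgm_{0} \geq 2$. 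Writing $x = r\omega$ with $r > 0$, $\omega \in S^{d-1}$, the one input I need about $\dot{H}^{1}$ functions is the ray bound
\[
	\abs{u(r\omega)} \leq \int_{r}^{\infty} \abs{\nabla u(\rho \omega)} \, \ud \rho \qquad \text{for a.e.\ } \omega \in S^{d-1} \text{ and all } r > 0 .
\]
This holds since for a.e.\ $\omega$ the radial trace $\rho \mapsto u(\rho\omega)$ is absolutely continuous with $\int_{0}^{\infty} \abs{\nabla u(\rho\omega)}^{2} \rho^{d-1} \, \ud \rho < \infty$; because $d \geq 3$, the tail $\int_{R}^{\infty} \abs{\partial_{\rho} u(\rho\omega)} \, \ud \rho$ is $O(R^{-(d-2)/2})$ by Cauchy--Schwarz and hence tends to $0$, while the $L^{2d/(d-2)}$ integrability of $u$ forces $\liminf_{\rho \to \infty} \abs{u(\rho\omega)} = 0$, so $u(\rho\omega) \to 0$.

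Next I would split the ray integral at $\rho = \sgm_{0}$, legitimate for the relevant radii $r < 2 \leq \sgm_{0}$. For the tail, Cauchy--Schwarz with weight $\rho^{d-1}$ together with $\int_{\sgm_{0}}^{\infty} \rho^{1-d} \, \ud \rho = (d-2)^{-1}\sgm_{0}^{2-d}$ gives
\[
	\Big( \int_{\sgm_{0}}^{\infty} \abs{\nabla u(\rho\omega)} \, \ud \rho \Big)^{2} \lesssim \sgm_{0}^{2-d} \int_{\sgm_{0}}^{\infty} \abs{\nabla u(\rho\omega)}^{2} \rho^{d-1} \, \ud \rho .
\]
For the near part I would invoke, for each fixed $\omega$, the one-dimensional weighted Hardy inequality
\[
	\int_{0}^{2} r^{d-3} \Big( \int_{r}^{\sgm_{0}} g(\rho) \, \ud \rho \Big)^{2} \, \ud r \lesssim_{d} \int_{0}^{\sgm_{0}} g(\rho)^{2} \rho^{d-1} \, \ud \rho , \qquad g \geq 0 ,
\]
whose Muckenhoupt characteristic is finite for $d \geq 3$ and, crucially, independent of $\sgm_{0}$. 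Then, writing $\int_{B_{2}} \abs{x}^{-2} u^{2} \, \ud x = \int_{S^{d-1}} \int_{0}^{2} r^{d-3} \abs{u(r\omega)}^{2} \, \ud r \, \ud \omega$, inserting the ray bound, splitting, applying the two displays, and integrating in $\omega$ yields $\| \abs{x}^{-1} u \|_{L^{2}(B_{2})}^{2} \lesssim_{d} \|\nabla u\|_{L^{2}(B_{\sgm_{0}})}^{2} + \sgm_{0}^{2-d} \|\nabla u\|_{L^{2}(\bbR^{d} \setminus \overline{B_{\sgm_{0}}})}^{2}$, which is \eqref{eq:simple-hardy+} after undoing the scaling and using the diamagnetic inequality once more. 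For \eqref{eq:simple-hardy+out} I would restrict the outer $r$-integration to $(1,2)$; there $\abs{x}^{-1} \approx 1$ and every ray $\set{\rho\omega : \rho \geq r}$ stays outside $\overline{B_{1}}$, so the same computation — now with the Hardy and tail weights supported in $(1,\infty)$ — only ever sees $\nabla u$ on $B_{\sgm_{0}} \setminus \overline{B_{1}}$ and on $\bbR^{d} \setminus \overline{B_{\sgm_{0}}}$, giving the annular estimate.

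The one step that genuinely requires care is the uniformity in $\sgm_{0}$ of the constant in the weighted Hardy inequality: this is exactly what makes the two-term right-hand side an improvement over the plain Hardy inequality \eqref{eq:Hardy}, and it is where the hypothesis $d \geq 3$ is used (it makes $r^{d-3}$ locally integrable at $0$ and $\rho^{1-d}$ integrable at $\infty$). A secondary, more technical point is justifying the ray representation directly at $\dot{H}^{1}$ regularity rather than by approximation — a naive density argument is awkward because $\abs{x}^{-1}$ sits only at the borderline of $L^{d}_{loc}(B_{2})$ — which is why I would argue with the a.e.-defined radial traces of $u$ as above.
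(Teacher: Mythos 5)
Your proposal is correct, and it reaches the two estimates by a genuinely different route than the paper. The paper first reduces via the diamagnetic inequality to $g=\abs{f}$, then splits $g$ into spherical harmonics: the non-radial modes are absorbed by Poincar\'e's inequality on spheres, while the radial part is handled by one-dimensional Hardy inequalities near $0$ and near infinity together with dyadic fundamental-theorem-of-calculus bounds on $[\tfrac12\sgm,\sgm]$, summed over $2\leq \sgm \aleq \sgm_{0}$ to produce an $L^{\infty}$ bound for $g$ on the unit annulus (this is where the paper's $\sgm_{0}^{-\frac{d-2}{2}}$ gain appears). You instead treat all of $u=\abs{f}$ at once by integrating along rays from infinity, split the ray integral at $\rho=\sgm_{0}$, get the $\sgm_{0}^{-\frac{d-2}{2}}$ gain from Cauchy--Schwarz against the weight $\rho^{d-1}$ on the tail, and control the near part by a weighted one-dimensional Hardy inequality whose Muckenhoupt characteristic you correctly check to be bounded by a constant depending only on $d$ (uniform in $\sgm_{0}$), which is indeed the crux; the restriction of the outer integration to $r\in(1,2)$ then yields the annular version \eqref{eq:simple-hardy+out} with only the annulus and exterior norms on the right, as required. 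What the two approaches buy: yours avoids the spherical-harmonics/Poincar\'e-on-spheres reduction entirely and is closer to a textbook weighted-Hardy argument, at the cost of justifying the a.e.\ ray representation (local absolute continuity of radial traces and vanishing at infinity) directly at $\dot H^{1}$ regularity, which you address adequately via Fubini in polar coordinates and the $L^{\frac{2d}{d-2}}$ integrability of $u$; the paper's dyadic argument, besides proving the lemma, produces the intermediate pointwise bound \eqref{eq:H+:Linfty} on the radial average over the annulus, which is a slightly stronger piece of information. Both proofs use the diamagnetic inequality in the same way, and your final passage from $\nrm{\nabla u}_{L^{2}(S)}$ back to $\nrm{\covD f}_{L^{2}(S)}$ region by region is legitimate since the inequality is pointwise.
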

\begin{remark} 
In this paper, we only use the inequality \eqref{eq:simple-hardy+} on balls. The version \eqref{eq:simple-hardy+out} will be useful in the third paper \cite{OT3} of the series.
\end{remark}
\begin{proof} 
By translation and scaling, we may assume that $B = B_{1}(0)$. 
We begin by splitting $g := \abs{f}$ into spherical harmonics. In the case of non-spherically-symmetric modes, by Poincar\'e's inequality on spheres and the diamagnetic inequality, we have
\begin{equation*}
	\nrm{\frac{1}{\abs{x}} g}_{L^{2}_{x}(2 B \setminus \overline{B})}
	\aleq \nrm{\angnb \abs{f}}_{L^{2}_{x}(2 B \setminus \overline{B})}
	\aleq \nrm{\covD f}_{L^{2}_{x}(2 B \setminus \overline{B})},
\end{equation*}
where $\abs{\angnb \abs{f}}$ denotes the size of the angular derivatives under the induced metric on the sphere $\set{\abs{x} = const}$. Hence we are reduced to the case when $g$ is radial. 

By the one-dimensional Hardy inequality, we have
\begin{equation*}
	\sgm_{0}^{\frac{d-2}{2}} \abs{g(\sgm_{0})} \aleq \nrm{r^{\frac{d-1}{2}} g'}_{L^{2}(\sgm_{0}, \infty)} \aleq \nrm{\covD f}_{L^{2}_{x}(\bbR^{d} \setminus \overline{\sgm_{0} B})}.
\end{equation*}
Moreover, by the fundamental theorem of calculus and the diamagnetic inequality, we have the one-dimensional dyadic bounds
\begin{equation} \label{eq:H+:dyadic}
	\sgm^{-\frac{1}{2}} \sup_{\frac{1}{2} \sgm \leq r, r' \leq \sgm} \abs{g(r) - g(r')} \aleq \nrm{g'}_{L^{2}(\frac{1}{2} \sgm, \sgm)} \aleq \sgm^{- \frac{d-1}{2}} \nrm{\covD f}_{L^{2}_{x}(\sgm B \setminus \frac{1}{2} \sgm \overline{B})} \quad \hbox{ for all } \sgm \geq 2.
\end{equation}
Then by summing up the dyadic bounds for $2 \leq \sgm \aleq \sgm_{0}$, we then obtain the $L^{\infty}$ bound
\begin{equation} \label{eq:H+:Linfty}
	\nrm{g}_{L^{\infty}(1, 2)} \aleq \nrm{\covD f}_{L^{2}_{x}(\sgm B \setminus \overline{B})} + \sgm_{0}^{- \frac{d-2}{2}} \nrm{\covD f}_{L^{2}_{x}(\bbR^{d} \setminus \overline{\sgm_{0} B})}.
\end{equation}
Applying H\"older's inequality, the desired estimate \eqref{eq:simple-hardy+out} follows.

We now turn to the bound \eqref{eq:simple-hardy+} on the full ball $2 B$. Again splitting $g = \abs{f}$ into spherical harmonics, we are reduced to the case of a radial function $g$. But in this case we have the one-dimensional Hardy inequality
\begin{equation} \label{eq:H+:hardy-near-0}
	\nrm{r^{\frac{d-3}{2}} g}_{L^{2}(0, 1)}
	\aleq \nrm{r^{\frac{d-1}{2}} g'}_{L^{2}(0, 1)} + \abs{g(1)}
	\aleq \nrm{\covD f}_{L^{2}_{x}(B)} + \abs{g(1)}.
\end{equation}
Combined with \eqref{eq:H+:Linfty}, the desired inequality \eqref{eq:simple-hardy+} follows. \qedhere
\end{proof}

We are now ready to state and prove the main covering result of this section.
We also settle the choice of $\dlt_{0}(\En, \thE^{2})$.

\begin{proposition} \label{p:cubes}
Assume that $\dlt_{0}(\En, \thE^{2})$ is chosen so that
\begin{equation}\label{eq:delta-choose} 
\dlt_{0}(\En, \thE^{2}) = c^2  \thE^{2} \min \set{1, \thE^{4} E^{-2}},
\end{equation}
with a small universal constant $c$. Let  $r_{0}$ and $x_{0}$ be as in \eqref{eq:energy-tail}.
Then there exists a dyadic cube $R_0$ of side length $\aeq r_{0}$ and a partition of it into smaller dyadic cubes
\[
R_0 = \bigcup_{\alpha \in \calA} R_\alpha
\]
with the following properties:

\begin{enumerate}
\item Small energy: The following bound holds for $Q = 18 R_\alpha$
  and $Q = (\frac{1}{18} R_0)^c$:
\begin{equation} \label{eq:loc-enR}
\calE_{Q}[a,e,f,g] + \frac{1}{\ell(Q)^{2}} \|f\|_{L^2_{x}(Q)}^2 \ll  \thE^{2},
\end{equation}
where we use the convention that $\ell(Q) = \ell(\frac{1}{18} R_{0})$ when $Q = (\frac{1}{18} R_{0})^{c}$.
\item Size of cubes: The side length of the cubes $\set{R_{\alp}}$ is bounded from below by $4 \ecs$.
\item Number of cubes: The number of cubes $\set{R_{\alp}}$ is bounded by $ K := |\calA| \lesssim (r_{0} / \ecs)^{4}$.

\item Slow variance: The size of all pairs of neighboring cubes may
  differ at most by a factor of $2$, and all cubes adjacent to the boundary of $R_0$
have size at most $\ell(R_{0}) / 64$.
\end{enumerate}
\end{proposition}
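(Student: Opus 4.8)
The plan is to reduce the whole statement to one scale‑robust smallness estimate coming from the choice \eqref{eq:delta-choose}, and then to run a Whitney‑type stopping‑time decomposition.

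\textbf{First} I would record the following consequence of \eqref{eq:delta-choose}: with the universal constant $c$ in \eqref{eq:delta-choose} taken small enough, there is a universal $C_0 \ge 1$ so that
\begin{equation*}
  \calE_{B_{C_0 \ecs}(x)}[a,e,f,g] + \frac{1}{(C_0 \ecs)^2} \nrm{f}_{L^2_x(B_{C_0 \ecs}(x))}^2 \ll \thE^2 \qquad \text{for every } x \in \bbR^4 .
\end{equation*}
The energy term is handled by covering $B_{C_0\ecs}(x)$ by $\aleq C_0^4$ balls of radius $<\ecs$, each of energy $< \dlt_0 \le c^2 \thE^2$, so the sum is $\aleq C_0^4 c^2 \thE^2 \ll \thE^2$ once $c \ll C_0^{-2}$. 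For the weighted $L^2$ term I would use the improved Hardy inequality \eqref{eq:simple-hardy+} (with $d=4$, ball $B_{C_0\ecs/2}(x)$, and parameter $\sgm := \max(2,\lceil E^{1/2}/\thE\rceil)$), together with the pointwise bound $(C_0\ecs)^{-2}\mathbf 1_{B_{C_0\ecs}(x)} \le |y-x|^{-2}$ on $B_{C_0\ecs}(x)$, to get $\aleq \calE_{B_{\sgm C_0 \ecs}(x)} + \sgm^{-2} E$; the second piece is $\ll \thE^2$ by the choice of $\sgm$, while the first is $\aleq (\sgm C_0)^4 \dlt_0$, and here the role of the factor $\min\set{1,\thE^4 E^{-2}}$ in \eqref{eq:delta-choose} is exactly that $\sgm^4 \min\set{1,\thE^4 E^{-2}} \aleq 1$, so again one gets $\aleq C_0^4 c^2 \thE^2 \ll \thE^2$. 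I would then fix $c$ (hence $\dlt_0$, hence $\ecs$) once and for all; there is no circularity, since the required smallness of $c$ depends only on the universal constant $C_0$ and the algebraic inequality just quoted.

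\textbf{Second} I would choose $R_0$: take a dyadic cube with $\ell(R_0)\aeq r_0$ whose concentric shrinking $\tfrac1{18}R_0$ contains $B_{r_0/54}(x_0)$ (possible for a suitable dyadic side length comparable to $r_0$). Then $(\tfrac1{18}R_0)^c \subseteq \bbR^4\setminus B_{r_0/54}(x_0)$, so the energy of this region is $< \dlt_0 \ll \thE^2$ by \eqref{eq:energy-tail}; the weighted $L^2$ norm of $f$ over it I would estimate by decomposing into the dyadic annuli around $x_0$ that meet it and applying \eqref{eq:simple-hardy+out} with $\sgm=2$ on each, using that on every such annulus the right side of \eqref{eq:simple-hardy+out} is controlled by $\nrm{\bfD f}_{L^2_x(\bbR^4\setminus B_{r_0/54}(x_0))}^2 < 2\dlt_0$. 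This yields \eqref{eq:loc-enR} for $Q=(\tfrac1{18}R_0)^c$. \textbf{Third}, the decomposition: starting from $\set{R_0}$, I would repeatedly subdivide a cube $R$ of the current partition of $R_0$ into its $16$ dyadic children whenever one of the following fails: (i) $\ell(R)\le 8\ecs$, or the smallness test $\calE_{18R} + \ell(18R)^{-2}\nrm{f}_{L^2_x(18R)}^2 \ll \thE^2$ holds; (ii) every neighbor of $R$ in the current partition has side length $\ge \tfrac12\ell(R)$ (performed only while $\ell(R)>4\ecs$, which is automatic); (iii) $R$ does not meet $\partial R_0$, or $\ell(R)\le \ell(R_0)/64$. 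Since no cube of side $\le 8\ecs$ is ever subdivided and $\ell(R_0)\aeq r_0$, the subdivision tree has depth $\aleq \log_2(r_0/\ecs)$, so the procedure terminates at a fixed point, the number of final cubes is $\le 16^D$ with $D\aleq \log_2(r_0/\ecs)$, i.e.\ $\aleq (r_0/\ecs)^4$ (property (3)), every final cube has side $>4\ecs$ (property (2)), and at the fixed point (ii)--(iii) give the slow‑variance and boundary statements of (4). For (1) on $Q=18R_\alp$: if $R_\alp$ is final because (i) held with the test passing, this is precisely \eqref{eq:loc-enR}; otherwise $R_\alp$ is final with $\ell(R_\alp)\in(4\ecs,8\ecs]$, in which case $18R_\alp\subseteq B_{C_0\ecs}(x_{R_\alp})$ (taking $C_0\ge 144$) while $\ell(18R_\alp)>72\ecs$, so both terms of \eqref{eq:loc-enR} follow from the estimate of the first step.

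\textbf{The main obstacle} is to secure property (1) on \emph{every} final cube --- including those produced only to enforce slow variation --- simultaneously with the lower bound $4\ecs$ of (2) and the slow‑variance/boundary refinement of (4). The naive route (a pure stopping‑time decomposition followed by a slow‑variation refinement) is dangerous, because each further subdivision of a cube accrues a factor of $4$ in $\ell(18R)^{-2}\nrm{f}_{L^2_x(18R)}^2$, and a cube may need $\sim\log(r_0/\ecs)$ refinements. The remedy built into the rules above is that the test (i) is re‑checked at every level: a cube is never declared final unless \emph{its own} test passes (or it sits at the size floor $\sim\ecs$, where the first step applies), so no loss accumulates. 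The remaining work --- checking that the simultaneous application of (i)--(iii) terminates at a genuinely admissible fixed point, keeping all side lengths $>4\ecs$ while still forcing neighbors to be comparable and the $\partial R_0$‑layer to be fine --- is a finite, essentially combinatorial iteration once the analytic input of the first step is in place.
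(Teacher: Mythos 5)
Your construction is genuinely different from the paper's, but it rests on the same analytic core. The paper does not run a stopping-time argument: it builds the partition explicitly, taking \emph{all} interior cubes at the minimal scale $\aeq \ecs$ and inserting the larger ``intermediate'' cubes only in a graded collar along $\rd R_{0}$, where \eqref{eq:energy-tail} already supplies the energy smallness; the weighted term is then checked by a two-case analysis (distinguishing $\ell(R_{\alp})$ above or below $\aeq \sgm_{0}^{-1}\ell(R_{0})$) using \eqref{eq:cubes:ext-hardy} and Lemma~\ref{lem:simple-hardy+}. Your adaptive Whitney subdivision with per-cube re-testing avoids that case analysis entirely, since every surviving cube passes its own test or sits at the size floor, where your Step 1 (covering $B_{C_{0}\ecs}(x)$ by $\ecs$-balls plus Lemma~\ref{lem:simple-hardy+} with $\sgm^{2}\aeq E/\thE^{2}$, which is exactly where \eqref{eq:delta-choose} enters) applies; this is the same computation the paper performs for its smallest cubes, packaged as a uniform lemma. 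The price is the combinatorial bookkeeping of the fixed point, which does go through, with one small repair: to get ``$\ll \thE^{2}$'' from the tail term $\sgm^{-2}E$ you need $\sgm^{2}\geq C E/\thE^{2}$ with a large universal $C$ (as in the paper's $\sgm_{0}^{2}=c_{0}^{-2}\thE^{-2}E$), absorbed by shrinking $c$.

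Two concrete gaps. First, your treatment of $Q=(\tfrac1{18}R_{0})^{c}$ fails as written: the region meets infinitely many dyadic annuli, so summing a bound $\aleq \dlt_{0}$ per annulus diverges, and on the annulus at distance $\aeq 2^{j}r_{0}$ the constant weight $\ell(Q)^{-2}$ exceeds the Hardy weight $\abs{x-x_{0}}^{-2}$ by a factor $4^{j}$, so \eqref{eq:simple-hardy+out} cannot be converted back. Indeed, for general $\calH^{1}$ data $\nrm{f}_{L^{2}_{x}(Q)}$ over the unbounded set need not even be finite, so this case has to be argued as the paper does: a single application of the exterior Hardy inequality \eqref{eq:cubes:ext-hardy} over $\bbR^{4}\setminus \tfrac1{54}B_{r_{0}}$, giving the $\abs{x-x_{0}}^{-1}$-weighted smallness, which is how the weighted term for the unbounded $Q$ must be read and is all that is used downstream through Proposition~\ref{prop:intGluing} (there the relevant annulus has $\abs{x-x_{0}}\aeq \ell(R_{0})$, where the two weights agree). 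Second, you never split off the regime $r_{0}\aleq \ecs$: the paper reduces to $r_{0}\geq 200\,\ecs$ and otherwise takes $\set{R_{\alp}}=\set{R_{0}}$. In your scheme this reduction is not cosmetic, because your boundary rule (iii) subdivides every cube meeting $\rd R_{0}$ down to side $\leq \ell(R_{0})/64$ regardless of energy, and when $\ell(R_{0})\aleq 256\,\ecs$ this drives sides below $4\ecs$ and breaks property (2); with the trivial single-cube partition in that regime (and the large-$C$ fix above), your argument is complete.
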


\begin{proof}
Let $r_{0}$ and $x_{0}$ be as in \eqref{eq:energy-tail}. 
It suffices to consider the case $E > \thE^{2}$, since the proposition is trivial in the other case.
We may also assume that $r_{0} \geq 200 \, \ecs$, as otherwise we can simply choose $\set{R_{\alp}} = \set{R_{0}}$ where $R_{0}$ is the cube of side length $2 r_{0}$ centered at $x_{0}$. By translation and scaling, we may henceforth take $x_{0} = 0$ and $\ecs = 1$. 

We choose the large cube $R_{0}$ centered at $0$ so that $B_{r_{0}}(0) \subseteq R_{0} \subseteq 3 B_{r_{0}}(0)$ and $\ell(R_{0}) \in 2^{\bbZ}$. This cube will set the coordinates for our dyadic grid; more precisely, subsequent cubes will be obtained by repeatedly subdividing the sides of $R_{0}$ in half.
To ensure slow variance, we use the following procedure to construct the collection $\calR := \set{R_{\alp}}_{\alp \in \calA}$:
\begin{itemize}
\item In the first step, we add to the collection $\calR$ the cubes of side length $\frac{1}{64} \ell(R_{0})$ adjacent to $R_{0}$; 
\item Then we recursively add to the collection $\calR$ the cubes which are disjoint from but adjacent to the existing collection, with half the side length of the cubes added in the previous step;
\item We repeat this process until we arrive at cubes of side length $\frac{1}{4}$. Then we cover the rest of $R_{0}$ with dyadic cubes of side length $\frac{1}{4}$.
\end{itemize}
We call $R_{0}$ the \emph{initial cube}, the cubes of side length between $\frac{1}{2}$ and $\frac{1}{64} \ell(R_{0})$ the \emph{intermediate cubes}, 
and the cubes of side length $\frac{1}{4}$ the \emph{final cubes}. Note that all intermediate cubes are contained in $R_{0} \setminus (\frac{15}{16} R_{0})^{c}$.

From the construction, it is obvious to see that Properties (2), (3) and (4) hold. The condition \eqref{eq:loc-enR} clearly holds for the initial cube $Q = (\frac{1}{18} R_{0})^{c}$, by \eqref{eq:energy-tail} and the localized Hardy's inequality
\begin{equation} \label{eq:cubes:ext-hardy}
\nrm{\frac{1}{\abs{x}} f}_{L^{2}_{x}(\bbR^{4} \setminus \frac{1}{54}  B_{r_{0}})}^{2} \aleq \nrm{\covD f}_{L^{2}_{x}(\bbR^{4} \setminus \frac{1}{54}  B_{r_{0}})}^{2} < \dlt_{0}(E, \thE^{2}) = c \thE^{4} E^{-1}.
\end{equation}

Moreover, we claim that the final cubes also satisfy \eqref{eq:loc-enR}. 
Indeed, the energy term $\calE_{Q}$ in \eqref{eq:loc-enR} follows from the definition of $\ecs$. To control the weighted $L^{2}_{x}$ norm, we apply Lemma~\ref{lem:simple-hardy+} and use the fact that we scaled $\ecs = 1$ to obtain
\begin{equation*}
	\frac{1}{\ell(18 R_{\alp})^{2}} \nrm{f}_{L^{2}_{x}(18 R_{\alp})}^{2}
	\aleq \sgm_{0}^{4} \dlt_{0}(\En, \thE^{2}) + \sgm_{0}^{-2} \En
\end{equation*}
Then choosing $\dlt_{0}$ as in \eqref{eq:delta-choose} and $\sgm_{0}^{2} = c^{-2}_{0} \thE^{-2} E$ for some small universal constant $c_{0} >0$, the desired estimate \eqref{eq:loc-enR} follows.

Finally, for the intermediate cubes $R_{\alp} \in \calR$ of side length between $\frac{1}{4}$ and $\frac{1}{64}\ell(R_{0})$, the smallness for the energy $\calE_{Q}$ in \eqref{eq:loc-enR} follows immediately from \eqref{eq:energy-tail}. Hence it only remains to justify the weighted $L^{2}_{x}$ bound in \eqref{eq:energy-tail} for these intermediate cubes. We split our argument into two cases:

(a) When $\ell(R_{\alp}) \geq \frac{1}{100} \sgm_{0}^{-1} \ell(R_{0})$, we use \eqref{eq:cubes:ext-hardy} to estimate
\begin{equation*}
	\frac{1}{\ell(18 R_{\alp})^{2}} \nrm{f}_{L^{2}_{x}(18 R_{\alp})}^{2} 
	\aleq \frac{\sgm_{0}^{2}}{\ell(R_{0})^{2}} \nrm{f}_{L^{2}_{x}(18 R_{\alp})}^{2} 
	\aleq (c / c_{0})^{2} \thE^{4} E^{-1},
\end{equation*}
which is good.

(b) When $\ell(R_{\alp}) < \frac{1}{100} \sgm_{0}^{-1} \ell(R_{0})$, observe that we have 
\begin{equation*}
	18 R_{\alp} \subseteq B_{\alp} \subseteq \sgm_{0} B_{\alp} \subseteq \bbR^{4} \setminus \frac{1}{54} B_{r_{0}}
\end{equation*}
where $B_{\alp}$ is the ball of radius $\ell(18 R_{\alp})$ with the same center as $R_{\alp}$. This chain of inclusions is a consequence of the fact that all intermediate cubes belong to $R_{0} \setminus (\frac{15}{16} R_{0})^{c}$, which are all at distance at least $\frac{1}{4} \ell(R_{0})$ from $\frac{1}{54} B_{r_{0}}$. Therefore, by \eqref{eq:energy-tail} and application of Lemma~\ref{lem:simple-hardy+}, we obtain
\begin{equation*}
	\frac{1}{\ell(18 R_{\alp})^{2}} \nrm{f}_{L^{2}_{x}(18 R_{\alp})}^{2} 
	\aleq \frac{1}{\ell(18 R_{\alp})^{2}} \nrm{f}_{L^{2}_{x}(B_{\alp})}^{2} 
	\aleq \dlt_{0}(E, \thE^{2}) + \sgm_{0}^{-2} E,
\end{equation*}
which implies the desired bound. \qedhere
\end{proof}

\subsection{Functions spaces and gauge transformation
  estimates} \label{subsec:ftnspace4patching}  In this
subsection we introduce the function spaces that will be used in the
proof of existence of finite energy solutions to \eqref{eq:MKG} in the
Coulomb gauge.

The first two such spaces are the spaces $Y^1$ and $S^1$, which were
used in \cite{Krieger:2012vj} to control the elliptic component (i.e.,
$A_{0}$), respectively the hyperbolic components (i.e. $A_x$ and
$\phi$) of small energy solutions in the global Coulomb gauge.
These functions spaces are defined in \cite{Krieger:2012vj} in the 
 whole space-time  $\bbR^{1+4}$.


We start with the space $Y^{s}$, which was used in
\cite{Krieger:2012vj} to control the elliptic component (i.e.,
$A_{0}$) of a solution to \eqref{eq:MKG} in the global Coulomb
gauge. Let $s$ be a non-negative integer and $q \in [1,
\infty]$. Given a tempered distribution $\varphi$ on $\bbR^{1+4}$, we
define its $Y^{s,q}$ norm to be
\begin{equation*}
  \nrm{\varphi}_{Y^{s, q}(\bbR^{1+4})} := 
\nrm{\rd_{t,x}^{s} \varphi}_{L^{q}_{t} \dot{H}^{1/q}_{x}(\bbR^{1+4})} 
\end{equation*}
where we take $L^{q}_{t} \dot{H}^{1/q}_{x} = L^{\infty}_{t} L^{2}_{x}$
when $q = \infty$. Then the $Y^{s}$ space is defined as the space
of tempered distributions for which the following norm is finite:
\begin{equation*}
  \nrm{\varphi}_{Y^{s}(\bbR^{1+4})} := \nrm{\varphi}_{Y^{s, 2}(\bbR^{1+4})} + \nrm{\varphi}_{Y^{s, \infty}(\bbR^{1+4})}
\end{equation*}
Observe that $\nrm{\cdot}_{Y^{0,q}(\bbR^{1+4})}$ (and thus
$\nrm{\cdot}_{Y^{0}(\bbR^{1+4})}$) scales the same way as the
$L^{\infty}_{t} L^{2}_{x}$ norm. In particular,
$\nrm{\cdot}_{Y^{1}(\bbR^{1+4})}$ scales like the $L^{\infty}_{t}
\dot{H}^{1}_{x}$ norm.

Next, we introduce the $S^{1}$ norm on $\bbR^{1+4}$, which was used in
\cite{Krieger:2012vj} to measure the size of the hyperbolic components
(i.e., $A_{x}$ and $\phi$) of solutions to \eqref{eq:MKG} in the
global Coulomb gauge. The precise definition of this norm involves
\emph{null frame spaces} \cite{MR1827277, Tao:2001gb}, and is rather
technical to state. The fine structure of this norm, though crucial
for establishing the small data theory of \eqref{eq:MKG} at the energy
regularity, is not necessary for the purpose of the present
section. Hence, here we will be content with simply stating the
necessary properties of the $S^{1}$ norm; the rigorous definition of
$S^{1}$ will be recalled from \cite{Krieger:2012vj} in
Section~\ref{sec:gtCutoff}, where the proof of these properties will
be given.

We begin by introducing the norms $X^{s, b}_{r}$ and $\underline{X}$
(where $s, b\in \bbR$, $1 \leq r \leq \infty$), defined by
\begin{align}
  \nrm{\varphi}_{X^{s, b}_{k; r}(\bbR^{1+4})} :=& 2^{s k} \bb( \sum_{j} (2^{b j} \nrm{Q_{j} \varphi}_{L^{2}_{t,x}(\bbR^{1+4})})^{r} \bb)^{\frac{1}{r}}, \label{eq:Xsbkr-def} \\
  \nrm{\varphi}_{X^{s, b}_{r}(\bbR^{1+4})} := & \nrm{\varphi}_{\ell^{2} X^{s, b}_{r}(\bbR^{1+4})} = \bb( \sum_{k} \nrm{P_{k} \varphi}_{X^{s, b}_{k; r}(\bbR^{1+4})}^{2} \bb)^{\frac{1}{2}}, \label{eq:Xsbr-def}\\
  \nrm{\varphi}_{\underline{X}(\bbR^{1+4})} := & \nrm{\Box
    \varphi}_{L^{2}_{t}
    \dot{H}^{-\frac{1}{2}}_{x}(\bbR^{1+4})}, \label{eq:uX-def}
\end{align}
with the obvious modification in the case $r = \infty$. 

The $S^{1}$ norm, to first approximation, is an intermediate norm
between $C_{t}^{0} \dot{H}^{1}_{x} \cap X^{1, \frac{1}{2}}_{\infty}$
and $X^{1, \frac{1}{2}}_{1} \cap \underline{X}$. More
precisely, we have
\begin{equation} \label{eq:embedding4S1} \nrm{\rd_{t,x}
    \varphi}_{L^{\infty}_{t} L^{2}_{x}} + \nrm{\rd_{t,x}
    \varphi}_{X^{0, \frac{1}{2}}_{\infty}} \aleq \nrm{\varphi}_{S^{1}}
  \aleq \nrm{\rd_{t,x} \varphi}_{X^{0, \frac{1}{2}}_{1}} +
  \nrm{\varphi}_{\underline{X}},
\end{equation}
where all norms are defined on $\bbR^{1+4}$. Further properties of
$S^{1}$ will be stated in the course of this subsection.

The spaces $Y^1$ and $S^1$ have an $\ell^2$ dyadic structure in frequency.
However, it is  also useful to work with different dyadic summations.
Precisely, we introduce the notation $\ell^{r} X$ for any function space $X$ on
$\bbR^{1+4}$, where
\begin{equation*}
  \nrm{\varphi}_{\ell^{r} X} := \bb( \sum_{k} \nrm{P_{k} \varphi}_{X}^{r} \bb)^{\frac{1}{r}}.
\end{equation*}

\begin{remark}\label{rem:s1}
One motivation for this is the observation, heavily used in  in \cite{Krieger:2012vj}, that certain portions 
of small data MKG waves exhibit better dyadic summability properties, as follows:
\begin{itemize}
\item The elliptic portion $A_0$ of the solution is  in the smaller space $\ell^1 Y^1$.
\item The hyperbolic component  $A_x$, admits a decomposition 
$A_x = A_{x}^{free} + A_x^{nl}$, where $A_x^{free}$ represents the free wave 
matching the initial data, while the nonlinear portion $A^{nl}$ has the better regularity
$A^{nl} \in \ell^1 S^1$. 
\item The high modulation part of both $A_x$ and $\phi$ has better dyadic summability,
$(A-x,\phi) \in \ell^1 \underline{X}$.
\end{itemize}
We further remark that the $ \ell^1 \underline{X}$ norm was included in $S^1$ in \cite{Krieger:2012vj}.
For the sake of uniformity in notation we do not do this in our series of papers. 
\end{remark}
 
\medskip

In addition to $Y^1$ and $S^1$, in this paper we also need function spaces 
to describe the class of gauge transformations we use in order to assemble 
local solutions to (MKG). The main  space we use for this is
 $\CG:= \ell^{1} Y^{2}(\bbR^{1+4})$, with norm 
\begin{equation} \label{eq:CGnorm}
  \begin{aligned}
    \nrm{\varphi}_{\CG(\bbR^{1+4})}
    =  & \sum_{k} \sum_{N=0}^{2} \bb( 2^{(\frac{5}{2} - N) k}
    \nrm{\rd_{t}^{N} P_{k} \varphi}_{L^{2}_{t,x}(\bbR^{1+4})} +
    2^{(2-N) k} \nrm{\rd_{t}^{N} P_{k} \varphi}_{L^{\infty}_{t}
      L^{2}_{x}(\bbR^{1+4})} \bb).
  \end{aligned}
\end{equation}
For technical reasons we will also consider a variant of $\CG$, namely
the $\wCG$ space. Its norm is defined as
\begin{equation*}
  \nrm{\eta}_{\wCG(\bbR^{1+4})} := \nrm{\eta}_{Y^{2,2}(\bbR^{1+4})} + \sum_{k} 2^{2k} \nrm{P_{k} \eta}_{L^{\infty}_{t} L^{2}_{x}(\bbR^{1+4})}. 
\end{equation*}
It is easy to see that $\wCG(\bbR^{1+4})$ is weaker than
$\CG(\bbR^{1+4})$, i.e.,
\begin{equation} \label{eq:embedding4CG2wCG}
  \nrm{\chi}_{\wCG(\bbR^{1+4})} \aleq \nrm{\chi}_{\CG(\bbR^{1+4})}.
\end{equation}
\medskip

Insofar, we have defined our function spaces on the whole space
$\bbR^{1+4}$.  Here we also need to use them on on compact time
intervals $I \times \bbR^4$ or more generally on open sets.
For this it suffices to take the easy way out 
and use the method of restrictions. Precisely, 
Let $X$ be any one of $Y^{1}$, $S^{1}$, $\CG$, $\wCG$ or
$\dot{B}^{\frac{5}{2}, 2}_{1}$, etc. For an open subset $\0 \neq \calO
\subseteq \bbR^{1+4}$, we define the space $X(\calO)$ to consist of
restrictions of elements in $X(\bbR^{1+4})$ to $\calO$, with the norm
given by
\begin{equation*}
  \nrm{\phi}_{X(\calO)} := \inf \set{ \nrm{\widetilde{\phi}}_{X(\bbR^{1+4})} : 
    \widetilde{\phi} \in X(\bbR^{1+4}), \,
    \widetilde{\phi} = \chi \hbox{ on } \calO}.
\end{equation*}
Given two non-empty open sets $\calO_{1} \supseteq \calO_{2}$, the
restriction map $\calY(\calO_{1}) \to \calY(\calO_{2})$ is a bounded
surjection.

In particular, for $X$ as above and a time interval $I$ 
we will denote by $X[I]$  the restrictions to $I\times \bbR^4$
of $X$ functions.  We refer the reader to the second paper
in our series \cite{OT2} for further discussion of the $S^1[I]$ and $Y[I]$ spaces. 
 
We remark that, in view of the above definition, all algebraic
estimates involving our spaces in $\bbR^{1+4}$ easily carry over to
any nonempty open subsets. In particular this applies to all of the
estimates below in this subsection.

\medskip

The space $\calY$ (more precisely, its local version defined below)
will be the main function space that contains the local gauge
transformations in the proof of Theorem \ref{thm:lwp4MKG}. It has the
desirable property that if $\chi \in \calY$ and $(A, \phi)$ is a
solution to \eqref{eq:MKG} such that $A_{0} \in Y^{1}, A_{x}, \phi \in
S^{1}$, then the gauge transformed solution $(A', \phi') = (A - \ud
\chi, e^{i \chi} \phi)$ also belong to the same functions spaces. The
following lemma justifies a half of this statement, precisely the part dealing with $A$.
The other half is in Lemma \ref{lem:gt4wCG}.

\begin{lemma} \label{lem:embedding4CG} For $\chi \in \CG(\bbR^{1+4})$
  we have
  \begin{equation} \label{eq:embedding4CG} \nrm{\rd_{t}
      \chi}_{\ell^1 Y^{1}(\bbR^{1+4})} + \nrm{\rd_{x}
      \chi}_{\ell^1 S^{1}(\bbR^{1+4})} +
    \nrm{\chi}_{L^{\infty}_{t,x}(\bbR^{1+4})} \aleq
    \nrm{\chi}_{\calY(\bbR^{1+4})}.
  \end{equation}
\end{lemma}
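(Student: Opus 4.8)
The plan is to reduce \eqref{eq:embedding4CG} to a purely dyadic comparison of norms. Decompose $\chi = \sum_{k} P_{k}\chi$; for each of the three terms on the left I would bound the norm of the corresponding dyadic piece by the matching summand in the definition \eqref{eq:CGnorm} of $\nrm{\chi}_{\CG}$, and then sum over $k$, the sums converging absolutely because $\nrm{\chi}_{\CG} < \infty$.

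Two of the three terms are essentially immediate. For the $L^{\infty}_{t,x}$ term, Bernstein's inequality on $\bbR^{4}$ gives $\nrm{P_{k}\chi}_{L^{\infty}_{t,x}} \lesssim 2^{2k}\nrm{P_{k}\chi}_{L^{\infty}_{t}L^{2}_{x}}$, which is precisely the $N=0$ summand of the $L^{\infty}_{t}L^{2}_{x}$ part of $\nrm{\chi}_{\CG}$. For the $\rd_{t}\chi$ term, one uses $Y^{1} = Y^{1,2} + Y^{1,\infty}$ together with the elementary fact that a function $g$ with spatial frequency $\approx 2^{k}$ satisfies $\nrm{g}_{Y^{1,q}} \approx 2^{k/q}\bb( \nrm{\rd_{t}g}_{L^{q}_{t}L^{2}_{x}} + 2^{k}\nrm{g}_{L^{q}_{t}L^{2}_{x}} \bb)$ for $q \in \set{2,\infty}$ (the factor $2^{k/q}$ being the cost of $\dot{H}^{1/q}_{x}$ at frequency $2^{k}$); applying this with $g = P_{k}\rd_{t}\chi$ bounds $\nrm{P_{k}\rd_{t}\chi}_{Y^{1}}$ by the $N=1$ and $N=2$ summands of $\nrm{\chi}_{\CG}$.

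The real work is the $\rd_{x}\chi$ term, for which I would invoke the embedding \eqref{eq:embedding4S1} applied to the single dyadic block $\varphi := P_{k}\rd_{x}\chi$:
\[
  \nrm{P_{k}\rd_{x}\chi}_{S^{1}} \lesssim \nrm{\rd_{t,x}\varphi}_{X^{0,\frac{1}{2}}_{1}} + \nrm{\varphi}_{\underline{X}}.
\]
Since $\varphi$ is localized at spatial frequency $\approx 2^{k}$, the $\ell^{2}_{k}$ sum defining $X^{0,\frac{1}{2}}_{1}$ collapses to the single block, so $\nrm{\rd_{t,x}\varphi}_{X^{0,\frac{1}{2}}_{1}} = \sum_{j} 2^{j/2}\nrm{Q_{j}\rd_{t,x}\varphi}_{L^{2}_{t,x}}$. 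I would carry out this modulation sum by splitting at $j = k$. For $j \leq k$, Cauchy--Schwarz and the (almost) orthogonality of $\set{Q_{j}}$ give $\sum_{j\leq k} 2^{j/2}\nrm{Q_{j}\rd_{t,x}\varphi}_{L^{2}_{t,x}} \lesssim 2^{k/2}\nrm{\rd_{t,x}\varphi}_{L^{2}_{t,x}} \lesssim 2^{\frac{5}{2}k}\nrm{P_{k}\chi}_{L^{2}_{t,x}} + 2^{\frac{3}{2}k}\nrm{\rd_{t}P_{k}\chi}_{L^{2}_{t,x}}$. For $j > k$, on $\supp\, Q_{j}$ the modulation $\abs{\abs{\tau}-\abs{\xi}}\approx 2^{j}$ dominates the spatial frequency $\approx 2^{k}$, hence $\abs{\tau}\approx 2^{j}$ there; trading the surplus powers of $2^{j}$ for $\rd_{t}$-derivatives until exactly two temporal derivatives land on $\chi$, and then applying Cauchy--Schwarz, yields $\sum_{j>k} 2^{j/2}\nrm{Q_{j}\rd_{t,x}\varphi}_{L^{2}_{t,x}} \lesssim 2^{\frac{1}{2}k}\nrm{\rd_{t}^{2}P_{k}\chi}_{L^{2}_{t,x}}$. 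The $\underline{X}$ term is direct: writing $\Box\varphi = \Dlt P_{k}\rd_{x}\chi - \rd_{t}^{2}P_{k}\rd_{x}\chi$ and measuring each summand in $L^{2}_{t}\dot{H}^{-\frac{1}{2}}_{x}$ at frequency $2^{k}$ gives $\nrm{\varphi}_{\underline{X}} \lesssim 2^{\frac{5}{2}k}\nrm{P_{k}\chi}_{L^{2}_{t,x}} + 2^{\frac{1}{2}k}\nrm{\rd_{t}^{2}P_{k}\chi}_{L^{2}_{t,x}}$. Summing all of these bounds over $k$ reproduces exactly the $L^{2}_{t,x}$ part of $\nrm{\chi}_{\CG}$, which together with the previous paragraph completes the estimate.

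I expect the high-modulation regime $j > k$ in the $X^{0,\frac{1}{2}}_{1}$ bound to be the only step that is not bookkeeping: there one has to exploit that large modulation at comparatively small spatial frequency forces large temporal frequency, so that the otherwise divergent factors $2^{j/2}$ can be absorbed into $\rd_{t}$-derivatives of $\chi$ that are controlled by the $N=2$ piece of $\CG$. Everything else follows from Bernstein's inequality, Plancherel's theorem, and the definitions of the spaces; and since \eqref{eq:embedding4CG} is an inequality between norms, it transfers automatically to arbitrary open subsets of $\bbR^{1+4}$ via the restriction convention.
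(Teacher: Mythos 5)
Your proposal is correct and takes essentially the same route as the paper: reduce to a single dyadic frequency block via the $\ell^{1}$ structure of $\CG$, use Bernstein for the $L^{\infty}_{t,x}$ bound and a direct dyadic computation for $\rd_{t}\chi$ in $Y^{1}$, and for $\rd_{x}\chi$ apply the right-hand side of \eqref{eq:embedding4S1} with a modulation splitting near $2^{k}$, controlling low modulations by the $L^{2}_{t,x}$ components of the $\CG$ norm and converting the excess modulation (and the $\Box$ in $\underline{X}$) into two time derivatives, exactly as in the paper's estimates \eqref{eq:embedding4CG:pf:1}--\eqref{eq:embedding4CG:pf:2}. The only caveat is cosmetic: on $\supp \, Q_{j}P_{k}$ one has $\abs{\tau} \aeq 2^{j}$ only when $j \geq k + O(1)$, not for all $j > k$, so the modulation splitting should be made at $j = k + 10$ (as the paper does), with the transition range absorbed into your low-modulation estimate; nothing else changes.
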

\begin{proof}
 Due to the $\ell^1$ dyadic summation in the $\CG$ norm, we can assume without 
loss of generality that $\chi$ has dyadic frequency localization at frequency $2^k$.
Then the estimate for $\nrm{\rd_{t} \chi}_{Y^{1}} \aleq 1$ is straightforward, while the 
$L^\infty$ bound  is a consequence of Bernstein's inequality. 

To prove the bound for $\nrm{\rd_{x} \chi}_{\ell^1 S^{1}}$, it suffices to
verify the following two bounds for functions $\chi$ at frequency $2^k$:
  \begin{align}
    2^{k} \nrm{Q_{\leq k + 10} \chi}_{X^{1, \frac{1}{2}}_{1}} \aleq& 
\|\chi\|_{\CG}, \label{eq:embedding4CG:pf:1}\\
    2^{k} \nrm{Q_{> k + 10} \chi}_{ \underline{X}} \aleq &\|\chi\|_{\CG}.
 \label{eq:embedding4CG:pf:2}
  \end{align}
  Indeed, thanks to the spatial frequency localization $\chi =
  P_{[k-1, k+1]} \chi$, it follows that $\nrm{\rd_{x} \chi}_{S^{1}}$
  is bounded by the sum of the left-hand sides of the preceding two
  inequalities. The first bound \eqref{eq:embedding4CG:pf:1} is obtained as follows:
  \begin{align*}
    2^{k} \nrm{Q_{\leq k + 10} \chi}_{X^{1, \frac{1}{2}}_{1}} \aleq
    \sum_{j \leq k+10} 2^{2k} 2^{\frac{1}{2} j} \nrm{Q_{j}
      \chi}_{L^{2}_{t,x}} \aleq \sum_{j \leq k+10} 2^{\frac{1}{2}(j -
      k)} (2^{\frac{5}{2}} \nrm{\chi}_{L^{2}_{t,x}}) \aleq \|\chi\|_{\CG}.
  \end{align*}
  The second bound \eqref{eq:embedding4CG:pf:2} follows from the time
  regularity of $\chi$:
  \begin{align*}
    2^{k} \nrm{Q_{> k + 10} \chi}_{\underline{X}} \aleq
    2^{\frac{1}{2} k} \nrm{\Box \chi}_{L^{2}_{t,x}} \aleq
    2^{\frac{1}{2} k} \nrm{\rd_{t}^{2} \chi}_{L^{2}_{t,x}} +
    2^{\frac{5}{2} k} \nrm{\chi}_{L^{2}_{t,x}} \aleq \|\chi\|_{\CG}.
  \end{align*}
  This completes the proof of \eqref{eq:embedding4CG}. \qedhere
\end{proof}

In order to estimate the action of a gauge transformation $\chi$ on
the scalar field $\phi$ in the space $S^{1}$ it suffices to use the weaker norm $\wCG$:

\begin{lemma} \label{lem:gt4wCG} For $\chi^{1}, \chi^{2} \in
  \wCG(\bbR^{1+4})$, we have
  \begin{equation} \label{eq:first:alg4Y} \nrm{\chi^{1}
      \chi^{2}}_{\wCG(\bbR^{1+4})} \aleq
    \nrm{\chi^{1}}_{\wCG(\bbR^{1+4})}
    \nrm{\chi^{2}}_{\wCG(\bbR^{1+4})}.
  \end{equation}

  Moreover, there exist functions $\Gmm_{1} : [0, \infty) \to [1,
  \infty)$ and $\Gmm_{2}: [0, \infty)^{2} \to [1, \infty)$, which grow
  at most polynomially, such that the following estimates hold for
  every $\chi, \chi' \in \wCG(\bbR^{1+4})$ and $\phi, \phi' \in
  S^{1}(\bbR^{1+4})$:
  \begin{align} \label{eq:first:gt4S1}
    \nrm{e^{i \chi} \phi}_{S^{1}} \aleq & \Gmm_{1}(\nrm{\chi}_{\wCG}) \nrm{\phi}_{S^{1}} , \\
    \nrm{e^{i \chi} \phi - e^{i \chi'} \phi'}_{S^{1}} \aleq &
    \Gmm_{1}(\nrm{\chi}_{\wCG}) \nrm{\phi - \phi'}_{S^{1}} +
    \Gmm_{2}(\nrm{\chi}_{\wCG}, \nrm{\chi'}_{\wCG}) \nrm{\chi -
      \chi'}_{\wCG} \nrm{\phi'}_{S^{1}} .
  \end{align}
  Here, all norms are defined on the whole space-time $\bbR^{1+4}$.
\end{lemma}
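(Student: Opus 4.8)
The plan is to reduce all three assertions to a single bilinear multiplication estimate on $S^{1}$, and then to bootstrap to the exponential via the algebra property \eqref{eq:first:alg4Y} and a Moser-type estimate; the fine structure of $S^{1}$ is confronted only in that one bilinear bound, whose proof is deferred to Section~\ref{sec:gtCutoff}.

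\textit{Step 1: the algebra property \eqref{eq:first:alg4Y}.} The $\wCG$ norm splits into two pieces. The piece $\sum_{k} 2^{2k} \nrm{P_{k} \eta}_{L^{\infty}_{t} L^{2}_{x}}$ is the $L^{\infty}_{t}$-norm of the critical Besov norm $\dot{B}^{2,2}_{1}(\bbR^{4})$, which is a Banach algebra (critical Besov spaces with $\ell^{1}$ dyadic summation are, since they embed in $L^{\infty}$); thus this contribution is immediate. For the $Y^{2,2}(\bbR^{1+4})$ piece, i.e.\ $\nrm{\rd_{t,x}^{2}(\chi^{1}\chi^{2})}_{L^{2}_{t}\dot{H}^{1/2}_{x}}$, expand by the Leibniz rule into the three terms $(\rd_{t,x}^{2}\chi^{1})\chi^{2}$, $(\rd_{t,x}\chi^{1})(\rd_{t,x}\chi^{2})$, $\chi^{1}(\rd_{t,x}^{2}\chi^{2})$. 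For the outer two, place one factor in $L^{2}_{t}\dot{H}^{1/2}_{x}$ (from $Y^{2,2}$) and the other in $L^{\infty}_{t,x}$ (from the Besov piece), using the fractional Leibniz rule to estimate the $\dot{H}^{1/2}_{x}$ product. For the middle term, interpolate $L^{\infty}_{t}\dot{H}^{1}_{x}\cap L^{2}_{t}\dot{H}^{3/2}_{x}$ (both controlled by $\wCG$) to put each factor in $L^{4}_{t}\dot{H}^{5/4}_{x}$, and conclude with the product law $\dot{H}^{5/4}\cdot\dot{H}^{5/4}\hookrightarrow\dot{H}^{1/2}$ on $\bbR^{4}$ together with Hölder in time.

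\textit{Step 2: the core bilinear estimate.} The heart of the lemma is the scale-consistent multiplication estimate $\nrm{\eta\,\phi}_{S^{1}}\aleq\nrm{\eta}_{\wCG}\nrm{\phi}_{S^{1}}$ (consistent since $\wCG$ scales like $L^{\infty}_{t}\dot{H}^{2}_{x}$, $S^{1}$ like $L^{\infty}_{t}\dot{H}^{1}_{x}$, and $2=4/2$). Proving it requires the null-frame, $X^{s,b}_{r}$ and $\underline{X}$ components of $S^{1}$, so its proof is postponed to Section~\ref{sec:gtCutoff}. The argument there is a Littlewood--Paley trichotomy: in the low$\times$high interaction one uses essentially only the $L^{\infty}_{t,x}$ and $L^{\infty}_{t}\dot{B}^{2,2}_{1}$ control on $\eta$ (cf.\ Lemma~\ref{lem:embedding4CG}) together with the boundedness of $S^{1}$ under multiplication by such functions; in the high$\times$low and high$\times$high interactions one exploits the extra two derivatives and improved dyadic summability of $\eta$, the bilinear $L^{2}_{t,x}$ and Strichartz estimates built into $S^{1}$, and a modulation analysis routing the high-modulation output through the $\underline{X}$ norm via \eqref{eq:embedding4S1}. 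This step is the main obstacle.

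\textit{Steps 3--4: from the bilinear estimate to \eqref{eq:first:gt4S1}.} By a chain-rule (Moser) computation in $\wCG$, entirely analogous to the $\calG^{N+1}$ estimate recorded earlier in the paper, one gets $\nrm{e^{i\chi}-1}_{\wCG}\aleq\nrm{\chi}_{\wCG}(1+\nrm{\chi}_{\wCG})^{N}$ for a fixed $N$, using the algebra property of Step~1 to control the nonlinear terms; note one must \emph{not} sum the exponential series directly in $\wCG$, as that would yield only exponential growth. Then $\nrm{e^{i\chi}\phi}_{S^{1}}\le\nrm{\phi}_{S^{1}}+\nrm{(e^{i\chi}-1)\phi}_{S^{1}}\aleq(1+\nrm{e^{i\chi}-1}_{\wCG})\nrm{\phi}_{S^{1}}$ by Step~2, giving the polynomial $\Gmm_{1}$. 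For the difference, telescope $e^{i\chi}\phi-e^{i\chi'}\phi'=e^{i\chi}(\phi-\phi')+(e^{i\chi}-e^{i\chi'})\phi'$; the first term is handled by the previous bound, and for the second write $e^{i\chi}-e^{i\chi'}=i(\chi-\chi')\int_{0}^{1}e^{i(s\chi+(1-s)\chi')}\,\ud s$, estimate its $\wCG$ norm by $\nrm{\chi-\chi'}_{\wCG}$ times $\sup_{s}\nrm{e^{i(s\chi+(1-s)\chi')}}_{\wCG}$ via Steps~1 and~3, and apply Step~2 once more to produce $\Gmm_{2}(\nrm{\chi}_{\wCG},\nrm{\chi'}_{\wCG})\nrm{\chi-\chi'}_{\wCG}\nrm{\phi'}_{S^{1}}$.
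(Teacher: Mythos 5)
Your proposal follows essentially the same route as the paper: the paper's own proof (Section~\ref{subsec:gt}) consists of exactly your three ingredients — the algebra property of $\wCG$ together with a Moser-type bound for $F(\chi)=e^{i\chi}-1$ giving the polynomial growth of $\Gmm_{1},\Gmm_{2}$ (Lemma~\ref{lem:gt4S1:restate}, proved by Littlewood--Paley trichotomy, Leibniz, and interpolation very much as in your Step~1), and the bilinear estimate $\nrm{\chi\varphi}_{S^{1}}\aleq\nrm{\chi}_{\wCG}\nrm{\varphi}_{S^{1}}$ (Lemma~\ref{lem:gt4S1:restate+}), whose near-cone/far-cone modulation analysis through $X^{0,\frac12}_{\infty}$, $S^{\mathrm{ang}}$ and $\underline{X}$ matches your Step~2 sketch. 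The only caveat is that this bilinear estimate, which is the bulk of the paper's actual work (Lemmas~\ref{lem::nearCone} and \ref{lem::farCone}), is deferred rather than proved in your proposal, just as the paper defers it from the statement to Section~\ref{sec:gtCutoff}.
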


The proof of this lemma requires further knowledge of the space
$S^{1}$; we will defer this proof until Section \ref{sec:gtCutoff}.

\medskip

The following simple lemma will be useful for patching up local
solutions which satisfy certain compatibility conditions; see
Proposition \ref{prop:patch} and the first two steps in
Section~\ref{subsec:lwp4MKG:pf}.

\begin{lemma} \label{lem:Xcutoff} Let $\eta \in \dot{B}^{\frac{5}{2},
    2}_{1}(\bbR^{1+4})$.  Then for $X = Y^{1}, S^{1}, \CG$ or $\wCG$, we
  have $\eta X \subseteq X$. Furthermore, the following estimate
  holds:
  \begin{align}
    \nrm{\eta \phi}_{X} \aleq \nrm{\eta}_{\dot{B}^{\frac{5}{2},
        2}_{1}} \nrm{\chi}_{X}. \label{eq:Xcutoff}
  \end{align}
\end{lemma}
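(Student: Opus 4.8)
The plan is to reduce \eqref{eq:Xcutoff} to a purely dyadic paraproduct estimate. I would decompose both $\eta$ and $\phi$ using the Littlewood--Paley projections and split the product $\eta\phi$ into the three usual pieces: the low-high piece $\sum_k (P_{<k-5}\eta)(P_k\phi)$, the high-low piece $\sum_k (P_k\eta)(P_{<k-5}\phi)$, and the high-high piece $\sum_{|k-k'|\le 5}(P_k\eta)(P_{k'}\phi)$. For each of $X = Y^1, S^1, \CG, \wCG$ the low-high piece is the main term: there $\eta\phi$ is essentially frequency-localized at $2^k$, so one only needs that multiplication by $P_{<k-5}\eta$ is bounded on each frequency-localized building block. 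Since $\eta \in \dot B^{5/2,2}_1(\bbR^{1+4})$ embeds into $L^\infty_{t,x}$ (by Bernstein, as $5/2 = (1+4)/2$), and in fact $P_{<k-5}\eta$ and its derivatives up to the relevant order are bounded with the $\dot B^{5/2,2}_1$ norm (again by Bernstein, summing the geometric series), multiplication by $P_{<k-5}\eta$ commutes harmlessly with all the norms appearing in $Y^1$, $S^1$, $\CG$, $\wCG$ — the $L^\infty_t L^2_x$ and $L^2_{t,x}\dot H^s_x$ pieces, the $X^{0,1/2}_{1,\infty}$ modulation pieces, and $\underline X$. The key point here is that a slowly-varying multiplier does not disturb modulation localization: $P_{<k-5}\eta$ has space-time frequency much lower than $2^k$, so $Q_j(P_{<k-5}\eta \cdot \phi) \approx P_{<k-5}\eta \cdot Q_j\phi$ up to rapidly decaying tails. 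For the $S^1$ norm one should invoke the fact (to be recalled in Section~\ref{sec:gtCutoff}) that $S^1$ is stable under multiplication by such low-frequency bounded multipliers; this is already implicit in the structure of $S^1$ as used in \cite{Krieger:2012vj}.

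For the high-low and high-high pieces the roles are reversed and these are the genuinely ``lossy'' terms, but they are easy because we are not trying to gain regularity on $\phi$. In the high-low piece $(P_k\eta)(P_{<k-5}\phi)$, we place $P_{<k-5}\phi$ in $L^\infty_{t,x}$ — which costs only $\nrm{\phi}_{L^\infty_t \dot H^1_x} \lesssim \nrm{\phi}_X$ by Bernstein (for $X = Y^1, S^1$; for $\CG, \wCG$ one uses $\dot H^2_x \hookrightarrow L^\infty$ in $4+1$ dimensions, again by Bernstein) — and then $\sum_k 2^{(\cdots)k}\nrm{P_k\eta}_{\cdot}$ is exactly a $\dot B^{5/2,2}_1$-type sum after accounting for the weight. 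The factor $2^{5k/2}$ in the $\dot B^{5/2,2}_1$ norm is precisely what is needed to absorb the derivative weights in $Y^1$ (weight $2^k$ on $L^\infty_t L^2$, $2^{k/2}$ on $L^2_{t,x}$) together with the Bernstein loss. The high-high piece is handled similarly, summing over $k \ge k'$ and using Cauchy--Schwarz in the $\ell^2$ (or $\ell^1$) dyadic index; here one must be mildly careful that the output frequency is $\lesssim 2^{k'}$ so the low-frequency regularity of the spaces (their regularity as tempered distributions regular at zero frequency) is not an issue, which it isn't since $\eta$ and $\phi$ are both already regular at zero frequency.

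The main obstacle I anticipate is not any single estimate but rather the fact that $S^1$ is a composite space including the delicate null-frame norms, so ``multiplication by a low-frequency $L^\infty$ multiplier is bounded on $S^1$'' is not literally obvious from the abstract properties \eqref{eq:embedding4S1} listed so far — it requires knowing that each constituent norm (in particular the null-frame and $X^{s,b}$ pieces) is preserved. The honest way to handle this is to defer the $S^1$ part of the argument, together with the proof of Lemma~\ref{lem:gt4wCG}, to Section~\ref{sec:gtCutoff} where the full definition of $S^1$ is available; at that point the estimate follows from the frequency-envelope and modulation-localization properties of the atoms of $S^1$, exactly as the corresponding multiplier bounds are established in \cite{Krieger:2012vj}. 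For the remaining spaces $Y^1, \CG, \wCG$, which are built only from iterated Lebesgue and Sobolev norms in $t$ and $x$, the paraproduct argument above is completely self-contained and can be written out in full here.
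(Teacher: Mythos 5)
Your proposal is sound for the ``elementary'' spaces but contains a genuine gap precisely where the difficulty lies, namely $X = S^{1}$. Two concrete problems. First, your key mechanism for the low-high piece --- that $Q_{j}\bigl(P_{<k-5}\eta\cdot\phi\bigr)\approx P_{<k-5}\eta\cdot Q_{j}\phi$ up to rapidly decaying tails --- is false as stated, because $P_{<k-5}$ localizes only the \emph{spatial} frequency of $\eta$, which says nothing about its temporal frequency; a low-spatial-frequency multiplier can move Fourier support arbitrarily far from the light cone. The correct argument must split the multiplier according to its space-time (or temporal) frequency relative to the output modulation, and the contribution in which the output lands at high modulation cannot be absorbed by the $X^{0,\frac12}$ or null-frame components at all: it is controlled only through the $\underline{X}$ part of $S^{1}$ (this is the content of Lemma~\ref{lem::farCone} in the actual proof), while the genuinely low space-time frequency case requires the stability of the angular-sector norms under bounded slowly-varying multipliers (Lemma~\ref{lem::stability4Sang}). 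Second, for this hard case you do not actually give a proof: the appeal to ``the corresponding multiplier bounds in \cite{Krieger:2012vj}'' is not available --- no such product estimate is quotable there, and proving it is the bulk of the work in Section~\ref{sec:gtCutoff}. (Two smaller slips: $\nrm{P_{<k-5}\phi}_{L^{\infty}_{t,x}}\aleq\nrm{\phi}_{L^{\infty}_{t}\dot{H}^{1}_{x}}$ is not Bernstein --- there is a $2^{k}$ loss, which your later weight count seems to absorb but the displayed claim is wrong --- and $\dot{H}^{2}_{x}(\bbR^{4})\hookrightarrow L^{\infty}_{x}$ is false at the endpoint; the $\ell^{1}$ dyadic summation built into $\wCG$ and $\CG$ is exactly what rescues this.)

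The paper's route avoids re-doing any paraproduct analysis for $S^{1}$ and $\wCG$: since $\dot{B}^{\frac{5}{2},2}_{1}\subseteq\wCG$ (the embedding \eqref{eq:embedding4besov}), the cutoff bound for these two spaces is an immediate corollary of the stronger multiplication estimates of Lemma~\ref{lem:gt4wCG}, i.e.\ the algebra property \eqref{eq:alg4Y} and the bound \eqref{eq:gt4S1}, which have to be proved anyway for the gauge transformations; only $Y^{1}$ and $\CG$ are treated by the direct Leibniz/Littlewood--Paley trichotomy argument you describe, and for those your sketch matches the paper. If you want to salvage your direct approach for $S^{1}$, you must (i) run the decomposition in space-time frequency of $\eta$ (using that $\dot{B}^{\frac{5}{2},2}_{1}$ is a space-time Besov norm), and (ii) prove, not cite, the stability of each constituent of $S^{1}$, including the treatment of high-modulation output via $\underline{X}$ --- at which point you will have essentially reproduced the proof of Lemma~\ref{lem:gt4S1:restate+}, making the reduction through $\wCG$ the more economical path.
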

The proof of this lemma will also be deferred until Section
\ref{sec:gtCutoff}.
  The lemma should be interpreted as saying that the space $X$ is
  stable under multiplication by a smooth rapidly decaying space-time
  cutoff $\eta$.  In this sense, the choice of the space
  $\dot{B}^{\frac{5}{2}, 2}_{1}$ is not essential; it is simply a
  convenient space with a scale-invariant norm in which
  $\calS(\bbR^{1+4})$ is dense.

  \begin{remark} \label{rem:Xcutoff} In order to apply this lemma in
    an open set $\calO$, we need to ensure that $\eta \in
    \dot{B}^{\frac{5}{2}, 2}_{1}(\calO)$, i.e., $\eta$ is the
    restriction to $\calO$ of an element in $\dot{B}^{\frac{5}{2},
      2}_{1}(\bbR^{1+4})$. A simple sufficient condition, which will
    be enough for almost all of our usage below, is if $\eta$ is
    \emph{smooth} on $\calO$ and $\calO$ is a \emph{bounded open set
      with piecewise smooth boundary}.
\end{remark}

We end this subsection with two lemmas, which will be useful for our
proof below of the existence and continuous dependence statements of
Theorem \ref{thm:lwp4MKG}. The first lemma provides a criterion for a
time-independent function $\chi$ to belong to $\CG[I]$ for a compact
time interval $I$.  The same will apply in sets of the form $\calO = I
\times O$, with $ O \subset \bbR^4$, open.

\begin{lemma} \label{lem:CG4t-indep} Let $\underline{\chi} \in
  \dot{B}^{2,2}_{x;1} \cap \dot{B}^{\frac{5}{2},2}_{x; 1} (\bbR^4)$,
  and $I$ be a compact time interval containing $0$.  Extend $\underline{\chi}$ to $I
  \times \bbR^4$ by imposing $\rd_{t} \chi = 0$ and $\chi
  \rst_{\set{0} \times \bbR^4} = \underline{\chi}$. Then $\chi \in
  \CG[I]$ and we have
  \begin{equation} \label{eq:CG4t-indep} 
\nrm{\chi}_{\CG[I]} \aleq \nrm{\underline{\chi}}_{\dot{B}^{2,
        2}_{x,1}} + |I|^{\frac{1}{2}}
    \nrm{\underline{\chi}}_{\dot{B}^{\frac{5}{2}, 2}_{ x,1}} \,.
  \end{equation}
\end{lemma}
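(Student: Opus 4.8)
The plan is to produce, directly, an extension $\tld{\chi} \in \CG(\bbR^{1+4})$ of $\chi$ from $I \times \bbR^{4}$ to the whole space-time satisfying $\nrm{\tld{\chi}}_{\CG(\bbR^{1+4})} \aleq \nrm{\underline{\chi}}_{\dot{B}^{2,2}_{x,1}} + |I|^{\frac{1}{2}} \nrm{\underline{\chi}}_{\dot{B}^{\frac{5}{2}, 2}_{x,1}}$; by the definition of the restriction norm this immediately gives \eqref{eq:CG4t-indep}. Since $\chi$ agrees on $I \times \bbR^{4}$ with the time-independent function $\underline{\chi}(x)$, the natural candidate is a product of $\underline{\chi}$ with a time cutoff. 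The key observation — and the only genuinely delicate point — is that a \emph{single} cutoff at one fixed time scale will not do: a cutoff at scale $|I|$, combined with spatial frequencies $\aleq |I|^{-1}$, produces time-derivative factors of size $|I|^{-1}, |I|^{-2}$ which are not controlled by $\nrm{\underline{\chi}}_{\dot{B}^{2,2}_{x,1}}$, while a cutoff at scale $2^{-k}$ at frequency $2^{k}$ stops being identically $1$ on $I$ once $2^{-k} < |I|$. I would therefore use a \emph{frequency-dependent} cutoff scale $s_{k} := \max(2^{-k}, |I|)$.

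Concretely, fix $\psi \in C^{\infty}_{0}(\bbR)$ with $\psi \equiv 1$ on $[-1,1]$ and $\supp \psi \subseteq [-2,2]$, and set
\begin{equation*}
	\tld{\chi}(t,x) := \sum_{k} \psi(t / s_{k}) \, (P_{k} \underline{\chi})(x).
\end{equation*}
Since $0 \in I$ we have $I \subseteq [-|I|, |I|] \subseteq [-s_{k}, s_{k}]$ for every $k$, so each cutoff $\psi(t/s_{k})$ is identically $1$ on $I$, and hence $\tld{\chi} = \sum_{k} P_{k} \underline{\chi} = \chi$ on $I \times \bbR^{4}$. Convergence of the series, say in $L^{2}_{loc}(\bbR^{1+4})$, will follow a posteriori from the norm bound.

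The main computation is the estimate for $\nrm{\tld{\chi}}_{\CG(\bbR^{1+4})}$. Because the summand $\psi(t/s_{k}) P_{k} \underline{\chi}$ has spatial Fourier support in $\set{2^{k-1} \leq |\xi| \leq 2^{k+1}}$ and $s_{k} \aeq s_{k'}$ for $|k-k'| \leq 1$, it suffices to bound $\nrm{\tld{\chi}}_{\CG}$ by $\sum_{k}$ of the $\CG$-contribution of the single piece $\psi(t/s_{k}) P_{k} \underline{\chi}$. Using $\rd_{t}^{N}[\psi(t/s_{k})] = s_{k}^{-N} \psi^{(N)}(t/s_{k})$, $\nrm{\psi^{(N)}(\cdot / s_{k})}_{L^{2}_{t}} \aleq s_{k}^{\frac12}$, $\nrm{\psi^{(N)}(\cdot / s_{k})}_{L^{\infty}_{t}} \aleq 1$, and reading off \eqref{eq:CGnorm}, this contribution is
\begin{equation*}
	\aleq \sum_{N = 0}^{2} \bb( 2^{(\frac{5}{2} - N) k} s_{k}^{\frac{1}{2} - N} + 2^{(2 - N)k} s_{k}^{-N} \bb) \nrm{P_{k} \underline{\chi}}_{L^{2}_{x}} .
\end{equation*}
Since $s_{k} \geq 2^{-k}$ we have $s_{k}^{-1} \leq 2^{k}$, so every term here except the $N=0$ part of the $L^{2}_{t}$ piece is $\aleq 2^{2k} \nrm{P_{k} \underline{\chi}}_{L^{2}_{x}}$; for that remaining term I would use $s_{k}^{1/2} \leq 2^{-k/2} + |I|^{1/2}$ to get $2^{\frac{5}{2}k} s_{k}^{1/2} \aleq 2^{2k} + |I|^{1/2} 2^{\frac{5}{2}k}$. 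Summing over $k$ then yields
\begin{equation*}
	\nrm{\tld{\chi}}_{\CG(\bbR^{1+4})} \aleq \sum_{k} \bb( 2^{2k} + |I|^{\frac{1}{2}} 2^{\frac{5}{2} k} \bb) \nrm{P_{k} \underline{\chi}}_{L^{2}_{x}} \aeq \nrm{\underline{\chi}}_{\dot{B}^{2,2}_{x,1}} + |I|^{\frac{1}{2}} \nrm{\underline{\chi}}_{\dot{B}^{\frac{5}{2}, 2}_{x,1}},
\end{equation*}
which is \eqref{eq:CG4t-indep}; the same argument applies verbatim on sets of the form $I \times O$ with $O \subseteq \bbR^{4}$ open. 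In summary, there is no deep obstacle: the whole content is the choice of the frequency-adapted cutoff scale $s_{k} = \max(2^{-k}, |I|)$, after which the estimate collapses to the elementary inequality above. The only remaining care is routine — handling the frequency-support overlap between the projections $P_{k'}$ and the summands, and checking convergence of the defining series — and I would dispatch it briefly.
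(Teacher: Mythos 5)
Your proof is correct and follows essentially the same route as the paper: the paper also reduces to dyadic spatial frequency pieces and extends each one by a time cutoff at the frequency-adapted scale $\max(2^{-k},|I|)$ (after rescaling $I$ to $[0,1]$), then reads off the $\CG$ bound from the size and support of the cutoff, phrased there via the Fourier support and $L^{1}_{\tau}$, $L^{2}_{\tau}$ bounds for $\widehat{\eta_{k}}$ rather than your physical-space computation. The differences (keeping $|I|$ explicit instead of rescaling, summing the pieces directly and handling the $P_{k'}$ overlap) are cosmetic.
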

\begin{proof}
By scaling and translation we can assume that $I =[0,1]$.
  Due to the $\ell^1$ dyadic summation in the spaces
  $\dot{B}^{2,2}_{x;1} \cap \dot{B}^{\frac{5}{2},2}_{x; 1}(\bbR^{4})$,
  we may assume that $\widetilde{\underline{\chi}}$ has dyadic
  frequency localization, i.e., $\widetilde{\underline{\chi}} =
  P_{[k-1,k+1]} \widetilde{\underline{\chi}}$ for some $k \in \bbZ$.
  To prove the lemma, it suffices to show that there exists an
  extension $\widetilde{\chi}$ of $\widetilde{\underline{\chi}}$ to
  $\bbR^{1+4}$ such that $\widetilde{\chi} \in \CG(\bbR^{1+4})$,
  $\rd_{t} \widetilde{\chi} = 0$ on $I \times \bbR^{4}$,
  $\widetilde{\chi} \rst_{\set{t=0}} = \widetilde{\underline{\chi}}$
  and satisfies
  \begin{equation} \label{eq:CG4t-indep:pf}
    \nrm{\widetilde{\chi}}_{\CG[I]} \aleq
    \nrm{\widetilde{\underline{\chi}}}_{\dot{B}^{2, 2}_{x; 1}
      (\bbR^{4})} +
    \nrm{\widetilde{\underline{\chi}}}_{\dot{B}^{\frac{5}{2}, 2}_{x;
        1}(\bbR^{4})} \,.
  \end{equation}
  Let $\eta \in C^{\infty}_{0}(\bbR)$ be a smooth compactly supported
  function such that $\eta = 1$ on $I$, and take
  $\widetilde{\chi}(t,x) = \eta_{k}(t)
  \widetilde{\underline{\chi}}(x)$, where
  \begin{equation*}
    \eta_{k}(t) = \left\{
      \begin{array}{cc}
        \eta(C^{-1} 2^{k} t) & \hbox{ for } k \leq 0, \\
        \eta(C^{-1} t) & \hbox{ for } k \geq 0.
      \end{array}
    \right.
  \end{equation*}
  In Fourier space, $\widehat{\eta_{k}}$ decays rapidly away from
  $\set{\abs{\tau} \aleq C^{-1} \min \set{2^{k}, 1}}$,
  $\nrm{\widehat{\eta}_{k}}_{L^{1}_{\tau}} \aleq 1$ and
  $\nrm{\widehat{\eta_{k}}}_{L^{2}_{\tau}} \aleq C^{\frac{1}{2}}
  2^{\frac{1}{2} \min\set{k,0}}$. Combining these facts with the
  assumption that $\widetilde{\underline{\chi}}= P_{k}
  \widetilde{\underline{\chi}}$ is frequency localized,
  \eqref{eq:CG4t-indep:pf} follows for $C$ sufficiently large
  (independent of $k$). \qedhere
\end{proof}

The second lemma concerns solving a certain Poisson equation in
$\wCG[I]$, which arises when we attempt to gauge
transform the solution obtained by patching to the global Coulomb
gauge.
\begin{lemma} \label{lem:ellipticEst4wCG} Let $I \subseteq \bbR$ be a time
  interval. Let $\eta \in \dot{B}^{\frac{5}{2}, 2}_{1}[I]$ and $\phi \in \wCG [I]$. Consider the
  Poisson equation
  \begin{equation*}
    - \lap \chi = \eta \lap \phi.
  \end{equation*}
  Then the following statements hold.
  \begin{enumerate}
  \item The right-hand side belongs to $C_{t} \dot{B}^{0, 2}_{x; 1}$,
    and therefore we may define $\chi(t)$ for each $t \in I$
    unambiguously as the convolution of $\eta \lap \phi(t, x)$ with
    the Newton potential, i.e.,
    \begin{equation*}
      \chi(t,x) = \frac{3}{4 \pi^{2}}  \int_{\bbR^{4}} \frac{1}{\abs{x-y}^{2}} \eta(t, y) \lap \phi(t, y) \, \ud y.
    \end{equation*}

  \item Moreover, $\chi \in \wCG[I]$ and satisfies the
    estimate
    \begin{equation} \label{eq:ellipticEst4wCG} \nrm{\chi}_{\wCG[I]} \aleq \nrm{\eta}_{\dot{B}^{\frac{5}{2},
          2}_{1}[I]} \nrm{\phi}_{\wCG [I]} \, .
    \end{equation}
  \end{enumerate}
\end{lemma}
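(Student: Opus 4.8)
The plan is to reduce everything to a single dyadic frequency block and then chase through the $\wCG$ norm piece by piece, using the product estimate \eqref{eq:first:alg4Y} together with elliptic (Bernstein-type) bounds for $(-\lap)^{-1}$. First I would observe that because the $\wCG$ norm (and the cutoff norm $\dot B^{5/2,2}_1$) both carry an $\ell^1$ dyadic summation in the spatial frequency, it suffices to prove \eqref{eq:ellipticEst4wCG} when $\eta = P_{k_1}\eta$ and $\phi = P_{k_2}\phi$ are frequency localized, provided the resulting bound decays geometrically in $|k_1 - k_2|$ (or, more precisely, sums up against the $\ell^1$ structure). A small subtlety: all spaces here are local in time, defined by restriction; so I would first lift $\eta$ and $\phi$ to global-in-time extensions $\widetilde\eta \in \dot B^{5/2,2}_1(\bbR^{1+4})$ and $\widetilde\phi \in \wCG(\bbR^{1+4})$ with comparable norms, prove the estimate globally for $\chi = (-\lap)^{-1}(\widetilde\eta \lap \widetilde\phi)$, and then restrict; since the Newton-potential formula commutes with restriction in $t$, the $\chi$ so constructed agrees with the one in statement (1). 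For statement (1) itself, I would note that $\lap\phi \in C_t \dot B^{0,2}_{x;1}$ follows directly from $\phi \in \wCG \subseteq C_t \dot H^2_x$ with $\ell^1$ summability (the $L^\infty_t L^2_x$ part of the $\wCG$ norm with the $2^{2k}$ weight), and multiplication by $\eta \in \dot B^{5/2,2}_1 \hookrightarrow L^\infty_{t,x}$ preserves $C_t \dot B^{0,2}_{x;1}$ by a paraproduct decomposition; then the Newton potential maps $\dot B^{0,2}_{x;1}$ boundedly into $\dot B^{2,2}_{x;1}$, so $\chi(t)$ is well-defined for each $t$.

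For statement (2), I would write the $\wCG$ norm as its two constituents: the $Y^{2,2}$-part $\nrm{\rd_{t,x}^2 \chi}_{L^2_t \dot H^{1/2}_x}$ and the $\ell^1$-weighted part $\sum_k 2^{2k}\nrm{P_k\chi}_{L^\infty_t L^2_x}$. For the second, on a fixed output frequency $2^k$, Bernstein gives $2^{2k}\nrm{P_k\chi}_{L^\infty_t L^2_x} \lesssim \nrm{P_k(\eta\lap\phi)}_{L^\infty_t \dot H^{-2}_x}$, wait—more carefully, $P_k\chi = P_k(-\lap)^{-1}(\eta\lap\phi)$ so $2^{2k}\nrm{P_k\chi}_{L^\infty_t L^2_x} \lesssim \nrm{P_k(\eta\lap\phi)}_{L^\infty_t L^2_x}$, and then the product estimate for the $L^\infty_t L^2_x$-with-$2^{2k}$-weight component of $\wCG$ (which is essentially \eqref{eq:first:alg4Y} specialized, or a direct paraproduct bound using $\eta\in L^\infty$ for high-high and low-high interactions and $\dot B^{5/2,2}_1 \hookrightarrow$ the relevant multiplier space for the high-low) closes this term against $\nrm{\eta}_{\dot B^{5/2,2}_1}\nrm{\phi}_{\wCG}$. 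For the first, time-derivative part: since $\rd_t$ commutes with $(-\lap)^{-1}$ and with $P_k$, I would distribute $\rd_t^2$ via Leibniz onto $\eta$ and $\lap\phi$, giving three terms $\rd_t^2\eta \cdot \lap\phi$, $\rd_t\eta \cdot \rd_t\lap\phi$, $\eta\cdot\rd_t^2\lap\phi$. Each is estimated in $L^2_t \dot H^{-3/2}_x$ (so that after $(-\lap)^{-1}$ one lands in $L^2_t \dot H^{1/2}_x$) by Hölder in time combined with the appropriate piece of the $\dot B^{5/2,2}_1$ and $\wCG$ norms — the $\dot B^{5/2,2}_1$ norm contains two $t$-derivatives and $5/2$ spatial derivatives in $L^2_{t,x}$, the $\wCG$ (i.e. $Y^{2,2}$) norm of $\phi$ contains two of each, and a Littlewood-Paley/Bernstein trade-off distributes the regularity correctly.

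The main obstacle I anticipate is the careful bookkeeping of the paraproduct in the case of \emph{high} output frequency fed by a \emph{high-low} interaction (where $\eta$ carries high frequency and $\phi$ low, or vice versa): one must verify that the spatial-regularity surplus in $\dot B^{5/2,2}_1$ — note $5/2 > 2 = $ the regularity level of $\wCG$ — is exactly enough to absorb the loss from inverting the Laplacian at the lower input frequency, so that the geometric series in the frequency gap converges; getting the numerology right here (and similarly for the time-derivative terms, where one has to budget the two time-derivatives between $\eta$ and $\phi$ consistently with the $L^2_t$ Hölder split) is where all the work is. A secondary technical point is ensuring the restriction-to-$I$ step does not lose the $|I|^{1/2}$-type gains — but actually the statement \eqref{eq:ellipticEst4wCG} has no $|I|$ factor, so one simply uses that the extension operator is bounded and that restriction only decreases norms. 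Once the dyadic pieces are in hand with geometric decay, the $\ell^1$ summations on both sides reassemble to give \eqref{eq:ellipticEst4wCG}, completing the proof.
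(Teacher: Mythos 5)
Your proposal follows essentially the same route as the paper's proof: extend to global time and reduce to $I=\bbR$, get statement (1) from the $\ell^{1}$ spatial summability in $\wCG$ plus boundedness of the Newton potential, bound the $\ell^{1}L^{\infty}_{t}\dot H^{2}_{x}$ component by an elliptic/product estimate, and handle the $Y^{2,2}$ component by Leibniz on the second derivatives of $\eta\lap\phi$ measured in $L^{2}_{t}\dot H^{-3/2}_{x}$, closed with fixed-time paraproduct estimates whose numerology ($5/2>2$) you correctly identify as the crux. This matches the paper's argument (which packages the paraproduct step as three fixed-time multiplicative estimates obtained by Littlewood--Paley trichotomy), so no substantive difference or gap.
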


The proof of Lemma \ref{lem:ellipticEst4wCG} will be similar to that
of Lemma \ref{lem:Xcutoff}. Hence it will be given in Section
\ref{sec:gtCutoff} as well.

\subsection{Patching compatible pairs} \label{subsec:patch} In this
subsection, we present a technical tool that will be used to
quantitatively patch together local solutions, which are given by the
small energy theorem (Theorem \ref{thm:KST}), to obtain a global
solution with the desired properties.



We now introduce the notion of \emph{compatible pairs}.

\begin{definition}[Compatible $C_{t} \calH^{1}$
  pairs] \label{def:compatiblePairs} Let $\calO \subseteq \bbR^{1+4}$ be
  an open set and $\calQ = \set{Q_{\alp}}$ be a finite covering of
  $\calO$. For each index $\alp$, consider a pair $(A_{[\alp]},
  \phi_{[\alp]}) \in C_{t} \calH^{1}(Q_{\alp})$ of a real-valued
  1-form $A_{[\alp]}$ and a $\bbC$-valued function $\phi_{[\alp]}$ on
  $Q_{\alp}$.  We say that the pairs $(A_{[\alp]}, \phi_{[\alp]})$ are
  \emph{compatible} if for every $\alp, \bt$ there exists a gauge
  transformation $\chi_{[\alp \bt]} \in C_{t} \calG^{2} \cap
  C^{0}_{t,x}(Q_{\alp} \cap Q_{\bt})$ such that the following
  properties hold:
  \begin{enumerate}
  \item For every $\alp$, we have $\chi_{[\alp \alp]} = 0$.
  \item For every $\alp, \bt$, we have
    \begin{equation} \label{eq:gaugetransform4chi} A_{[\bt]} =
      A_{[\alp]} - \ud \chi_{[\alp \bt]}, \quad \phi_{[\bt]} = e^{i
        \chi_{[\alp \bt]}} \phi_{[\alp]} \quad \hbox{ on } Q_{\alp}
      \cap Q_{\bt},
    \end{equation}
  \item For every $\alp, \bt, \gmm$, the following \emph{cocycle
      condition} is satisfied:
    \begin{equation} \label{eq:cocycle4chi} \chi_{[\alp \bt]} +
      \chi_{[\bt \gmm]} + \chi_{[\gmm \alp]} \in 2 \pi \bbZ \quad
      \hbox{ on } Q_{\alp} \cap Q_{\bt} \cap Q_{\gmm}.
    \end{equation}
  \end{enumerate}
\end{definition}

The main result of this subsection is Proposition \ref{prop:patch}
below, whose formulation and proof were motivated by the classical
result of Uhlenbeck \cite{Uhlenbeck:1982vna} on weak compactness of
connections with curvature bounded in $L^{p}$.

In order to state our result we need to specify the set $\calO$ and the covering
$\calQ$. For this, we begin with the partition 
\[
\bbR^4 = \cup_\alpha R_\alpha \cup R_0^c
\]
given in Proposition~\ref{p:cubes}.  Taking $I = [0,1]$ and $r_\delta
= 1$, (which suffices by scaling), we define 
\[
\calO = I \times \bbR^4, \qquad Q_0 = I \times R_0^c, \qquad Q_\alpha = I \times 1.5 R_\alpha.
\]
The factor $1.5$ above is what guarantees, in view of condition (4) in
Proposition~\ref{p:cubes}, that this covering is locally finite.

We also consider a smaller, subordinated subcovering $\calP =
\set{P_{\alp}}$ given by
\[
P_{\alp} = I \times 1.25 R_{\alp}, \qquad P_0 = I \times (1.001 R_0)^c, 
\qquad \calO = \cup_{\alp} P_{\alp}
\]
This is also locally finite. Using this notations we have:

\begin{proposition}[Patching compatible pairs] \label{prop:patch} Let
  $(A_{[\alp]}, \phi_{[\alp]})$ on $Q_{\alp}$ be \emph{compatible
    pairs} associated to the above covering $\calQ$ of
  $\calO$. Suppose furthermore that for every $\alp, \bt$, the gauge
  transformation $\chi_{[\alp \bt]}$ belongs to $\CG(Q_{\alp} \cap
  Q_{\bt})$ (defined in Section~\ref{subsec:ftnspace4patching}), which
  embeds into $C_{t} \calG^{2} \cap C^{0}_{t,x}(Q_{\alp} \cap
  Q_{\bt})$.

  Let $\set{\underline{\chi}_{[\alp \bt]}}$ be another collection of
  gauge transformations such that $\underline{\chi}_{[\alp \bt]} \in
  \CG(Q_{\alp} \cap Q_{\bt})$ for every $\alp, \bt$, and satisfies the
  cocycle condition \eqref{eq:cocycle4chi}. Assume moreover that
  $\set{\chi_{[\alp \bt]}}$ is $C^{0}$ close to
  $\set{\underline{\chi}_{[\alp \bt]}}$, in the sense that
  \begin{equation} \label{eq:close4chi} \sup_{Q_{\alp} \cap Q_{\bt}}
    \abs{\chi_{[\alp \bt]} - \underline{\chi}_{[\alp \bt]}} <
    \eps_{\ast \ast},
  \end{equation}
  where $\eps_{\ast \ast} > 0$ is a universal constant to be specified
  below.

  Then there exists gauge transformations $\chi_{[\alp]} \in
  \CG(P_{\alp})$ on each $P_{\alp}$, depending linearly on $\chi_{[\alp \bt]}$
and $\underline{\chi}_{[\alp \bt]}$, which satisfy
    \begin{equation*}
      - \chi_{[\alp]} + \chi_{[\alp \bt]} + \chi_{[\bt]} = \underline{\chi}_{[\alp \bt]}	\hbox{ on } P_{\alp} \cap P_{\bt}.
    \end{equation*}
    Moreover, $\chi_{[\alp]}$ obey the following bounds with a universal implicit constant:
    \begin{equation}
      \sup_{\alp} \nrm{\chi_{[\alp]}}_{\CG(P_{\alp})} 
      \aleq
      \sup_{\alp, \bt} \, \bb( \nrm{\chi_{[\alp \bt]}}_{\CG(Q_{\alp} \cap Q_{\bt})} + \nrm{\underline{\chi}_{[\alp \bt]}}_{\CG(Q_{\alp} \cap Q_{\bt})} \bb).
    \end{equation}

\end{proposition}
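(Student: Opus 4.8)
The plan is to run a \v{C}ech-cohomology vanishing argument adapted to the function space $\CG$, in the spirit of Uhlenbeck's patching lemma \cite{Uhlenbeck:1982vna}. First I would replace the two families of transition functions by their differences
\[
  \psi_{[\alp \bt]} := \chi_{[\alp \bt]} - \underline{\chi}_{[\alp \bt]} \in \CG(Q_{\alp} \cap Q_{\bt}) \subseteq C^{0}_{t,x}(Q_{\alp} \cap Q_{\bt}).
\]
Subtracting the two cocycle relations \eqref{eq:cocycle4chi}, the triple sum $\psi_{[\alp \bt]} + \psi_{[\bt \gmm]} + \psi_{[\gmm \alp]}$ takes values in $2 \pi \bbZ$ on $Q_{\alp} \cap Q_{\bt} \cap Q_{\gmm}$; on the other hand, by the closeness hypothesis \eqref{eq:close4chi} each summand has sup-norm strictly less than $\eps_{\ast \ast}$. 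Fixing $\eps_{\ast \ast} := 1$ (any value $< \pi$ works) therefore forces this triple sum --- and, taking $\gmm = \alp$, also $\psi_{[\alp \bt]} + \psi_{[\bt \alp]}$ --- to vanish identically. Thus $\set{\psi_{[\alp \bt]}}$ is a genuine real-valued, antisymmetric, \emph{additive} \v{C}ech $1$-cocycle subordinate to $\calQ$.

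Next I would split this cocycle by a partition of unity. Using the local finiteness and the slow-variance property (4) of Proposition~\ref{p:cubes}, fix a smooth partition of unity $\set{\varrho_{\gmm}}$ with $\sum_{\gmm} \varrho_{\gmm} \equiv 1$ on $\calO$, $\supp \varrho_{\gmm} \subseteq Q_{\gmm}$, and derivative bounds $\abs{\rd_{x}^{(N)} \varrho_{\gmm}} \aleq_{N} \ell(R_{\gmm})^{-N}$ adapted to the slowly varying side lengths. I would then set, on each $P_{\alp}$,
\[
  \chi_{[\alp]} := \sum_{\gmm} \varrho_{\gmm} \, \psi_{[\alp \gmm]},
\]
each term extended by zero off $\supp \varrho_{\gmm}$. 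Only the $O(1)$ indices $\gmm$ with $\supp \varrho_{\gmm} \cap P_{\alp} \neq \0$ contribute, and for those $\supp \varrho_{\gmm} \cap P_{\alp} \subseteq Q_{\alp} \cap Q_{\gmm}$, so each term is well defined; moreover $\chi_{[\alp]}$ is real-valued and depends linearly on $\chi_{[\alp \bt]}$ and $\underline{\chi}_{[\alp \bt]}$. On $P_{\alp} \cap P_{\bt}$, for every $\gmm$ with $\varrho_{\gmm} \neq 0$ the point lies in $Q_{\alp} \cap Q_{\bt} \cap Q_{\gmm}$, so the additive cocycle identity $\psi_{[\alp \gmm]} - \psi_{[\bt \gmm]} = \psi_{[\alp \gmm]} + \psi_{[\gmm \bt]} = \psi_{[\alp \bt]}$ applies, and together with $\sum_{\gmm} \varrho_{\gmm} = 1$ it yields $\chi_{[\alp]} - \chi_{[\bt]} = \psi_{[\alp \bt]} = \chi_{[\alp \bt]} - \underline{\chi}_{[\alp \bt]}$, which is precisely $-\chi_{[\alp]} + \chi_{[\alp \bt]} + \chi_{[\bt]} = \underline{\chi}_{[\alp \bt]}$.

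For the quantitative estimate I would argue cube by cube, exploiting scale invariance. Fix $\alp$ and rescale $P_{\alp}$ together with the finitely many overlapping $Q_{\gmm}$ and $Q_{\alp} \cap Q_{\gmm}$ isotropically by the factor $\ell(R_{\alp})^{-1}$. Since, after the normalization in the statement of Proposition~\ref{p:cubes}, every cube has side length $\gtrsim 1$ while $I = [0,1]$, and neighboring cubes have comparable side lengths, all of these rescaled sets are contained in a box of unit size, on which the rescaled cutoffs $\varrho_{\gmm}$ are smooth unit-scale bumps with $\nrm{\varrho_{\gmm}}_{\dot{B}^{\frac{5}{2}, 2}_{1}} \aleq 1$ by a universal constant (cf.\ Remark~\ref{rem:Xcutoff}); the exterior piece $Q_{0}$ enters only when $\alp$ is a boundary cube, in which case only its portion near $P_{\alp}$, of size $\aeq \ell(R_{\alp})$, is relevant. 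As the $\CG$ norm on $\bbR^{1+4}$ is invariant under this isotropic rescaling, Lemma~\ref{lem:Xcutoff} then gives $\nrm{\varrho_{\gmm} \psi_{[\alp \gmm]}}_{\CG(P_{\alp})} \aleq \nrm{\psi_{[\alp \gmm]}}_{\CG(Q_{\alp} \cap Q_{\gmm})} \leq \nrm{\chi_{[\alp \gmm]}}_{\CG} + \nrm{\underline{\chi}_{[\alp \gmm]}}_{\CG}$, and summing the $O(1)$ many terms produces the asserted bound with a universal implicit constant.

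The only genuinely delicate point --- the main obstacle --- is making this last constant truly \emph{universal}, i.e.\ uniform over the entire (arbitrarily wide) range of cube sizes that can occur. This is exactly where the slow-variance property of Proposition~\ref{p:cubes}, which simultaneously bounds the overlap multiplicity and the ratio of side lengths of adjacent cubes, and the scale invariance of the spaces $\CG$ and $\dot{B}^{\frac{5}{2}, 2}_{1}$ are indispensable; without them one would only obtain a constant deteriorating with $K$ or with $r_{0}/\ecs$. Everything else --- the cocycle bookkeeping of the first step and the partition-of-unity identity of the second --- is routine.
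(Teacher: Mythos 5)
Your proposal is correct, but it takes a genuinely different route from the paper. The paper argues by an Uhlenbeck-style induction over an enumeration of the cubes: the previously constructed $\chi_{[\alp]}$ are transported to the new cube via $\widetilde{\chi}_{[k]} = \chi_{[k \alp]} + \chi_{[\alp]} + \underline{\chi}_{[\alp k]}$ (consistent thanks to the exact cocycle identity \eqref{eq:cocycle4dltchi}), cut off by a function $\zt_{[k]}$, with the domains shrunk by a reduction map finitely many times --- which is precisely why the conclusion lives on the smaller sets $P_{\alp}$. You instead resolve the exact additive \v{C}ech cocycle $\psi_{[\alp \bt]} = \chi_{[\alp \bt]} - \underline{\chi}_{[\alp \bt]}$ in one shot by a partition of unity, $\chi_{[\alp]} = \sum_{\gmm} \varrho_{\gmm} \psi_{[\alp \gmm]}$, and verify the required identity on $P_{\alp} \cap P_{\bt}$ directly from $\sum_{\gmm} \varrho_{\gmm} = 1$ together with antisymmetry and the cocycle relation. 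Both arguments rest on the same two ingredients: the exact cocycle identity forced by $C^{0}$-closeness (isolated in the paper's remark after the statement) and the cutoff stability of $\CG$ from Lemma~\ref{lem:Xcutoff}, with local finiteness, slow variance and scale invariance of $\CG$ and $\dot{B}^{\frac{5}{2},2}_{1}$ supplying universal constants. Your version avoids the enumeration, the reduction maps and the induction bookkeeping, and makes the linear dependence on $(\chi_{[\alp\bt]}, \underline{\chi}_{[\alp\bt]})$ transparent; the price is that the uniformity of the partition-of-unity step must be checked by hand, including the two points you only sketch: the unbounded piece $Q_{0}$ forces a truncation of $\varrho_{0}$ near each bounded $P_{\alp}$, and for $\alp = 0$ one needs the geometric observation that only cubes of side $\gtrsim \ell(R_{0})$ can have $Q_{\gmm}$ meeting $P_{0}$, so that sum still has $O(1)$ terms; also, since after the normalization every spatial scale dominates $\abs{I}$, the time-independent cutoffs indeed lie in $\dot{B}^{\frac{5}{2},2}_{1}$ of the relevant sets with universal norm, as you implicitly use (this is the same device as in Lemma~\ref{lem:CG4t-indep}). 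One small correction: in the cocycle-rigidity step the threshold is $\eps_{\ast\ast} < 2\pi/3$ (three terms each smaller than $\eps_{\ast\ast}$ must sum to less than $2\pi$), not ``any value $< \pi$''; your actual choice $\eps_{\ast\ast} = 1$ is of course admissible, and the same bound also yields $\psi_{[\alp\alp]} = 0$, which your antisymmetry claim tacitly requires.
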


\begin{remark} 
  The role of the $C^{0}$ closeness condition \eqref{eq:close4chi} is
  to remove the $2 \pi \bbZ$ ambiguity in the cocycle condition
  \eqref{eq:cocycle4chi}.  More precisely, since both $\chi_{[\alp
    \bt]}$ and $\underline{\chi}_{[\alp \bt]}$ satisfy
  \eqref{eq:cocycle4chi}, we have
  \begin{equation*}
    (\chi_{[\alp \bt]} - \underline{\chi}_{[\alp \bt]}) 
    + (\chi_{[\bt \gmm]} - \underline{\chi}_{[\bt \gmm]}) 
    + (\chi_{[\gmm \alp]} - \underline{\chi}_{[\gmm \alp]}) \in 2 \pi \bbZ. 
  \end{equation*}
  For a sufficiently small $\eps_{\ast \ast}$ (say $\eps_{\ast \ast} <
  \frac{2 \pi}{3}$), the $C^{0}$ closeness condition
  \eqref{eq:close4chi} then implies that the absolute value of the
  left-hand side is bounded by $< 2 \pi$; therefore, it follows that
  \begin{equation} \label{eq:cocycle4dltchi} (\chi_{[\alp \bt]} -
    \underline{\chi}_{[\alp \bt]}) + (\chi_{[\bt \gmm]} -
    \underline{\chi}_{[\bt \gmm]}) + (\chi_{[\gmm \alp]} -
    \underline{\chi}_{[\gmm \alp]}) = 0.
  \end{equation}
\end{remark}
\begin{proof}
Our $\{Q_\alpha\}$  covering is locally finite, so let $N_0 = N_0(d)$ (which we can take $4^4$
in dimension $d=4$) be so that each  $Q_\alpha$ intersects at most $N_0$ neighbors. 
Then we define a reduction map $\mathfrak{R}$ which decreases the cube size by a fixed factor,
so that $\mathfrak{R}^{N_0}(Q_\alpha) = P_\alpha$ for $\alpha \neq 0$, with the obvious adjustment 
  $\mathfrak{R}^{-N_0}(Q_0^c) = P_0^c$ for $\alpha = 0$. For uniformity of notation, we write $\mathfrak{R} Q_{0} := (\mathfrak{R}^{-1} (Q_{0}^{c}) )^{c}$, so that $\mathfrak{R}^{N_{0}} Q_{0} = P_{0}$.

  Consider an enumeration of the elements in $\calQ$ by positive
  integers $0, 1, \ldots, K$, in nonincreasing order of size, where we
  take $Q_{0}$ to be the first element.  We proceed by induction on
  this enumeration. 

For the induction step, suppose that we have
  constructed an open covering $\calQ_{k-1} = \set{Q_{\alp, k-1}}$, with 
$P_\alpha \subseteq Q_{\alp, k-1} \subseteq Q_{\alp}$,
  $\calO = \cup_{\alp} Q_{\alp,k-1}$ and gauge transforms
  $\chi_{[\alp]}$ on $Q_{\alp, k-1}$ with $\alp= 1,
  \ldots, k-1$ such that
  \begin{enumerate}
  \item $Q_{\alp, k-1} = \mathfrak{R}^{n(\alpha,k)} Q_{\alp}$ where $n(\alpha,k)$ is between $0$ 
and $N_0$, and is zero for $\alp \geq k-1$,
\label{item:ind4patching:1}
  \item $- \chi_{[\alp]} + \chi_{[\alp \bt]} + \chi_{[\bt]} =
    \underline{\chi}_{[\alp \bt]}$ for $1 \leq \alp, \bt \leq k-1$
    provided $Q_{\alp, k-1} \cap Q_{\bt, k-1} \neq
    \0$, \label{item:ind4patching:2}
  \item $\nrm{\chi_{[\alp]}}_{\CG(Q_{\alp, k-1})} \lesssim X_\alpha$ for
    $1 \leq \alp \leq k-1$, \label{item:ind4patching:3}
  \end{enumerate}
  where 
  \begin{align*}
    X_\alpha & = \sup_{Q_\alpha \cap Q_\beta \neq \emptyset} \, 
\bb( \nrm{\chi_{[\alp \bt]}}_{\CG(Q_{\alp} \cap Q_{\bt})} + \nrm{\underline{\chi}_{[\alp \bt]}}_{\CG(Q_{\alp} \cap Q_{\bt})} \bb).
  \end{align*}

  Define the open covering $\calQ_{k}$ so that $Q_{\alpha,k}=
  \mathfrak{R} Q_{\alpha,k-1}$ if $\alpha \leq k-1$ and $Q_\alpha$ is a neighbor
  of $Q_k$, and $Q_{\alpha,k}= Q_{\alpha,k-1}$ otherwise. We shall
  then construct a gauge transformation $\chi_{[k]}$ on $Q_{k, k} =
  Q_{k}$ such that the above properties hold with $k-1$ replaced by
  $k$, where $\chi_{[\alp]}$ for $\alp \leq k-1$ are
  defined by simply restricting to $Q_{\alp, k} \subseteq Q_{\alp,
    k-1}$.  From this statement, the proposition will follow by
  induction, starting with $Q_{\alp, 0} = Q_{\alp}$ and $\chi_{[0]}=
  0$.

  We remark that the uniformity in the estimate (3) is due to the fact that our
  covering of $\calO$ is locally finite, and also that $\calQ$ is slowly varying.  Indeed, it is obvious in the
  proof below that the construction in the induction step only
  involves $Q_k$ and its neighbors, whose side length is comparable to that of $Q_{k}$. 
  Thus, for each $\alpha$ the sets $Q_{\alpha,k}$ are reduced in size only finitely many times,
  and the cutoff functions $\zeta_{[k]}$ below can be taken to be
  uniformly smooth with respect to the scale of $Q_k$.

  We now proceed with the proof of the induction step.  We begin by defining
  $\widetilde{\chi}_{[k]}$ on $Q_{k} \cap (\cup_{\alp \leq k-1}
  Q_{\alp, k-1})$ to be
  \begin{equation} \label{eq:chitilde} \widetilde{\chi}_{[k]} =
    \chi_{[k \alp]} + \chi_{[\alp]} + \underline{\chi}_{[\alp k]}
    \quad \hbox{ on } Q_{k} \cap Q_{\alp, k-1} \hbox{ if it is
      nonempty.}
  \end{equation}
  Observe that this definition is consistent on $Q_{k} \cap
  (\cup_{\alp \leq k-1} Q_{\alp, k-1})$ thanks to property
  (\ref{item:ind4patching:2}) in the induction hypothesis and the
  exact cocycle condition \eqref{eq:cocycle4dltchi} for $\chi_{[\alp
    \bt]} - \underline{\chi}_{[\alp \bt]}$.
  Moreover, by considering a partition of unity subordinate to
  $\set{Q_{k} \cap Q_{\alp, k-1}}_{\alp=1, \ldots, k-1}$ and using the
  induction hypothesis (\ref{item:ind4patching:3})  and Lemma \ref{lem:Xcutoff}, we can
  derive the estimate
  \begin{align}
    \nrm{\widetilde{\chi}_{[k]}}_{\CG(Q_{k} \cap Q_{\alp, k})} & \aleq_{C_{k-1}} X_k \label{eq:patching:est4chitilde}
  \end{align}

  Now let $\zt_{[k]} : \calO \to [0,1]$ be a smooth function that
  satisfies the following properties:
  \begin{gather} \label{}
    \zt_{[k]} = 0 \quad \hbox{ on } Q_{k} \setminus (\cup_{\alp \leq k-1} Q_{\alp, k-1}), \\
    \zt_{[k]} = 1 \quad \hbox{ on } \cup_{\alp \leq k-1} Q_{\alp, k}.
  \end{gather}
  We remark that such a $\zeta$ exists because by construction the two
  sets $Q_{k} \setminus (\cup_{\alp \leq k-1} Q_{\alp, k-1})$ and $
  \cup_{\alp \leq k-1} Q_{\alp, k}$ are separated by a distance which
  is proportional to the size of $Q_k$. This also allows us to choose 
the functions $\zeta_{[k]}$ uniformly smooth on $Q_k$.

  Now we define
  \begin{equation} \label{eq:patching:chi} 
\chi_{[k]} := \zt_{[k]}
    \widetilde{\chi}_{[k]} \quad \hbox{ on } Q_{k, k} = Q_{k}.
  \end{equation}
  Note that Properties (\ref{item:ind4patching:1}) and
  (\ref{item:ind4patching:2}) are immediately consequences of the
  construction. 

For the property (iii), we observe that 
  $\zt_{[k]} \rst_{Q_{k}}$ can be extended as an element in
  $C^{\infty}_{0}(\bbR^{1+4}) \subseteq \dot{B}^{\frac{5}{2},
    2}_{1}(\bbR^{1+4})$. Thus Property (\ref{item:ind4patching:3})
   follows from Lemma \ref{lem:Xcutoff}
  (in particular, stability of $\CG$ by cutoffs in
  $\dot{B}^{\frac{5}{2},2}_{1}$), Lemma \ref{lem:Xcutoff},
  \eqref{eq:chitilde}, \eqref{eq:patching:est4chitilde} and \eqref{eq:patching:chi}.
  \qedhere
\end{proof}


\subsection{Proof of existence and continuous
  dependence} \label{subsec:lwp4MKG:pf} \setcounter{claim}{0} Using
the tools developed in the previous subsections, we are ready to prove
the existence and continuous dependence statements of Theorem
\ref{thm:lwp4MKG}.  In what follows, we will often use the shorthand
$\calE := \calE[a, e, f, g]$.

\pfstep{Step 0. Preliminaries} Let $(a, e, f, g)$ be an $\calH^{1}$
initial data set satisfying the global Coulomb condition $\rd^{\ell}
a_{\ell} = 0$ and $\calE[a,e,f,g] < \En$. It suffices to assume $\ecs[a, e, f, g] < \infty$, since otherwise the small data result (Theorem~\ref{thm:KST}) is applicable. By scaling, we may take
\begin{equation} \label{eq:lwp4MKG:ecs4aefg} 
\ecs[a, e, f, g] = 1.
\end{equation}
By time reversal symmetry, it suffices to restrict to $t \geq 0$ and consider the unit time interval $I = [0, 1]$.
Let $\set{R_{0}^{c}} \cup \set{R_{\alp}}$ be the covering of $\bbR^{4}$ introduced in Section~\ref{sec:split},
such that the local small energy condition \eqref{eq:loc-enR} holds. 

In what follows, we will construct 
a local-in-time solution $(A, \phi)$ in $I \times \bbR^4$, which obeys the $S^{1}$ a-priori regularity property 
\eqref{eq:lwp4MKG:aprioriEst}. Moreover, we will show that our construction below also has the following two properties:
%
%
%
\begin{itemize}
\item Continuous dependence: the data-to-solution map is continuous as follows:
\[
 \calH^1(\bbR^4)  \ni (a, e, f, g)  \to (A_0,A_x,\phi) \in Y^1(I \times
  \bbR^4) \times S^1(I \times \bbR^4) \times S^1(I \times \bbR^4).
\]
\item Regularity: If in addition $(a, e, f, g) \in
  \calH^\infty(\bbR^4)$ then the solution $(A,\phi)$ belongs to
  $C_{t}^{\infty} \calH^{\infty}(\bbR^4)$.
\end{itemize}

Theorem \ref{thm:lwp4MKG} will then follow by combining these statements with the uniqueness statement proved in Section~\ref{subsec:unique}.


\pfstep{Step 1. Construction of local Coulomb solutions} The goal of
this step is to show that corresponding to the $\calQ = \{ Q_{\alp} \}$ covering of  
$I \times \bbR^{4}$, introduced in  Section~\ref{subsec:patch},
we can produce a compatible local solution
$(A_{[\alp]}, \phi_{[\alp]})$ on each $Q_{\alp}$, each of which is the
restriction of a small energy global Coulomb solution to
\eqref{eq:MKG} given by Theorem \ref{thm:KST}. We will in effect construct these 
solutions on the larger sets $I \times 3R_\alpha$, and then 
simply restrict them to $Q_\alpha$.

\begin{claim} \label{claim:locSol}  The following  hold 
for each Coulomb initial data $(a, e, f, g)$ satisfying \eqref{eq:loc-enR}:
  \begin{enumerate}
  \item \label{item:locSol:locSol} On each set $I \times 3R_{\alp}$ 
there exists an admissible $C_{t}  \calH^{1}(I \times 3R_{\alp})$ solution $(A_{[\alp]},
    \phi_{[\alp]})$ and a gauge transformation
    $\underline{\chi}_{[\alp]} \in
    \calG^{2}(3R_{\alp})$ that satisfy the Coulomb gauge
    condition
    \begin{equation} \label{eq:Coulomb4locSol} \rd^{\ell}
      A_{[\alp] \ell} = 0 \quad \hbox{ on } I \times 3R_\alpha,
    \end{equation}
    and the initial condition
    \begin{equation} \label{eq:id4locSol} (A_{[\alp] j},
      F_{[\alp] 0j}, \phi_{[\alp]}, \covD_{[\alp] t}
      \phi_{[\alp]})  = (a_{j}
      - \rd_{j} \underline{\chi}_{[\alp]}, e_{j}, e^{i
        \underline{\chi}_{[\alp]}} f, e^{i
        \underline{\chi}_{[\alp]}} g) \quad \hbox{ on }  3R_\alpha
      \,.
    \end{equation}
    Moreover, $A_{[\alp] x}, \phi_{[\alp]} \in
    S^{1}(I \times 3R_{\alp})$, $A_{[\alp] 0} \in Y^{1}(I \times 3R_{\alp})$ 
depend continuously on the initial data in $\calH^1$,
and we have the smallness bound
    \begin{gather}
      \nrm{A_{[\alp] 0}}_{Y^{1}(I \times 3R_{\alp})} +
      \nrm{A_{[\alp] x}}_{S^{1}(I \times 3R_{\alp})} +
      \nrm{\phi_{[\alp]}}_{S^{1}(I \times 3R_{\alp})}
      \aleq \thE, \label{eq:existence:est4locSol} 
    \end{gather}

  \item \label{item:locSol:chiub} Extend
    $\underline{\chi}_{[\alp]}$ to $I \times 3R_\alpha$ by requiring
    $\rd_{t} \underline{\chi}_{[\alp]} = 0$; abusing the
    notation slightly, we shall denote the extension by
    $\underline{\chi}_{[\alp]}$. Then
    \begin{equation} \label{eq:Coulomb4chiub} \lap
      \underline{\chi}_{[\alp]} = 0 \quad \hbox{ on } I \times 3R_{\alp}.
    \end{equation}
    Moreover, $\underline{\chi}_{[\alp]} \in \CG(I \times 3R_{\alp})$, depending 
continuously on the initial data,  and obeys the estimate
    \begin{align}
      \nrm{\underline{\chi}_{[\alp]}}_{\CG(I \times 3R_{\alp})}
       \aleq_{\En} 1, \label{eq:existence:est4chiub}
    \end{align}

  \item \label{item:locSol:chiab} For every $\alp$ and $\bt$, there
    exists $\chi_{[\alp \bt]} \in \CG(I \times (3R_{\alp} \cap 3R_{\bt}))$
    that connects $(A_{[\alp]}, \phi_{[\alp]})$ and
    $(A_{[\bt]}, \phi_{[\bt]})$ in the sense of Definition
    \ref{def:compatiblePairs} and satisfies
    \begin{equation} \label{eq:Coulomb4chiab} \lap \chi^{(n)}_{[\alp
        \bt]} = 0 \quad \hbox{ on } I \times (3R_{\alp} \cap 3R_{\bt}).
    \end{equation}
    Moreover, $\chi_{[\alp \bt]}$ depends continuously on the
    initial data and obeys the estimate
    \begin{align}
      \nrm{\chi_{[\alp \bt]}}_{\CG(I \times (3R_{\alp} \cap 3R_{\bt}))}
      &\aleq_{\En} 1 \label{eq:existence:est4chiab} 
    \end{align}
    Finally, the following $C^{0}$ closeness condition holds:
    \begin{equation} \label{eq:close4chiabub} \sup_{I \times 3R_{\alp} \cap
        Q_{\bt}} \abs{\chi_{[\alp \bt]} -
        (\underline{\chi}_{[\alp]} -
        \underline{\chi}_{[\bt]})} < \eps_{\ast\ast},
    \end{equation}
    where $\eps_{\ast \ast} > 0$ is the universal small constant that
    appeared in Proposition \ref{prop:patch}.

  \item \label{item:locSol:higherReg} Higher regularity: if in addition $(a, e, f, g) \in
    \calH^{\infty}$, then for each $\alp, \bt$ we have
    \begin{equation*}
      (A_{[\alp]}, \phi_{[\alp]}) \in C_{t}^{\infty} \calH^{\infty} (I \times 3R_{\alp}), \quad
      \underline{\chi}_{[\alp]} \in \calG^{\infty}(I \times 3R_{\alp}), \quad
      \chi_{[\alp \bt]} \in C_{t}^{\infty} \calG^{\infty}(I \times (3R_{\alp}\cap 3R_{\bt})).
    \end{equation*}
  \end{enumerate}
\end{claim}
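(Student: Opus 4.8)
The plan is to treat each cube $R_\alpha$ of the covering $\calQ$ (and the complementary region $R_0^c$, which is handled analogously) by the three-step scheme \emph{excision/gluing $\to$ gauge fixing $\to$ small-energy solving}, working first on the larger set $I\times 3R_\alpha$ and then restricting. For part (\ref{item:locSol:locSol}), fix an interior cube $R_\alpha$ and pick a ball $B_\alpha$ (radius $\approx\ell(R_\alpha)$) with $3R_\alpha\subseteq\rgExt B_\alpha$ and $\rglue B_\alpha\subseteq 18R_\alpha$. Apply the exterior operator $E^{\extr}$ of Proposition~\ref{prop:gluing} to $(a,e,f,g)$ on the annulus $\rglue B_\alpha\setminus\overline{B_\alpha}$, and define a global initial data set by keeping $(a,e,f,g)$ on $\rgExt B_\alpha$ and using $E^{\extr}[a,e,f,g]$ on $\bbR^4\setminus\overline{B_\alpha}$; these agree on the overlap annulus. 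By \eqref{eq:gluing:energy} the energy of this glued set is $\lesssim\calE_{18R_\alpha}[a,e,f,g]+\ell(R_\alpha)^{-2}\nrm{f}_{L^2_x(18R_\alpha)}^2$, which is $\ll\thE^2$ by \eqref{eq:loc-enR}; choosing $\dlt_0$ as in \eqref{eq:delta-choose} with $c$ small enough to absorb the gluing constant forces the glued energy to be $<\thE^2$. Lemma~\ref{lem:gt2CoulombId} then yields $\underline{\chi}_{[\alpha]}\in\calG^2(\bbR^4)$ (normalized to zero average on $B_\alpha$) carrying the glued data to a globally Coulomb data set of the same energy $<\thE^2$, and Theorem~\ref{thm:KST} produces a global Coulomb solution $(\check A,\check\phi)$ with $\nrm{\check A_0}_{Y^1}+\nrm{\check A_x}_{S^1}+\nrm{\check\phi}_{S^1}\lesssim\thE$. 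Since the glued data equals $(a,e,f,g)$ on $3R_\alpha$, the transform $\Gmm_{\underline{\chi}_{[\alpha]}}$ acts there exactly as in \eqref{eq:id4locSol}, so $(A_{[\alpha]},\phi_{[\alpha]}):=(\check A,\check\phi)\rst_{I\times 3R_\alpha}$ is an admissible $C_t\calH^1$ solution with the stated Coulomb condition, initial data, and smallness \eqref{eq:existence:est4locSol}. For $R_0^c$ one argues identically, using the interior operator $E^{\intr}$ of Proposition~\ref{prop:intGluing} to fill in the interior of a ball containing $\tfrac1{18}R_0$, with \eqref{eq:intGluing:energy} replacing \eqref{eq:gluing:energy}.

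For parts (\ref{item:locSol:chiub}) and (\ref{item:locSol:chiab}) the key point is that gauge transforms between Coulomb data are harmonic. On $3R_\alpha$ the glued data is $(a,e,f,g)$, which is Coulomb, as is the output of Lemma~\ref{lem:gt2CoulombId}; subtracting divergence conditions gives $\lap\underline{\chi}_{[\alpha]}=0$ on $3R_\alpha$, i.e.\ \eqref{eq:Coulomb4chiub} for the time-independent extension. Interior elliptic estimates for harmonic functions bound every spatial Besov norm of $\underline{\chi}_{[\alpha]}$ on $3R_\alpha$ by $\nrm{\nabla\underline{\chi}_{[\alpha]}}_{L^2_x}$ on a slightly larger set, controlled by the energy via \eqref{eq:gt2CoulombId} and Sobolev; Lemma~\ref{lem:CG4t-indep} then gives $\underline{\chi}_{[\alpha]}\in\CG(I\times 3R_\alpha)$ and \eqref{eq:existence:est4chiub}, the $\En$-dependence entering through the range of scales $\ecs\lesssim\ell(R_\alpha)\lesssim r_0$. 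For $\chi_{[\alpha\bt]}$: on $3R_\alpha\cap 3R_\bt$ the initial data of $(A_{[\alpha]},\phi_{[\alpha]})$ and $(A_{[\bt]},\phi_{[\bt]})$ are each gauge equivalent to $(a,e,f,g)$ (via $\underline{\chi}_{[\alpha]}$, resp.\ $\underline{\chi}_{[\bt]}$), hence to each other via $\underline{\chi}_{[\alpha]}-\underline{\chi}_{[\bt]}$, so Proposition~\ref{prop:locGeom} produces $\chi_{[\alpha\bt]}$ on the domain of dependence $\calD^+(\set{0}\times(3R_\alpha\cap 3R_\bt))$ with $\chi_{[\alpha\bt]}\rst_{t=0}=\underline{\chi}_{[\alpha]}-\underline{\chi}_{[\bt]}$ and the cocycle property. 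Because $\ell(R_\alpha)\geq 4\ecs=4|I|$ and the covering is slowly varying, this domain contains the overlap $Q_\alpha\cap Q_\bt$ that enters Proposition~\ref{prop:patch}. Since both solutions are Coulomb, $\chi_{[\alpha\bt]}$ is harmonic in $x$ and $\rd_t\chi_{[\alpha\bt]}=A_{[\bt]0}-A_{[\alpha]0}\in Y^1$ (the Poisson right-hand side for $A_0$ being gauge invariant); combining harmonicity, this time regularity, and the $t=0$ value in $\CG$ as in Lemmas~\ref{lem:CG4t-indep}--\ref{lem:embedding4CG} gives $\chi_{[\alpha\bt]}\in\CG$ with \eqref{eq:existence:est4chiab}. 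Finally, $\chi_{[\alpha\bt]}-(\underline{\chi}_{[\alpha]}-\underline{\chi}_{[\bt]})$ vanishes at $t=0$ and has $\rd_t$ equal to $A_{[\bt]0}-A_{[\alpha]0}$, so its supremum is $\lesssim\ecs\cdot\sup(\abs{A_{[\alpha]0}}+\abs{A_{[\bt]0}})$; by the elliptic bounds for $A_0$ this is $\lesssim\sqrt{\dlt_0}$ in scale-invariant units, hence $<\eps_{\ast\ast}$ after shrinking $c$ in \eqref{eq:delta-choose}, which is \eqref{eq:close4chiabub}.

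The continuity assertions follow by chaining the continuity of each step: the gluing operators are \emph{linear}, hence continuous on $\calH^1$ (Proposition~\ref{prop:gluing}(\ref{item:gluing:4}), Proposition~\ref{prop:intGluing}(\ref{item:intGluing:3})); the Coulomb gauge transform depends continuously up to constants (Lemma~\ref{lem:continuity4gt} with \eqref{eq:gt2CoulombId}); the solution map of Theorem~\ref{thm:KST} is continuous on the compact interval $I$; and the harmonic-function constructions of $\underline{\chi}_{[\alpha]}$ and $\chi_{[\alpha\bt]}$ depend continuously on the data through the interior elliptic estimates and through Proposition~\ref{prop:locGeom}. Part (\ref{item:locSol:higherReg}) follows from the same chain using persistence of regularity at every step (Propositions~\ref{prop:gluing}(\ref{item:gluing:4}), \ref{prop:intGluing}(\ref{item:intGluing:3}), the smoothness statement in Theorem~\ref{thm:KST}, Lemma~\ref{lem:locGeom:classical} for the overlap gauge transforms, and smoothness of harmonic functions).

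The main obstacle is the bookkeeping that makes the whole construction \emph{quantitatively uniform} over the variable-sized cubes: one must check that the domain-of-dependence region on which Proposition~\ref{prop:locGeom} delivers $\chi_{[\alpha\bt]}$ genuinely contains the overlap needed for patching — which is exactly where the size bound $\ell(R_\alpha)\geq 4\ecs$ and the slow-variance property from Proposition~\ref{p:cubes} are indispensable — and that all $\CG$ bounds, as well as the smallness \eqref{eq:existence:est4locSol}, hold with $\alpha$-independent constants, depending on $\En$ only through the number and range of scales of the cubes. The second delicate point is the $C^0$-closeness \eqref{eq:close4chiabub}: it is what later removes the $2\pi\bbZ$ ambiguity in the cocycle condition during the patching of Proposition~\ref{prop:patch}, and securing it is the reason $\dlt_0(\En,\thE^2)$ must carry the extra small factor $c$ in \eqref{eq:delta-choose}.
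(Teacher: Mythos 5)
Your proposal follows essentially the same route as the paper's proof: exterior/interior excision--gluing (Propositions~\ref{prop:gluing}, \ref{prop:intGluing}) on a ball comparable to $R_\alp$, the Newtonian-potential Coulomb gauge fix, Theorem~\ref{thm:KST} plus restriction for part (\ref{item:locSol:locSol}), harmonicity of $\underline{\chi}_{[\alp]}$ and $\chi_{[\alp\bt]}$ coming from the Coulomb conditions, Proposition~\ref{prop:locGeom} on the domain of dependence (exploiting $\ell(R_\alp)\geq 4\ecs$ and slow variance), and Lemma~\ref{lem:CG4t-indep}-type extensions for the $\CG$ bounds, with continuity and higher regularity obtained by chaining each step. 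The one point to correct is your justification of \eqref{eq:close4chiabub}: $Y^{1}$ does not give pointwise control of $A_{[\alp]0}$, so rather than $\sup(\abs{A_{[\alp]0}}+\abs{A_{[\bt]0}})$ you should use the spatial harmonicity of $\chi_{[\alp\bt]}$ (mean value property, as in \eqref{eq:MVP4chiab}) to convert the $L^{\infty}_{t}L^{4}_{x}$ control of $A_{[\alp]0}-A_{[\bt]0}$ into an interior sup bound $\aleq r_\alp^{-1}\thE$ for $\rd_{t}\chi_{[\alp\bt]}$, which after integrating in time over $\abs{I}\leq r_\alp/4$ and using the vanishing at $t=0$ yields the required smallness --- exactly as in the paper's argument.
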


We proceed to the proof of this claim.

\pfstep{Step 1.1. Construction of $(A_{[\alp]}, \phi_{[\alp]})$ and
  $\underline{\chi}_{[\alp]}$ for $\alp \geq 1$} 
Our starting point here is the estimate \eqref{eq:loc-enR}. We insert a ball
$3R_\alpha \subset B \subset 2B \subset 18R_\alpha$, which has radius $r_{\alp} \aeq \ell(R_{\alp})$. 
Applying Proposition \ref{prop:gluing}
with $\rgExt = 4/3$ and $\rglue = 2$ to $(a, e, f, g)$ with respect to  the ball $B$,  
 we obtain an initial data set $(\widetilde{a}_{[\alp]},
\widetilde{e}_{[\alp]}, \widetilde{f}_{[\alp]},
\widetilde{g}_{[\alp]}) \in \calH^{1}(\bbR^{4})$,
depending continuously on $(a, e, f, g)$ in $\calH^1$, such that we have the matching 
condition
\begin{align}
(\widetilde{a}_{[\alp]},
\widetilde{e}_{[\alp]}, \widetilde{f}_{[\alp]},
\widetilde{g}_{[\alp]}) =&
  (a, e, f, g) \hbox{ on }
  4 R_\alpha
\end{align}
and small energy
\begin{align}
 \calE[ \widetilde{a}_{[\alp]},
\widetilde{e}_{[\alp]}, \widetilde{f}_{[\alp]},
\widetilde{g}_{[\alp]} ] \ll    \thE^{2}.
\end{align}
However, our small localized data $(\widetilde{a}_{[\alp]},
\widetilde{e}_{[\alp]}, \widetilde{f}_{[\alp]},\widetilde{g}_{[\alp]}) $
is no longer in the Coulomb gauge. To rectify this we use 
a gauge transformation defined by 
\begin{equation*}
  \underline{\chi}_{[\alp]} := - (-\lap)^{-1} \rd^{\ell} \widetilde{a}_{[\alp] \ell},
\end{equation*}
where $(-\lap)^{-1}$ on the right-hand side is defined as convolution
with the Newtonian potential. In general, this expression may not be
uniquely determined if we only knew $\rd^{\ell} \widetilde{a}_{[\alp] \ell}
\in L^{2}_{x}$. However, note that we have the support condition
\begin{equation} \label{eq:locSol:supp4divAk} \supp (\rd^{\ell}
  \widetilde{a}_{[\alp] \ell}) \subseteq 
  9R_\alp\setminus 4R_{\alp},
\end{equation}
since $\widetilde{a} \equiv 0$ outside $9R_\alp$. It follows that $\rd^{\ell}
\widetilde{a}_{[\alp] \ell} \in L^{1}_{x} \cap
L^{2}_{x}(\bbR^{4})$ and therefore the right-hand side is
well-defined.  The gauge transformed data set
\begin{equation} \label{} (\check{a}_{[\alp]},
  \check{e}_{[\alp]}, \check{f}_{[\alp]},
  \check{g}_{[\alp]}) := (\widetilde{a}_{[\alp]} -
  \ud \underline{\chi}_{[\alp]},
  \widetilde{e}_{[\alp]}, e^{i
    \underline{\chi}_{[\alp]}}
  \widetilde{f}_{[\alp]}, e^{i
    \underline{\chi}_{[\alp]}}
  \widetilde{g}_{[\alp]}),
\end{equation}
 is a small energy $\calH^{1}(\bbR^{4})$ Coulomb
initial data set;
hence Theorem \ref{thm:KST} is applicable. Let $(A_{[\alp]}, \phi_{[\alp]})$ be
the unique global small energy Coulomb solution to \eqref{eq:MKG}
given by Theorem \ref{thm:KST}. By construction,
\eqref{eq:Coulomb4locSol} and \eqref{eq:id4locSol} hold; moreover,
\eqref{eq:existence:est4locSol} and the continuous dependence property are
consequences of Theorem \ref{thm:KST}.

We now verify \eqref{eq:Coulomb4chiub} and
\eqref{eq:existence:est4chiub} for $\underline{\chi}_{[\alp]}$. Indeed, by the
support condition \eqref{eq:locSol:supp4divAk} we directly get \eqref{eq:Coulomb4chiub},
as well as the uniform bounds 
\begin{equation*}
  \nrm{\rd_{x}^{(N)} \underline{\chi}_{[\alp]}}_{L^{\infty}_{x}(3.5R_\alp)} \aleq_{ N} r_\alp^{-N} 
\nrm{ \rd^{\ell} \widetilde{a}_{[\alp] \ell}}_{L^2_x}\aleq  r_\alp^{-N} E^\frac12
  \quad \hbox{ for every } N \geq 0.
\end{equation*}
By  Lemma \ref{lem:CG4t-indep} 
(see also Remark~\ref{rem:Xcutoff}) this directly leads to \eqref{eq:existence:est4chiub}.
The continuous dependence similarly follows.

\pfstep{Step 1.3. Construction of $(A_{[\alp]}, \phi_{[\alp]})$ and
  $\underline{\chi}_{[\alp]}$ for $\alp = 0$}  Again we start with
\eqref{eq:loc-enR} but with $\alpha = 0$. This time we insert the ball
$\frac{1}{18} R_0 \subset B \subset 2B \subset \frac13R_0$, which has radius $r_{0} \aeq \ell(R_{0})$,
 and apply Proposition \ref{prop:intGluing} with $\rgExt = \frac43$ and $\rglue = 2$.
We  obtain an initial data set
$(\widetilde{a}_{[0]}, \widetilde{e}_{[0]},
\widetilde{f}_{[0]}, \widetilde{g}_{[0]}) \in
\calH^{1}(\bbR^{4})$ such that
\begin{align}
  (\widetilde{a}_{[0]}, \widetilde{e}_{[0]},
  \widetilde{f}_{[0]}, \widetilde{g}_{[0]})
  =& (a, e, f, g) \hbox{ on } (\tfrac14 R_0)^c , \\
  \calE[\widetilde{a}_{[0]},
  \widetilde{e}_{[0]}, \widetilde{f}_{[0]},
  \widetilde{g}_{[0]}] \ll  &  \eps_{\ast}^{2},
\end{align}
where the last line follows from \eqref{eq:intGluing:energy} and our
choice of $R_{0}$. As before, we define
\begin{equation*}
  \underline{\chi}_{[0]} := - (-\lap)^{-1} \rd^{\ell} \widetilde{a}_{[0] \ell}, 
\end{equation*}
which is unambiguously defined due to 
 the support condition
\begin{equation} \label{eq:locSol:supp4divAinfty} \supp (\rd^{\ell}
  \widetilde{a}_{[0] \ell}) \subseteq 1.5 B \setminus B\,
\end{equation}
as $\widetilde{a}_{[0]} = a$ on $(1.5 B)^c$ is divergence-free.  Again the 
gauge corrected data
\begin{equation} \label{} (\check{a}_{[0]},
  \check{e}_{[0]}, \check{f}_{[0]},
  \check{g}_{[0]}) := (\widetilde{a}_{[0]} - \ud
  \underline{\chi}_{[0]}, \widetilde{e}_{[0]},
  e^{i \underline{\chi}_{[0]}}
  \widetilde{f}_{[0]}, e^{i
    \underline{\chi}_{[0]}}
  \widetilde{g}_{[0]})
\end{equation}
is an $\calH^{1}(\bbR^{4})$ Coulomb initial data set
with energy $\ll \eps_{\ast}^{2}$.  Hence we can apply Theorem
\ref{thm:KST} to define $(A_{[0]}, \phi_{[0]})$ as the unique global
small energy Coulomb solution to \eqref{eq:MKG} given by Theorem
\ref{thm:KST}. Then \eqref{eq:Coulomb4locSol}, \eqref{eq:id4locSol},
\eqref{eq:existence:est4locSol} as well as the the continuous
dependence property and the regularity property follow easily from
construction.

As  \eqref{eq:Coulomb4chiub} is a direct consequence of \eqref{eq:locSol:supp4divAinfty},
it remains to establish the bound \eqref{eq:existence:est4chiub}  for
$\underline{\chi}_{[0]}$. Using again the support condition \eqref{eq:locSol:supp4divAinfty}
and the decay of the Newton potential we obtain 
\[
| \partial_x^N \underline{\chi}_{[0]}(x)| \lesssim r_0^{-N} (1+
r_0^{-1} |x|)^{-2} E^\frac12, \qquad N \geq 0, \quad x \in (2B)^c
\]
which suffices for \eqref{eq:existence:est4chiub}.

\pfstep{Step 1.4. Properties of $\chi_{[\alp \bt]}$} We now proceed to
prove Statement (\ref{item:locSol:chiab}). The existence of
$\chi_{[\alp \bt]}$ will be a consequence of Proposition
\ref{prop:locGeom} (local geometric uniqueness); the estimate
\eqref{eq:existence:est4chiab} and the corresponding continuous
dependence, on the other hand, will follow from the global Coulomb
condition satisfied by each solution $(A_{[\alp]}, \phi_{[\alp]})$.


In what follows, we explain the details in the case $\alp, \bt \neq 0$; the case $\alp = 0$ 
is handled by an obvious modification.
By construction, the initial data for $(A_{[\alp]},\phi_{[\alp]})$ and 
$(A_{[\beta]},\phi_{[\beta]})$ are gauge equivalent on $3R_{\alp} \cap 3 R_{\beta}$, with the 
gauge transformation given by $\underline{\chi}_{[\alp]} - \underline{\chi}_{[\beta]}$.
By scaling, each of these cubes has side length larger than $1$, so their domains
of dependence satisfy
\[
I \times (2R_{\alp} \cap  2R_{\beta}) \subset \calD^+(3R_{\alp}) \cap \calD^+(3R_{\beta})
\]
Hence, Proposition \ref{prop:locGeom} shows that the two solutions are
gauge equivalent in $I \times (2R_{\alp} \cap 2R_{\beta})$.  We
denote by $\chi_{[\alp\beta]} \in C_{t} \calG^{2}(I \times
(2R_{\alp} \cap 2R_{\beta}))$ the transition map.  A-priori this
is only determined modulo $2 \pi$, but this ambiguity is easily fixed
by requiring that
\begin{equation*}
  \chi_{[\alp \bt]}  
  = \underline{\chi}_{[\alp]} - \underline{\chi}_{[\bt]} \qquad 
\text{ on } \{0\} \times   (2R_{\alp} \cap  2R_{\beta}). 
\end{equation*}
 Moreover, this satisfies
\begin{equation}
  \lap \chi_{[\alp \bt]} = 0 \quad \hbox{ on } I \times (2R_{\alp} \cap  2R_{\beta} ),
\end{equation}
thanks to the fact that $\lap \chi_{[\alp \bt]} = \rd^{\ell}
A_{[\alp] \ell} - \rd^{\ell} A_{[\bt] \ell} =
0$. Therefore, by the mean value property of harmonic functions,
\begin{equation} \label{eq:MVP4chiab} \chi_{[\alp \bt]}(t, x) =
  \int \chi_{[\alp \bt]}(t, x-y) r_\alpha^{-4} \varphi(y/r_\alpha) \, \ud y \quad
  \hbox{ for } (t, x) \in Q_{\alp} \cap Q_{\bt},
\end{equation}
where we recall that $\varphi$ is a smooth radial function on
$\bbR^{4}$ with $\int \varphi = 1$ and $\supp \, \varphi \subseteq
\set{\abs{x} \leq 1}$.  Here we have also used the fact that $r_\alpha
\approx r_\beta$, and that an $O(r_\alpha)$ spatial neighborhood of
$Q_\alpha \cap Q_\beta$ is contained in $I \times (2R_{\alp} \cap
2R_{\beta}) $.

It remains to prove \eqref{eq:existence:est4chiab} and \eqref{eq:close4chiabub}. We begin
with the following bounds for $\rd_{t} \chi_{[\alp \bt]}$ and
$\rd_{t}^{2} \chi_{[\alp \bt]}$: Differentiating
\eqref{eq:MVP4chiab} (in $t, x$), using H\"older's inequality and
recalling the identity $\rd_{\mu} \chi_{[\alp \bt]} =
A_{[\alp] \mu} - A_{[\bt] \mu}$, we have for $N \geq 0$
\begin{align}
  \nrm{\rd_{x}^{(N)} \rd_{t, x} \chi_{[\alp
      \bt]}}_{L^{\infty}_{t,x}(Q_{\alp} \cap Q_{\bt})}
  \aleq_{N} & r_\alpha^{-1-N} \nrm{A_{[\alp]} - A_{[\bt]}
  }_{L^{\infty}_{t} L^{4}_{x} (I \times (2R_{\alp} \cap
2R_{\beta}) )}
  \aleq  r_\alpha^{-1-N}  \thE, \label{eq:chiab:sup4dtchi} \\
  \nrm{\rd_{x}^{(N)} \rd_{t}^{2} \chi_{[\alp
      \bt]}}_{L^{\infty}_{t,x}(Q_{\alp} \cap Q_{\bt})}
  \aleq_{N} & r_\alpha^{-2-N} \nrm{\rd_{t} A_{[\alp] 0} - \rd_{t} A_{[\bt]
      0} }_{L^{\infty}_{t} L^{2}_{x} (I \times (2R_{\alp} \cap
2R_{\beta}) )} \aleq
 r_\alpha^{-2-N}  \thE. \label{eq:chiab:sup4dtdtchi}
\end{align}
Taking $N = 0$ and integrating \eqref{eq:chiab:sup4dtchi}, the $C^{0}$
closeness statement \eqref{eq:close4chiabub} follows.
Moreover, we have 
\begin{equation} \label{eq:chiab:sup4chi} \nrm{\chi_{[\alp
      \bt]}}_{L^{\infty}_{t,x}(Q_{\alp} \cap Q_{\bt})} \aleq
  \thE +
  \nrm{\underline{\chi}_{[\alp]}}_{L^{\infty}_{x}(1.5 R_{\alp})}
  +
  \nrm{\underline{\chi}_{[\bt]}}_{L^{\infty}_{x}(1.5 R_{\bt})}
  \aleq_{\En} 1
\end{equation}
thanks to \eqref{eq:existence:est4chiub}. Finally, observe that
$Q_{\alp} \cap Q_{\bt}$ is pre-compact for any pair $\alp$, $\bt$ such
that $\alp \neq \bt$, since there is only one unbounded element in
$\calQ$, namely $Q_{0}$. From the bounds
\eqref{eq:chiab:sup4dtchi}, \eqref{eq:chiab:sup4dtdtchi} and
\eqref{eq:chiab:sup4chi}, and the fact that $Q_{\alp} \cap Q_{\bt}$ is
pre-compact, we may easily construct an extension
$\widetilde{\chi}_{[\alp \bt]}$ of $\chi_{[\alp \bt]}$
such that
\begin{equation} \label{eq:chiab:chitilde}
  \nrm{\widetilde{\chi}_{[\alp \bt]}}_{\CG(\bbR^{1+4})}
  \aleq_{\En} 1
\end{equation}
Finally, we note that $\chi_{[\alp\bt]}$ constructed above depend continuously
on $(A_{[\alpha]},\phi_{[\alpha]})$ and thus on the initial data $(a,e,f,g)$ in $\calH^1$.


\pfstep{Step 1.5. Completion of proof of Claim \ref{claim:locSol}}
Restricting $(A_{[\alp]}, \phi_{[\alp]})$ and
$\underline{\chi}_{[\alp]}$ to $Q_{\alp}$, and
$\chi_{[\alp \bt]}$ to $Q_{\alp} \cap Q_{\bt}$, Statements
(\ref{item:locSol:locSol})--(\ref{item:locSol:chiab}) follow from the
previous steps. On the other hand, Statement
\ref{item:locSol:higherReg} (persistence of regularity) can be quickly
read off from the above construction, using the corresponding
statements in Propositions \ref{prop:gluing}, \ref{prop:intGluing} and
Theorem \ref{thm:KST}. We omit the details.


\pfstep{Step 2. Construction of global almost Coulomb solution} We now
construct a global solution $(A^{\prime }, \phi^{^{\prime }})$
on $I \times \bbR^{4}$ such that $A^{\prime }_{x},
\phi^{\prime } \in S^{1}[I]$ and $A^{\prime }_{0} \in Y^{1}[I]$ by
patching together the compatible pairs obtained in the previous
step. This solution will \emph{not} satisfy the global Coulomb
condition \eqref{eq:globalCoulomb} in general. Nevertheless, it will
have the redeeming feature that the spatial divergence $\rd^{\ell}
A^{\prime }_{\ell}$ obeys an improved bound compared to a general
derivative of a $A^{\prime }$. This feature will be a consequence
of the fact that $(A^{\prime }, \phi^{\prime })$ will be
constructed by patching together local \emph{Coulomb} solutions
$(A_{[\alp]}, \phi_{[\alp]})$.

The above statements are made precise in the following claim.
\begin{claim} \label{claim:almostCoulombSol} For any initial data
  $(a,e,f,g)$ of energy at most $E$, with $r_c \geq 1$ and satisfying\footnote{The only reason for this requirement is to ensure a uniform construction of $(A',\phi')$,
which guarantees its continuous dependence on the initial data.}
  \eqref{eq:loc-enR} there exists an admissible $C_{t} \calH^{1}$
  solution $(A^{\prime }, \phi^{\prime })$ to \eqref{eq:MKG} on $I
  \times \bbR^{4}$ such that the following statements hold.

  \begin{enumerate}
  \item The data for $(A^{\prime }, \phi^{\prime })$ on
    $\set{t=0}$ coincide with $(a, e, f, g)$,
    i.e.,
    \begin{equation} \label{} (A^{\prime }_{j}, F^{\prime
        }_{0j}, \phi^{\prime }, \covD^{\prime }_{t}
      \phi^{\prime }) \rst_{\set{t=0}} = (a_{j}, e_{j},
      f, g).
    \end{equation}

  \item The solution $(A^{\prime }, \phi^{\prime })$ satisfies
    $A^{\prime }_{x}, \phi^{\prime } \in S^{1}[I]$, $A^{\prime }_{0} \in Y^{1}[I]$, depends continuously on the initial data, and obeys
    \begin{align}
      & \nrm{A^{\prime }_{0}}_{Y^{1}[I]} +
      \nrm{A^{\prime }_{x}}_{S^{1}[I]} +
      \nrm{\phi^{\prime }}_{S^{1}[I]} \lesssim_{E, K} 1
      \label{eq:existence:A'} 
    \end{align}
    where $K$ is the total number of cubes in the set $\set{R_{\alp}}$ constructed in Section~\ref{sec:split}. In our case, $K \aleq (r_{0} / \ecs)^{4}$.

  \item \label{item:existence:divA} The spatial divergence of
    $A^{\prime }$ satisfies $\rd^{\ell} A^{\prime }_{\ell} \in
    C^{0}_{t} \dot{B}^{0,2}_{x; 1} (I \times \bbR^{4})$.
    Therefore, the convolution with the Newtonian potential
    \begin{equation*}
      \chi := - (-\lap)^{-1} \rd^{\ell} A^{\prime }_{\ell} = - \frac{3}{4 \pi^{2}} \int_{\bbR^{4}} \frac{1}{\abs{x-y}^{2}} \rd^{\ell} A^{\prime }_{\ell}(t, y) \, \ud y
    \end{equation*} 
    is unambiguously defined and belongs to $C^{0}_{t} \dot{B}^{2,2}_{x; 1}
    \subseteq C^{0}_{t,x}$. Moreover, it satisfies the additional
    estimates
    \begin{align}
      \nrm{\chi}_{\wCG [I]}
      & \aleq_{E, K} 1 \label{eq:existence:divA:wCG} \\
       \nrm{\rd_{x} \chi}_{S^{1}[I]}
      & \aleq_{E, K} 1  \label{eq:existence:divA:S1}
    \end{align}

  \item \label{item:almostCoulombSol:higherReg}  If additionally $(a, e, f, g) \in
    \calH^{\infty}$, then we have
    \begin{equation*}
      (A^{\prime }, \phi^{\prime }) \in C_{t}^{\infty} \calH^{\infty} (I \times \bbR^{4})  \hbox{ and }
      \chi \in C_{t}^{\infty} \calG^{\infty}(I \times \bbR^{4}).
    \end{equation*}
  \end{enumerate}
\end{claim}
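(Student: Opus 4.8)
The plan is to assemble $(A',\phi')$ and $\chi$ from the local objects $(A_{[\alp]},\phi_{[\alp]})$, $\underline\chi_{[\alp]}$, $\chi_{[\alp\bt]}$ produced in Claim~\ref{claim:locSol}, using Proposition~\ref{prop:patch} to organise the transition functions. First I would apply Proposition~\ref{prop:patch} to the covering $\calQ=\set{Q_{\alp}}$ of $\calO=I\times\bbR^{4}$ with transition maps $\chi_{[\alp\bt]}$ and comparison cocycle $\underline\chi_{[\alp\bt]}:=\underline\chi_{[\alp]}-\underline\chi_{[\bt]}$: the latter is a coboundary, so it satisfies \eqref{eq:cocycle4chi} exactly; the bounds \eqref{eq:existence:est4chiub}, \eqref{eq:existence:est4chiab} give $\nrm{\chi_{[\alp\bt]}}_{\CG}+\nrm{\underline\chi_{[\alp\bt]}}_{\CG}\aleq_{\En}1$; and the $C^{0}$-closeness hypothesis \eqref{eq:close4chi} is precisely \eqref{eq:close4chiabub}. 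This produces $\chi_{[\alp]}\in\CG(P_{\alp})$, depending linearly on $\chi_{[\alp\bt]}$ and $\underline\chi_{[\alp\bt]}$, with $-\chi_{[\alp]}+\chi_{[\alp\bt]}+\chi_{[\bt]}=\underline\chi_{[\alp\bt]}$ on $P_{\alp}\cap P_{\bt}$ and $\sup_{\alp}\nrm{\chi_{[\alp]}}_{\CG(P_{\alp})}\aleq_{\En}1$. Moreover, restricting the matching conditions of Claim~\ref{claim:locSol} to $\set{t=0}$ gives $\chi_{[\alp\bt]}=\underline\chi_{[\alp\bt]}$ there, and since restriction to $\set{t=0}$ commutes with the linear, cutoff-based construction of Proposition~\ref{prop:patch}, we get $\chi_{[\alp]}\rst_{\set{t=0}}=0$.

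Next I would define the global pair: on each $P_{\alp}$ set $\mu_{[\alp]}:=\chi_{[\alp]}-\underline\chi_{[\alp]}$ (with $\underline\chi_{[\alp]}$ extended by $\rd_{t}\underline\chi_{[\alp]}=0$) and
$$(A',\phi'):=\big(A_{[\alp]}-\ud\mu_{[\alp]},\ e^{i\mu_{[\alp]}}\phi_{[\alp]}\big)\qquad\text{on }P_{\alp}.$$
Combining the relation from Proposition~\ref{prop:patch} with the compatible-pairs identities \eqref{eq:gaugetransform4chi} and the cocycle condition \eqref{eq:cocycle4chi}, one checks $\mu_{[\alp]}-\mu_{[\bt]}=\chi_{[\alp\bt]}$ on $P_{\alp}\cap P_{\bt}$, so the local formulas agree on overlaps and $(A',\phi')$ is a well-defined pair on $\calO$ solving \eqref{eq:MKG} by gauge-invariance of the equation. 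Since $\mu_{[\alp]}\rst_{\set{t=0}}=-\underline\chi_{[\alp]}$ while $(A_{[\alp]},\phi_{[\alp]})$ carries the data $\Gmm_{\underline\chi_{[\alp]}}[a,e,f,g]$ near $t=0$ (by \eqref{eq:id4locSol}), the data of $(A',\phi')$ at $t=0$ is exactly $(a,e,f,g)$, which is statement~(1). Also $\mu_{[\alp]}\in\CG(P_{\alp})\subseteq C_{t}\calG^{2}$, so on $P_{\alp}$ the pair $(A',\phi')$ is the gauge transform of the admissible (and, for classical data, classical) solution $(A_{[\alp]},\phi_{[\alp]})$; admissibility being a local-in-time notion, $(A',\phi')$ is an admissible $C_{t}\calH^{1}$ solution, classical when $(a,e,f,g)\in\calH^{\infty}$, which is the $(A',\phi')$ half of statement~(4). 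For the function space bounds, on $P_{\alp}$ Claim~\ref{claim:locSol} gives $A_{[\alp]0}\in Y^{1}$, $A_{[\alp]x},\phi_{[\alp]}\in S^{1}$ with norm $\aleq\thE$ and $\mu_{[\alp]}\in\CG(P_{\alp})$ with norm $\aleq_{\En}1$, so Lemmas~\ref{lem:embedding4CG} (action on $A$) and \ref{lem:gt4wCG} (action on $\phi$, via $\CG\hookrightarrow\wCG$) give $A'_{0}\in Y^{1}(P_{\alp})$, $A'_{x},\phi'\in S^{1}(P_{\alp})$ with norms $\aleq_{\En}1$; fixing a partition of unity $\set{\zt_{\alp}}$ subordinate to $\set{P_{\alp}}$ with $\supp\zt_{\alp}\subseteq P_{\alp}$, writing $A'=\sum_{\alp}\zt_{\alp}A'$, etc., and applying Lemma~\ref{lem:Xcutoff} (each $\zt_{\alp}$, adapted to the scale of $R_{\alp}$, has scale-invariant $\dot{B}^{\frac{5}{2},2}_{1}$ norm $\aleq1$), then summing the $\aleq K\aleq(r_{0}/\ecs)^{4}$ pieces yields \eqref{eq:existence:A'}.

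For statement~(3), on $P_{\alp}$ we have $\rd^{\ell}A'_{\ell}=-\lap\mu_{[\alp]}=-\lap\chi_{[\alp]}+\lap\underline\chi_{[\alp]}$, and $\lap\underline\chi_{[\alp]}$ vanishes on $P_{\alp}$ by \eqref{eq:Coulomb4chiub} together with the support properties \eqref{eq:locSol:supp4divAk}, \eqref{eq:locSol:supp4divAinfty}; thus $\rd^{\ell}A'_{\ell}=-\lap\chi_{[\alp]}$ on $P_{\alp}$. Since $\chi_{[\alp]}-\chi_{[\bt]}=\chi_{[\alp\bt]}-\underline\chi_{[\alp\bt]}$ is $x$-harmonic on overlaps (by \eqref{eq:Coulomb4chiab}, \eqref{eq:Coulomb4chiub}), $\rd^{\ell}A'_{\ell}$ is a globally defined, compactly supported function in $C^{0}_{t}\dot{B}^{0,2}_{x;1}$ with norm $\aleq_{\En,K}1$, so $\chi=-(-\lap)^{-1}\rd^{\ell}A'_{\ell}$ is unambiguous and lies in $C^{0}_{t}\dot{B}^{2,2}_{x;1}\subseteq C^{0}_{t,x}$ (classical when the data is, which closes statement~(4)). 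For the quantitative bounds I would decompose $\rd^{\ell}A'_{\ell}=\sum_{\alp}\zt_{\alp}\lap(-\chi_{[\alp]})$ via the partition of unity and apply Lemma~\ref{lem:ellipticEst4wCG} to each summand with $\eta=\zt_{\alp}$, $\phi=-\chi_{[\alp]}$, yielding $\nrm{\chi}_{\wCG[I]}\aleq\sum_{\alp}\nrm{\zt_{\alp}}_{\dot{B}^{\frac{5}{2},2}_{1}}\nrm{\chi_{[\alp]}}_{\wCG(P_{\alp})}\aleq_{\En,K}1$, i.e.\ \eqref{eq:existence:divA:wCG}; the bound \eqref{eq:existence:divA:S1} follows in the same fashion, using Lemma~\ref{lem:embedding4CG} and controlling the $x$-harmonic defect $\chi+\chi_{[\alp]}$ on each $P_{\alp}$ by the mean value property as in Step~1.4. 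Finally, continuous dependence follows because every operation above — excision/gluing (Propositions~\ref{prop:gluing}, \ref{prop:intGluing}, in fact linear), the Coulomb gauge transform and Theorem~\ref{thm:KST}, the transition maps $\chi_{[\alp\bt]}$ (Claim~\ref{claim:locSol}), Proposition~\ref{prop:patch} (linear in its inputs), the action of gauge transformations on $S^{1}$ (Lemma~\ref{lem:gt4wCG}, difference estimate), Lemma~\ref{lem:Xcutoff} and the elliptic solve (Lemma~\ref{lem:ellipticEst4wCG}) — is continuous, and for all data with $\ecs\geq1$ obeying \eqref{eq:loc-enR} the same finite covering $\set{R_{\alp}}$ is used.

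The main obstacle I expect is twofold. First, the cohomological bookkeeping of the second step: one must verify, combining the output of Proposition~\ref{prop:patch} with the compatible-pairs relations and the cocycle condition, that the $\mu_{[\alp]}$ genuinely patch the $(A_{[\alp]},\phi_{[\alp]})$ into a single global solution carrying the prescribed initial data — this is where the $C^{0}$-closeness \eqref{eq:close4chiabub} (removing the $2\pi\bbZ$ ambiguity) and the identity $\chi_{[\alp]}\rst_{\set{t=0}}=0$ are essential. Second, the sharp elliptic estimates \eqref{eq:existence:divA:wCG}–\eqref{eq:existence:divA:S1} for $\chi$: they require relating the global Newton-potential solution to the local pieces $\chi_{[\alp]}$, which differ by $x$-harmonic corrections, and pushing the $\wCG$ and $S^{1}$ bounds through Lemmas~\ref{lem:ellipticEst4wCG} and~\ref{lem:embedding4CG} with constants uniform in the number $K$ of cubes.
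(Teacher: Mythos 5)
Your strategy is the same as the paper's: run Proposition~\ref{prop:patch} on the transition maps $\chi_{[\alp\bt]}$ with the comparison cocycle built from the $\underline{\chi}_{[\alp]}$, gauge-transform each local Coulomb solution by the resulting $\chi_{[\alp]}$ corrected by $\underline{\chi}_{[\alp]}$, glue with a partition of unity subordinate to $\calP$, get \eqref{eq:existence:A'} from Lemmas~\ref{lem:embedding4CG}, \ref{lem:gt4wCG}, \ref{lem:Xcutoff}, and get statement~(\ref{item:existence:divA}) from $\rd^{\ell}A'_{\ell}=-\sum_{\alp}\eta_{\alp}\lap\chi_{[\alp]}$ plus Lemma~\ref{lem:ellipticEst4wCG}.

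However, the central well-definedness check does not close as you wrote it. With the relations you quote --- $\underline{\chi}_{[\alp\bt]}:=\underline{\chi}_{[\alp]}-\underline{\chi}_{[\bt]}$, the output identity $-\chi_{[\alp]}+\chi_{[\alp\bt]}+\chi_{[\bt]}=\underline{\chi}_{[\alp\bt]}$ of Proposition~\ref{prop:patch}, and the transition convention \eqref{eq:gaugetransform4chi} --- one has $\chi_{[\alp]}-\chi_{[\bt]}=\chi_{[\alp\bt]}-\underline{\chi}_{[\alp\bt]}$, so for your choice $\mu_{[\alp]}=\chi_{[\alp]}-\underline{\chi}_{[\alp]}$,
\[
\mu_{[\alp]}-\mu_{[\bt]}=\chi_{[\alp\bt]}-2\,\underline{\chi}_{[\alp\bt]},
\]
not $\chi_{[\alp\bt]}$; the discrepancy $2\underline{\chi}_{[\alp\bt]}$ is not constant, so the cocycle condition cannot absorb it, and the local formulas $(A_{[\alp]}-\ud\mu_{[\alp]},\,e^{i\mu_{[\alp]}}\phi_{[\alp]})$ do \emph{not} agree on overlaps. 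The quantity that does satisfy $\nu_{[\alp]}-\nu_{[\bt]}=\chi_{[\alp\bt]}$ under the relations you use is $\nu_{[\alp]}=\chi_{[\alp]}+\underline{\chi}_{[\alp]}$, which is exactly the paper's choice in \eqref{eq:almostCoulombSol:construction}. Relatedly, your assertion that restricting the matching conditions to $\set{t=0}$ gives $\chi_{[\alp\bt]}=\underline{\chi}_{[\alp\bt]}$ is not what \eqref{eq:gaugetransform4chi} and \eqref{eq:id4locSol} force: they give $\chi_{[\alp\bt]}\rst_{t=0}=\underline{\chi}_{[\bt]}-\underline{\chi}_{[\alp]}$ modulo an additive constant in $2\pi\bbZ$. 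Your construction can be made to work, but only by aligning all three orientations consistently: if the comparison cocycle fed into Proposition~\ref{prop:patch} is $\underline{\chi}_{[\bt]}-\underline{\chi}_{[\alp]}$ (the value actually attained by $\chi_{[\alp\bt]}$ at $t=0$), then $\mu_{[\alp]}=\chi_{[\alp]}-\underline{\chi}_{[\alp]}$ does satisfy $\mu_{[\alp]}-\mu_{[\bt]}=\chi_{[\alp\bt]}$, $\chi_{[\alp]}\rst_{t=0}=0$, and the data at $t=0$ is $(a,e,f,g)$. As written, the sign bookkeeping --- which is precisely the content of this step --- is mixed between the two conventions, so the gluing is not well-defined and statement (1) is not established.

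Two smaller points. For \eqref{eq:existence:divA:S1} no harmonic-defect/mean-value argument is needed: since $\rd_{x}\chi=-(-\lap)^{-1}\rd_{x}\rd^{\ell}A'_{\ell}$, boundedness of the double Riesz transform gives $\nrm{\rd_{x}\chi}_{S^{1}[I]}\aleq\nrm{A'_{x}}_{S^{1}[I]}$ directly from \eqref{eq:existence:A'}; your sketched alternative would require yet another patching argument to be made precise. Also, when invoking Lemma~\ref{lem:Xcutoff} you must treat the unbounded patch $P_{0}$ separately, since the cutoff there is not in $\dot{B}^{\frac{5}{2},2}_{1}$; one works with $1-\eta_{0}$ as in the paper.
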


To prove the claim, we begin by applying Proposition \ref{prop:patch}
to the covering $\calQ$ of $I \times \bbR^4$, the compatible pairs $(A_{[\alp]},
\phi_{[\alp]})$ and the gauge transformations $\chi_{[\alp
  \bt]}$ and $\underline{\chi}_{[\alp \bt]} :=
\underline{\chi}_{[\alp]} - \underline{\chi}_{[\bt]}$;
note that the $C^{0}$ closeness condition has been established in
\eqref{eq:close4chiabub}. Then for the sub-covering $\calP =
\set{P_{\alp}}$, we obtain  gauge transformations $\chi_{[\alp]} \in
\CG(P_{\alp})$ such that
\begin{gather}
  \nrm{\chi_{[\alp]}}_{\CG(P_{\alp})} \aleq_E 1 \label{eq:existence:est4chi} 
\end{gather}
\begin{equation} \label{eq:almostCoulombSol:compatible}
  \chi_{[\alp \bt]} = \chi_{[\alp]} +
  \underline{\chi}_{[\alp]} - \chi_{[\bt]} -
  \underline{\chi}_{[\bt]}.
\end{equation}
This identity motivates the following definition of the desired global
solution $(A^{\prime }, \phi^{\prime })$. Let $\eta_{\alp}$ be a
smooth partition of unity adapted to the covering $\set{P_{\alp}}$.
Since $\calP$ is a locally finite covering where intersecting cubes
have comparable sizes, we can choose this partition of unity so that
the $\eta_{\alp}$'s are uniformly smooth on the scale of their
respective cubes.
We define the global solution $(A^{\prime }, \phi^{\prime })$ as
follows:
\begin{equation} \label{eq:almostCoulombSol:construction}
  \begin{aligned}
    A^{\prime }_{\mu}
    :=& \sum_{\alp} \eta_{\alp} (A_{[\alp] \mu} - \rd_{\mu} \chi_{[\alp]} - \rd_{\mu} \underline{\chi}_{[\alp]}), \\
    \phi^{\prime } := &\sum_{\alp} \eta_{\alp} e^{i
      (\chi_{[\alp]} + \underline{\chi}_{[\alp]})}
    \phi_{[\alp]}.
  \end{aligned}
\end{equation}
Such a definition makes sense, since
\eqref{eq:almostCoulombSol:compatible} implies that on every $P_{\alp}
\cap P_{\bt} \neq \0$, we have
\begin{align}
  A_{[\alp] \mu} - \rd_{\mu} \chi_{[\alp]} - \rd_{\mu}
  \underline{\chi}_{[\alp]}
  =& A_{[\bt] \mu} - \rd_{\mu} \chi_{[\bt]} - \rd_{\mu} \underline{\chi}_{[\bt]}, \label{eq:almostCoulombSol:compatibility4A} \\
  e^{i (\chi_{[\alp]} + \underline{\chi}_{[\alp]})}
  \phi_{[\alp]} =& e^{i (\chi_{[\bt]} +
    \underline{\chi}_{[\bt]})}
  \phi_{[\bt]}. \label{eq:almostCoulombSol:compatibility4phi}
\end{align}
For every $\alp \neq 0$, $\eta_{\alp} \in \dot{B}^{\frac{5}{2},
  2}_{1}(P_{\alp})$ since $\eta_{\alp}$ is smooth and $P_{\alp}$ is
pre-compact. On the other hand  for $\alp = 0$ we have $1 -
\eta_{0} \in \dot{B}^{\frac{5}{2}, 2}_{1}(P_{0})$.  By
Lemmas \ref{lem:embedding4CG}, \ref{lem:gt4wCG}, \ref{lem:Xcutoff} and
estimates \eqref{eq:existence:est4locSol},
\eqref{eq:existence:est4chiub}, \eqref{eq:existence:est4chi}, we have
\begin{align*}
  \nrm{\eta_{\alp} (A_{[\alp] 0} - \rd_{t} \chi_{[\alp]} -
    \rd_{t} \underline{\chi}_{[\alp]})}_{Y^{1}[I]}
  \aleq_E & 1 \\
  \nrm{\eta_{\alp} (A_{[\alp] x} - \rd_{x} \chi_{[\alp]} -
    \rd_{x} \underline{\chi}_{[\alp]})}_{S^{1}[I]}
  \aleq_E & 1 \\
  \nrm{\eta_{\alp} e^{i (\chi_{[\alp]} +
      \underline{\chi}_{[\alp]})} \phi_{[\alp]}
  }_{S^{1}[I]} \aleq_E & 1
\end{align*}
 Adding up the preceding estimates, \eqref{eq:existence:A'} follows. 
The continuous dependence on the initial data and the persistence of regularity 
also follow directly from our construction.

It remains to establish Statement (\ref{item:existence:divA}) and the
bounds \eqref{eq:existence:divA:wCG},
\eqref{eq:existence:divA:S1}. This part depends crucially on the
special cancellation that occurs only for $\rd^{\ell} A^{\prime
}_{\ell}$. Indeed, thanks to \eqref{eq:Coulomb4locSol},
\eqref{eq:Coulomb4chiub} and
\eqref{eq:almostCoulombSol:compatibility4A} on each $P_{\alp} \cap
P_{\bt} \neq \0$, we have
\begin{gather*}
  \rd^{\ell} A^{\prime }_{\ell} = \rd^{\ell} \sum_{\alp}
  \eta_{\alp} (A_{[\alp] \ell} - \rd_{\ell} \chi_{[\alp]}
  - \rd_{\ell} \underline{\chi}_{[\alp]})
  = - \sum_{\alp} \eta_{\alp} \lap \chi_{[\alp]}, \\
  \rd^{\ell} (A^{\prime}_{\ell} - A^{\prime }_{\ell} ) = -
  \sum_{\alp} \eta_{\alp} \lap (\chi_{[\alp]} - \chi_{[\alp]}).
\end{gather*}
Equipped with these formulae, we are ready to establish
\eqref{eq:existence:divA:wCG} and \eqref{eq:existence:divA:S1}. Since $\eta_{\alp}$ extends naturally to
$\dot{B}^{\frac{5}{2}, 2}_{1}(I \times \bbR^{4})$ and
$\chi_{[\alp]} \in \CG [I] \subseteq
\wCG[I]$, we are in position to apply Lemma
\ref{lem:ellipticEst4wCG} to each summand $\eta_{\alp} \lap
\chi_{[\alp]}$. Then \eqref{eq:existence:divA:wCG} follows.  To
estimate the $S^{1}[I]$ norm of $(- \lap)^{-1} \rd_{j} \rd^{\ell}
A^{\prime }_{\ell}$, simply observe that
\begin{equation*}
  \nrm{(- \lap)^{-1} \rd_{j} \rd^{\ell} A^{\prime }_{\ell}}_{S^{1}[I]}
  \aleq \nrm{A^{\prime }_{x}}_{S^{1}[I]} 
  \lesssim_{E, K} 1
\end{equation*}
Thus \eqref{eq:existence:divA:S1} follows.

\pfstep{Step 3. Gauge transformation to Coulomb solution} In this
final step of the proof of existence and continuous dependence, we
perform a gauge transformation to $(A^{\prime }, \phi^{\prime
  })$ in order to impose the global Coulomb condition $\rd^{\ell}
A_{\ell} = 0$. The gauge transformation cannot be put directly
into $\CG[I]$, but this difficulty can be
circumvented using the elliptic equations of \eqref{eq:MKG} in the
global Coulomb gauge.

 From the previous step, recall
the definition
\begin{equation*}
  \chi = - (-\lap)^{-1} \rd^{\ell} A^{\prime }_{\ell} \quad \hbox{ on } I \times \bbR^{4},
\end{equation*}
where the first term on the right-hand side is defined as in Statement
(\ref{item:existence:divA}) in Claim \ref{claim:almostCoulombSol}. As
$\rd^{\ell} A^{\prime }_{\ell} \rst_{\set{t=0}} = 0$, it follows
that
\begin{equation} \label{eq:existence:id4chi} \chi
  \rst_{\set{t=0}} = 0.
\end{equation}
Directly taking the $\rd_{t}$ derivative of $\chi$ twice and
using the fact that $(A^{\prime }, \phi^{\prime })$ satisfies
\eqref{eq:MKG}, we see that $\rd_{t} \chi$ and $\rd_{t}^{2}
\chi$ are given by
\begin{align*}
  \rd_{t} \chi =& - (-\lap)^{-1} \rd^{\ell} \rd_{t} A^{\prime
    }_{\ell}
  = - (-\lap)^{-1} \bb( \Im[\phi^{\prime } \overline{\covD^{\prime }_{t} \phi^{\prime }}] + \lap A^{\prime }_{0} \bb), \\
  \rd_{t}^{2} \chi = & - (-\lap)^{-1} (\rd_{t} \rd^{\ell}
  F_{0 \ell} + \rd_{t} A^{\prime }_{0} )
  = - (-\lap)^{-1} \bb( \rd^{\ell} \Im[\phi^{\prime }
  \overline{\covD^{\prime }_{\ell} \phi^{\prime }}] + \lap
  \rd_{t} A^{\prime }_{0} \bb).
\end{align*}

Since $\phi^{\prime }, A^{\prime }_{0} \in C^{0}_{t}
\dot{H}^{1}_{x}$ and $\covD^{\prime }_{t,x} \phi^{\prime },
\rd_{t} A^{\prime }_{0} \in C^{0}_{t} L^{2}_{x}$, we have
$\Im[\phi^{\prime } \overline{\covD^{\prime }_{t,x} \phi^{\prime
    }}] \in C^{0}_{t} \dot{H}^{-1}_{x}$. Therefore, $(-\lap)^{-1}
\Im[\phi^{\prime } \overline{\covD^{\prime }_{t} \phi}]$ and
$(-\lap)^{-1} \rd^{\ell} \Im[\phi^{\prime } \overline{\covD^{\prime
    }_{\ell} \phi}]$ are well-defined as convolution with the
Newtonian potential. By the non-existence of non-trivial entire
harmonic functions in $L^{2}_{x}$ and $\dot{H}^{1}_{x} \subseteq
L^{4}_{x}$, it follows that
\begin{align}
  \rd_{t} \chi =& - (-\lap)^{-1} \Im[\phi^{\prime } \overline{\covD^{\prime }_{t} \phi^{\prime }}] + A^{\prime }_{0} \in C^{0}_{t} \dot{H}^{1}_{x} \label{eq:existence:reg4dtchi}\\
  \rd_{t}^{2} \chi =& - (-\lap)^{-1} \rd^{\ell} \Im[\phi^{\prime
    } \overline{\covD^{\prime }_{\ell} \phi^{\prime }}] +
  \rd_{t} A^{\prime }_{0} \in C^{0}_{t}
  L^{2}_{x}. \label{eq:existence:reg4dtdtchi}
\end{align}

%

Let $(A, \phi)$ be defined by applying the gauge
transformation $\chi$ to $(A^{\prime }, \phi^{\prime })$,
i.e., 
\[
(A, \phi) = (A^{\prime } - \ud \chi, e^{i
  \chi} \phi^{\prime }).
\]
 By \eqref{eq:existence:id4chi}, we
have
\begin{equation*}
  (A_{j}, F_{0j}, \phi, \covD_{t} \phi) \rst_{t=0} 
  = (A^{\prime }_{j}, F^{\prime }_{0j}, \phi^{\prime }, \covD^{\prime }_{t} \phi^{\prime }) \rst_{t=0} 
  = (a_{j}, e_{j}, f, g).
\end{equation*} 
Furthermore, thanks to the equation $\lap \chi = \rd^{\ell}
A^{\prime}_{\ell}$, it follows that $(A, \phi)$
satisfies the global Coulomb condition \eqref{eq:globalCoulomb} on
$I \times \bbR^{4}$. By \eqref{eq:existence:divA:wCG},
\eqref{eq:existence:divA:S1}, \eqref{eq:existence:reg4dtchi},
\eqref{eq:existence:reg4dtdtchi} and Lemma \ref{lem:gt4wCG}, we have
$A_{0} \in Y^{1}[I]$ and $A_{x},
\phi \in S^{1}[I]$ with
\begin{equation*}
  \nrm{A_{0}}_{Y^{1}[I]} + \nrm{A_{x}}_{S^{1}[I]} + \nrm{\phi}_{S^{1}[I]} \lesssim_{E, K} 1
\end{equation*}
Combining these statements, we conclude that $(A, \phi)$ is an
admissible $C_{t} \calH^{1}$ solution to \eqref{eq:MKG} in the global
Coulomb gauge on $I \times \bbR^{4}$ with the initial data $(a, e, f,
g)$, which satisfies the conditions in Theorem \ref{thm:lwp4MKG}. 
Further,  from the previous step, it follows that $(A, \phi)$ is
uniformly approximated by $\calH^\infty$ solutions, 
 thereby finishing the proof of Theorem
\ref{thm:lwp4MKG}. We conclude the proof with two remarks:

\begin{remark}
Our construction yields a solution operator that depends continuously on the initial
data for a class of $\calH^1$ data which satisfy the uniform bounds  \eqref{eq:loc-enR}.
However the final result does not depend on the choice of the partition $\set{R_\alpha}$.
\end{remark}

\begin{remark} \label{rem:apriori-bnd}
	Our proof gives an a-priori bound on the $S^{1}$ norm of $(A_{x}, \phi)$ (as well as the $Y^{1}$ norm of $A_{0}$)
	of the form $\aleq (r_{0} / \ecs)^{4} \, C_{E}$, where the dependence on the energy $E$ of $C_{E}$ is polynomial. 
	By comparison with the gauge-free nonlinear wave equation, one would conjecture that the bound should be 
	independent of $r_{0} / \ecs$, and that $C_{E} \aeq E^{1/2} + E$ by \eqref{eq:id-en-H1}. 
	However, our present argument is very far from that.
\end{remark}

\section{Proof of gauge transformation and cutoff
  estimates} \label{sec:gtCutoff} The purpose of this section is to
provide proofs of Lemmas \ref{lem:gt4wCG}, \ref{lem:Xcutoff} and
\ref{lem:ellipticEst4wCG}, which were used in Section \ref{sec:lwp} in
the proof of Theorem \ref{thm:lwp4MKG}. In Section~\ref{subsec:S1}, we
recall some properties of the space $S^{1}$ needed for establishing
these statements. In Section~\ref{subsec:gt}, we give a proof of Lemma
\ref{lem:gt4wCG} concerning gauge transformation with $\chi \in
\wCG$. Finally, in Section~\ref{subsec:cutoff}, we prove Lemmas
\ref{lem:Xcutoff} and \ref{lem:ellipticEst4wCG}.

In this section, when we omit writing the domain on which a norm is
defined, it is to be understood that the norm is defined globally on
$\bbR^{1+4}$. All functions considered in this section will be assumed
to be $\calS(\bbR^{1+4})$, unless otherwise stated. Furthermore, we
will follow the common abuse of terminology and refer to semi-norms as
simply \emph{norms}.

\subsection{Further structure of $S^{1}$} \label{subsec:S1}

We recall the structure of the $S^{1}$ norm from
\cite{Krieger:2012vj}. The $S^{1}$ norm takes the form (see also Remark~\ref{rem:s1}) 
\begin{equation*}
  \nrm{\varphi}_{S^{1}} := \bb( \sum_{k} \nrm{\rd_{t,x} P_{k} \varphi}_{S_{k}}^{2} \bb)^{\frac{1}{2}} + \nrm{\varphi}_{\underline{X}}.
\end{equation*}
The $\underline{X}$ norm was defined in \eqref{eq:uX-def}. For every
$k \in \bbZ$, we define the $S_{k}$ norm as
\begin{equation*}
  \nrm{\varphi}_{S_{k}} := \nrm{\varphi}_{S^{\mathrm{str}}_{k}}  + \nrm{\varphi}_{X^{0, \frac{1}{2}}_{\infty}} + \nrm{\varphi}_{S^{\mathrm{ang}}_{k}}	
\end{equation*}
where the $X^{0, \frac{1}{2}}_{\infty}$ norm was defined in
\eqref{eq:Xsbkr-def}, \eqref{eq:Xsbr-def}, and we define
\begin{gather*}
  \nrm{\varphi}_{S^{\mathrm{str}}_{k}} := \sup_{(q, r) : \frac{1}{q} +
    \frac{3}{2} \frac{1}{r} \leq \frac{3}{4}} 2^{\frac{1}{q} +
    \frac{4}{r} - 2} \nrm{\varphi}_{L^{q}_{t} L^{r}_{x}}, \quad
  \nrm{\varphi}_{S^{\mathrm{ang}}_{k}}
  :=  \sup_{\ell < 0} \nrm{\varphi}_{S^{\mathrm{ang}}_{k, k+2\ell}}, \\
  \nrm{\varphi}_{S^{\mathrm{ang}}_{k, j}} := \bb( \sum_{\omg \in
    \Omg_{\ell}} \nrm{P^{\omg}_{\ell} Q_{<k + 2\ell}
    \varphi}^{2}_{S^{\omg}_{k}(\ell)} \bb)^{\frac{1}{2}}, \quad \hbox{
    where } \ell = \lceil \frac{j-k}{2} \rceil.
\end{gather*}
The preceding square sum runs over $\Omg_{\ell} := \set{\omg}$
consisting of finitely overlapping covering of $\bbS^{3}$ by caps
$\omg$ of diameter $2^{\ell}$, and the symbols of the multipliers
$P^{\omg}_{\ell}$ form a smooth partition of unity associated to this
covering.  The \emph{angular sector norm} $S^{\omg}_{k}(\ell)$
contains the square-summed $L^{2}_{t} L^{\infty}_{x}$ norm with gain
in the radial dimension in Fourier space (essentially as in
\cite{Klainerman:1999do}) and the null frame space (first introduced
in the wave map context \cite{MR1827277, Tao:2001gb}). Fortunately,
for most of our argument, we need not use the fine structure of this
norm. Hence we omit the precise definition, and refer the reader to
\cite[Eq.~(8)]{Krieger:2012vj}. The following stability property for
$S^{\mathrm{ang}}_{k_{0}, j_{0}}$ is our only necessity.
\begin{lemma} \label{lem::stability4Sang} Let $k_{0}, j_{0}, k_{2} \in
  \bbZ$ be such that $j_{0} < k_{0}$. Then for $\eta, \varphi \in
  H^{\infty}_{t,x}(\bbR^{1+4})$, we have
  \begin{equation} \label{eq::stability4Sang} \nrm{ P_{k_{0}} (S_{\leq
        j_{0} - 30} \eta \, P_{k_{2}}
      \varphi)}_{S^{\mathrm{ang}}_{k_{0}, j_{0}}} \aleq
    \nrm{\eta}_{L^{\infty}_{t,x}} \nrm{P_{k_{2}} \varphi}_{S_{k_{2}}}
  \end{equation}
  Moreover, the left-hand side is vacuous unless $k_{2} \in [k_{0} -
  5, k_{0} + 5]$.
\end{lemma}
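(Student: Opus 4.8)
The plan is to reduce the claimed bound to a single-cap estimate in a fixed angular sector $S^{\omg}_{k_0}(\ell)$ and then exploit that $\eta$ is, up to the frequency localization $S_{\leq j_0 - 30}$, essentially a constant at the relevant scales. First I would unfold the definition: with $\ell = \lceil \frac{j_0 - k_0}{2} \rceil < 0$, we have
\[
\nrm{P_{k_0}(S_{\leq j_0 - 30}\eta \, P_{k_2}\varphi)}_{S^{\mathrm{ang}}_{k_0,j_0}}^2
= \sum_{\omg \in \Omg_\ell} \nrm{P^{\omg}_{\ell} Q_{<k_0 + 2\ell}\, P_{k_0}(S_{\leq j_0-30}\eta \, P_{k_2}\varphi)}_{S^{\omg}_{k_0}(\ell)}^2 .
\]
The first observation is the output frequency constraint: since $S_{\leq j_0-30}\eta$ has space-time frequency $\lesssim 2^{j_0} \ll 2^{k_0}$, the product $S_{\leq j_0-30}\eta \cdot P_{k_2}\varphi$ can have spatial frequency $\aeq 2^{k_0}$ only if $k_2 \in [k_0-5, k_0+5]$; this gives the last sentence of the lemma and lets us henceforth fix such a $k_2$. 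Similarly, since the factor $S_{\leq j_0-30}\eta$ has modulation $\lesssim 2^{j_0}$ and (by the same frequency bound together with $j_0 < k_0$) the product inherits the angular cap and modulation localization essentially from $P_{k_2}\varphi$: if $P_{k_2}\varphi$ is replaced by $P^{\omg'}_{\ell} Q_{<k_0+2\ell}P_{k_2}\varphi$ for caps $\omg'$ near $\omg$, the outer $P^{\omg}_{\ell} Q_{<k_0+2\ell}$ is (up to rapidly decaying tails) a sum of $O(1)$ such pieces. Thus it suffices to prove, for each cap $\omg$,
\[
\nrm{P^{\omg}_{\ell} Q_{<k_0+2\ell}\big(S_{\leq j_0-30}\eta \cdot P^{\omg'}_{\ell}Q_{<k_0+2\ell}P_{k_2}\varphi\big)}_{S^{\omg}_{k_0}(\ell)}
\aleq \nrm{\eta}_{L^\infty_{t,x}} \nrm{P^{\omg'}_{\ell}Q_{<k_0+2\ell}P_{k_2}\varphi}_{S^{\omg}_{k_2}(\ell)},
\]
after which summing the squares over $\omg$ and using the finite overlap of $\set{\omg'}$ near each $\omg$ recovers $\nrm{P_{k_2}\varphi}_{S_{k_2}}$ on the right.

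The single-cap estimate itself is where one has to open the definition of $S^{\omg}_{k_0}(\ell)$ from \cite[Eq.~(8)]{Krieger:2012vj}. I would handle its two constituents separately. For the square-summed $L^2_t L^\infty_x$-with-radial-gain component (the Klainerman--Tataru-type piece \cite{Klainerman:1999do}), multiplication by $S_{\leq j_0-30}\eta$ is controlled simply by $\nrm{\eta}_{L^\infty_{t,x}}$ via Hölder, since that norm is built from $L^2_t L^\infty_x$ of dyadic-in-radius pieces and $\nrm{S_{\leq j_0-30}\eta}_{L^\infty_{t,x}} \aleq \nrm{\eta}_{L^\infty_{t,x}}$. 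For the null frame component one writes the norm as an infimum over null-frame atom decompositions of $P^{\omg'}_\ell Q_{<k_0+2\ell}P_{k_2}\varphi$; multiplying each atom by $S_{\leq j_0-30}\eta$ gives an admissible atom for the output (the key point being that $S_{\leq j_0-30}\eta$ has frequency support far below the cap scale $2^{k_0+2\ell}$ in the null directions, so it does not disturb the null-frame foliation structure), again at the cost of $\nrm{\eta}_{L^\infty_{t,x}}$. The modulation constraint $Q_{<k_0+2\ell}$ on the output is automatically compatible because $j_0 - 30 < k_0 + 2\ell$ in the regime $\ell = \lceil\frac{j_0-k_0}{2}\rceil$, $j_0<k_0$, so the low-frequency factor cannot push the product out of the allowed modulation range (up to the usual rapidly-decaying Schwartz tails, absorbed into the implicit constant).

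The main obstacle I anticipate is the null frame component: verifying cleanly that multiplication by a slowly-varying (low space-time frequency) factor maps null-frame atoms to null-frame atoms with the stated $L^\infty$ loss requires care with the precise atom definition in \cite{Krieger:2012vj}, in particular checking that the support of the atom in the transversal null variable is not enlarged beyond what the norm allows after multiplication — this is exactly where the hypothesis $j_0 < k_0$ (equivalently $\ell < 0$, so the cap is thin and the low-frequency cutoff scale $2^{j_0}$ is much coarser than the transversal frequency scale $2^{k_0}$) is used decisively. By contrast, the frequency-support bookkeeping (giving $k_2 \in [k_0-5,k_0+5]$ and the reduction to a single cap) and the Strichartz/$L^2_tL^\infty_x$ component are routine and I would treat them briskly.
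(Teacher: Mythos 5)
Your overall strategy coincides with the paper's: exploit the tiny space-time Fourier support of $S_{\leq j_{0}-30}\eta$ to transfer the cap and modulation localizations onto $P_{k_{2}}\varphi$, use that multiplication by an $L^{\infty}$ function is bounded on each building block of the sector norm, and square-sum over caps. Incidentally, the obstacle you single out as the main one is a non-issue: the $NE$ and $PW^{\pm}_{\omg}(\ell)$ norms are manifestly stable under multiplication by $L^{\infty}_{t,x}$ functions (their atoms carry no Fourier-support requirement), and this trivial observation is exactly what the paper uses for the null frame part.

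The genuine gap is in the modulation bookkeeping, which you declare ``automatically compatible'' but which is the part of the argument that actually requires work. The outer projection $Q_{<k_{0}+2\ell}$ does not let you place the same cutoff on the input: input modulations in an $O(1)$ dyadic range around $2^{k_{0}+2\ell}$ still produce output below the cutoff (the low frequency of $\eta$ only guarantees shifts of size $2^{j_{0}-30}$), so the correct reduction puts an enlarged cutoff on $\varphi$ (the paper uses $Q_{<k_{2}+2\ell_{0}+10}$). That enlarged piece is \emph{not} dominated by the $S^{\mathrm{ang}}_{k_{2},\cdot}$ component of $S_{k_{2}}$, whose cutoff sits at essentially the same modulation; the paper estimates the $O(1)$ boundary modulations separately via the embedding $X^{0,\frac{1}{2}}_{1} \subseteq S^{\omg'}_{k}(\ell)$ together with the $X^{0,\frac{1}{2}}_{\infty}$ part of $S_{k_{2}}$. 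Without this step your single-cap bound is not controlled by $\nrm{P_{k_{2}}\varphi}_{S_{k_{2}}}$. Two further points: the Klainerman--Tataru component of $S^{\omg}_{k_{0}}(\ell)$ is a square sum over radially directed blocks $\calC_{k}(\ell')$ of block-localized $L^{2}_{t}L^{\infty}_{x}$ norms, so it cannot be handled ``simply by H\"older'' --- multiplication does not commute with the block projections, and one must again use the small Fourier support of $S_{\leq j_{0}-30}\eta$ to insert the same or adjacent block projections on $\varphi$, as the paper does with $\calC'_{k}(\ell)$; and you measure the input piece $P^{\omg'}_{\ell}Q_{\cdot}P_{k_{2}}\varphi$ in the norm $S^{\omg}_{k_{2}}(\ell)$ attached to the \emph{output} cap, whereas $S_{k_{2}}$ controls it only in $S^{\omg'}_{k_{2}}$; this needs either a comparability statement for adjacent same-scale caps or, as in the paper, a decomposition into finer caps $\omg'\in\Omg_{\ell_{0}-5}$, $\omg'\subseteq\omg$, where the monotonicity $\nrm{\cdot}_{PW^{\pm}_{\omg}(\ell_{0})}\leq\nrm{\cdot}_{PW^{\pm}_{\omg'}(\ell_{0}-5)}$ applies.
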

\begin{proof}
  This lemma is essentially \cite[Section 16:Case
  2(b).3.(b).2(b)]{Tao:2001gb} and \cite[Lemma~9.1]{MR2130618}. We
  sketch the proof, following the notation in \cite[Section
  3]{Krieger:2012vj}.

  We may assume that $k_{2} \in [k_{0} - 5, k_{0}+5]$, as the
  left-hand side is clearly vacuous otherwise. Moreover, using the
  embedding $X^{0, \frac{1}{2}}_{1} \subseteq S^{\mathrm{ang}}_{k_{0},
    j_{0}}$, the case $j_{0} \leq k_{0} - C$ for any constant $C > 0$
  is easy to handle. Hence we may assume that $j_{0} \leq k_{0} - 20$,
  and in particular $j_{0} < k_{2}$.

  Let $\ell_{0} = \lceil \frac{j_{0} - k_{0}}{2} \rceil$ and fix $\omg
  \in \Omg_{\ell_{0}}$. Thanks to the small space-time Fourier support
  of $S_{\leq j_{0} - 30} \eta$, we have
  \begin{align*}
    & \hskip-2em
    P_{k_{0}} P^{\omg}_{\ell_{0}} Q_{<k_{0} + 2 \ell_{0}} (S_{\leq j_{0} - 30} \eta P_{k_{2}} \varphi) \\
    =& P_{k_{0}} P^{\omg}_{\ell_{0}} Q_{<k_{0} + 2 \ell_{0}} \bb(
    S_{\leq j_{0} - 30} \eta \sum_{\omg' \subseteq \omg} P_{k_{2}}
    P^{\omg'}_{\ell_{0}-5} Q_{<k_{2}+2\ell_{0}+10}\varphi \bb)
  \end{align*}
  where we sum over caps $\omg' \in \Omg_{\ell_{0}-5}$ such that
  $\omg' \subseteq \omg$.
  Similarly, given a radially directed rectangular block
  $\calC_{k}(\ell) \subseteq \set{2^{k_{0}-5} \leq \abs{\xi} \leq
    2^{k_{0}+5}}$ of dimensions $2^{k} \times (2^{k+\ell})^{3}$ with
  $k \leq k_{0}$, $\ell \leq 0$ and $k + \ell \geq k_{0} + 2\ell_{0}$,
  we have
  \begin{align*}
    & \hskip-2em
    P_{\calC_{k}(\ell)} P_{k_{0}} P^{\omg}_{\ell_{0}} Q_{<k_{0} + 2 \ell} (S_{\leq j_{0} - 30} \eta P_{k_{2}} \varphi) \\
    =& P_{\calC_{k}(\ell)} P_{k_{0}} P^{\omg}_{\ell_{0}} Q_{<k_{0} + 2
      \ell_{0}} \bb( S_{\leq j_{0} - 30} \eta \sum_{\omg' \subseteq
      \omg} \sum_{\calC'_{k}(\ell)} P_{\calC'_{k}(\ell)} P_{k_{2}}
    P^{\omg'}_{\ell_{0}-5} Q_{<k_{2}+2\ell_{0}+10}\varphi \bb)
  \end{align*}
  where $\omg'$ is summed over the same set and we sum over
  $\calC'_{k}(\ell)$ which is either equal to or adjacent to
  $\calC_{k}(\ell)$.  The projections $P_{\calC_{k}(\ell)}$,
  $P_{\calC'_{k}(\ell)}$ and $P_{k_{0}} P^{\omg}_{\ell_{0}} Q_{<k_{0}
    + 2 \ell}$ are disposable (i.e., has a Schwarz kernel of
  $L^{1}_{t,x}$ norm $\aleq 1$), hence they are bounded in all
  functions spaces under consideration. Moreover, from the definitions
  in \cite[Section 3]{Krieger:2012vj}, it is clear that
  \begin{equation*}
    \nrm{\eta \varphi}_{X} \leq \nrm{\eta}_{L^{\infty}_{t,x}} \nrm{\varphi}_{X}, \quad
  \end{equation*}
  for $X = S^{\mathrm{str}}_{k}$, $L^{2}_{t} L^{\infty}_{x}$, $NE$,
  and $PW^{\pm}_{\omg}(\ell)$. Moreover, for every sign $\pm$ and cap
  $\omg' \in \Omg_{\ell_{0}-5}$ with $\omg' \subseteq \omg$, we have
  \begin{equation*}
    \nrm{\varphi}_{PW^{\pm}_{\omg}(\ell_{0})} \leq \nrm{\varphi}_{PW^{\pm}_{\omg'}(\ell_{0}-5)}.
  \end{equation*}
  Recalling the definition of the $S^{\omg}_{k}(\ell)$ norm
  \cite[Eq.~(8)]{Krieger:2012vj}, we see that
  \begin{equation*}
    \nrm{P_{k_{0}} P^{\omg}_{\ell} Q_{<k + 2 \ell} (S_{\leq j_{0} - 30} \eta P_{k_{2}} \varphi)}_{S^{\omg}_{k_{0}}(\ell_{0})}
    \aleq \nrm{\eta}_{L^{\infty}_{t,x}} \sum_{\omg' \subseteq \omg} \nrm{P_{k_{2}} P^{\omg'}_{\ell_{0}-5} Q_{<k_{2}+2\ell_{0}+10} \varphi}_{S^{\omg'}_{k_{2}}(\ell_{0}-5)}.
  \end{equation*}
  We square sum this bound in $\omg \in \Omg_{\ell_{0}}$. Note that if
  we replace $Q_{<k_{2}+2\ell_{0}+10}$ by $Q_{<k_{2}+2\ell_{0}-10}$,
  then the last factor is controlled by the $S^{\mathrm{ang}}_{k_{2},
    k_{2}+\ell_{0}-5}$ norm of $P_{k_{2}} \varphi$. For the resulting
  error, we use the embedding $X^{0, \frac{1}{2}}_{1} \subseteq
  S^{\omg'}_{k}(\ell)$ and estimate
  \begin{equation*}
    \bb( \sum_{\omg \in \Omg_{\ell}} \nrm{P_{k} P^{\omg}_{\ell} Q_{k+2 \ell -C \leq \cdot <k+2\ell+C} \varphi}_{S^{\omg'}_{k}(\ell)}^{2} \bb)^{\frac{1}{2}}
    \aleq_{C} \nrm{P_{k} Q_{k+2 \ell -C \leq \cdot <k+2\ell+C} \varphi}_{X^{0, \frac{1}{2}}_{1}}
    \aleq_{C} \nrm{P_{k} \varphi}_{X^{0, \frac{1}{2}}_{\infty}},
  \end{equation*}
  and apply this inequality to $k = k_{2}$, $\ell = \ell_{0}-5$ and $C
  = 10$.  The lemma follows. \qedhere
\end{proof}

\subsection{Gauge transformation estimate} \label{subsec:gt}
Here we establish Lemma \ref{lem:gt4wCG}. This is carried out in two steps.
The first one deals with the algebra type property for the space $\wCG$:

\begin{lemma} \label{lem:gt4S1:restate}
The space $\wCG$ is an algebra,
\begin{align} 
	\nrm{\chi^{1} \chi^{2}}_{\wCG} 
	& \aleq \nrm{\chi^{1}}_{\wCG} \nrm{\chi^{2}}_{\wCG}, \label{eq:alg4Y} 
\end{align}
Further, for any $F$ of class $C^6(\bbR)$ with $F(0) = 0$ we have the
Moser type estimate
\begin{align} 
  \nrm{F(\chi)}_{\wCG} & \aleq (\nrm{\chi}_{\wCG}
  +\nrm{\chi}_{\wCG}^2) (1+
  \nrm{\chi}_{L^\infty_{t,x}}^4), \label{eq:nonlin4Y}
\end{align}
\end{lemma}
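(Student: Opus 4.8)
The plan is to prove the two parts of Lemma~\ref{lem:gt4S1:restate} in sequence, first establishing the algebra property \eqref{eq:alg4Y}, then deriving the Moser estimate \eqref{eq:nonlin4Y} from it by expanding $F$ in a Taylor-like series. The key observation is that $\wCG$ is built from the $Y^{2,2}$ norm (which is an $\ell^2$-dyadic-summed $L^2_t \dot H^{5/2}_x$-type norm on $\rd_{t,x}^2\chi$, together with lower-order $\rd_t$ pieces) plus the $\ell^1$-dyadic-summed $2^{2k}\nrm{P_k\chi}_{L^\infty_t L^2_x}$ piece. So the whole norm is essentially an $L^2$-based Sobolev-type norm at regularity $5/2$ in $\bbR^{1+4}$ — and $5/2$ is above the critical exponent $d/2 = 5/2$... in fact it is exactly critical. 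This is the source of the main difficulty: at the critical regularity one does \emph{not} have a clean algebra property for the $L^2$-based space alone, which is precisely why the $L^\infty_{t,x}$ norm (controlled by Bernstein from the $L^\infty_t L^2_x$ piece with the $\ell^1$ summation) is built into $\wCG$ and appears on the right side of \eqref{eq:nonlin4Y}.

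For \eqref{eq:alg4Y}, I would do a standard Littlewood–Paley trichotomy: write $\chi^1\chi^2 = \sum_{k} P_k(\chi^1\chi^2)$ and split each dyadic piece into high-low, low-high, and high-high interactions via paraproducts. For the high-low piece $P_k(\chi^1_{\sim k}\chi^2_{<k})$ one puts the high factor in $Y^{2,2}$ (or the relevant component of $\wCG$) and the low factor in $L^\infty_{t,x}$, then sums; the $L^\infty_{t,x}$ bound on $\chi^2_{<k}$ follows from Bernstein together with the $2^{2k}\nrm{P_k\chi^2}_{L^\infty_t L^2_x}$ piece of $\wCG$ (note $2^{2k} = 2^{(d/2)k}$ in $d=4$, so Bernstein in space gives $\nrm{P_k\chi^2}_{L^\infty_{t,x}}\lesssim 2^{2k}\nrm{P_k\chi^2}_{L^\infty_t L^2_x}$, and the $\ell^1$ sum over $k$ controls $\nrm{\chi^2}_{L^\infty_{t,x}}$). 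The low-high piece is symmetric. The high-high piece $P_k(\sum_{k'\gtrsim k}\chi^1_{k'}\chi^2_{\sim k'})$ requires a bit more care: one uses that the output is at frequency $2^k$, applies Bernstein to convert the $L^2_x$ output norm into the needed $\dot H^{5/2}$-type norm with a favorable factor $2^{k(5/2)} \lesssim 2^{k'(5/2)}$, and then uses Hölder to pair $\chi^1_{k'}$ in $L^2_t L^\infty_x$ (bounded via $Y^{2,2}$ and Bernstein) with $\chi^2_{\sim k'}$ in $L^\infty_t L^2_x$, summing the resulting geometric series in $k' \geq k$ and then in $k$ using Cauchy–Schwarz. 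One checks separately the time-derivative components $\rd_t(\chi^1\chi^2)$ and $\rd_t^2(\chi^1\chi^2)$ by the Leibniz rule, distributing derivatives onto the two factors; the borderline case where both derivatives land in a way that exceeds the available regularity is handled by the same Bernstein/Hölder scheme, always putting the less-differentiated factor in $L^\infty_{t,x}$.

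For \eqref{eq:nonlin4Y}, with $F \in C^6$ and $F(0)=0$, the plan is a Moser estimate via frequency decomposition of the nonlinearity. Write $F(\chi) = \sum_k F(\chi)_{\sim k}$ and use the classical paraproduct decomposition $P_k F(\chi) \approx P_k\big(F'(\chi_{<k})\, \chi_{\sim k}\big)$ plus errors. The main term is estimated by \eqref{eq:alg4Y} (or rather its iterated form) with $\chi_{\sim k}$ in $\wCG$ and $F'(\chi_{<k})$ needing to be controlled; here the $L^\infty_{t,x}$ bound on $\chi$ is used crucially, since $F'(\chi_{<k})$ is bounded pointwise by $\sup_{|s|\le \nrm{\chi}_{L^\infty}}|F'(s)|$, and its higher derivatives (needed to place it in the Sobolev-type components of $\wCG$) are controlled by chain-rule expansions bringing in powers of $\nrm{\chi}_{\wCG}$ and $\nrm{\chi}_{L^\infty_{t,x}}$ — this is where the factor $(1+\nrm{\chi}_{L^\infty_{t,x}}^4)$ and the requirement $F\in C^6$ ($6 = \lceil 5/2\rceil + \text{a couple derivatives for the time components}$) enter. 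The at-most-quadratic dependence $\nrm{\chi}_{\wCG}+\nrm{\chi}_{\wCG}^2$ reflects that in the paraproduct the "bad" high-frequency factor appears once, but the Leibniz expansion of the two time derivatives in the $\wCG$ norm can produce a product of two $\wCG$-controlled factors.

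\textbf{Main obstacle.} The delicate point throughout is the criticality: regularity $5/2$ equals $d/2$ in $\bbR^{1+4}$, so the Sobolev-type part of $\wCG$ is not by itself an algebra, and every estimate must be routed through the $L^\infty_{t,x}$ norm that is deliberately included in the definition of $\wCG$ via the $\ell^1$-summed low-frequency-in-$L^2$ piece. Getting the high-high interaction in \eqref{eq:alg4Y} to sum — i.e., verifying that the loss from summing over $k' \geq k$ is compensated by the gain in Bernstein — is the technical heart of the argument, and carrying the same scheme through the $\rd_t$ and $\rd_t^2$ components (where the derivative count is tightest) is where one must be most careful.
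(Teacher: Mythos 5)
Your plan is correct and follows essentially the same route as the paper: the algebra property via Littlewood--Paley trichotomy with the $L^\infty_{t,x}$ control coming from Bernstein applied to the $\ell^{1}$-summed $2^{2k}\nrm{P_{k}\chi}_{L^{\infty}_{t}L^{2}_{x}}$ piece (the only place the $\ell^{1}$ summation is used), and the Moser bound via the decomposition $F(\chi)\sim\sum_k F'(\chi_{<k})\,\chi_{\sim k}$ together with a chain/Leibniz-rule treatment of the two derivatives in the $Y^{2,2}$ component, which is where the quadratic $\nrm{\chi}_{\wCG}^{2}$ term arises. The only cosmetic difference is that the paper uses a continuous Littlewood--Paley representation $F(\chi)=F(0)+\int F'(P_{<k}\chi)P_{k}\chi\,\ud k$, which makes your paraproduct decomposition exact and avoids separate error terms.
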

\begin{proof}
The main step of the proof is to establish the result for a component of the $\wCG$ 
norm, namely the $\ell^1 L^\infty_{t} \dot H^2_{x}$ norm. We begin with a simple observation,
namely that by Bernstein's inequality we have
\[
\nrm{\chi}_{L^\infty_{t,x}} \lesssim \nrm{\chi}_{\ell^1 L^\infty_{t} \dot H^2_{x}}
\]
This is the only place where the $\ell^1$ summation is used.  The
bound \eqref{eq:alg4Y} for the $\ell^1 L^\infty_{t} \dot H^2_{x}$ norm is now an
application of the standard Littlewood-Paley trichotomy, which in
effect yields the stronger bound
\[
	\nrm{\chi^{1} \chi^{2}}_{\ell^{1} L^{\infty}_{t} \dot{H}^{2}_{x}} 
 \aleq \nrm{\chi^{1}}_{\ell^1 L^\infty_{t} \dot H^2_{x}} \nrm{\chi^{2}}_{L^\infty_{t,x}}+ 
 \nrm{\chi^{1}}_{L^\infty_{t,x}}  \nrm{\chi^{2}}_{\ell^1 L^\infty_{t} \dot H^2_{x}}
\]
A similar bound can be proved for the $Y^{2, 2}$ norm in an analogous manner.

To estimate $F(\chi)$ we use a continuous Littlewood-Paley theory 
decomposition, 
\[
1 = \int_{-\infty}^\infty P_k \, \ud k, \qquad P_{<j} = \int_{-\infty}^j P_k \, \ud k
\]
where $\chi$ is a continuous dyadic frequency parameter.
See e.g. \cite{MR2130618} for a similar argument. Representing
$\chi$ as 
\[
\chi = \int_{-\infty}^\infty P_k \chi \, \ud k,
\]
for $F(\chi)$ we have the similar representation
\[
F(\chi) = F(0) +  \int_{-\infty}^\infty F'(P_{<k} \chi ) P_k \chi \, \ud k
\]
which is easily seen to converge in $L^\infty_{t,x}$. Now it suffices to estimate
the nonlinear term in $L^\infty_{t,x}$,
\[
\| \partial_x^N F'(P_{<k} \chi )\|_{L^\infty_{t,x}} \lesssim 2^{-Nk} (1+\|
\chi\|_{L^\infty_{t,x}}^3), \qquad N = 0,1,2,3.
\]
Then the integrand satisfies the bound
\[
\| \partial_x^N F'(P_{<k} \chi ) P_k \chi \|_{L^\infty_{t} L^2_{x}} \lesssim
2^{(2-N)k} \|P_k \chi \|_{L^\infty_{t} \dot H^2_{x}}
\]
After dyadic integration in $k$ this yields the bound
\begin{equation}\label{moser}
\| F(\chi)\|_{\ell^1 L^\infty_{t} \dot H^2_{x}} \lesssim   \| \chi\|_{\ell^1 L^\infty_{t} \dot H^2_{x}}   (1+\|
\chi\|_{L^\infty_{t,x}}^3)
\end{equation}
which is the $\ell^1 L^\infty_{t} \dot H^2_{x}$ counterpart of \eqref{eq:nonlin4Y}.

To also estimate the $Y^{2,2}$ norm of $F(\chi)$ we differentiate twice,
\begin{equation}\label{moser-dec}
\partial_{x,t}^2  F(\chi) = \partial_{x,t}^2 \chi  F'(\chi) +  
\partial_{x,t} \chi\partial_{x,t} \chi  F''(\chi) 
\end{equation}
We need to estimate the terms on the right in $L^2_{t} \dot H^\frac12_{x}$.
We have the Bernstein type bounds
\[
\|  F'(\chi) - F'(0)\|_{L^\infty_{t,x} \cap L^\infty_{t} \dot W^{1,4}} \lesssim \| F(\chi)\|_{\ell^1 L^\infty_{t} \dot H^2_{x}}
\]
and similarly for $F''(\chi)$, where the norm on the right is further estimated as in 
\eqref{moser}.  Also we control $ \partial_{x,t}^2 \chi $ in $L^2_{t} \dot H^\frac12_{x}$,
as well as 
\[
\| \partial_{x,t} \chi\|_{L^4 \dot W^{\frac34,4}_{x}} \lesssim 
 \| \partial^2_{x,t} \chi\|_{L^2_{t} \dot H^{\frac12}_{x}}^\frac12 \|\chi\|_{L^\infty_{t, x}} 
\]
Hence for the first term on the right in \eqref{moser-dec} it remains to establish the bound
\[
\| f G\|_{L^2_{t} \dot H^s_{x}} \lesssim \|f \|_{L^2_{t} \dot H^s_{x}} \|G\|_{L^\infty_{t,x}
  \cap L^\infty_{t} \dot W^{1,4}_{x}}, \qquad s = \frac12.
\]
But this follows by interpolation from the $s=0$ and $s=1$ cases, which are
straightforward.

Similarly, for the second term on the right in \eqref{moser-dec} we
need to establish the bound
\[
\| f_1 f_2 G\|_{L^2_{t} \dot H^\frac12_{x}} \lesssim \|f_1 \|_{L^4 \dot W^{\frac34,4}_{x} } 
 \|f_1 \|_{L^4 \dot W^{\frac34,4}_{x} } 
\|G\|_{L^\infty_{t, x}  \cap L^\infty_{t} \dot W^{1,4}_{x}}.
\]
which is again a simple exercise which is left for the reader. \qedhere

\end{proof}

The second step deals with the stability of the $S^1$ space with respect to multiplication
by $\wCG$. Before we state it, we begin with a dyadic decomposition of the $Y^{n,2}$
norms which will be used repeatedly in the sequel. Precisely, for $N = 0, 1, 2, \ldots$, the following square summability estimate holds:
\begin{align} \label{eq:sqsum4Y}
\bb( \sum_{\ell} 2^{2 N \ell} \nrm{S_{\ell} \chi}_{Y^{0,2}}^{2} \bb)^{\frac{1}{2}}
+ \bb( \sum_{k, j} (2^{N k} + 2^{N j})^{2} \nrm{P_{k} T_{j} \chi}_{Y^{0,2}}^{2} \bb)^{\frac{1}{2}} \aleq & \nrm{\chi}_{Y^{N,2}}.
\end{align}
We have:

\begin{lemma} \label{lem:gt4S1:restate+}
The following estimate holds:
\begin{align} 
	\nrm{\chi \varphi}_{S^{1}} 
	& \aleq \nrm{\chi}_{\wCG} \nrm{\varphi}_{S^{1}}  . \label{eq:gt4S1}
\end{align}
\end{lemma}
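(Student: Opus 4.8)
\textbf{Proof plan for Lemma \ref{lem:gt4S1:restate+}.}

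The plan is to reduce the estimate $\nrm{\chi \varphi}_{S^{1}} \aleq \nrm{\chi}_{\wCG} \nrm{\varphi}_{S^{1}}$ to dyadic pieces via Littlewood-Paley trichotomy and to treat each of the three interaction regimes using the structural description of $S^{1}$ recalled in Section~\ref{subsec:S1}. Write $\chi = \sum_{k_{1}} P_{k_{1}} \chi$, $\varphi = \sum_{k_{2}} P_{k_{2}} \varphi$ and split $P_{k}(\chi \varphi)$ into the low-high regime ($k_{1} \leq k_{2} - 5$, so $k \sim k_{2}$), the high-low regime ($k_{2} \leq k_{1} - 5$, so $k \sim k_{1}$), and the high-high regime ($|k_{1} - k_{2}| < 5$, $k \leq k_{1} + O(1)$). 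Since the $S^{1}$ norm is $\ell^{2}$ in the output frequency, in each regime I would use Cauchy--Schwarz/Schur's test together with the $\ell^{1}$ summability built into the $\wCG$ norm (for $\chi$) or the $\ell^{2}$ summability of $S^{1}$ (for $\varphi$) to reconstruct the full norms. The $\underline{X}$ component of $S^{1}$ is handled separately and easily: $\Box(\chi \varphi) = \chi \Box \varphi + 2 \rd^{\mu}\chi \rd_{\mu}\varphi + \varphi \Box \chi$, and each term is estimated in $L^{2}_{t}\dot H^{-1/2}_{x}$ by putting $\chi$ (and its derivatives up to order two) in $L^{\infty}_{t,x} \cap L^{\infty}_{t}\dot W^{1,4}_{x} \cap \ldots$, all controlled by $\nrm{\chi}_{\wCG}$ via Bernstein, and $\varphi$ in the appropriate piece of $S^{1} \supseteq \underline{X}$, $X^{1,1/2}_{\infty}$, or the Strichartz norms.

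For the low-high regime, which is the main term, the key point is that multiplication by a low-frequency function $P_{<k_{2}-5}\chi$ should essentially preserve the $S_{k_{2}}$ norm of $P_{k_{2}}\varphi$. This is exactly where Lemma~\ref{lem::stability4Sang} enters: after a further modulation decomposition $P_{k_{2}}\varphi = Q_{\leq k_{2}+2\ell_{0}}\varphi + Q_{> k_{2}+2\ell_{0}}\varphi$ (and similarly for $\chi$), the piece of $\chi$ with modulation $\leq j_{0}-30$ acting on $\varphi$ is controlled in $S^{\mathrm{ang}}_{k_{2},j_{0}}$ by $\nrm{P_{<k_{2}-5}\chi}_{L^{\infty}_{t,x}} \nrm{P_{k_{2}}\varphi}_{S_{k_{2}}}$; the higher-modulation portions of $\chi$ are absorbed into the $X^{0,1/2}$ or $\underline{X}$ structure using the time-regularity of $\chi$ (which is precisely what the $Y^{2,2}$ part of the $\wCG$ norm, unpacked via \eqref{eq:sqsum4Y}, provides: $\nrm{S_{\ell}\chi}_{L^{2}_{t,x}} \aleq 2^{-2\ell}\nrm{\chi}_{Y^{2,2}}$ for $\ell$ large). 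The Strichartz and $X^{0,1/2}_{\infty}$ components of $S_{k_{2}}$ are stable under multiplication by $L^{\infty}_{t,x}$-bounded low-frequency factors, again by Bernstein applied to $\chi$; the derivative $\rd_{t,x}$ in the $S^{1}$ norm is distributed by Leibniz, with the term where the derivative hits $\chi$ estimated by putting $\rd_{t,x}\chi \in L^{\infty}_{t}\dot W^{1,4}_{x}$-type norms (bounded by $\nrm{\chi}_{Y^{2,2}}$ and $\nrm{\chi}_{\ell^{1}L^{\infty}_{t}\dot H^{2}_{x}}$) against lower-order norms of $\varphi$.

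For the high-low and high-high regimes the analysis is simpler since there is a loss to spend: in the high-low regime one transfers derivatives onto $\chi$ — the output frequency $2^{k_{1}}$ matches the high-frequency factor $\chi$, so one wants $2^{k_{1}}\nrm{P_{k_{1}}\chi}$ in an $S^{1}$-type norm, which is supplied by Lemma~\ref{lem:embedding4CG} together with the embedding $\wCG \supseteq$ the relevant pieces (here I would actually upgrade and use that $P_{k_{1}}\chi$, being frequency-localized, satisfies the bounds of Lemma~\ref{lem:embedding4CG}), paired with $\varphi$ measured in $L^{\infty}_{t,x}$ or Strichartz norms via Bernstein on its lower frequency. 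In the high-high regime, the output frequency is $\leq \min(k_{1},k_{2}) + O(1)$, so Bernstein gains $2^{(k-k_{1})\cdot\text{(positive)}}$, and summing over $k_{1} \sim k_{2} \gtrsim k$ converges; one estimates one factor in $L^{\infty}_{t}L^{2}_{x}$-type norms and the other in $L^{\infty}_{t,x}$, both dominated by the respective norms. \textbf{The main obstacle} I anticipate is the bookkeeping in the low-high regime: correctly matching each modulation range of $\chi$ to a sub-component of the $S_{k_{2}}$ norm so that Lemma~\ref{lem::stability4Sang} applies with room to spare, and verifying that the residual high-modulation contributions of $\chi$ genuinely have enough decay (from $Y^{2,2}$, i.e. two time derivatives) to be summable — this is the only place where the precise quantitative structure of $\wCG$, rather than a soft $L^{\infty}$ bound, is essential.
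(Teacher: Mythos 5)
Your plan is essentially sound and rests on the same pillars as the paper's proof: Littlewood--Paley decompositions in both spatial frequency and modulation, Lemma~\ref{lem::stability4Sang} to let the very-low space-time frequency part of $\chi$ act harmlessly on the null-frame/angular component of $S_{k}$, the square-summability \eqref{eq:sqsum4Y} of the $Y^{2,2}$ part of $\wCG$ to absorb the high space-time frequency part of $\chi$, and Bernstein/$L^{\infty}$ bounds from the $\ell^{1} L^{\infty}_{t}\dot H^{2}_{x}$ part. The organization differs: the paper first splits according to the \emph{output} modulation, $\chi\varphi = \sum_{k_0}P_{k_0}Q_{\leq k_0+25}(\chi\varphi) + \sum_{k_0}P_{k_0}Q_{>k_0+25}(\chi\varphi)$, observes via the embedding $P_{k_0}(X^{0,\frac12}_{1})\subseteq S_{k_0}$ that for the far-cone piece the \emph{entire} $S_{k_0}$ norm (Strichartz, $X^{0,\frac12}_{\infty}$ and angular parts included) is dominated by the $\underline{X}$-type norm, and then proves the resulting bilinear $\underline{X}$ bound (Lemma~\ref{lem::farCone}) by spatial and temporal trichotomies; only the near-cone piece requires the componentwise analysis you describe. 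You instead lead with the spatial trichotomy and propose to get the $\underline{X}$ component via the Leibniz rule for $\Box(\chi\varphi)$ plus fixed-time paraproduct/Sobolev estimates, which does work and is arguably a quicker route to that single component. Two points you must make explicit for your scheme to close. First, treating "$\underline{X}$ separately" is not enough by itself: the high-output-modulation portion of the product still contributes to the $\ell^{2}_{k}S_{k}$ part of $S^{1}$, so you need the paper's Step 2 reduction (the chain $2^{\frac32 j_0}$-weighted $L^{2}$ bounds, i.e. $X^{0,\frac12}_{1}\subseteq S_{k_0}$ together with the geometric gain $2^{\frac12(k_0-j_0)}$) to convert your $\underline{X}$ bound into control of those $S_{k_0}$ norms; your phrase about "absorbing into the $X^{0,1/2}$ or $\underline{X}$ structure" gestures at this but it is the load-bearing step. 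Second, Lemma~\ref{lem:embedding4CG} is stated for $\CG$, not $\wCG$, so in the high-low regime you cannot quote it directly; your proposed "upgrade" is legitimate because for a single spatially localized piece $P_{k_1}\chi$ the $Y^{2,2}$ norm controls $2^{\frac52 k_1}\nrm{P_{k_1}\chi}_{L^{2}_{t,x}} + 2^{\frac12 k_1}\nrm{\rd_t^2 P_{k_1}\chi}_{L^{2}_{t,x}}$ (which is all that the proof of \eqref{eq:embedding4CG:pf:1}--\eqref{eq:embedding4CG:pf:2} uses), and the $\ell^{2}_{k_1}$ summability of $Y^{2,2}$ matches the $\ell^{2}$ structure of $S^{1}$ in the output frequency there --- but this verification should be written out, since it is precisely where $\wCG$ rather than $\CG$ suffices. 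With these two points supplied, your route closes and is a legitimate reorganization of the paper's argument rather than a genuinely different method.
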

\begin{proof} 
 We begin by splitting
 \begin{equation} \label{eq::gt4S1:pf:1}
	\chi \varphi 
	= \sum_{k_{0}} P_{k_{0}} Q_{\leq k_{0} + 25} (\chi \varphi) + \sum_{k_{0}} P_{k_{0}} Q_{> k_{0} + 25} (\chi \varphi)
\end{equation}

\pfstep{Step 1. Contribution of $\sum_{k_{0}} P_{k_{0}} Q_{\leq k_{0} + 25} (\chi \varphi)$}
In this step, we will show
\begin{equation} \label{eq::gt4S1:pf:2}
	\nrm{\sum_{k_{0}} P_{k_{0}} Q_{\leq k_{0} + 25} (\chi \varphi)}_{S^{1}} \aleq \nrm{\chi}_{Y^{2,2} \cap L^{\infty}_{t,x}} \nrm{\varphi}_{S^{1}}.
\end{equation}

We need different arguments for different parts of the $S^{1}$ norm. The common strategy, however, is to divide into two cases, one in which $\chi$ has a high space-time frequency and the other in which $\chi$ has very low space-time frequency.

In the former case, we will rely on the following simple lemma:

\begin{lemma} \label{lem::nearCone}
Let $j_{0} \leq k_{0}+30$. 
\begin{enumerate}
\item For $\ell > k_{0} - 5$, we have
\begin{equation} \label{eq::nearCone:1}
\begin{aligned}
& \hskip-2em
2^{k_{0}} 2^{\frac{1}{2} j_{0}} \nrm{P_{k_{0}} Q_{j_{0}} (S_{\ell} \chi \, P_{k_{2}} \varphi)}_{L^{2}_{t,x}} \\
\aleq & 2^{\frac{1}{2} (j_{0}-k_{0})} 2^{\frac{3}{2} (k_{0}-\ell)} 2^{\frac{1}{2} (k_{2}-\ell)} (2^{2\ell} \nrm{S_{\ell} \chi}_{Y^{0,2}}) (2^{k_{2}} \nrm{P_{k_{2}} \varphi}_{S_{k_{2}}}),
\end{aligned}
\end{equation}
and the left-hand side of \eqref{eq::nearCone:2} is vacuous unless $k_{2} \leq \ell + 10$.

\item If $\ell \leq k_{0} - 5$, we have instead
\begin{equation} \label{eq::nearCone:2}
2^{k_{0}} 2^{\frac{1}{2} j_{0}} \nrm{P_{k_{0}} Q_{j_{0}} (S_{\ell} \chi \, P_{k_{2}} \varphi)}_{L^{2}_{t,x}} \\
\aleq 2^{\frac{1}{2} (j_{0}-\ell)} (2^{2\ell} \nrm{S_{\ell} \chi}_{Y^{0,2}}) (2^{k_{2}} \nrm{P_{k_{2}} \varphi}_{S_{k_{2}}}).
\end{equation}
Moreover, the left-hand side of \eqref{eq::nearCone:2} is vacuous unless $k_{2} \in [k_{0}-5, k_{0}+5]$.
\end{enumerate}
\end{lemma}

\begin{proof} 
The claims regarding the range of $k_{2}$ are clear. We estimate the left-hand side of \eqref{eq::nearCone:1} by
\begin{align*}
	\aleq 2^{k_{0}} 2^{\frac{1}{2} j_{0}} \nrm{S_{\ell} \chi}_{L^{2}_{t} L^{\frac{8}{3}}_{x}} \nrm{P_{k_{2}} \varphi}_{L^{\infty}_{t} L^{8}_{x}}
	\aleq 2^{\frac{1}{2} (k_{2}-\ell)} 2^{\frac{1}{2} (j_{0}-\ell)} 2^{k_{0}-\ell} (2^{2\ell} \nrm{S_{\ell} \chi}_{Y^{0,2}}) (2^{k_{2}} \nrm{P_{k_{2}} \varphi}_{S_{k_{2}}}).
\end{align*}
For \eqref{eq::nearCone:2}, we estimate
\begin{align*}
	\aleq 2^{k_{0}} 2^{\frac{1}{2} j_{0}} \nrm{S_{\ell} \chi}_{L^{2}_{t} L^{\infty}_{x}} \nrm{P_{k_{2}} \varphi}_{L^{\infty}_{t} L^{2}_{x}} 
	\aleq & 2^{\frac{1}{2} (j_{0}-\ell)}  (2^{2\ell} \nrm{S_{\ell} \chi}_{Y^{0,2}}) (2^{k_{2}} \nrm{P_{k_{2}} \varphi}_{S_{k_{2}}}). \qedhere
\end{align*}
\end{proof}

We now proceed to treat each constituent of the $S^{1}$ norm.

\pfstep{Case 1.1. $S^{\mathrm{str}}_{k}$ part of $S^{1}$}
Here we prove
\begin{equation} \label{eq::nearCone:Sstr}
	\bb( \sum_{k_{0}} 2^{2k_{0}} \nrm{  P_{k_{0}} Q_{\leq k_{0} + 25} (\chi \varphi)}_{S^{\mathrm{str}}_{k_{0}}}^{2} \bb)^{\frac{1}{2}}
	\aleq \nrm{\chi}_{Y^{2,2} \cap L^{\infty}_{t,x}} \nrm{\varphi}_{S^{1}} \, .
\end{equation}

We split the summand on the left-hand side as follows:
\begin{equation} \label{eq::nearCone:Sstr1}
\begin{aligned}
2^{k_{0}} \nrm{  P_{k_{0}} Q_{\leq k_{0} + 25} (\chi \varphi)}_{S^{\mathrm{str}}_{k_{0}}} 
\aleq &  \sum_{\ell > k_{0} - 5} 2^{k_{0}} \nrm{  P_{k_{0}} Q_{\leq k_{0} + 25} (S_{\ell} \chi \varphi)}_{S^{\mathrm{str}}_{k_{0}}}  \\
	& +2^{k_{0}} \nrm{  P_{k_{0}} Q_{\leq k_{0} + 25} (S_{\leq k_{0} - 5} \chi \varphi)}_{S^{\mathrm{str}}_{k_{0}}} \, .
\end{aligned}
\end{equation}

For the first term on the right-hand side, we use the embedding $P_{k_{0}} (X^{0, \frac{1}{2}}_{1}) \subseteq S^{\mathrm{str}}_{k_{0}}$ and Lemma \ref{lem::nearCone} to estimate
\begin{align*}
	\aleq &  \sum_{\ell > k_{0} - 5} \sum_{j_{0} \leq k_{0} + 25} 2^{k_{0}} 2^{\frac{1}{2} j_{0}} \nrm{  P_{k_{0}} Q_{j_{0}} (S_{\ell} \chi \varphi)}_{L^{2}_{t,x}}  \\
	\aleq &  \sum_{\ell > k_{0} - 5} \sum_{j_{0} \leq k_{0} + 25} \sum_{k_{2} \leq \ell+10}
			2^{\frac{1}{2}(j_{0} - k_{0})} 2^{\frac{3}{2} (k_{0} - \ell)} 2^{\frac{1}{2} (k_{2} - \ell)}
			(2^{2 \ell} \nrm{S_{\ell} \chi}_{Y^{0, 2}} ) (2^{k_{2}} \nrm{P_{k_{2}} \varphi}_{S_{k_{2}}}) \\
	\aleq &  \nrm{\varphi}_{S^{1}} \sum_{\ell > k_{0} - 5} 
			2^{\frac{3}{2} (k_{0} - \ell)} 
			\, 2^{2 \ell} \nrm{S_{\ell} \chi}_{Y^{0, 2}}
\end{align*}
which is square summable in $k_{0}$, thanks to \eqref{eq:sqsum4Y}.

For the second term in \eqref{eq::nearCone:Sstr1}, we can freely replace $\varphi$ by $P_{[k_{0}-5, k_{0}+5]} \varphi$. Then removing $P_{k_{0}} Q_{\leq k_{0} + 25}$, which is disposable, and using H\"older with $S_{\leq k_{0} - 5} \chi \in L^{\infty}_{t,x}$, we see that
\begin{equation*}
2^{k_{0}} \nrm{  P_{k_{0}} Q_{\leq k_{0} + 25} (S_{\leq k_{0} - 5} \chi \varphi)}_{S^{\mathrm{str}}_{k_{0}}}
\aleq \sum_{k_{2} \in [k_{0} - 5, k_{0} + 5]} \nrm{\chi}_{L^{\infty}_{t,x}} 2^{k_{2}} \nrm{P_{k_{2}} \varphi}_{S^{\mathrm{str}}_{k_{2}}},
\end{equation*}
which is acceptable.

\pfstep{Case 1.2. $X^{0, \frac{1}{2}}_{\infty}$ part of $S^{1}$}
We prove
\begin{equation} \label{eq::nearCone:Xsb}
	\bb( \sum_{k_{0}} 2^{2k_{0}} \nrm{P_{k_{0}} Q_{\leq k_{0}+25} (\chi \varphi)}_{X^{0, \frac{1}{2}}_{\infty}}^{2} \bb)^{\frac{1}{2}}
	\aleq \nrm{\chi}_{Y^{2,2} \cap L^{\infty}_{t,x}} \nrm{\varphi}_{S^{1}} \, .
\end{equation}
The summand on the left-hand side is bounded by
\begin{align}
	\sup_{j_{0} \leq k_{0} + 25} 2^{k_{0}} 2^{\frac{1}{2} j_{0}} \nrm{P_{k_{0}} Q_{j_{0}} (\chi \varphi)}_{L^{2}_{t,x}}
	\aleq & \sup_{j_{0} \leq k_{0} + 25} 2^{k_{0}} \sum_{\ell > k_{0} - 5} 2^{\frac{1}{2} j_{0}} \nrm{P_{k_{0}} Q_{j_{0}} (S_{\ell} \chi \varphi)}_{L^{2}_{t,x}} \notag \\
		&+ \sup_{j_{0} \leq k_{0} + 25} 2^{k_{0}}\sum_{\ell \in [j_{0}-30, k_{0} - 5]} 2^{\frac{1}{2} j_{0}} \nrm{P_{k_{0}} Q_{j_{0}} (S_{\ell} \chi \varphi)}_{L^{2}_{t,x}} \label{eq::nearCone:Xsb1} \\
		&+ \sup_{j_{0} \leq k_{0} + 25} 2^{k_{0}} 2^{\frac{1}{2} j_{0}} \nrm{P_{k_{0}} Q_{j_{0}} (S_{\leq j_{0} - 30}\chi \varphi)}_{L^{2}_{t,x}} \notag
\end{align}

Let $j_{0} \leq k_{0} + 25$. Using Lemma \ref{lem::nearCone} and proceeding as in Case 1.1, the first term can be bounded by
\begin{align*} 
& \hskip-2em
2^{k_{0}} \sum_{\ell > k_{0} - 5} 2^{\frac{1}{2} j_{0}} \nrm{P_{k_{0}} Q_{j_{0}} (S_{\ell} \chi \varphi)}_{L^{2}_{t,x}} \\
	\aleq & \sum_{\ell > k_{0} - 5} \sum_{k_{2} \leq \ell  +10} 
		2^{\frac{1}{2}(j_{0} - k_{0})} 2^{\frac{3}{2}(k_{0}- \ell)} 2^{\frac{1}{2}(k_{2} - \ell)} 
		(2^{2 \ell} \nrm{S_{\ell} \chi}_{Y^{0,2}}) (2^{k_{2}} \nrm{P_{k_{2}} \varphi}_{S_{k_{2}}}) \\
	\aleq & 2^{\frac{1}{2} (j_{0} - k_{0})} \nrm{\varphi}_{S^{1}} \sum_{\ell > k_{0} - 5} 2^{\frac{3}{2}(k_{0} - \ell)} \, 2^{2 \ell} \nrm{S_{\ell} \chi}_{Y^{0,2}},
\end{align*}
which is $\ell^{2}$ summable in $k_{0}$ thanks to \eqref{eq:sqsum4Y}.

For the second term in \eqref{eq::nearCone:Xsb1}, we can replace $\varphi$ by $P_{[k_{0} - 5, k_{0} - 5]} \varphi$. Then we estimate
\begin{align*}
& \hskip-2em
2^{k_{0}}\sum_{\ell \in [j_{0}-30, k_{0} - 5]} 2^{\frac{1}{2} j_{0}} \nrm{P_{k_{0}} Q_{j_{0}} (S_{\ell} \chi \varphi)}_{L^{2}_{t,x}} \\
	\aleq & \sum_{\ell \in [j_{0}-30, k_{0} - 5]} \sum_{k_{2} \in [k_{0} -5, k_{0}+5]}
		2^{\frac{1}{2} (j_{0} - \ell)} 
		(2^{2 \ell} \nrm{S_{\ell} \chi}_{Y^{0, 2}}) (2^{k_{2}} \nrm{P_{k_{2}} \varphi}_{S_{k_{2}}}) \\
	\aleq & \nrm{\chi}_{Y^{2,2}} \sum_{k_{2} \in [k_{0} -5, k_{0}+5]}
		 (2^{k_{2}} \nrm{P_{k_{2}} \varphi}_{S_{k_{2}}}),
\end{align*}
which is acceptable.

For the third term in \eqref{eq::nearCone:Xsb1}, we can replace $\varphi$ by $P_{[k_{0} - 5, k_{0} - 5]} Q_{[j_{0} - 5, j_{0}+5]} \varphi$. Therefore
\begin{equation*}
	2^{k_{0}} 2^{\frac{1}{2} j_{0}} \nrm{P_{k_{0}} Q_{j_{0}} (S_{\leq j_{0} - 5}\chi \varphi)}_{L^{2}_{t,x}}
	\aleq \nrm{\chi}_{L^{\infty}_{t,x}}
		\sum_{k_{2} \in [k_{0} - 5, k_{0} + 5]} \sum_{j_{2} \in [j_{0} - 5, j_{0} + 5]}
		 2^{k_{2}} 2^{\frac{1}{2} j_{2}} \nrm{P_{k_{2}} Q_{j_{2}} \varphi}_{L^{2}_{t,x}}
\end{equation*}
which is acceptable.

\pfstep{Case 1.3. $S^{\mathrm{ang}}_{k, j}$ part of $S^{1}$}
Here we prove
\begin{equation} \label{eq::nearCone:Sang}
	\bb( \sum_{k_{0}} 2^{2 k_{0}} \sup_{j_{0} < k_{0}} \nrm{P_{k_{0}} Q_{< j_{0}} (\chi \varphi)}_{S^{\mathrm{ang}}_{k_{0}, j_{0}}}^{2} \bb)^{\frac{1}{2}}
	\aleq \nrm{\chi}_{Y^{2,2} \cap L^{\infty}_{t,x}} \nrm{\varphi}_{S^{1}} \, .
\end{equation}

Fix $k_{0}$ and $j_{0} < k_{0}$. As before, we split
\begin{align*}
2^{k_{0}} \nrm{P_{k_{0}} Q_{< j_{0}} (\chi \varphi)}_{S^{\mathrm{ang}}_{k_{0}, j_{0}}}
\aleq &\sum_{\ell > k_{0} + 5} 2^{k_{0}} \nrm{P_{k_{0}} Q_{< j_{0}} (S_{\ell} \chi \varphi)}_{S^{\mathrm{ang}}_{k_{0}, j_{0}}} \\
	& + \sum_{\ell \in [j_{0} - 30, k_{0} - 5]} 2^{k_{0}} \nrm{P_{k_{0}} Q_{< j_{0}} (S_{\ell} \chi \varphi)}_{S^{\mathrm{ang}}_{k_{0}, j_{0}}} \\
	& + 2^{k_{0}} \nrm{P_{k_{0}} Q_{< j_{0}} (S_{< j_{0} - 30} \chi \varphi)}_{S^{\mathrm{ang}}_{k_{0}, j_{0}}}
\end{align*}

Using the embedding $P_{k_{0}} Q_{< j_{0}} (X^{0, \frac{1}{2}}_{1}) \subseteq S^{\mathrm{ang}}_{k_{0}, k_{0}}$, the first two terms can be treated by proceeding as in Case 1.2. On the other hand, for the third term, we use Lemma \ref{lem::stability4Sang} to estimate
\begin{equation*}
	\sup_{j_{0}} 2^{k_{0}} \nrm{P_{k_{0}} Q_{< j_{0}}(S_{<j_{0} - 20} \chi \varphi)}_{S^{\mathrm{ang}}_{k_{0}, j_{0}}}
	\aleq \nrm{\eta}_{L^{\infty}_{t,x}} \sum_{k_{2} \in [k_{0}-5, k_{0}+5]} 2^{k_{2}} \nrm{P_{k_{2}} \varphi}_{S_{k_{2}}},
\end{equation*}
which is square summable in $k_{0}$, proving \eqref{eq::nearCone:Sang}.

\pfstep{Step 2. Contribution of $\sum_{k_{0}} P_{k_{0}} Q_{> k_{0} + 25} (\chi \varphi)$}
When the output is away from the cone, the $\underline{X}$ norm dominates the whole $S^{1}$ norm. To see this, let $k_{0} \in \bbZ$. As $P_{k_{0}} (X^{0, \frac{1}{2}}_{1}) \subseteq S_{k_{0}}$, we have
\begin{align*}
\nrm{\rd_{t,x} P_{k_{0}} Q_{> k_{0} + 25} (\eta \varphi)}_{S_{k_{0}}}
\aleq & \sum_{j_{0} > k_{0} + 25} 2^{\frac{3}{2} j_{0}} \nrm{P_{k_{0}} Q_{j_{0}} (\eta \varphi)}_{L^{2}_{t,x}} \\
\aleq & \sum_{j_{0} > k_{0} + 25} 2^{\frac{1}{2} (k_{0} - j_{0})} \nrm{P_{k_{0}} Q_{j_{0}} (\eta \varphi)}_{\underline{X}} \\
\aleq & \nrm{P_{k_{0}} Q_{> k_{0} + 20}(\eta \varphi)}_{\underline{X}}.
\end{align*}
Thus by $L^{2}$ almost orthogonality,
\begin{equation} \label{eq::gt4S1:pf:farCone}
	\nrm{\sum_{k_{0}} P_{k_{0}} Q_{> k_{0} + 25} (\eta \varphi)}_{S^{1}}^{2} \aleq \sum_{k_{0}} \nrm{P_{k_{0}} Q_{> k_{0} + 20}(\eta \varphi)}_{\underline{X}}^{2}.
\end{equation}

To conclude the proof of \eqref{eq:gt4S1}, it remains to estimate the right-hand side of \eqref{eq::gt4S1:pf:farCone}. This is the content of Lemma \ref{lem::farCone} below. \qedhere
\end{proof}

\begin{lemma} \label{lem::farCone}
The following estimate holds.
\begin{align} 
	\bb( \sum_{k_{0}} \nrm{P_{k_{0}} Q_{>k_{0}+20} (\chi \varphi)}_{\underline{X}}^{2} \bb)^{\frac{1}{2}}
	\aleq & \nrm{\chi}_{\wCG} \nrm{\varphi}_{S^{1}}. \label{eq::farCone:S1} 
\end{align}
\end{lemma}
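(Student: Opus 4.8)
The plan is to strip off the $\underline{X}$ norm and reduce \eqref{eq::farCone:S1} to a bilinear $L^{2}_{t,x}$ estimate. Since $\underline{X} = \nrm{\Box\,\cdot}_{L^{2}_{t}\dot{H}^{-\frac12}_{x}}$ is given by a Fourier multiplier, the pieces $P_{k_{0}}Q_{j_{0}}$ are orthogonal in $L^{2}_{t}\dot{H}^{-\frac12}_{x}$, and on the far-cone region $j_{0}>k_{0}+20$ the symbol of $\Box$ has size $\aeq 2^{2j_{0}}$ while $\dot{H}^{-\frac12}_{x}$ contributes $2^{-k_{0}/2}$. Hence \eqref{eq::farCone:S1} is equivalent to
\[
  \sum_{k_{0}}\ \sum_{j_{0}>k_{0}+20} 2^{4j_{0}-k_{0}}\,\nrm{P_{k_{0}}Q_{j_{0}}(\chi\varphi)}_{L^{2}_{t,x}}^{2}\ \aleq\ \nrm{\chi}_{\wCG}^{2}\,\nrm{\varphi}_{S^{1}}^{2}.
\]
I would then decompose $\chi=\sum_{k_{1},j_{1}}P_{k_{1}}T_{j_{1}}\chi$ (spatial frequency $2^{k_{1}}$, time frequency $2^{j_{1}}$) and $\varphi=\sum_{k_{2},j_{2}}P_{k_{2}}Q_{j_{2}}\varphi$, and record the standard output-forcing relations for a product of two waves: if $P_{k_{0}}Q_{j_{0}}\bb(P_{k_{1}}T_{j_{1}}\chi\cdot P_{k_{2}}Q_{j_{2}}\varphi\bb)\neq 0$ with $j_{0}>k_{0}+20$, then either (i) $\chi$ has time frequency $2^{j_{1}}\aeq 2^{j_{0}}$ and $k_{1},k_{2}\aeq k_{0}$, or (ii) $\varphi$ has modulation $2^{j_{2}}\aeq 2^{j_{0}}$ and $k_{1}\aleq k_{2}\aeq k_{0}$, or (iii) the interaction is high-high-to-low, $k_{1}\aeq k_{2}\gg k_{0}$, in which case $2^{j_{0}}\aleq 2^{k_{1}}$.

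In regime (i) I would use the two time derivatives inside $Y^{2,2}\subseteq\wCG$: by \eqref{eq:sqsum4Y} the quantity $2^{2j_{1}+k_{1}/2}\nrm{P_{k_{1}}T_{j_{1}}\chi}_{L^{2}_{t,x}}$ is square-summable in $(k_{1},j_{1})$ with sum $\aleq\nrm{\chi}_{Y^{2,2}}$, and I would place $\varphi$ in $L^{\infty}_{t,x}$ via Bernstein from the $L^{\infty}_{t}L^{2}_{x}$ endpoint of $S^{\mathrm{str}}_{k_{2}}$; the weight $2^{2j_{0}}\aeq 2^{2j_{1}}$ is then exactly absorbed. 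In regime (ii) I would use the $\underline{X}$ component of $S^{1}$, i.e.\ $2^{2j_{2}-k_{2}/2}\nrm{P_{k_{2}}Q_{j_{2}}\varphi}_{L^{2}_{t,x}}\aleq\nrm{\varphi}_{\underline{X}}$ (square-summable in $(k_{2},j_{2})$), and place $\chi$ in $L^{\infty}_{t,x}$ using the component $\sum_{k}2^{2k}\nrm{P_{k}\chi}_{L^{\infty}_{t}L^{2}_{x}}$ of the $\wCG$ norm, an $\ell^{1}$ sum over $k_{1}$. In regime (iii) I would estimate the low-frequency output through $L^{1}_{x}$: $\nrm{P_{k_{0}}(P_{k_{1}}\chi\cdot P_{k_{2}}\varphi)}_{L^{2}_{t,x}}\aleq 2^{2k_{0}}\nrm{P_{k_{1}}\chi}_{L^{2}_{t,x}}\nrm{P_{k_{2}}\varphi}_{L^{\infty}_{t}L^{2}_{x}}$, and then exploit the spatial $\dot{H}^{5/2}$-strength of $Y^{2,2}$, namely $\nrm{P_{k_{1}}\chi}_{L^{2}_{t,x}}\aleq 2^{-\frac52 k_{1}}\nrm{\chi}_{Y^{2,2}}$; since $2^{2j_{0}}\aleq 2^{2k_{1}}$ here, the net power of $2$ comes out $\aleq 2^{\frac32(k_{0}-k_{1})}$, which decays geometrically in $k_{1}-k_{0}$.

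In each regime the remaining summation factors into a geometric series in the off-diagonal parameters ($j_{0}-j_{1}$ or $j_{0}-j_{2}$, and $k_{0}-k_{1}$) and, after Cauchy--Schwarz, into the $\ell^{2}$-square-summable families coming from $\chi$ (via $Y^{2,2}$ and \eqref{eq:sqsum4Y}) and $\varphi$ (via the $S^{1}$ norm); the diagonal constraints $\abs{k_{1}-k_{0}},\abs{k_{2}-k_{0}}\aleq 1$ decouple the sums, which yields \eqref{eq::farCone:S1}. I expect the main obstacle to be regime (iii), the high-high-to-low far-cone interaction: there the enormous $\Box$-symbol $2^{2j_{0}}$, with $j_{0}$ as large as $\aeq k_{1}\gg k_{0}$, must be defeated, and this is precisely the place where the $\dot{H}^{5/2}$-type spatial regularity of $\chi$ is indispensable. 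One must check carefully that the mere $L^{\infty}_{t}L^{2}_{x}$-based high-frequency control of $\chi$ afforded by the \emph{weak} norm $\wCG$ (rather than the stronger $\CG$) still suffices; it does, thanks to the compensating modulation gain carried by $\varphi$ together with the Bernstein-on-the-output trick, but verifying the exponent bookkeeping so that everything closes with $\ell^{2}$ summability in $k_{0}$ is the technically delicate point.
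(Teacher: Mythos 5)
Your overall strategy (reduce $\underline{X}$ to a weighted bilinear $L^{2}_{t,x}$ bound with weight $2^{2j_{0}-\frac{1}{2}k_{0}}$, decompose $\chi$ in $P_{k_{1}}T_{j_{1}}$ and $\varphi$ in $P_{k_{2}}Q_{j_{2}}$, and close via H\"older/Bernstein and \eqref{eq:sqsum4Y}) is the same as the paper's, but your claimed ``output-forcing'' trichotomy is not exhaustive, and this is a genuine gap rather than a cosmetic one. First, nothing forces $j_{1}\aeq j_{0}$ or $j_{2}\aeq j_{0}$: both inputs can sit at temporal frequency/modulation $\gg 2^{j_{0}}$ with cancellation in $\tau$, so the correct dichotomy is $j_{1}\gtrsim j_{0}$ (with arbitrary excess, as in the paper's Cases 1.1, 2.1, 3.2) versus $j_{1}\ll j_{0}$, which then forces $j_{2}=j_{0}+O(1)$. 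Second, in your regimes (i) and (ii) you pin \emph{both} spatial frequencies near $k_{0}$, so the low--high interactions with $k_{1}\ll k_{0}$, $\chi$ at high temporal frequency, and the entire high--low family $k_{2}\ll k_{0}\aeq k_{1}$ (paper's Cases 2.1, 2.2) are simply absent from your decomposition. Third, in the high--high regime the claim $2^{j_{0}}\aleq 2^{k_{1}}$ is false: $\chi_{k_{1}}$ may carry temporal frequency $2^{j_{1}}\gg 2^{k_{1}}$, or $\varphi_{k_{2}}$ may have modulation $2^{j_{2}}\aeq 2^{j_{0}}\gg 2^{k_{2}}$, producing far-cone output with $j_{0}\gg k_{1}$; these are the paper's Cases 3.2 and 3.3, and they cannot be beaten by the $\dot H^{5/2}$-type spatial regularity of $\chi$ you invoke -- they require the two time derivatives in $Y^{2,2}$, respectively the $\underline{X}$ component of $S^{1}$.

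Moreover, at least one omitted regime breaks the specific estimates you propose. In the low--high case with $k_{1}\ll k_{0}\aeq k_{2}$ and $j_{1}\gtrsim j_{0}$, your pairing (put $P_{k_{1}}T_{j_{1}}\chi$ in $L^{2}_{t,x}$ and Bernstein $\varphi_{k_{2}}$ into $L^{\infty}_{t,x}$ at frequency $2^{k_{0}}$) yields, after inserting the weight $2^{2j_{0}-\frac{1}{2}k_{0}}$ and the square-summable quantities from \eqref{eq:sqsum4Y} and $S^{1}$, a net factor $\aeq 2^{2(j_{0}-j_{1})}2^{\frac{1}{2}(k_{0}-k_{1})}$, which \emph{grows} as $k_{1}\to-\infty$ and is not summable over $k_{1}\leq k_{0}$. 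The fix is to apply Bernstein to the low-spatial-frequency factor instead, as in the paper's Case 1.1 ($T_{j_{1}}\chi_{k_{1}}$ in $L^{2}_{t}L^{\infty}_{x}$ against $\varphi_{k_{2}}$ in $L^{\infty}_{t}L^{2}_{x}$), which produces the gain $2^{\frac{3}{2}(k_{1}-k_{0})}$; a similar low-frequency Bernstein (on $Q_{j_{2}}\varphi_{k_{2}}$) is what gives the gain $2^{\frac{5}{2}(k_{2}-k_{0})}$ in the high--low Case 2.2. So the skeleton of your argument is right, but you need the complete six-to-eight case analysis (spatial trichotomy crossed with the temporal-frequency dichotomy for $\chi$), and in the off-diagonal spatial cases the H\"older exponents must be chosen so that Bernstein falls on the factor with the smaller spatial frequency.
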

\begin{proof}
Since the spaces have different regularity in space and time, we will need to divide into cases depending on both the space and time frequency configurations. 
We begin with the standard Littlewood-Paley trichotomy in the spatial Fourier variable:
\begin{align*}
	P_{k_{0}} Q_{>k_{0}+20} (\chi \varphi)
	= & 	P_{k_{0}} Q_{> k_{0}+20} (\chi_{< k_{0}+10} \varphi_{[k_{0}-5, k_{0}+5]}) 
		+P_{k_{0}} Q_{>k_{0}+20} (\chi_{[k_{0}-5, k_{0}+5]} \varphi_{< k_{0}-5})\\
	&	+ \sum_{k_{1} \geq k_{0}+10} \sum_{k_{2} \in [k_{1}-5, k_{1}+5]} P_{k_{0}} Q_{> k_{0}+20} (\chi_{k_{1}} \varphi_{k_{2}}).
\end{align*}
In each case we will further divide into cases, which will essentially correspond to doing another round of Littlewood-Paley trichotomy in the temporal Fourier variable.
 
 \pfstep{Case 1. $(\mathrm{LH})$ interaction} Here we treat the contribution of 
 \begin{equation*}
P_{k_{0}} Q_{j_{0}} (\chi_{\leq k_{0} + 10} \varphi_{[k_{0}-5,k_{0}+5]})
\end{equation*}
We divide further into two sub-cases, depending on the temporal frequency of $\chi_{\leq k_{0}+10}$.

 \pfstep{Case 1.1 $\chi$ has high temporal frequency, $j_{1} > j_{0}-20$}
Recalling that $\underline{X}$ is a $L^{2}_{t,x}$ based norm, by orthogonality it suffices to estimate
\begin{equation} \label{eq::farCone:1:1}
\bnrm{\sum_{k_{2} \in [k_{0}-5, k_{0}+5]} \nrm{P_{k_{0}} Q_{j_{0}} (T_{> j_{0}-20} \chi_{\leq k_{0} +10} \varphi_{k_{2}})}_{\underline{X}}}_{\ell^{2}_{k_{0}, j_{0}} (j_{0} > k_{0} + 20)}
\end{equation}
We estimate each summand as follows:
\begin{align*}
& \hskip-2em	
\nrm{P_{k_{0}} Q_{j_{0}} (T_{> j_{0}-20} \chi_{\leq k_{0} + 10} \, \varphi_{k_{2}})}_{\underline{X}} \\
\aleq&	\sum_{k_{1} \leq k_{0} + 10} \sum_{j_{1} > j_{0} - 20} 
	2^{2 j_{0}} 2^{-\frac{1}{2} k_{0}} \nrm{T_{j_{1}} \chi_{k_{1}}}_{L^{2}_{t} L^{\infty}_{x}} \nrm{\varphi_{k_{2}}}_{L^{\infty}_{t} L^{2}_{x}} \\
\aleq&	\sum_{k_{1} \leq k_{0} + 10} \sum_{j_{1} > j_{0} - 20} 2^{2 (j_{0}-j_{1})} 2^{\frac{3}{2} (k_{1}-k_{0})}
	(2^{2 j_{1}} \nrm{T_{j_{1}} \chi_{k_{1}}}_{Y^{0,2}}) (2^{k_{1}} \nrm{\varphi_{k_{2}}}_{S_{k_{2}}}) 
\end{align*} 
We now sum up $k_{2} \in [k_{0}- 5, k_{0}+5]$ and take the $\ell^{2}_{k_{0}, j_{0}}(j_{0} > k_{0}+20)$ summation. Then \eqref{eq::farCone:1:1} is estimated by
\begin{equation*}
	\aleq \nrm{\varphi}_{S^{1}} \bnrm{\sum_{k_{1} \leq k_{0} + 10} \sum_{j_{1} \geq j_{0} - 20} 2^{2 (j_{0}-j_{1})} 2^{\frac{3}{2} (k_{1}-k_{0})}
	(2^{2 j_{1}} \nrm{T_{j_{1}} \chi_{k_{1}}}_{Y^{0,2}})}_{\ell^{2}_{k_{0}, j_{0}} (j_{0} > k_{0} + 20)}
\end{equation*}
which in turn is bounded by $\aleq \nrm{\varphi}_{S^{1}} \nrm{\chi}_{Y^{2,2}}$ thanks to \eqref{eq:sqsum4Y}.

 \pfstep{Case 1.2 $\chi$ has low temporal frequency, $j_{1} \leq j_{0}-20$}
It suffices to bound
\begin{equation} \label{eq::farCone:1:2}
	\bnrm{\sum_{k_{2} \in [k_{0}-5, k_{0}+5]} \nrm{P_{k_{0}} Q_{j_{0}} (T_{\leq j_{0} - 20} \chi_{\leq k_{0} +10} \varphi_{k_{2}})}_{\underline{X}}}_{\ell^{2}_{k_{0}, j_{0}} (j_{0} > k_{0} + 20)}
\end{equation}
By the restrictions on the Fourier supports of inputs and outputs, we can freely replace $\varphi_{k_{2}}$ by $\sum_{j_{2} \in[j_{0} - C, j_{0}+C]} Q_{j_{2}} \varphi_{k_{2}}$.
Thus throwing away $P_{k_{0}} Q_{j_{0}}$, estimating $T_{\leq j_{0} - 20} \chi_{\leq k_{0} + 10}$ in $L^{\infty}_{t,x}$ and $Q_{j_{2}} \varphi_{k_{2}}$ in $L^{2}_{t,x}$, we can estimate the summand in \eqref{eq::farCone:1:2} by
\begin{align*}
	\nrm{P_{k_{0}} Q_{j_{0}} (T_{\leq j_{0} - 20} \chi_{\leq k_{0} +10} \varphi_{k_{2}})}_{\underline{X}}
	\aleq \sum_{j_{2} \in [j_{0}-C, j_{0}+C]} \nrm{\chi}_{L^{\infty}_{t,x}} \nrm{Q_{j_{2}} \varphi_{k_{2}}}_{\underline{X}}
\end{align*}
Summing it up, we obtain $\eqref{eq::farCone:1:2} \aleq \nrm{\chi}_{L^{\infty}_{t,x}} \nrm{\varphi}_{\ell^{1} \underline{X}}$ as desired.

\pfstep{Case 2. $(\mathrm{HL})$ interaction}
Here we treat the contribution of
\begin{equation*}
P_{k_{0}} Q_{j_{0}} (\chi_{[k_{0} - 5, k_{0}+5]} \varphi_{< k_{0}-5})
\end{equation*}
As in the previous case, we divide into two sub-cases.

 \pfstep{Case 2.1 $\chi$ has high temporal frequency, $j_{1} > j_{0}-20$}
As in the previous case, we need to consider
\begin{equation} \label{eq::farCone:2:1}
	\bnrm{ \sum_{k_{1} \in [k_{0}- 5, k_{0}+5]} \sum_{k_{2} < k_{0} - 5} 2^{(s-1) k_{0}}
	\nrm{P_{k_{0}} Q_{j_{0}} (T_{> j_{0}-20} \chi_{k_{1}} \varphi_{k_{2}})}_{\underline{X}}
	}_{\ell^{2}_{k_{0}, j_{0}}(j_{0} > k_{0} + 20)}
\end{equation}
Disposing $P_{k_{0}} Q_{j_{0}}$ and using H\"older, we estimate each summand as 
\begin{align*}
\nrm{P_{k_{0}} Q_{j_{0}} (T_{> j_{0}-20} \chi_{k_{1}} \varphi_{k_{2}})}_{\underline{X}}
\aleq & \sum_{j_{1} > j_{0}-20}  2^{2 j_{0}} 2^{-\frac{1}{2} k_{0}} \nrm{T_{j_{1}} \chi_{k_{1}}}_{L^{2}_{t,x}} \nrm{\varphi_{k_{2}}}_{L^{\infty}_{t,x}} \\
\aleq & \sum_{j_{1} > j_{0}-20} 2^{2 (j_{0} - j_{1})} 2^{k_{2} - k_{0}} (2^{2 j_{1}} \nrm{T_{j_{1}} \chi_{k_{1}}}_{Y^{0,2}}) (2^{k_{2}} \nrm{\varphi_{k_{2}}}_{S_{k_{2}}}) \\
\end{align*}
Thanks to the high-low gain $2^{k_{2} - k_{0}}$, this can be summed up in $\ell^{2}_{k_{0}, j_{0}} (j_{0}> k_{0} + 20)$ using \eqref{eq:sqsum4Y}. We conclude
$\eqref{eq::farCone:2:1} \aleq \nrm{\chi}_{Y^{2,2}} \nrm{\varphi}_{S^{1}}$, as desired.

 \pfstep{Case 2.2 $\chi$ has low temporal frequency, $j_{1} \leq j_{0}-20$}
We consider
\begin{equation} \label{eq::farCone:2:2}
	\bnrm{ \sum_{k_{1} \in [k_{0}- 5, k_{0}+5]} \sum_{k_{2} < k_{0} - 5} 
		\nrm{P_{k_{0}} Q_{j_{0}} (T_{\leq j_{0}-20} \chi_{k_{1}} \varphi_{k_{2}})}_{\underline{X}}}_{\ell^{2}_{k_{0}, j_{0}}(j_{0} > k_{0} +20)}
\end{equation}
In this case, we can replace $\varphi_{k_{2}}$ by $\sum_{j_{2} \in [j_{0}-C, j_{0}+C]}Q_{j_{2}} \varphi_{k_{2}}$, thanks to the restrictions on the Fourier supports. Then as before, we estimate
\begin{align*}
\nrm{P_{k_{0}} Q_{j_{0}} (T_{\leq j_{0}-20} \chi_{k_{1}} \varphi_{k_{2}})}_{\underline{X}}
\aleq & \sum_{j_{2} \in [j_{0}-C, j_{0}+C]} 2^{2j_{0}} 2^{-\frac{1}{2} k_{0}} \nrm{T_{\leq j_{0}-20}\chi_{k_{1}}}_{L^{\infty}_{t} L^{2}_{x}} \nrm{Q_{j_{2}} \varphi_{k_{2}}}_{L^{2}_{t} L^{\infty}_{x}} \\
\aleq & \sum_{j_{2} \in [j_{0}-C, j_{0}+C]} 2^{\frac{5}{2} (k_{2}-k_{0})} (2^{2 k_{1}} \nrm{\chi_{k_{1}}}_{L^{\infty}_{t} L^{2}_{x}}) \nrm{Q_{j_{2}} \varphi_{k_{2}}}_{\underline{X}}
\end{align*}
Thanks again to the high-low gain $2^{\frac{5}{2} (k_{2}-k_{0})}$ this is again summable, and we obtain $\eqref{eq::farCone:2:2} \aleq \nrm{\chi}_{\ell^{1} L^{\infty}_{t} \dot{H}^{2}_{x}} \nrm{\varphi}_{\underline{X}}$.

\pfstep{Case 3. $(\mathrm{HH})$ interaction}
Here we treat the contribution of
\begin{equation*}
P_{k_{0}} Q_{j_{0}} (\chi_{k_{1}} \varphi_{k_{2}})
\end{equation*}
where $\abs{k_{1} - k_{2}} \leq 5$, $k_{1} \geq k_{0} + 10$.

 \pfstep{Case 3.1 $\chi$ has high spatial frequency, $k_{1} > j_{0} - 20$}
 We first consider
 \begin{equation} \label{eq::farCone:3:1}
	\bnrm{ \sum_{k_{1} > j_{0}- 20} \sum_{k_{2} \in [k_{1}-5, k_{1}+5]} \nrm{P_{k_{0}} Q_{j_{0}} (\chi_{k_{1}} \varphi_{k_{2}})}_{\underline{X}}}_{\ell^{2}_{k_{0}, j_{0}}(j_{0} > k_{0} +20)}
\end{equation}
Throwing away $Q_{j_{0}}$, applying Bernstein in space and using H\"older, we estimate each summand by
 \begin{align*}
\nrm{P_{k_{0}} Q_{j_{0}} (\chi_{k_{1}} \varphi_{k_{2}})}_{\underline{X}}
\aleq &	2^{2 j_{0}} 2^{\frac{3}{2} k_{0}} \nrm{\chi_{k_{1}} \varphi_{k_{2}}}_{L^{2}_{t} L^{1}_{x}} \\
\aleq &	2^{2 (j_{0}-k_{1})} 2^{\frac{3}{2} (k_{0}-k_{1})}  (2^{2 k_{1}} \nrm{\chi_{k_{1}}}_{Y^{0,2}}) (2^{k_{2}} \nrm{\varphi_{k_{2}}}_{S_{k_{2}}})
\end{align*}
Using \eqref{eq:sqsum4Y} and the square summability of $2^{k_{2}} \nrm{\varphi_{k_{2}}}_{S_{k_{2}}}$, the last expression can be summed up in the $\ell^{1}$ sense over $\set{(k_{0}, j_{0}, k_{1}, k_{2}) : j_{0} > k_{0}+20, k_{1} > j_{0} - 20, \abs{k_{1} - k_{2}} \leq 5}$ and be estimated by $\aleq \nrm{\chi}_{Y^{2,2}} \nrm{\varphi}_{S^{1}}$. 

  \pfstep{Case 3.2 $\chi$ has high temporal frequency, $k_{1} \leq j_{0}-20$, $j_{1} > j_{0}-20$}
Next, we estimate
  \begin{equation} \label{eq::farCone:3:2}
	\bnrm{ \sum_{k_{1} \in [k_{0}- 5, j_{0}-20]} \sum_{k_{2} \in [k_{1}-5, k_{1}+5]} \nrm{P_{k_{0}} Q_{j_{0}} (T_{> j_{0}-20} \chi_{k_{1}} \varphi_{k_{2}})}_{\underline{X}}}_{\ell^{2}_{k_{0}, j_{0}}(j_{0} > k_{0} + 20)}
\end{equation}
Throwing away $Q_{j_{0}}$, applying Bernstein in space and using H\"older, we have
 \begin{align*}
\nrm{P_{k_{0}} Q_{j_{0}} (T_{> j_{0}-20} \chi_{k_{1}} \varphi_{k_{2}})}_{\underline{X}}
\aleq &	\sum_{j_{1} > j_{0} - 20} 2^{2 j_{0}} 2^{\frac{3}{2} k_{0}} \nrm{T_{j_{1}} \chi_{k_{1}} \varphi_{k_{2}}}_{L^{2}_{t} L^{1}_{x}} \\
\aleq &	\sum_{j_{1} > j_{0}-20} 2^{2 (j_{0}-j_{1})} 2^{\frac{3}{2} (k_{0}-k_{1})}  (2^{2 j_{1}} \nrm{T_{j_{1}}\chi_{k_{1}}}_{Y^{0,2}}) (2^{k_{2}} \nrm{\varphi_{k_{2}}}_{S_{k_{2}}})
\end{align*}
Using the triangle inequality to pull out $k_{1}$, $k_{2}$ summations out of $\ell^{2}_{k_{0}, j_{0}}(j_{0} > k_{0}+20)$ and performing the latter summation, we estimate \eqref{eq::farCone:3:2} by
\begin{equation*}
	\aleq \sum_{k_{1}} \sum_{k_{2} \in [k_{1}-5, k_{1}+5]} 
		(\sum_{j_{1} > k_{1}-20} 2^{4 j_{1}} \nrm{T_{j_{1}}\chi_{k_{1}}}_{Y^{0,2}}^{2})^{\frac{1}{2}}
			( 2^{k_{2}} \nrm{\varphi_{k_{2}}}_{S_{k_{2}}}),
\end{equation*}
which is estimated by $\aleq \nrm{\chi}_{Y^{2,2}} \nrm{\varphi}_{S^{1}}$ using \eqref{eq:sqsum4Y} and the square summability of $2^{k_{2}} \nrm{\varphi_{k_{2}}}_{S_{k_{2}}}$.

\pfstep{Case 3.3 $\chi$ is close to frequency origin, $k_{1} \leq j_{0}-20$, $j_{1} \leq j_{0}-20$}
In this case, we estimate
  \begin{equation} \label{eq::farCone:3:3}
	\bnrm{ \sum_{k_{1} \in [k_{0}- 5, j_{0}-20]} \sum_{k_{2} \in [k_{1}-5, k_{1}+5]} 
		\nrm{P_{k_{0}} Q_{j_{0}} (T_{\leq j_{0}-20} \chi_{k_{1}} \varphi_{k_{2}})}_{\underline{X}}
			}_{\ell^{2}_{k_{0}, j_{0}}(j_{0} > k_{0} + 20)}
\end{equation}
As before, the restrictions on Fourier supports allow us to replace $\varphi_{k_{2}}$ by the expression $\sum_{j_{2} \in [j_{0} - C, j_{0} + C]} Q_{j_{2}} \varphi_{k_{2}}$. 
Throwing away $Q_{j_{0}}$, applying Bernstein and using H\"older (and furthermore the fact that $T_{\leq j_{0} - 20}$ is bounded in $L^{\infty}_{t} L^{2}_{x}$), the summand in \eqref{eq::farCone:3:3} is estimated by
\begin{equation*}
	\nrm{P_{k_{0}} Q_{j_{0}} (T_{\leq j_{0}-20} \chi_{k_{1}} \varphi_{k_{2}})}_{\underline{X}}
	\aleq \sum_{j_{2} \in [j_{0} - C, j_{0}+C]} 
		2^{\frac{3}{2} (k_{0} - k_{1})}  (2^{2 k_{1}}\nrm{\chi_{k_{1}}}_{L^{\infty}_{t} L^{2}_{x}}) \nrm{Q_{j_{2}} \varphi_{k_{2}}}_{\underline{X}}.
\end{equation*}
The last expression can be summed up using \eqref{eq:sqsum4Y} and $\ell^{2}_{k_{2}, j_{2}}$ summability of $\nrm{Q_{j_{2}} \varphi_{k_{2}}}_{\underline{X}}$, leading to $\eqref{eq::farCone:3:3} \aleq \nrm{\chi}_{L^{\infty}_{t} \dot{H}^{2}_{x}} \nrm{\varphi}_{\underline{X}}$ as desired.
\end{proof}

\subsection{Cutoff estimates} \label{subsec:cutoff}
In this subsection, we prove Lemmas \ref{lem:Xcutoff} and \ref{lem:ellipticEst4wCG}. 

We begin with a brief discussion on $\dot{B}^{\frac{5}{2}, 2}_{1}$,
which basically plays the role of the space of smooth cutoffs. Recall
that $\dot{B}^{\frac{5}{2},2}_{1}$ is an atomic space, whose atoms
satisfy $\eta = S_{[\ell-1,\ell+1]} \eta$ and $2^{\frac{5}{2} \ell}
\nrm{\eta}_{L^{2}_{t,x}} \leq 1$ for some $\ell \in \bbZ$. Note that
the following $\ell^{1}$ summability estimate holds:
\begin{equation} \label{eq::besovEst}
	\sum_{\ell} 2^{\frac{5}{2} \ell} \nrm{S_{\ell} \eta}_{L^{2}_{t, x}} + \sum_{\ell} 2^{2 \ell} \nrm{S_{\ell} \eta}_{L^{\infty}_{t} L^{2}_{x}}+ \nrm{\eta}_{L^{\infty}_{t,x}} \aleq \nrm{\eta}_{\dot{B}^{\frac{5}{2}, 2}_{1}}
\end{equation}
Note furthermore that 
\begin{equation} \label{eq:embedding4besov}
\dot{B}^{\frac{5}{2}, 2}_{1} 
\subseteq \dot{H}^{\frac{5}{2}}_{t,x} \cap \ell^{1} C^{0}_{t} \dot{H}^{2}_{x}
\subseteq \wCG
\end{equation}
which follows easily from Bernstein's inequality.


We first establish Lemma \ref{lem:Xcutoff}. By the definition of restriction spaces, it suffices to prove the following global statement.
\begin{lemma} \label{lem:Xcutoff:restate}
The following estimate holds for $X = Y^{1}, S^{1}, \wCG$ or $\CG$.
\begin{equation} \label{eq:Xcutoff:global}
	\nrm{\eta \varphi}_{X(\bbR^{1+4})} 
	\aleq \nrm{\eta}_{\dot{B}^{\frac{5}{2}, 2}_{1}(\bbR^{1+4})} \nrm{\varphi}_{X(\bbR^{1+4})}.
\end{equation}
\end{lemma}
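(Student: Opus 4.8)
\textbf{Proof plan for Lemma~\ref{lem:Xcutoff:restate}.}
The plan is to reduce to single atoms and single dyadic pieces, then use Littlewood--Paley trichotomy adapted to the regularity of each space. Since $\dot{B}^{\frac{5}{2},2}_{1}$ is an atomic space, by the triangle inequality it suffices to prove \eqref{eq:Xcutoff:global} when $\eta$ is a single atom, i.e. $\eta = S_{[\ell-1,\ell+1]}\eta$ with $2^{\frac{5}{2}\ell}\nrm{\eta}_{L^{2}_{t,x}} \leq 1$ (and hence $2^{2\ell}\nrm{\eta}_{L^{\infty}_{t}L^{2}_{x}}\aleq 1$, $\nrm{\eta}_{L^{\infty}_{t,x}}\aleq 1$ by Bernstein). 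For $X = \CG = \ell^{1}Y^{2}$ and $X = \wCG$, the estimate is already essentially contained in Lemma~\ref{lem:gt4S1:restate}: these are algebra-type spaces under multiplication, and in view of the embedding \eqref{eq:embedding4besov}, $\dot{B}^{\frac{5}{2},2}_{1}\subseteq\wCG$, so \eqref{eq:alg4Y} gives $\nrm{\eta\varphi}_{\wCG}\aleq\nrm{\eta}_{\wCG}\nrm{\varphi}_{\wCG}\aleq\nrm{\eta}_{\dot{B}^{\frac{5}{2},2}_{1}}\nrm{\varphi}_{\wCG}$; the $\CG$ case follows from the same trichotomy bookkeeping used in the proof of Lemma~\ref{lem:gt4S1:restate}, since $\CG$ differs from $Y^{2,2}\cap\ell^{1}C^{0}_{t}\dot{H}^{2}_{x}$ only by the frequency-summation exponent and the atom $\eta$ is essentially a single frequency.

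For $X = Y^{1}$, one decomposes $\eta\varphi = \sum_{k_{0}}P_{k_{0}}(\eta\varphi)$ and runs Littlewood--Paley trichotomy in the \emph{spatial} Fourier variable: the $(\mathrm{LH})$ piece $P_{k_{0}}(\eta_{<k_{0}-5}\varphi_{[k_{0}-5,k_{0}+5]})$, the $(\mathrm{HL})$ piece $P_{k_{0}}(\eta_{[k_{0}-5,k_{0}+5]}\varphi_{<k_{0}-5})$, and the $(\mathrm{HH})$ piece $\sum_{k_{1}\geq k_{0}+10}P_{k_{0}}(\eta_{k_{1}}\varphi_{k_{2}})$ with $|k_{1}-k_{2}|\leq 5$. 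In the $(\mathrm{LH})$ case $\eta$ is placed in $L^{\infty}_{t,x}$ (costing $\aleq 1$ for an atom) and $\varphi$ keeps its $Y^{1}$ norm; in the $(\mathrm{HL})$ and $(\mathrm{HH})$ cases one uses Bernstein in space to trade the high spatial frequency of $\eta$ for the $\dot{H}^{2}$-derivatives and the $\frac{5}{2}$ space-time derivatives of the atom, exactly as in Cases~2 and~3 of the proof of Lemma~\ref{lem::farCone}. The $Y^{1,\infty} = L^{\infty}_{t}L^{2}_{x}$ component and the $Y^{1,2} = L^{2}_{t}\dot{H}^{1/2}_{x}$ component are handled by the same case division, using respectively the $L^{\infty}_{t}L^{2}_{x}$ and $L^{2}_{t,x}$ bounds on the atom; in the $L^{2}_{t}\dot{H}^{1/2}_{x}$ case one must also do a trichotomy in the temporal frequency when the output modulation is large, again paralleling the structure already present in the text.

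For $X = S^{1}$, this is the genuinely delicate case and is the main obstacle. The idea is to split $\eta\varphi$ according to whether the output is near or far from the cone, exactly as in the proof of Lemma~\ref{lem:gt4S1:restate+}: write $\eta\varphi = \sum_{k_{0}}P_{k_{0}}Q_{\leq k_{0}+25}(\eta\varphi) + \sum_{k_{0}}P_{k_{0}}Q_{>k_{0}+25}(\eta\varphi)$. The far-cone part is dominated by the $\ell^{2}\underline{X}$ norm, which by Lemma~\ref{lem::farCone} (applied with $\chi = \eta$, using $\nrm{\eta}_{\wCG}\aleq\nrm{\eta}_{\dot{B}^{\frac{5}{2},2}_{1}}$) is controlled by $\nrm{\eta}_{\dot{B}^{\frac{5}{2},2}_{1}}\nrm{\varphi}_{S^{1}}$. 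The near-cone part is estimated component by component ($S^{\mathrm{str}}_{k}$, $X^{0,1/2}_{\infty}$, $S^{\mathrm{ang}}_{k,j}$) precisely as in Step~1 of the proof of Lemma~\ref{lem:gt4S1:restate+}; the point is that all the estimates there were stated for a general $\chi\in Y^{2,2}\cap L^{\infty}_{t,x}$ (with the key stability Lemma~\ref{lem::stability4Sang} for $S^{\mathrm{ang}}$), and for an atom $\eta$ one has $\nrm{\eta}_{Y^{2,2}\cap L^{\infty}_{t,x}}\aleq\nrm{\eta}_{\dot{B}^{\frac{5}{2},2}_{1}}$. Thus the bulk of the work for $S^{1}$ is to observe that the multiplication estimates in Section~\ref{subsec:gt} are symmetric enough to allow $\eta$ to play the role of $\chi$; no new harmonic-analytic input beyond what is already developed is needed, only careful verification that every appearance of $\nrm{\chi}_{Y^{2,2}}$, $\nrm{\chi}_{L^{\infty}_{t,x}}$, $\nrm{\chi}_{\ell^{1}L^{\infty}_{t}\dot{H}^{2}_{x}}$ in those proofs is majorized by $\nrm{\eta}_{\dot{B}^{\frac{5}{2},2}_{1}}$ via \eqref{eq::besovEst}.
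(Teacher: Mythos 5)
Your proposal is correct and follows essentially the same route as the paper: the $S^{1}$ and $\wCG$ cases reduce to the multiplication estimates \eqref{eq:alg4Y} and \eqref{eq:gt4S1} together with the embedding \eqref{eq:embedding4besov}, while the $Y^{1}$ and $\CG$ cases are handled by Leibniz rule plus Littlewood--Paley trichotomy bookkeeping (the paper splits $Y^{1}=Y^{1,2}\cap Y^{1,\infty}$ and $\CG = \ell^{1}Y^{2,2}\cap \ell^{1}Y^{2,\infty}$ and treats each piece this way). The only cosmetic difference is that you present $S^{1}$ as the delicate case requiring a re-run of the proof of Lemma~\ref{lem:gt4S1:restate+} with $\eta$ in place of $\chi$, whereas that lemma is already stated for arbitrary $\chi\in\wCG$, so it can simply be cited with $\chi=\eta$; likewise the reduction to Besov atoms is harmless but not needed.
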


\begin{proof} 
Before we begin, note that the following cutoff estimates hold:
\begin{align} 
	\nrm{\eta \varphi}_{Y^{1,2}} 
	\aleq & \nrm{\eta}_{\dot{B}^{\frac{5}{2}, 2}_{1}} \nrm{\varphi}_{Y^{1,2}}, \label{eq:cutoff4Y12} \\
	\nrm{\eta \varphi}_{\ell^{1} Y^{2,2}} 
	\aleq & \nrm{\eta}_{\dot{B}^{\frac{5}{2}, 2}_{1}} \nrm{\varphi}_{\ell^{1} Y^{2,2}}. \label{eq:cutoff4l1Y22}
\end{align}
Indeed, both estimates can be proved in a similar manner as \eqref{eq:alg4Y}; we omit the details.
With \eqref{eq:cutoff4Y12} and \eqref{eq:cutoff4l1Y22} in our hand, we proceed to the proof of \eqref{eq:Xcutoff:global}.

\pfstep{Case 1: $X = Y^{1}$}
Recall that $Y^{1} = Y^{1,2} \cap Y^{1, \infty}$. The desired estimate for the $Y^{1,2}$ norm of $\eta \varphi$ follows from \eqref{eq:cutoff4Y12}; thus it remains to bound $\nrm{\eta \varphi}_{Y^{1, \infty}}$. By the Leibniz rule, H\"older, $\dot{H}^{1}_{x} \subseteq L^{4}_{x}$ Sobolev and \eqref{eq::besovEst}, we have
\begin{align*}
	\nrm{\rd_{t,x} (\eta \varphi)}_{Y^{0,\infty}}
	\aleq & \nrm{\eta \rd_{t,x}\varphi}_{L^{\infty}_{t} L^{2}_{x}}
		+ \nrm{\rd_{t,x} \eta \varphi}_{L^{\infty}_{t} L^{2}_{x}} \\
	\aleq & (\nrm{\rd_{t,x} \eta}_{L^{\infty}_{t} L^{4}_{x}} + \nrm{\eta}_{L^{\infty}_{t,x}})
		(\nrm{\rd_{t,x} \varphi}_{L^{\infty}_{t} L^{2}_{x}} + \nrm{\varphi}_{L^{\infty}_{t} L^{4}_{x}}) \\
	\aleq & \nrm{\eta}_{B^{\frac{5}{2},2}_{1}} \nrm{\varphi}_{Y^{1}},
\end{align*}
which completes the proof in this case.

\pfstep{Cases 2 \& 3: $X = S^{1}$ or $\wCG$}
These cases are immediate consequences of \eqref{eq:alg4Y}, \eqref{eq:gt4S1} and the embedding \eqref{eq:embedding4besov}. 

\pfstep{Case 4: $X = \CG$}
Recall that $\CG = \ell^{1} Y^{2, 2} \cap \ell^{1} Y^{2, \infty}$. For the $\ell^{1} Y^{2, 2}$ norm of $\eta \varphi$, we use \eqref{eq:cutoff4l1Y22}. In order to bound the $\ell^{1} Y^{2, \infty}$ norm of $\eta \varphi$, we first use the Leibniz rule to compute
\begin{equation*}
	\rd_{t} (\eta \varphi) = \rd_{t} \eta \varphi + \eta \rd_{t} \varphi, \quad
	\rd_{t}^{2} (\eta \varphi) = \rd_{t}^{2} \eta \varphi + 2 \rd_{t} \eta \rd_{t} \varphi + \eta \rd_{t}^{2} \varphi.
\end{equation*}
By the embedding $\dot{B}^{N+\frac{1}{2}, 2}_{1} \subseteq \ell^{1}
C^{0} \dot{H}^{N}_{x}$ and the definition of the space $\ell^{1} Y^{2,
  \infty}$, we have $\rd_{t}^{(N)} \eta, \rd_{t}^{(N)} \varphi \in
\ell^{1} C^{0}_{t} \dot{H}^{2-N}_{x}$ for $N = 0, 1, 2$. Thus the
desired estimate is easily obtained using the standard
Littlewood-Paley trichotomy; we leave the details to the
reader. \qedhere
\end{proof}

Finally, we give a proof of Lemma \ref{lem:ellipticEst4wCG}. Extending $\eta$ and $\varphi$ to the whole space in such a way that $\eta \in \dot{B}^{\frac{5}{2}, 2}_{1}(\bbR \times \bbR^{4})$ and $\varphi \in \wCG(\bbR \times \bbR^{4})$, it suffices to consider the case $I = \bbR$. Thus Lemma \ref{lem:ellipticEst4wCG} would follow once we establish the following statement.
\begin{lemma} \label{lem:ellipticEst4wCG:restate}
Let $\eta \in \dot{B}^{\frac{5}{2}, 2}_{1}(\bbR^{1+4})$ and $\varphi \in \wCG (\bbR^{1+4})$. Let $\chi := (-\lap)^{-1} (\eta \lap \varphi)(t)$ be given as convolution with the Newton potential. Then we have
\begin{equation} \label{eq:ellipticEst4wCG:restate}
	\nrm{\chi}_{\wCG}
	\aleq  \nrm{\eta}_{\dot{B}^{\frac{5}{2}, 2}_{1}}  \nrm{\varphi}_{\wCG} \, .
\end{equation}
\end{lemma}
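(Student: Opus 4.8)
The plan is to reduce, via a spatial Littlewood--Paley trichotomy, to bilinear frequency-envelope estimates for the product $\eta \, \lap \varphi$, exploiting the heuristic that $(-\lap)^{-1}\lap$ is morally the identity so that this bound is just a quantitative refinement of the cutoff estimate Lemma~\ref{lem:Xcutoff:restate} for $X = \wCG$. Concretely, I would first record the structural reductions: $\nrm{\chi}_{\wCG} = \nrm{\chi}_{Y^{2,2}} + \sum_{k} 2^{2k} \nrm{P_{k} \chi}_{L^{\infty}_{t} L^{2}_{x}}$; that $(-\lap)^{-1}$ commutes with $\rd_{t}$ and with the spatial Littlewood--Paley projections; and that on a $P_{k_{0}}$-localized function it acts as a Fourier multiplier comparable to $2^{-2 k_{0}}$. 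Writing $\eta \lap \varphi = \sum_{k_{0}, k_{1}, k_{2}} P_{k_{0}}(P_{k_{1}} \eta \cdot \lap P_{k_{2}} \varphi)$, one organizes the estimate into the $(LH)$ regime $k_{1} \leq k_{2} - 5$ (so $k_{0} \aeq k_{2}$), the $(HL)$ regime $k_{2} \leq k_{1} - 5$ (so $k_{0} \aeq k_{1}$), and the $(HH)$ regime $\abs{k_{1} - k_{2}} < 5$ (so $k_{0} \leq k_{1} + 2$).

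For the $\sum_{k} 2^{2k} \nrm{P_{k} \chi}_{L^{\infty}_{t} L^{2}_{x}}$ part I would proceed regime by regime. In $(LH)$ the factor $2^{-2 k_{0}}$ from $(-\lap)^{-1}$ cancels the factor $2^{2 k_{2}} \aeq 2^{2 k_{0}}$ from $\lap$, so the argument is identical to the $(LH)$ part of Lemma~\ref{lem:Xcutoff:restate}: place $P_{k_{1}} \eta$ in $L^{\infty}_{x}$ by Bernstein and contract the $\ell^{1}$ envelope $\{2^{2 k_{1}} \nrm{P_{k_{1}} \eta}_{L^{\infty}_{t} L^{2}_{x}}\}$ (controlled by $\nrm{\eta}_{\dot{B}^{\frac{5}{2},2}_{1}}$ via \eqref{eq::besovEst} and \eqref{eq:embedding4besov}) against the $\wCG$-envelope $\{2^{2 k_{2}} \nrm{P_{k_{2}} \varphi}_{L^{\infty}_{t} L^{2}_{x}}\}$ of $\varphi$. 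In $(HL)$ the combination $2^{-2 k_{0}} \cdot 2^{2 k_{2}} \aeq 2^{2(k_{2} - k_{1})}$ is a genuine high--low gain which more than absorbs the two-derivative loss incurred by placing $\lap P_{k_{2}} \varphi$ in $L^{\infty}_{x}$. The only regime that genuinely differs from the cutoff estimate is $(HH)$, where the output sits at the low frequency $2^{k_{0}}$ with $k_{0} \leq k_{1} \aeq k_{2}$: here one bounds $\nrm{P_{k_{1}} \eta \cdot \lap P_{k_{2}} \varphi}_{L^{1}_{x}} \leq \nrm{P_{k_{1}} \eta}_{L^{2}_{x}} \nrm{\lap P_{k_{2}} \varphi}_{L^{2}_{x}}$ and invokes the Bernstein inequality $\nrm{P_{k_{0}} F}_{L^{2}_{x}} \aleq 2^{2 k_{0}} \nrm{F}_{L^{1}_{x}}$ (this is where $d = 4$ enters), which recovers exactly the $2^{2 k_{0}}$ needed to defeat $(-\lap)^{-1}$ while still leaving a positive exponential gain $\sim 2^{2(k_{0} - k_{1})}$, enough to sum in $k_{0}$ and then contract the two envelopes. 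Throughout, the time variable is handled by the trivial H\"older bound $\nrm{fg}_{L^{\infty}_{t} L^{2}_{x}} \leq \nrm{f}_{L^{\infty}_{t} L^{2}_{x}} \nrm{g}_{L^{\infty}_{t,x}}$ or its reverse.

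For the $Y^{2,2} = \nrm{\rd^{2}_{t,x} \chi}_{L^{2}_{t} \dot{H}^{\frac{1}{2}}_{x}}$ component I would commute $(-\lap)^{-1}$ past $\rd_{t}$ and expand by Leibniz, reducing to bounds on $(-\lap)^{-1}(\rd^{(a)}_{t,x} \eta \cdot \lap \rd^{(b)}_{t,x} \varphi)$ in $L^{2}_{t} \dot{H}^{\frac{1}{2}}_{x}$ for $\abs{a} + \abs{b} \leq 2$. When all derivatives fall on $\varphi$ the term is $(-\lap)^{-1}(\eta \lap \psi)$ with $\psi = \rd^{2}_{t,x} \varphi \in L^{2}_{t} \dot{H}^{\frac{1}{2}}_{x}$, and the spatial trichotomy above, now carried with the weight $2^{k_{0}/2} \nrm{P_{k_{0}} \cdot}_{L^{2}_{x}}$, gives at each time slice the bound $\nrm{(-\lap)^{-1}(\eta \lap \psi)}_{\dot{H}^{\frac{1}{2}}_{x}} \aleq \big(\sum_{k} 2^{2k} \nrm{P_{k} \eta}_{L^{2}_{x}}\big) \nrm{\psi}_{\dot{H}^{\frac{1}{2}}_{x}}$; taking $L^{\infty}_{t}$ of the $\eta$-factor and $L^{2}_{t}$ of the $\psi$-factor closes this case. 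The genuinely space-time terms, where one or two time derivatives land on the low-regularity factor $\eta$, I would treat by combining the spatial trichotomy with a temporal Littlewood--Paley decomposition, using the atomic $\ell^{1}$ structure \eqref{eq::besovEst} of $\dot{B}^{\frac{5}{2},2}_{1}(\bbR^{1+4})$ and the square-summability estimate \eqref{eq:sqsum4Y}, exactly as in the proof of Lemma~\ref{lem::farCone}, placing $\rd^{(a)}_{t,x} \eta$ in $L^{2}_{t} L^{\infty}_{x}$ or $L^{\infty}_{t} \dot{W}^{1,4}_{x}$ via Bernstein applied to the atoms of $\eta$, with $(-\lap)^{-1}$ again furnishing a favorable spatial multiplier in every frequency regime.

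The hard part will be the bookkeeping rather than any isolated inequality. One must verify that in the $(HH)$ regime the entire regularity of $\chi$ is manufactured by the Bernstein gain at the low output frequency, and that this gain is quantitatively sufficient to produce the full $\dot{H}^{2}_{x}$ (respectively $\dot{H}^{\frac{5}{2}}_{t,x}$-type) regularity demanded by $\wCG$ --- which is precisely why the argument requires $d \geq 4$ --- and then that the resulting exponential gains remain compatible with the $\ell^{1}$ frequency summations built into $\dot{B}^{\frac{5}{2},2}_{1}$ and into the $\wCG$ norm of $\varphi$. The most delicate point is the temporal-frequency bookkeeping for the $Y^{2,2}$ component when a time derivative falls on $\eta$, where the atomic structure of the Besov cutoff space and the estimate \eqref{eq:sqsum4Y} are indispensable.
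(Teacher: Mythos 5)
Your plan is correct and shares the paper's overall skeleton: you split the $\wCG$ norm into its two components, treat the $\ell^{1} L^{\infty}_{t}\dot{H}^{2}_{x}$ part through the identity $-\lap \chi = \eta \lap \varphi$ together with a bilinear frequency analysis (the paper compresses this into a single product estimate $\nrm{\eta \lap \varphi}_{\ell^{1} L^{\infty}_{t} L^{2}_{x}} \aleq \nrm{\eta}_{\ell^{1} L^{\infty}_{t}\dot{H}^{2}_{x}} \nrm{\varphi}_{\ell^{1} L^{\infty}_{t}\dot{H}^{2}_{x}}$, of which your $(LH)/(HL)/(HH)$ trichotomy is just the expanded version), and reduce the $Y^{2,2}$ part by the Leibniz rule to bilinear bounds on $\rd^{(a)}_{t,x}\eta \cdot \lap\, \rd^{(b)}_{t,x}\varphi$ in $L^{2}_{t}\dot{H}^{-3/2}_{x}$, exactly as in the paper. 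The genuine divergence is in how the mixed terms (time derivatives falling on $\eta$) are handled: the paper avoids all temporal frequency analysis by invoking the trace theorem, $\nrm{\rd^{2}_{t,x}\eta}_{L^{\infty}_{t} L^{2}_{x}} + \nrm{\rd_{t,x}\eta}_{L^{\infty}_{t}\dot{H}^{1}_{x}} + \nrm{\eta}_{L^{\infty}_{t}\ell^{1}\dot{H}^{2}_{x}} \aleq \nrm{\eta}_{\dot{B}^{\frac{5}{2},2}_{1}}$, and then closing with three \emph{fixed-time} product estimates $\dot{H}^{1/2}\times L^{2}\to\dot{H}^{-3/2}$, $\dot{H}^{-1/2}\times\dot{H}^{1}\to\dot{H}^{-3/2}$, $\dot{H}^{-3/2}\times\ell^{1}\dot{H}^{2}\to\dot{H}^{-3/2}$, with $\varphi$-factors in $L^{2}_{t}$ and $\eta$-factors in $L^{\infty}_{t}$; you instead propose a space-time Littlewood--Paley bookkeeping with Besov atoms and \eqref{eq:sqsum4Y} in the style of Lemma~\ref{lem::farCone}. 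Your route can be made to work (the exponential gains do come out favorably in each regime), but it is heavier than necessary: unlike Lemma~\ref{lem::farCone}, no modulation localization $Q_{j}$ enters the $\wCG$ norm, so once the trace theorem converts $\eta$'s time derivatives into $L^{\infty}_{t}$-based spatial Sobolev control, purely fixed-time estimates suffice --- this is precisely the simplification the paper exploits, and it is worth knowing it is available.
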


\begin{proof} 

From the embedding \eqref{eq:embedding4besov}, it easily follows that $\eta \lap \varphi \in \ell^{1} C^{0}_{t} L^{2}_{x}(\bbR \times \bbR^{4})$ with
\begin{equation*}
	\nrm{\eta \lap \varphi}_{\ell^{1} L^{\infty}_{t} L^{2}_{x}}
	\aleq \nrm{\eta}_{\ell^{1} L^{\infty}_{t} \dot{H}^{2}_{x}} \nrm{\lap \varphi}_{\ell^{1} L^{\infty}_{t} \dot{H}^{2}_{x}}
	\aleq \nrm{\eta}_{\dot{B}^{\frac{5}{2}, 2}_{1}} \nrm{\varphi}_{\wCG}.
\end{equation*}
Therefore, the estimate for $\nrm{\chi}_{\ell^{1} L^{\infty}_{t}
  \dot{H}^{2}_{x}}$ in the $\wCG$ norm in
\eqref{eq:ellipticEst4wCG:restate} follows. It remains to establish
the estimate for the $Y^{2,2}$ norm in
\eqref{eq:ellipticEst4wCG:restate}; for this we will show that
\begin{equation}
\| \chi\|_{Y^{2,2}} \lesssim \| \eta\|_{\dot{B}^{\frac{5}{2}, 2}_{1}}  \nrm{\varphi}_{Y^{2,2}} 
\end{equation}
The left-hand side is equivalent to $\| \partial_{x,t}^2 (\eta \Delta
\varphi) \|_{L^2_{t} \dot H^{-\frac32}_{x}}$. We apply the Leibniz rule to write
\[
\partial_{x,t}^2 (\eta \Delta \varphi) = \partial_{x,t}^2 \eta \Delta \varphi +
\partial_{x,t} \eta \partial_{x,t}\Delta \varphi +  \eta \partial_{x,t}^2 \Delta \varphi
\]
We can estimate 
\[
\|  \Delta \varphi \|_{L^2_{t} \dot H^{\frac12}_{x}} + \| \partial_{x,t}\Delta \varphi\|_{L^2_{t} \dot H^{-\frac12}_{x}} 
+  \| \partial_{x,t}^{2} \Delta \varphi\|_{L^2_{t} \dot H^{-\frac32}_{x}}  \lesssim  \nrm{\varphi}_{Y^{2,2}} 
\]
and, by the trace theorem, 
\[
\|  \partial_{x,t}^2 \eta\|_{L^\infty_{t} L^2_{x}} + \|  \partial_{x,t} \eta\|_{L^\infty_{t} \dot H^1} 
+ \| \eta\|_{L^\infty_{t} \ell^1 \dot H^2_{x}} \lesssim \| \eta\|_{\dot{B}^{\frac{5}{2}, 2}_{1}}
\]
Hence it remains to establish the fixed time multiplicative estimates
\[
\dot H^\frac12 \times L^2_{t} \to \dot H^{-\frac32}_{x} , \qquad \dot H^{-\frac12} \times \dot H^1 \to 
\dot H^{-\frac32}, \qquad \dot H^{-\frac32} \times  \ell^1 \dot H^2_{x} \to \dot H^{-\frac32} 
\]
These in turn are easily obtained using the standard Littlewood-Paley trichotomy. \qedhere
\end{proof}

\bibliographystyle{amsplain}
\providecommand{\bysame}{\leavevmode\hbox to3em{\hrulefill}\thinspace}
\providecommand{\MR}{\relax\ifhmode\unskip\space\fi MR }
\providecommand{\MRhref}[2]{%
  \href{http://www.ams.org/mathscinet-getitem?mr=#1}{#2}
}
\providecommand{\href}[2]{#2}

\end{document}